\documentclass[a4paper]{article}



%
%
\usepackage{graphicx} 
\usepackage{amssymb,amsfonts,amsthm,amsmath,mathtools,cleveref,comment,mathrsfs}
\usepackage{pgf}
\usepackage[margin=25mm,a4paper]{geometry}
\usepackage{xcolor}
\usepackage{paralist} 
\usepackage{tikz-cd}
\usepackage{todonotes}
\usepackage[backend=biber,style=alphabetic,sorting=nyt,maxbibnames=99,maxalphanames=5,doi=false,isbn=false,url=false]{biblatex}
\usepackage[normalem]{ulem}
\usepackage{float}
\usepackage{thm-restate}
\usepackage{epstopdf}
\usepackage{listings}
\usepackage{soul,cancel}
\usepackage{enumitem}

\addbibresource{references.bib}
\setcounter{biburlnumpenalty}{9999}
\setcounter{biburllcpenalty}{9999}
\setcounter{biburlucpenalty}{9999}

\usetikzlibrary{arrows}
\newcommand{\midarrow}{\tikz \draw[-angle 90] (0,0) -- +(.1,0);}
\newcommand{\midrevarrow}{\tikz \draw[-angle 90 reversed] (0,0) -- +(.1,0);}

\definecolor{codegreen}{rgb}{0,0.6,0}
\definecolor{codegray}{rgb}{0.5,0.5,0.5}
\definecolor{codepurple}{rgb}{0.58,0,0.82}
\definecolor{backcolour}{rgb}{0.95,0.95,0.92}

\lstdefinestyle{mystyle}{
    backgroundcolor=\color{backcolour},   
    commentstyle=\color{codegreen},
    keywordstyle=\color{magenta},
    numberstyle=\tiny\color{codegray},
    stringstyle=\color{codepurple},
    basicstyle=\ttfamily\footnotesize,
    breakatwhitespace=false,         
    breaklines=true,                 
    captionpos=b,                    
    keepspaces=true,                 
    numbers=left,                    
    numbersep=5pt,                  
    showspaces=false,                
    showstringspaces=false,
    showtabs=false,                  
    tabsize=2
}

\lstset{style=mystyle}

%
%

\newtheorem{theorem}{Theorem}[section]
\newtheorem{lemma}[theorem]{Lemma}
\newtheorem{proposition}[theorem]{Proposition}
\newtheorem{corollary}[theorem]{Corollary}
\theoremstyle{definition}
\newtheorem{definition}[theorem]{Definition}
\theoremstyle{remark}
\newtheorem{remark}[theorem]{Remark}

\newtheorem{question}[theorem]{Question}

\newtheorem{example}[theorem]{Example}

\numberwithin{table}{section}

\newcommand{\defn}[1]{\ul{#1}}

%
%




%
%
\newcommand\cA{{\mathcal A}}

\newcommand\cD{{\mathcal D}}
\newcommand\cE{{\mathcal E}}
\newcommand\cF{{\mathcal F}}

\newcommand\cH{{\mathcal H}}

\newcommand\cM{{\mathcal M}}

\newcommand\cQ{{\mathcal Q}}
\newcommand\cV{{\mathcal V}}

\newcommand\BB{{\mathbb B}}
\newcommand\CC{{\mathbb C}}

\newcommand\FF{{\mathbb F}}

\newcommand\KK{{\mathbb K}}

\newcommand\PP{{\mathbb P}}

\newcommand\RR{{\mathbb R}}

\newcommand\TT{{\mathbb T}}

\newcommand\ZZ{{\mathbb Z}}

\newcommand\bbG{{\mathbb G}}
\newcommand\bbP{{\mathbb P}}

\newcommand\fe{{\mathfrak e}}
\newcommand\fg{{\mathfrak g}}

\newcommand\fh{{\mathfrak h}}

\newcommand\fl{{\mathfrak l}}
\newcommand\fS{{\mathfrak S}}
\newcommand{\twedge}{{\textstyle{\bigwedge}}}
\newcommand{\Hom}{{\mathrm{Hom}}}


\newcommand\scP{{\mathscr{P}}}


\newcommand{\cat}{\circ}
\newcommand{\rem}[2]{{#1}\!\setminus\!\{#2\}}
\newcommand{\remS}[2]{{#1}\!\setminus\!#2}
\newcommand{\ord}[1]{\overrightarrow{#1}}

\renewcommand{\st}{\colon} 
\newcommand{\set}[2]{\left\{#1 \,\st\, #2\right\}} 
\newcommand{\symDiff}{\mathbin\triangle} 
\newcommand{\InnerProd}[2]{\left( #1,#2\right)}
\newcommand{\scaledInnerProd}[2]{\left\langle #1,#2\right\rangle}
\newcommand{\groupGen}[1]{\left\langle #1\right\rangle}
\renewcommand{\emptyset}{\varnothing}

\DeclarePairedDelimiter\order{\lvert}{\rvert}%
\newcommand{\rev}[1]{{#1}^{T}}

\newcommand{\half}{\frac{1}{2}}
\newcommand{\tshalf}{{\textstyle\half}}
\newcommand{\floor}[1]{\left\lfloor #1 \right\rfloor}
\newcommand{\ceil}[1]{\left\lceil #1 \right\rceil}

\renewcommand{\det}{\mathrm{det}}


\DeclareMathOperator{\proj}{proj}
\DeclareMathOperator{\conv}{conv}
\DeclareMathOperator{\sgn}{sgn}
\DeclareMathOperator{\End}{End}
\DeclareMathOperator{\Cl}{Cl} 
\DeclareMathOperator{\Lie}{Lie} 
\newcommand{\vectorSpace}{{\cV}}
\DeclareMathOperator{\SO}{\mathrm{SO}}

\DeclareMathOperator{\Spin}{\mathrm{Spin}}
\DeclareMathOperator{\Par}{\mathrm{Par}}

\newcommand{\Sp}{\mathcal{S}}
\renewcommand{\so}{\mathfrak{so}}

\newcommand{\polyOf}[1]{P(#1)}
\newcommand{\ambientPoly}{\operatorname{conv}\{W\cdot \omega_J\}}
\newcommand{\cube}[2]{\square_{#1, #2}} 
\newcommand{\cubeVertex}[1]{\varepsilon_{#1}}
\newcommand{\face}{Q}

\newcommand{\crossPoly}{\lozenge}

\newcommand{\rootSystem}{\Phi} 
\newcommand{\simpleRoots}{\Pi} 
\newcommand{\maximalRoots}[1]{{S\setminus{\{s_{#1}\}}}}
\newcommand{\parabolic}[1]{P_{#1}} 
\newcommand{\maximalParabolic}[1]{\parabolic{\maximalRoots{#1}}} 
\newcommand{\matroid}{M}
\newcommand{\cosets}[1]{W^{#1}}
\newcommand{\maximalCosets}[1]{{W^{\maximalRoots{#1}}}}

\DeclareMathOperator{\trop}{trop}
\DeclareMathOperator{\ex}{ex}

\newcommand{\lng}[1]{\ell\left(#1\right)}
\newcommand{\lngS}[1]{\lng{\sigma_{#1}}}

\begin{document}
\title{Quadratic exchange equations for Coxeter matroids}
\author{Kieran Calvert\thanks{Lancaster University, E-mail: \texttt{kieran.calvert@lancaster.ac.uk}} \and 
Aram Dermenjian\thanks{University of Sevilla, E-mail: \texttt{aram.dermenjian.math@gmail.com}} \and 
Alex Fink\thanks{Queen Mary University of London, E-mail: \texttt{a.fink@qmul.ac.uk}} \and 
Ben Smith\thanks{Lancaster University, E-mail: \texttt{b.smith9@lancaster.ac.uk}}}

\maketitle

\begin{abstract}
Tropicalisation (with trivial coefficients) is a process that turns a polynomial equation into a combinatorial predicate on subsets of the set of variables.
We show that for each minuscule representation of a simple reductive group, there is a set of quadratic equations cutting out the orbit of the highest weight vector whose tropicalisation characterises the set of Coxeter matroids for that representation which satisfy the strong exchange property.
\end{abstract}

\section{Introduction}

Let $\KK$ be a field.
A linear space of dimension~$r$ over $\KK^n$
represents a matroid of rank~$r$ on the ground set $\{1,\ldots,n\}$,
and these linear spaces are parametrised by $\KK$-valued points of the Grassmannian $\mathrm{Gr}(r,n)$.
This relationship between the Grassmannian and matroids is used to great profit 
in the overlap of algebraic geometry and combinatorics, for example in tropical geometry.
Tropical linear spaces are defined to be in bijection with valuated matroids,
and the adjective ``valuated'' disappears if one works only with tropical linear spaces that are fans.
The parameter space for tropical linear spaces, known as the \defn{Dressian},
contains the tropicalisation of the Grassmannian.
The two do not agree, for one because the tropicalisation of the Grassmannian depends on~$\KK$, whereas defining a tropical linear space does not.
However, the ideal of the Grassmannian -- over $\ZZ$, and therefore over each $\KK$ --
contains a privileged set of quadrics, the \defn{quadratic (Grassmann--)Pl\"ucker relations},
and tropicalising only these quadrics cuts out exactly the Dressian.

Many have studied (see \Cref{ssec:litreview}) how well this happy state of affairs replicates for other quotients
$\bbG/\bbP$ of a reductive group by a parabolic subgroup.
In these studies, the first choice to be made 
is an embedding of~$\bbG/\bbP$ in a projective space with a prescribed coordinate basis,
which is required to define the tropicalisation.
Our objective in this paper is to do so uniformly in~$\bbG$.
Representation theory gives a uniform procedure
to define a projective embedding of~$\bbG/\bbP$ in a projectivised $\bbG$-representation $\proj(V_\lambda)$, 
in which it is cut out by quadrics (\Cref{t:eqfromcas}).
As for the coordinate basis, for Grassmannians we can concisely express the choice
as the basis of homogeneous linear forms in the weight grading for $\bbG$;
since the weight spaces are one-dimensional, this uniquely specifies our basis up to scalars.
The same approach works for $\bbG/\bbP$ 
when $V_\lambda$ is \defn{minuscule} (we also call $\bbP$ minuscule in this case),
i.e.\ all the weights of $V_\lambda$ are in a single Weyl group orbit.
This implies that all weight spaces are one-dimensional.
So it is the minuscule case we study here. 
For convenience in the representation theory, our paper will set $\KK=\CC$
(although see \Cref{rem:coefficients}).

On the other side of the connection, 
\defn{Coxeter matroids} \cite{BorovikGelfandWhite2003} are to $\bbG/\bbP$ as matroids are to the Grassmannian.
A Coxeter matroid is any subpolytope of the orbit polytope associated to~$\bbP$ of the Weyl group $W(\bbG)$
whose edges are parallel to roots of~$\bbG$ (see \Cref{ssec:Coxeter matroids} for a careful definition).
With this, we have every piece but one of our main theorem:

\begin{theorem}\label{thm:A}
Let $\bbG$ be a simply connected complex Lie group and $\bbP$ a minuscule parabolic subgroup.
There exists a set $Q$ spanning the quadrics cutting out $\bbG/\bbP\subset\proj(V_\lambda)$ 
such that a set of points is the vertex set of a strong Coxeter matroid if and only if it satisfies the tropicalisations of~$Q$.
\end{theorem}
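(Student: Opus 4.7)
The plan is to construct $Q$ as a torus-weight-homogeneous spanning set of the degree-two part $I_2$ of the vanishing ideal of $\mathbb G/\mathbb P\subset\proj(V_\lambda)$. Since $V_\lambda$ is minuscule, all weight spaces are one-dimensional, so a $T$-weight vector in $I_2$ of weight $\mu$ takes the shape $q = \sum_{v+w=\mu} c^q_{v,w}\,p_v p_w$ in the weight-dual basis $(p_v)_{v\in W\lambda}$ of $V_\lambda^*$. Tropicalising such a quadric with trivial coefficients asks that, for an indicator set $S\subseteq W\lambda$, either no pair $(v,w)$ with $c^q_{v,w}\neq 0$ lies in $S\times S$, or at least two such pairs do. The proof therefore reduces to: (i) producing enough $T$-homogeneous quadrics to span $I_2$; (ii) identifying their supports $\{(v,w):c^q_{v,w}\neq 0\}$; and (iii) matching these supports to the combinatorial exchange patterns defining a strong Coxeter matroid.

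For (i), \Cref{t:eqfromcas} realises $I_2$ via a Casimir-based $\mathbb G$-equivariant projection of $\mathrm{Sym}^2 V_\lambda$ onto the complement of the $V_{2\lambda}$-summand. This projection commutes with the $T$-action, so its image decomposes into $T$-weight pieces; taking $Q$ to be a $T$-weight basis of this image gives the desired spanning set, indexed by pairs $(\mu,j)$ with $\mu$ a weight of $\mathrm{Sym}^2 V_\lambda$ other than $2\lambda$ and $j$ ranging over a basis of the multiplicity space at $\mu$.

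Step (ii) is the heart of the argument and the main obstacle. The key claim is that for each $(\mu,j)$, the support of the corresponding quadric is a single orbit of the operation $\{v,w\}\mapsto\{v+\alpha,w-\alpha\}$ over those roots $\alpha$ for which the resulting elements remain weights of $V_\lambda$. To establish this, I would apply $\mathfrak{sl}_2$-triples along each simple root: the raising operator $e_\alpha$ carries a quadric in $I_2$ to another, and in the minuscule setting transports a monomial $p_v p_w$ via the Leibniz rule to $p_{v+\alpha}p_w + p_v p_{w+\alpha}$ whenever those weights exist, yielding linear relations among the $c^q_{v,w}$ that pin them down up to a single scalar per orbit. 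A priori it is not evident that the combinatorial swap-orbits match the support imposed by the Casimir projection, and if a uniform argument proves elusive then a case analysis through the minuscule classification ($A_n$, $D_n$, $E_6$, $E_7$, and the trivial one-dimensional cases) serves as a fallback.

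Step (iii) then compares the swap-orbits from (ii) with the strong exchange axiom of Borovik--Gelfand--White. The axiom is itself formulated in terms of orbits of pairs of vertices summing to a common weight, so the matching is essentially tautological once the supports are identified; what remains is to verify that the tropicalised condition ``support meets $S\times S$ in zero or at least two pairs'' is the correct contrapositive of the exchange property. The ``only if'' direction of \Cref{thm:A} is then immediate, while the ``if'' direction relies on the support identification of (ii) to guarantee that the tropical conditions are strong enough to force the strong exchange axiom.
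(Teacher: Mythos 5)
Your plan shares its starting point with the paper — \Cref{t:eqfromcas} and \Cref{lem:grassmann+embedding+equations} producing a $T$-weight basis of $I_2$ — but step~(ii) contains the central gap, and it is not a gap that a case fallback alone repairs.

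The claim that the support of a $T$-weight quadric from the Casimir projection is a single orbit under $\{v,w\}\mapsto\{v+\alpha,w-\alpha\}$ is accurate for the ``Lichtenstein basis'' the paper constructs (\Cref{t:BspinEqs}): for type $B_n$, the $p^M_{N,L}$ are supported on \emph{all} antipodes of the subcube $Q_{N,L}$, which is exactly one swap-orbit. But that is not the spanning set the paper tropicalises. The set $Q$ in \Cref{thm:A} must be chosen so that its tropicalisations are the strong exchange equations, and those have much smaller supports: $f^{(B)}_{I,J}$ involves only the swaps by a single coordinate $i\in I\symDiff J$, not the whole $2^{m-1}$-element orbit. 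The whole content of \Cref{t:BnSpinmain} is that a genuinely different spanning set $\cE^{(B)}_n$ — obtained by taking nontrivial linear combinations within each weight space, proved equal in span by \Cref{l:leadingquadeqisingrassmmann}, \Cref{cor:quadsareingrassmann}, \Cref{l:psareinex} — can be engineered to have this small support and the right sign pattern. Your $\mathfrak{sl}_2$-triple argument would at best pin down coefficients along a single orbit up to scalar, which reproduces the full-orbit Lichtenstein quadrics; it says nothing about whether linear combinations exist whose supports line up with the exchange axiom, nor how to find them.

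Two further points show this is not a matter of detail. First, the weight-zero multiplicity space is not one-dimensional in general — for $E_7$ it is seven-dimensional, and the computer-generated weight-zero basis vectors do \emph{not} tropicalise to $f_{3_{21}}$; the paper has to construct seven particular linear combinations \eqref{eq:weightzerobasis} supported on the $28$ long antipodes. So ``one orbit per $(\mu,j)$'' fails precisely where the multiplicity grows. Second, and decisive against a uniform ``minimise support'' heuristic, Remark~\ref{rem:do+not+minimise+support} records that the \emph{minimal}-support weight-zero quadrics in $E_7$ impose a strictly stronger tropical condition, excluding some strong $E_7$-matroids. So the correct supports are neither maximal (one full orbit) nor minimal, and selecting them is where the real work lies; step~(iii) is correspondingly not ``essentially tautological'' — for the cube case it needs \Cref{lem:odd+antipode+exchange} and \Cref{lem:even+antipode+exchange}, and for $E_7$ it needs the bespoke choice of zero-weight equations.
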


The one word yet to be defined is ``strong''.
We say that a Coxeter matroid $P$ 
is \defn{strong} if for any two vertices $v,w$ of~$P$, either $\operatorname{conv}\{v,w\}$ is parallel to a root
or it is the diagonal of a parallelogram all four of whose vertices are vertices of~$P$ and with an edge parallel to a root.
(Such Coxeter matroids are also said to satisfy the \defn{strong exchange property}. For more details see \Cref{ssec:Coxeter matroids} below.) 
The condition of being strong will emerge from the structure of tropicalised quadrics homogeneous in the weight grading.
In particular, if $P$ is \defn{realizable}
in the sense that there is a point $x\in\bbG/\bbP$ 
so that a vertex $v$ of the orbit polytope is present in~$P$ if and only if the weight-$v$ coordinate of~$x$ is nonzero, 
then $P$ satisfies every tropical equation of $\bbG/\bbP$, so $P$ is strong.
All (ordinary) matroids are strong, but not, for example, all $\Delta$-matroids (\Cref{ssec:delta-matroids}).
For Coxeter matroids that are not strong, we are unable to suggest any characterisation by equations of tropical hypersurfaces, whatever their degree.

Borovik, Gelfand and White recognised the importance of minuscule representations in Open Problem 6.16.1 of \cite{BorovikGelfandWhite2003}.
Our theorem can be read as morally confirming the ``if'' direction of their expectation in that open problem,
strengthened by the connection to equations of~$\bbG/\bbP$.

Despite the statement of \Cref{thm:A} being type-agnostic, the proof is an analysis of each individual type in the classification of minuscule representations,
outside type~$A$ (usual Grassmannians) which is well known.
We sketch the structure of this paper. 
\Cref{s:Coxeter} is dedicated to background on Coxeter matroids.
Our type-by-type work begins in \Cref{s::charCoxMatroids}, where we characterise strong Coxeter matroids
in terms of the distribution of pairs of antipodal vertices on faces of the orbit polytope.
In \Cref{s:eqsfromGrass} we introduce the quadrics we need in each type, although their calculation is deferred to \Cref{app:equations},
and we make some use of computer algebra systems and the code in \Cref{app:code}.
We show the agreement of their tropicalisation with the characterisation that we obtained in \Cref{s::charCoxMatroids}, proving the cases of~\Cref{thm:A}.
\Cref{table:where} lists the classification of minuscule varieties and gives references to the results where the proof is achieved.
We have kept the subsectioning of Sections \ref{s::charCoxMatroids} and~\ref{s:eqsfromGrass} parallel,
so the reader interested in a particular type can easily pass between its treatment in the two sections.
\begin{table}
\begin{center}
\begin{tabular}{r|l}
  \Cref{t:Anmain}  & Type $A_n$ with minuscule variety associated to any root 
  \\
  \Cref{t:BnSpinmain}&Type $B_n$ with minuscule variety associated to the last root 
  \\
   \Cref{t:Dnspinmain} & Type $D_n$ with minuscule variety associated to either of the last two roots
  \\
   Lemma  \ref{l:Dncrossmain} & Type $D_n$ with the minuscule variety associated to the first root
 \\
   \Cref{l:Cncrossmain} &  Type $C_n$ with the minuscule variety associated to the first root 
     \\
    \Cref{p:mainthmE6} & Type $E_6$ and the two minuscule varieties associated to the first and last root  \\
    \Cref{p:mainthmE7} & Type $E_7$ and the Freudenthal variety 
\end{tabular}
\end{center}
\caption{Individual minuscule types and the statement of \Cref{thm:A} in each case.}
\label{table:where}
\end{table}

\paragraph*{Acknowledgements} 
The authors thank Chris Eur for his part in the initial impetus for this project and for pointing us to \cite{Kumar2002}, 
Matt Larson and Igor Makhlin for comments on the isomorphism between type $B_n$ and $D_{n+1}$ spinor Grassmannians,
Mauricio Velasco for pointers to the history of the Wick relations in the literature,
and Alessio D'Al\`\i, George Balla, Donggyu Kim, Dante Luber, Steve Noble, Felipe Rinc\'on, and Cynthia Vinzant for helpful conversations.
In the course of this project
the second author was supported by Heilbronn Institute for Mathematical Research.
The third author received support from the Engineering and Physical Sciences Research Council [grant number EP/X001229/1],
as well as the Institute for Advanced Study School of Mathematics
and the Fields Institute for Research in Mathematical Sciences.
The fourth author received support from the Engineering and Physical Sciences Research Council [grant number EP/X036723/1].

\subsection{Comparison to other work}\label{ssec:litreview}
The theorem that $\bbG/\bbP\subset\proj(V_\lambda)$ is cut out by quadrics obtained from the Casimir operator
was published at similar dates by Lichtenstein \cite{Lichtenstein:1982} and Garfinkle in her PhD thesis \cite{Garfinkle1982}, the latter attributing it to Kostant.
The quadrics can also be obtained through standard monomial theory as developed by Seshadri et al.\ \cite{seshadri1978geometry}. None of these three references gives the equations as explicitly as we need.

The first reference we know of that chooses generators of this space of quadric equations uniformly in $\bbG$ and $\bbP$ and makes their support explicit is Kumar's book \cite{Kumar2002}, in the proof of Corollary~10.1.11.
These agree with our equations in some types, but notably not in the nontrivial cases for classical groups, where our equations have smaller support than Kumar's.
In brief, in the equation of Kumar's whose ``middle'' monomial encodes a pair $v,w$ of vertices of~$P$,
there is another monomial for every pair $v+\alpha,w-\alpha$ where $\alpha$ is a root.
The prototypical shape of a root for a classical group is $\alpha=\pm e_i\pm e_j$.
The familiar exchange properties for matroids and their generalisations begin ``for all elements $i$, there exists an element $j$\ldots''.
Our equations model this structure, and thus a given one includes just the monomials for $v+\alpha,w-\alpha$ for a fixed $i$, with only $j$ varying.
(The monomial then ``in the middle'', by which we index the equation in this paper, is the monomial associated to $v\pm e_i,w\mp e_i$.)

This difference notwithstanding, we believe that when Kumar's equations are tropicalised
they determine the same class of Coxeter matroids as ours.
We plan to write a followup paper \cite{peerless_antipodes} proving this for the classical groups, by giving various equivalent characterisations of matroids and strong delta-matroids in terms of antipodes in faces of their polytopes.
It surprised us to learn that, in type $E_7$, one cannot use the equations of minimal support for \Cref{thm:A}; these impose a strictly stronger condition which excludes some strong $E_7$-matroids (Remark~\ref{rem:do+not+minimise+support}).

As this shows, the support of our equations is a type-dependent consideration, 
so our further discussion of precedents will be according to type.
We are aware of no relevant work on Coxeter matroids for exceptional groups.

Our main theorem in Type~$A$, \Cref{t:Anmain},
was known to Rota by 1971 \cite{RotaBowdoin}; see \cite[Section 3.3]{Kung1995geometricApproach} for further exposition.
Rota's interest in the Grassmannian and its coordinates was motivated by
his conjecture \cite{Rota1971} that realizability over any finite field was characterised by finitely many excluded minors.
Although the proof of this conjecture that has now been announced proceeds by the methods of structural combinatorics rather than algebraic geometry \cite{GeelenGerardsWhittle},
Rota's investigations did lead to the definition and study of the bracket ring \cite{White1975,White1977}.

There has been much subsequent interest in interactions between (type~$A$) matroids and Grassmannians, \cite[Section 4.1]{Ardila2018} being a survey.
When later authors have paid attention to the precise shape of the Grassmann--Pl\"ucker equations
it has often been because they were generalising matroids, whether by replacing the Grassmannian, as below,
or the coefficients.
Particularly clear is the discussion in the introduction of~\cite{DressWenzel1992}, the work introducing \defn{valuated matroids}, where the coefficients are real numbers representing the images of elements of $\KK$ under a nonarchimedean valuation.
Valuated matroids found a home in the field of tropical geometry, where they parametrise tropical linear spaces \cite[Remark 2.1]{Speyer2008},
and the coefficient object is interpreted as the tropical semifield $\TT=\RR\cup\{\infty\}$.
It's because of this line of work that we have adopted the language of tropicalising equations,
even though the solutions to tropical equations that we consider are valued in $\{0,\infty\}$ rather than in all of~$\TT$.

Valuated matroids can also be defined to be in bijection with regular subdivisions of matroid polytopes into matroid polytopes.
We will not use this perspective, but it appears in several of the works below,
and its generalisation to strong Coxeter matroids for minuscule representations is the theme of \cite{Frankfurt}.

In the other classical types $B$, $C$, and~$D$, there are fewer minuscule representations than in type~$A$.
The interesting ones come from the long root in type~$B$ and the two roots at the forked end of the Dynkin diagram in type~$D$.
The Coxeter matroids for both these types were introduced by Bouchet \cite{Bouchet1987} as \defn{$\Delta$-matroids},
as well as at similar dates by other authors (see \cite[Section 1]{Noble2025} for a survey).
What a realisation of a $\Delta$-matroid is, and therefore how to coordinatise $\Delta$-matroids, is a hairier question than for matroids;
from our perspective this is because of the potential to associate the geometry in any of the types $B$, $C$, or~$D$, 
or even other alternatives, like replacing the bilinear forms in these geometries with sesquilinear ones.
Declaring a realisation to be a point of $\bbG/\bbP$, as we do in this paper, is the approach of~\cite{BorovikGelfandWhite2003}, followed e.g.\ by \cite{Booth2001} introducing oriented $\Delta$-matroids.
In the structural literature, realisations are mostly taken to be symmetric or skew-symmetric square matrices, as in \cite{GeelenThesis}.
Many authors, following \cite{BouchetDuchamp}, further restrict attention to~$\FF_2$, symmetric and skew-symmetric matrices then being no different, at least off the diagonal.
From our perspective, these square matrices should be thought of as obtained from a Lagrangian subspace for a skew-symmetric or symmetric quadratic form (such subspaces being what $\bbG/\bbP$ parametrises) by taking a matrix spanning the subspace and deleting a maximal identity submatrix.

The first tropical investigations outside type~$A$ were made by Rinc\'on \cite{Rincon}, who considered even $\Delta$-matroids (type~$D$).
Here the \defn{Wick relations} generate the quadric equations of $\bbG/\bbP$.
The history of the Wick relations in the literature is complicated;
see Remark~\ref{rem:B+D+Gr+iso} for our attempts to trace it.
In \cite{Rincon} the tropical Wick relations appear as the definition of valuated even $\Delta$-matroid, without a direct comparison to the unvaluated case.
\cite[Theorem 5.4]{Rincon} is the regular subdivision statement connecting even $\Delta$-matroids to the Wick relations,
but our \Cref{p:strexcheqstrop} is not a consequence of Rinc\'on's theorem because he takes the support being a $\Delta$-matroid as an assumption.
Rinc\'on also states results about the tropicalisation of $\bbG/\bbP$ (i.e.\ of not just its quadrics), 
which from the $\Delta$-matroidal point of view is a question of realisability,
and about a definition of ``isotropic'' applying to tropical linear spaces.
Balla and Olarte \cite{BallaOlarte} investigate these latter two themes in all symplectic Grassmannians, the case of maximal parabolics in type~$C$.

In \cite{CCLV}, 
motivated by a notion of representability by Hermitian matrices that admits some non-strong $\Delta$-matroids,
Cheung, Chin, Liu and Vinzant carry out the regular subdivision programme for $\Delta$-matroids that need not be strong. 
They give a tropical characterisation of the height vectors 
that induce regular subdivisions of the cube into $\Delta$-matroids.
Not all their conditions on the vector are what we call ``tropical equations'' here, i.e.\ equations of tropical hypersurfaces, and their examples can be read as suggesting that a good characterisation by tropical hypersurfaces does not exist.
Instead, their conditions include assertions that the height vector lies in certain connected components of the complement of a tropical hypersurface.

Tracts \cite{BakerBowler} are algebraic structures providing a vast generalisation of $\TT$
and admitting a generalisation of tropical equations.
They therefore embrace not just ordinary matroids and valuated matroids but many other classes and variants of matroids from the literature.
Jin and Kim in \cite{JinKimO} show that the Wick relations achieve a good theory of even $\Delta$-matroids over tracts,
similarly rich to the theory \cite{BakerBowler} build for ordinary matroids over tracts using Pl\"ucker relations;
Jin and Kim's work, together with background on the Wick relations, \emph{does} imply our \Cref{p:strexcheqstrop}.
A sequel by Kim \cite{KimSp} treats the case of $\mathrm{SpGr}(n,2n)$ (type $C$, the maximal parabolic for the root $\alpha_n$),
using the coordinates of \cite{BDAKS}.
These coordinates can be interpreted as arising from $V_{\lambda_1}\otimes V_{\lambda_{n-1}}$, of which $V_{\lambda_n}$ is a submodule.
For some purposes these coordinates are better behaved than directly embedding into $\proj(V_{\lambda_n})$:
for instance they allow the formulation of a theory of fundamental circuits for these Coxeter matroids.

Finally, a non-minuscule family of Coxeter matroids that has been the subject of substantial literature is the \defn{flag matroids}.
Closest to our perspective is \cite{BrandtEurZhang};
see also the earlier \cite{Haque}, in which Proposition 3 is incorrect. 
Flag matroids are classically worked with \cite{Brylawski_1986} as tuples of matroids $M_1,\ldots,M_k$ that pairwise form matroid quotients.
A flag matroid is in fact a single type~$A$ Coxeter matroid, 
but the corresponding parabolic subgroup is not maximal, i.e.\ not minuscule.
For example, for \defn{full} flag matroids, $\bbP$ is a Borel subgroup $B$ in $\bbG=\mathrm{SL}_n$.
Accordingly, tropical work on flag matroids
has used not the projective coordinates of any embedding $\bbG/B\subset\proj(V_\lambda)$ directly,
but the \emph{multiprojective} coordinates on the factors of the embedding
\[\bbG/B\hookrightarrow\bbG/\bbP_1\times\cdots\times\bbG/\bbP_{n-1}
\subset\proj(V_{\lambda_1})\times\cdots\times\proj(V_{\lambda_{n-1}})\]
where $\bbP_1,\ldots,\bbP_{n-1}$ are all the maximal parabolics containing~$B$.
Each $\bbG/\bbP_i$ is a Grassmannian corresponding to one of the matroids in the flag.


\section{Coxeter matroids}\label{s:Coxeter}
In this section we give background on Coxeter matroids and some of their special cases.
We start by reviewing Coxeter groups, their associated hyperplane arrangements and the Coxeter complex.
We then describe Coxeter matroids and polytopes associated to them.
At the end of this section we describe the Coxeter matroids for certain parabolic subgroups in types $A$, $B$, and~$D$.
For a more thorough introduction, the interested reader is invited to look at \cite{BorovikGelfandWhite2003}.

Since Coxeter matroids of several types are main characters in this paper,
when we wish to refer to matroids in the standard sense we will often write ``ordinary matroids'' .
Ordinary matroids are one of several kinds of Coxeter matroids 
which can be modelled as \defn{set systems}: 
a set system on a finite set~$E$ is a set of subsets of~$E$.
In the polyhedral perspective we introduce below,
we will treat a Coxeter matroid $\matroid$ as a set system
when its polytope $\polyOf{\matroid}$ is a subpolytope of a cube.

\subsection{Coxeter groups}
In this section we review Coxeter groups and related structures.
The interested reader will find more details in \cite{Humphreys1990}, \cite{Humphreys1972}, \cite{Fulton+Harris:2004} and \cite{GoodmanWallach2010}.

\subsubsection{Coxeter groups, root systems and Weyl groups}\label{sssec:coxeter-groups-root-systems-and-weyl-groups}
Let $\vectorSpace$ be an $n$-dimensional Euclidean real vector space with inner product $(\cdot,\cdot)$.
For a given hyperplane $H\subset\vectorSpace$, we let $s_H$ denote the reflection which fixes $H$ pointwise and sends a normal vector of $H$ to its opposite.
A finite \defn{Coxeter group} $W$ is a finite group generated by a set of reflections in the orthogonal group $O(\vectorSpace)$.
A \defn{reflecting hyperplane} for~$W$ is a hyperplane $H$ such that $s_H\in W$.
The set of reflecting hyperplanes of $W$ forms the \defn{Coxeter arrangement of $W$}.

For a given $W$ we consider a \defn{root system} $\rootSystem\subset\vectorSpace$, i.e.\ a set of vectors closed under the action of~$W$, containing one opposite pair of normals to each of the reflecting hyperplanes of $W$.
Any choice of linear functional on $\vectorSpace$ that's nonzero on every root
separates the root system into a positive part $\rootSystem^+$ and a negative part $\rootSystem^-$.
The elements in these parts are called \defn{positive roots} and \defn{negative roots} respectively.
We let $\simpleRoots \subseteq \rootSystem^+$ be a minimal set with the property that every positive root is a positive linear combination of roots in $\simpleRoots$. The elements of $\simpleRoots$ are called \defn{simple roots}.
We let $S\subseteq W$ consist of the reflections which have a simple root as a normal vector to its reflecting hyperplane and name these the \defn{simple reflections}.
Then $S$ generates $W$.

We will require that our root system also have the property
\begin{equation} \label{eq:inner_int}
    \frac{2(\alpha, \beta)}{(\beta, \beta)} \in \ZZ \qquad \forall \alpha,\, \beta \in \rootSystem\, .
\end{equation}
Since we use this notation frequently, we let $\scaledInnerProd{\alpha}{\beta} \coloneq \frac{2(\alpha, \beta)}{(\beta, \beta)}$.
A \defn{Weyl group} is a Coxeter group together with the data of a fixed choice of root system satisfying \Cref{eq:inner_int}.

A root system can be associated to the type $B_n$ Coxeter group in two distinct ways so that \Cref{eq:inner_int} is satisfied.
This produces two different Weyl groups, type $B_n$ and type $C_n$, with isomorphic underlying Coxeter group.
The Coxeter group is sometimes accordingly called $BC_n$.
We do not use Coxeter groups that cannot be given the structure of a Weyl group,
but for completeness, in the classification of \defn{irreducible} Coxeter groups
i.e.\ those which are not direct products of two nontrivial Coxeter groups,
the Coxeter groups which are not Weyl groups make up one infinite family and two further examples,
known as  $I_2(m)$ $(m\ne3,4,6)$, $H_3$, and $H_4$.

The \defn{Dynkin diagram} of a Weyl group encodes the above data as a graph.
The vertex set of the Dynkin diagram is indexed by the simple roots. 
Edges are drawn between distinct $\alpha_i$ and $\alpha_j$ if $(\alpha_i, \alpha_j)\ne0$.
The number of such edges is given by $\scaledInnerProd{\alpha_i}{\alpha_j}\scaledInnerProd{\alpha_j}{\alpha_i}$ and, in addition, we draw an arrow pointing to the smaller of the two roots if they have different lengths.
Finiteness of~$W$ implies that $\scaledInnerProd{\alpha_i}{\alpha_j}\scaledInnerProd{\alpha_j}{\alpha_i}<4$,
which means that the Dynkin diagram determines all of the pairings $\scaledInnerProd{\alpha_i}{\alpha_j}\in\ZZ$.
The following is a list of all Dynkin diagrams for irreducible Weyl groups.

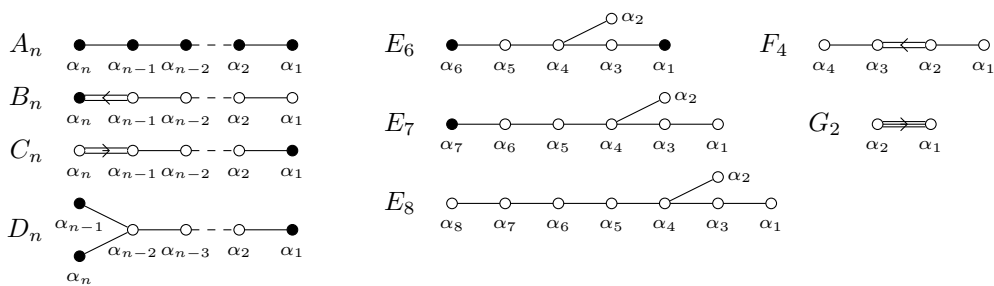
\begin{figure}[H]
    \centering
        \begin{tikzpicture}[scale=0.7]
        \begin{scope}[shift={(0,0)}]
            \draw (-0.5,0) node[anchor=east]  {$A_n$};
            
            \draw[fill=black] (0,0) circle(.1cm) node[below=.1cm] {\scriptsize$\alpha_n$};
            \draw[fill=black] (1,0) circle(.1cm) node[below=.1cm] {\scriptsize$\alpha_{n-1}$};
            \draw[fill=black] (2,0) circle(.1cm) node[below=.1cm] {\scriptsize$\alpha_{n-2}$};
            \draw[fill=black] (3,0) circle(.1cm) node[below=.1cm] {\scriptsize$\alpha_{2}$};
            \draw[fill=black] (4,0) circle(.1cm) node[below=.1cm] {\scriptsize$\alpha_1$};
            
            \draw (0.1, 0) -- +(0.8, 0);
            \draw (1.1, 0) -- +(0.8, 0);
            \draw[dashed] (2.1,0) -- +(0.8,0);
            \draw (3.1,0) -- +(0.8,0);
        \end{scope}
        \begin{scope}[shift={(0,-1)}]
            \draw (-0.5,0) node[anchor=east]  {$B_n$};
            
            \draw[fill=black] (0,0) circle(.1cm) node[below=.1cm] {\scriptsize$\alpha_n$};
            \draw (1,0) circle(.1cm) node[below=.1cm] {\scriptsize$\alpha_{n-1}$};
            \draw (2,0) circle(.1cm) node[below=.1cm] {\scriptsize$\alpha_{n-2}$};
            \draw (3,0) circle(.1cm) node[below=.1cm] {\scriptsize$\alpha_2$};
            \draw (4,0) circle(.1cm) node[below=.1cm] {\scriptsize$\alpha_1$};
            
            \draw (0.1, -0.05) -- +(0.8, 0);
            \draw[color=white] (0.1,0) -- node[color=black] {\midrevarrow} +(0.8,0);
            \draw (0.1, 0.05) -- +(0.8, 0);
            \draw (1.1, 0) -- +(0.8, 0);
            \draw[dashed] (2.1,0) -- +(0.8,0);
            \draw (3.1,0) -- +(0.8,0);
        \end{scope}
        \begin{scope}[shift={(0,-2)}]
            \draw (-0.5,0) node[anchor=east]  {$C_n$};
            
            \draw (0,0) circle(.1cm) node[below=.1cm] {\scriptsize$\alpha_n$};
            \draw (1,0) circle(.1cm) node[below=.1cm] {\scriptsize$\alpha_{n-1}$};
            \draw (2,0) circle(.1cm) node[below=.1cm] {\scriptsize$\alpha_{n-2}$};
            \draw (3,0) circle(.1cm) node[below=.1cm] {\scriptsize$\alpha_2$};
            \draw[fill=black] (4,0) circle(.1cm) node[below=.1cm] {\scriptsize$\alpha_1$};
            
            \draw (0.1, -0.05) -- +(0.8, 0);
            \draw[color=white] (0.1,0) -- node[color=black] {\midarrow} +(0.8,0);
            \draw (0.1, 0.05) -- +(0.8, 0);
            \draw (1.1, 0) -- +(0.8, 0);
            \draw[dashed] (2.1,0) -- +(0.8,0);
            \draw (3.1,0) -- +(0.8,0);
        \end{scope}

        \begin{scope}[shift={(0,-3.5)}]
            \draw (-0.5,0) node[anchor=east]  {$D_n$};
            
            \draw[fill=black] (0,0.5) circle(.1cm) node[below=.1cm] {\scriptsize$\alpha_{n-1}$};
            \draw[fill=black] (0,-0.5) circle(.1cm) node[below=.1cm] {\scriptsize$\alpha_n$};
            \draw (1,0) circle(.1cm) node[below=.1cm] {\scriptsize$\alpha_{n-2}$};
            \draw (2,0) circle(.1cm) node[below=.1cm] {\scriptsize$\alpha_{n-3}$};
            \draw (3,0) circle(.1cm) node[below=.1cm] {\scriptsize$\alpha_2$};
            \draw[fill=black] (4,0) circle(.1cm) node[below=.1cm] {\scriptsize$\alpha_1$};
            
            \draw (0.1, -0.45) -- +(0.8, 0.4);
            \draw (0.1, 0.45) -- +(0.8, -0.4);
            \draw (1.1, 0) -- +(0.8, 0);
            \draw[dashed] (2.1,0) -- +(0.8,0);
            \draw (3.1,0) -- +(0.8,0);
        \end{scope}
        \begin{scope}[shift={(7,0)}]
            \draw (-0.5,0) node[anchor=east]  {$E_6$};
            
            \draw[fill=black] (0,0) circle(.1cm) node[below=.1cm] {\scriptsize$\alpha_6$};
            \draw (1,0) circle(.1cm) node[below=.1cm] {\scriptsize$\alpha_5$};
            \draw (2,0) circle(.1cm) node[below=.1cm] {\scriptsize$\alpha_4$};
            \draw (3,0) circle(.1cm) node[below=.1cm] {\scriptsize$\alpha_3$};
            \draw (3,0.5) circle(.1cm) node[right] {\scriptsize$\alpha_2$};
            \draw[fill=black] (4,0) circle(.1cm) node[below=.1cm] {\scriptsize$\alpha_1$};
                                
            \draw (0.1,0) -- +(0.8,0);
            \draw (1.1, 0) -- +(0.8, 0);
            \draw (2.1, 0) -- +(0.8, 0);
            \draw (2.1, 0.05) -- +(0.8, 0.4);
            \draw (3.1, 0) -- +(0.8, 0);
        \end{scope}
        \begin{scope}[shift={(7,-1.5)}]
            \draw (-0.5,0) node[anchor=east]  {$E_7$};
            
            \draw[fill=black] (0,0) circle(.1cm) node[below=.1cm] {\scriptsize$\alpha_7$};
            \draw (1,0) circle(.1cm) node[below=.1cm] {\scriptsize$\alpha_6$};
            \draw (2,0) circle(.1cm) node[below=.1cm] {\scriptsize$\alpha_5$};
            \draw (3,0) circle(.1cm) node[below=.1cm] {\scriptsize$\alpha_4$};
            \draw (4,0.5) circle(.1cm) node[right] {\scriptsize$\alpha_2$};
            \draw (4,0) circle(.1cm) node[below=.1cm] {\scriptsize$\alpha_3$};
            \draw (5,0) circle(.1cm) node[below=.1cm] {\scriptsize$\alpha_1$};
                                
            \draw (0.1,0) -- +(0.8,0);
            \draw (1.1, 0) -- +(0.8, 0);
            \draw (2.1, 0) -- +(0.8, 0);
            \draw (3.1, 0.05) -- +(0.8, 0.4);
            \draw (3.1, 0) -- +(0.8, 0);
            \draw (4.1, 0) -- +(0.8, 0);
        \end{scope}
        \begin{scope}[shift={(7,-3)}]
            \draw (-0.5,0) node[anchor=east]  {$E_8$};
            
            \draw (0,0) circle(.1cm) node[below=.1cm] {\scriptsize$\alpha_8$};
            \draw (1,0) circle(.1cm) node[below=.1cm] {\scriptsize$\alpha_7$};
            \draw (2,0) circle(.1cm) node[below=.1cm] {\scriptsize$\alpha_6$};
            \draw (3,0) circle(.1cm) node[below=.1cm] {\scriptsize$\alpha_5$};
            \draw (4,0) circle(.1cm) node[below=.1cm] {\scriptsize$\alpha_4$};
            \draw (5,0) circle(.1cm) node[below=.1cm] {\scriptsize$\alpha_3$};
            \draw (5,0.5) circle(.1cm) node[right] {\scriptsize$\alpha_2$};
            \draw (6,0) circle(.1cm) node[below=.1cm] {\scriptsize$\alpha_1$};
                                
            \draw (0.1,0) -- +(0.8,0);
            \draw (1.1, 0) -- +(0.8, 0);
            \draw (2.1, 0) -- +(0.8, 0);
            \draw (3.1, 0) -- +(0.8, 0);
            \draw (4.1, 0) -- +(0.8, 0);
            \draw (4.1, 0.05) -- +(0.8, 0.4);
            \draw (5.1, 0) -- +(0.8, 0);
        \end{scope}
        \begin{scope}[shift={(14,0)}]
            \draw (-0.5,0) node[anchor=east]  {$F_4$};
            
            \draw (0,0) circle(.1cm) node[below=.1cm] {\scriptsize$\alpha_4$};
            \draw (1,0) circle(.1cm) node[below=.1cm] {\scriptsize$\alpha_3$};
            \draw (2,0) circle(.1cm) node[below=.1cm] {\scriptsize$\alpha_2$};
            \draw (3,0) circle(.1cm) node[below=.1cm] {\scriptsize$\alpha_1$};
            
            \draw (0.1, 0) -- +(0.8, 0);
            \draw (1.1, -0.05) -- +(0.8, 0);
            \draw[color=white] (1.1,0) -- node[color=black] {\midrevarrow} +(0.8,0);
            \draw (1.1, 0.05) -- +(0.8, 0);
            \draw (2.1, 0) -- +(0.8, 0);
        \end{scope}
        \begin{scope}[shift={(15,-1.5)}]
            \draw (-0.5,0) node[anchor=east]  {$G_2$};
            
            \draw (0,0) circle(.1cm) node[below=.1cm] {\scriptsize$\alpha_2$};
            \draw (1,0) circle(.1cm) node[below=.1cm] {\scriptsize$\alpha_1$};
            
            \draw (0.1, -0.05) -- +(0.8, 0);
            \draw (0.1, 0) -- node {\midarrow} +(0.8, 0) ;
            \draw (0.1, 0.05) -- +(0.8, 0);
        \end{scope}
    \end{tikzpicture}
        \caption{The Dynkin diagrams for all irreducible Weyl groups. The simple roots associated to minuscule fundamental weights are highlighted in black.}
        \label{fig:dynkin-diagrams}
\end{figure}

Note that if we forget which roots are longer, the diagrams for types $B_n$ and $C_n$, which give the same Coxeter group, become identical. 

\subsubsection{Weights and representations}\label{sssec:weights-and-representations}
For a given root system $\rootSystem$ satisfying \Cref{eq:inner_int} with a fixed choice of simple roots $\simpleRoots = \{\alpha_1, \ldots, \alpha_n\}$, let $\fg$ be its associated complex semisimple Lie algebra.
Let $\fh$ denote a Cartan subalgebra of $\fg$, i.e.\ a maximal subalgebra of $\fg$ consisting of semisimple elements.
A \defn{representation of $\fg$} is a pair $V_\pi \coloneqq (\pi, V)$ where $V$ is a complex vector space and $\pi : \fg \to \fg\fl(V)$ is a Lie algebra homomorphism.
We say $V_\pi$ is \defn{finite-dimensional} if $V$ is finite-dimensional and say it is \defn{irreducible} if there are no (non-zero) $\fg$-invariant proper subspaces.

For a representation $V_\pi$ and $\lambda \in \RR\rootSystem$ set
\[
    V(\lambda) = \set{v \in V}{\pi(h)v = \scaledInnerProd{\lambda}{h} v \text{ for all } h \in \fh}.
\]
Whenever $V(\lambda) \neq 0$ we call $\lambda$ a \defn{weight} of $V_\pi$ and $V(\lambda)$ a \defn{weight space}.
We let 
\[
    \Lambda = \set{\lambda \in \RR\rootSystem}{\scaledInnerProd{\lambda}{h}\in \ZZ \text{ for all } h \in \fh}
\]
denote the set of all \defn{weights}.
Letting $\{\lambda_i\}_{i \in [n]}$ denote the dual basis of the vectors $\left\{\frac{2\alpha_i}{(\alpha_i, \alpha_i)}\right\}_{i \in [n]}$,  the $\lambda_i$ are known as the \defn{fundamental weights}.
The set $\Lambda^+ = \set{\lambda \in \Lambda}{\lambda = \sum c_i \lambda_i,\; c_i \geq 0}$ is known as the set of \defn{dominant weights}.

It is well-known that a representation $V$ of $\fg$ can be decomposed such that $V = \bigoplus V(\lambda)$ where each $V(\lambda)$ is a weight space associated to a weight $\lambda$.
For every irreducible finite dimensional representation $V_\pi$ there is a unique dominant (integral) weight which is maximal under the order: $\mu \prec \nu$ if and only if $\nu - \mu$ is a sum of positive roots.
We call this maximal weight the \defn{highest weight} of $V_\pi$. 
Conversely, it is known that for every dominant (integral) weight $\lambda$ there is an irreducible finite-dimensional representation of $\fg$ whose highest weight is $\lambda$. 
In other words, without loss of generality, when we use the notations $V_\lambda$ for an irreducible finite-dimensional representation of $\fg$ we will assume that $\lambda$ is the unique highest weight of $V_\lambda$ unless otherwise specified.

\subsubsection{Parabolic subgroups and minuscule representations}
For a given subset $J \subseteq S = \{s_1, \ldots, s_n\}$, we let $\parabolic{J} = \groupGen{J}$ be the \defn{standard parabolic subgroup of $W$ generated by $J$} and $\cosets{J} = W / \parabolic{J}$ denote the \defn{standard parabolic cosets}.
The \defn{maximal parabolic subgroups} are precisely the standard parabolic subgroups of $W$  where $J = S \setminus \{s_i\}$ for some $i \in [n]$.
The maximal parabolic subgroups are an important part of our study due to their relationship with minuscule representations.

Suppose that $W$ is a Weyl group with a (fixed) root system $\rootSystem$ and simple roots $\simpleRoots$ and let $\fg$ be its associated semisimple Lie algebra. In the last subsection we presented a bijection between dominant (integral) weights and irreducible finite dimensional representations of $\fg$.
Given a representation $V_\lambda$ of $\fg$, let $J = \set{s \in S}{s(\lambda) = \lambda}$. 
A fundamental weight $\lambda_i$ is said to be \defn{minuscule} if $\scaledInnerProd{\lambda_i}{\alpha} \leq 1$ for all $\alpha \in \rootSystem^+$.
Equivalently, and crucially, $\lambda_i$ is minuscule if and only if the weights of $V_{\lambda_i}$ are in one Weyl group orbit. 
If $\lambda_i$ is minuscule, we say that the representation $V_{\lambda_i}$ is \defn{minuscule}.
 Similarly, we say that the maximal parabolic subgroup $\parabolic{J}$ for $J = S \setminus\{s_i\}$ is \defn{minuscule}.
 In \Cref{fig:dynkin-diagrams}, the simple roots associated to minuscule fundamental weights are highlighted in black.

The following is a list of all $J$ for which $\parabolic{J}$ is minuscule.
\begin{align}
    A_n&:\: S\setminus \{s_i\}\quad (i \in [n])
    &D_n&:\: S\setminus\{s_1\},\,S\setminus\{s_{n-1}\},\,\text{and }S\setminus\{s_n\}\notag\\
    B_n&:\: S\setminus \{s_n\}
    &E_6&:\: S\setminus\{s_6\} \text{ and }S \setminus \{s_1\} \label{eq:minuscule}\\
    C_n&:\: S\setminus\{s_1\}
    &E_7&:\: S\setminus\{s_7\}\notag
\end{align}
The simple roots associated to minuscule fundamental weights are shown in black in \Cref{fig:dynkin-diagrams}.
Note that Lie algebras of type $E_8$, $F_4$ and $G_2$ do not have any minuscule weights.

\subsection{Ordered (multi)sets and (multi)permutations}
\label{sssec:Symmetric-group}

Throughout this paper, in particular when prescribing the signs in our equations,
we will sometimes regard a set $A\subseteq [n]$ as an \defn{ordered set} $A = \{A_1, A_2, \ldots, A_m\}$, fixing which element $A_k$ appears at index~$k$ for each $k\in[m]$.
In fact we will need \defn{ordered multisets}, ordered sets whose elements might repeat.
Said otherwise, an ordered multiset is an arbitrary tuple, endowed with the multiset of its entries as its \defn{underlying} multiset.
An ordered set is an ordered multiset without repeated elements.

For two ordered multisets $A$ and $B$, we let $A \cat B$ denote their concatenation as an ordered multiset.
For an ordered multiset $A = \{A_1, \ldots, A_m\}$  we let $\rev{A} = \{A_m, \ldots, A_1\}$ denote the reversal of~$A$.
For an ordered set $A$ and a set $B$, we let $\remS{A}{B}$ be the ordered set obtained by deleting the elements of $A \cap B$ from~$A$, keeping the relative order of the rest.
As an abuse of notation, we use $\remS{A}{i}$ to mean $\rem{A}{i}$.
Additionally, if $i \in A$ we let $A(i)$ denote the index of $i$ in~$A$.
Note that $\rem{A}{i}$ and $A(i)$ are not (well-)defined for general ordered multisets since $i$ might appear more than once.

A \defn{permutation} $\sigma$ of an ordered multiset $A = \{A_1, A_2, \ldots, A_m\}$ is a bijection $[m]\to[m]$ on its set of indices.
When we wish to highlight that $A$ has repetitions, we say $\sigma$ is a \defn{multipermutation}.
Permutations act on ordered multisets by permuting indices: $\sigma(A)_k = A_{\sigma(k)}$.
Note that $A$ and $\sigma(A)$ have the same underlying multiset.
An \defn{ordering transposition} is a transposition of an ordered multiset which swaps two adjacent indices $i$ and $i+1$ such that $A_i > A_{i+1}$.
We emphasise that the inequality is strict; ordering transpositions may never swap adjacent elements that are equal.
If acting successively on an ordered multiset $A$ by an ordering transposition $\ell\ge0$ times 
transforms it to a weakly increasing ordered multiset $A'=\{A'_1,\ldots,A'_m\}$ with $A'_1\le\cdots\le A'_m$,
we write $\sigma_A$ for the composition of these ordering transpositions,
and call $\ell$ its \defn{length}%
\footnote{In the computer science literature, sorting a list this way,
without ever swapping equal elements, is called \defn{stable} sorting.}.
By abuse of notation, we will speak of the length as a property of $A$ or of~$\sigma_A$ indifferently,
and write $\lng{A}=\lng{\sigma_A}:=\ell$.
As the notation suggests, $\sigma_A$ and $\ell(\sigma_A)$ always exist and are independent of how ordering transpositions are chosen.

%
Though this notation suffices throughout, we do distinguish a special family of multipermutations.
Given two ordered sets $A$ and $B$, for readability we write $\sigma_{A,B}:=\sigma_{A\cat B}$ for the multipermutation that sorts $A \cat B$ into weakly increasing order,
and accordingly $\lng{\sigma_{A,B}} = \lng{A \cat B}$ for its length.
If we write $\sigma_{A,B}$ for sets $A$ and~$B$ to which we have not given an ordering,
we mean $A$ and $B$ to be increasingly ordered.

\begin{example}
    Let $n = 4$ and consider the ordered sets $I = \{1, 3, 4, 2\}$, $A = \{3, 2\}$ and $B = \{1, 4\}$.
    We have $\order{A} = \order{B} = 2$ and $\order{I} = 4$.
    Concatenating $A$ with $B$ gives $A\cat B = \{3, 2, 1, 4\}$ and $\rev{A} \cat B = \{2, 3, 1, 4\}$.
    We have that $\lng{\sigma_I} = 2$ as we need two ordering transpositions to reach the sorted ordered set:
    \[
        1342 \to 1324 \to 1234\, .
    \]
    We have $\remS{I}{3} = \{1, 4, 2\}$ and $\rem{I}{3, 2} = \remS{I}{A} = \{1, 4\} = B$.
    Finally, $I(2) = 4$, $I(3) = 2$ and $A(3) = 1$.
    Note that $B(3)$ is not defined since $3 \notin B$.

    Concatenating $I$ with $A$ gives the ordered multiset $I \cat A = \{1, 3, 4, 2, 3, 2\}$.
    Then the length of the permutation which sends $I \cat A$ to a weakly increasing ordered set is $\lng{\sigma_{I, A}} = 6$.
    The six ordering transpositions can be chosen to first push the left-hand $2$ to the left, then the right-hand $2$ to the left, and then finally the right-hand $3$ to the left:
    \[
        134232 \to 132432 \to 123432 \to 123423 \to 123243 \to 122343 \to 122334\, .
    \]
\end{example}

\subsection{Coxeter matroids}\label{ssec:Coxeter matroids}
\subsubsection{Coxeter matroids from polytopes}\label{sssec:coxeter-matroids-from-polytopes}
Given a Coxeter group $W$, 
a set of simple roots $\simpleRoots = \left\{ \alpha_1, \ldots, \alpha_n\right\}$ and a standard parabolic subgroup $\parabolic{J}$, we choose a point $\omega_J \in V$ such that
\begin{align} \label{eq:omega_J}
    \scaledInnerProd{\omega_J}{\alpha_i}  \begin{cases}
        < 0 &\text{if } s_i \notin J,\\
        = 0 &\text{otherwise.}
    \end{cases}
\end{align}
That is, $\omega_J$ is in the intersection of all hyperplanes of the Coxeter arrangement restricted to the parabolic subgroup and is in the negative half-space of the rest.
The choice of $\omega_J$ allows us to define an injective map from cosets $\cosets{J}$ to the vector space $\vectorSpace$ under which $w\parabolic{J} \mapsto w\cdot \omega_J$.
This map is well defined, as the choice of $\omega_J$ ensures that $w \cdot \omega_J = w' \cdot \omega_J$ for all $w,w' \in w\parabolic{J}$.
As such, we will not specify a representative of the coset and simply write $A \cdot \omega_J$ for the point associated to the coset $A \in W^J$.
Moreover, this bijection between points and cosets preserves the action of $W$, i.e., $w \cdot (A\cdot \omega_J) = (wA) \cdot \omega_J$.

Given a subset $M \subseteq W^J$, we let $\polyOf{\matroid}$ be the polytope constructed by taking the convex hull of $A \cdot \omega_J$ for all $A \in \matroid \subseteq \cosets{J}$.
When $M = W^J$, we call $P(M) = \ambientPoly$ the \defn{ambient polytope of $W^J$}.
We say that $M$ is a \defn{Coxeter matroid} if and only if every edge of $P(M)$ is parallel to a root in~$\rootSystem$.

We remark that the definition of Coxeter matroids adopted by \cite{BorovikGelfandWhite2003} is a characterisation using the Bruhat order on the subsets of~$\cosets{J}$.
Its equivalence to our definition is \cite[Theorem 6.3.1]{BorovikGelfandWhite2003}.

\begin{remark}\label{rem:combinatorial+equivalence}
Though the polytope $\polyOf{\matroid}$ depends on the choice of $\omega_J$, from a combinatorial perspective the precise choice is unimportant subject to the conditions in \eqref{eq:omega_J}.
Explicitly, given two different vectors $\omega_J$ and $\omega_J'$ satisfying \eqref{eq:omega_J}, the polytopes $\conv(w \omega_J \st w\parabolic{J} \in \matroid)$ and $\conv(w \omega_J' \st w\parabolic{J} \in \matroid)$ are combinatorially equivalent, i.e., have isomorphic face lattices, and corresponding faces under the isomorphism are parallel.
For the representation theory of Weyl groups, however, we will require $\omega_J$ to be a weight and work in the representation $V_{-\omega_J}$ of highest weight $-\omega_J$, which does depend on the choice.
\end{remark}

\begin{example}
    \label{ex:matroidPolys}
    Let $W$ be the type $B_3$ Coxeter group generated by $\{s_1, s_2, s_3\}$ where $(s_1 s_2)^3 = (s_2s_3)^4 = (s_1s_3)^2 = e$.
    Figure \ref{fig:B3-Coxeter-Complex} displays the Coxeter arrangement of~$W$ intersected by a $3$-cube (drawn solid, so only the front side is visible).
    The label on an edge is the generator of the parabolic subgroup associated to that edge.
    \begin{figure}[ht]
        \centering
        \begin{tikzpicture}
    [
        scale=3,
    ]
    \def\cubeL{1}
    \def\cubeX{0.6}
    \def\cubeY{0.7}
    \coordinate (bl1) at (0,0);
    \coordinate (bm1) at ({\cubeL/2}, 0);
    \coordinate (br1) at ({\cubeL},0);
    \coordinate (ml1) at (0, {\cubeL/2});
    \coordinate (mm1) at ({\cubeL/2}, {\cubeL/2});
    \coordinate (mr1) at ({\cubeL}, {\cubeL/2});
    \coordinate (tl1) at (0,{\cubeL});
    \coordinate (tm1) at ({\cubeL/2}, {\cubeL});
    \coordinate (tr1) at ({\cubeL},{\cubeL});

    \coordinate (bl2) at ({\cubeX/2}, {\cubeY/2});
    \coordinate (bm2) at ({\cubeX/2 + \cubeL/2}, {\cubeY/2});
    \coordinate (br2) at ({\cubeX/2 + \cubeL},{\cubeY/2});
    \coordinate (ml2) at ({\cubeX/2}, {\cubeY/2 + \cubeL/2});
    \coordinate (mm2) at ({\cubeX/2 + \cubeL/2}, {\cubeY/2 + \cubeL/2});
    \coordinate (mr2) at ({\cubeX/2 + \cubeL}, {\cubeY/2 + \cubeL/2});
    \coordinate (tl2) at ({\cubeX/2}, {\cubeY/2 + \cubeL});
    \coordinate (tm2) at ({\cubeX/2 + \cubeL/2}, {\cubeY/2 + \cubeL});
    \coordinate (tr2) at ({\cubeX/2 + \cubeL},{\cubeY/2 + \cubeL});

    \coordinate (bl3) at ({\cubeX},{\cubeY});
    \coordinate (bm3) at ({\cubeX + \cubeL/2}, {\cubeY});
    \coordinate (br3) at ({\cubeX + \cubeL},{\cubeY});
    \coordinate (ml3) at ({\cubeX}, {\cubeY + \cubeL/2});
    \coordinate (mm3) at ({\cubeX + \cubeL/2}, {\cubeY + \cubeL/2});
    \coordinate (mr3) at ({\cubeX + \cubeL}, {\cubeY + \cubeL/2});
    \coordinate (tl3) at ({\cubeX},{\cubeY + \cubeL});
    \coordinate (tm3) at ({\cubeX + \cubeL/2}, {\cubeY + \cubeL});
    \coordinate (tr3) at ({\cubeX + \cubeL},{\cubeY + \cubeL});

    \draw[thick] (bl1) -- (br1) -- (tr1) -- (tl1) -- (bl1);

    \draw[blue, thick, dotted] (br2) -- (tr2) -- (tl2);
    \draw[thick] (br3) -- (tr3) -- (tl3);

    \draw[thick] (tl1) -- (tl3);
    \draw[thick] (br1) -- (br3);
    \draw[thick] (tr1) -- (tr3);

    \draw[blue, thick, dotted] (tm3) -- (tm1) -- (bm1);
    \draw[red, thick, dashed] (tl1) -- (br1) -- (tr3);
    \draw[red, thick, dashed] (bl1) -- (tr1) -- (br3);
    \draw[blue, thick, dotted] (ml1) -- (mr1) -- (mr3);
    \draw[red, thick, dashed] (tl1) -- (tr3);
    \draw[red, thick, dashed] (tl3) -- (tr1);

    \def\labelOffset{0.07}
    \node at ({-\labelOffset}, {\cubeL/4}) {$s_1$};
    \node at ({\cubeL/4-2*\labelOffset/3},{3*\cubeL/4-2*\labelOffset/3}) {$s_2$};
    \node at ({\cubeL/2-\labelOffset}, {\cubeL/5}) {$s_3$};
            
\end{tikzpicture}
        \caption{The Coxeter arrangement of the type $B_3$ Coxeter group.
        The edges labelled by $s_1$ are solid black, those labelled by $s_2$ are dashed red and those labelled by $s_3$ are dotted blue. The labels represent right multiplication.}
        \label{fig:B3-Coxeter-Complex}
    \end{figure}
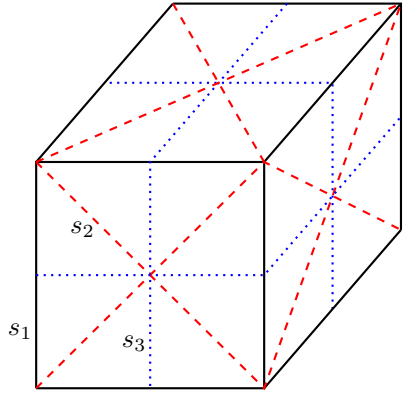
    
    Recall from \Cref{sssec:coxeter-groups-root-systems-and-weyl-groups} that this is the underlying Coxeter group of two different Weyl groups, types $B_3$ and $C_3$, with different root systems.
    The Weyl groups $B_3$ and~$C_3$ have one minuscule parabolic subgroup each,
    which are not equal as subsets of the Coxeter group.
    For $B_3$ the minimal parabolic subgroup is obtained when $J = \{s_1, s_2\} = S \setminus \{s_3\}$ and for $C_3$ the minimal parabolic subgroup is obtained when $J' = \{s_2, s_3\} = S \setminus \{s_1\}$.
    We next draw the orbit polytopes for these two subgroups.
    As we've discussed, these are the convex hull of the orbit of some fixed $\omega_J$ and $\omega_{J'}$ respectively.
    On the left, we have that the convex hull for the orbit of $\omega_J$ gives us the $3$-cube and on the right we have the $3$-cross polytope for the orbit of $\omega_{J'}$.
    Each vertex in the polytope is associated to a coset of $\parabolic{J}$ and $\parabolic{J'}$ respectively.
    \begin{figure}[ht]
        \centering
        \begin{tikzpicture}
            [
                scale=3,
                vertex/.style={inner sep=1.4pt,circle,draw=black,fill=black,thick},
            ]
            \def\cubeL{1}
            \def\cubeX{0.6}
            \def\cubeY{0.7}
            \begin{scope}
                \coordinate (bl1) at (0,0);
                \coordinate (bm1) at ({\cubeL/2}, 0);
                \coordinate (br1) at ({\cubeL},0);
                \coordinate (ml1) at (0, {\cubeL/2});
                \coordinate (mm1) at ({\cubeL/2}, {\cubeL/2});
                \coordinate (mr1) at ({\cubeL}, {\cubeL/2});
                \coordinate (tl1) at (0,{\cubeL});
                \coordinate (tm1) at ({\cubeL/2}, {\cubeL});
                \coordinate (tr1) at ({\cubeL},{\cubeL});
    
                \coordinate (bl2) at ({\cubeX/2}, {\cubeY/2});
                \coordinate (bm2) at ({\cubeX/2 + \cubeL/2}, {\cubeY/2});
                \coordinate (br2) at ({\cubeX/2 + \cubeL},{\cubeY/2});
                \coordinate (ml2) at ({\cubeX/2}, {\cubeY/2 + \cubeL/2});
                \coordinate (mm2) at ({\cubeX/2 + \cubeL/2}, {\cubeY/2 + \cubeL/2});
                \coordinate (mr2) at ({\cubeX/2 + \cubeL}, {\cubeY/2 + \cubeL/2});
                \coordinate (tl2) at ({\cubeX/2}, {\cubeY/2 + \cubeL});
                \coordinate (tm2) at ({\cubeX/2 + \cubeL/2}, {\cubeY/2 + \cubeL});
                \coordinate (tr2) at ({\cubeX/2 + \cubeL},{\cubeY/2 + \cubeL});
    
                \coordinate (bl3) at ({\cubeX},{\cubeY});
                \coordinate (bm3) at ({\cubeX + \cubeL/2}, {\cubeY});
                \coordinate (br3) at ({\cubeX + \cubeL},{\cubeY});
                \coordinate (ml3) at ({\cubeX}, {\cubeY + \cubeL/2});
                \coordinate (mm3) at ({\cubeX + \cubeL/2}, {\cubeY + \cubeL/2});
                \coordinate (mr3) at ({\cubeX + \cubeL}, {\cubeY + \cubeL/2});
                \coordinate (tl3) at ({\cubeX},{\cubeY + \cubeL});
                \coordinate (tm3) at ({\cubeX + \cubeL/2}, {\cubeY + \cubeL});
                \coordinate (tr3) at ({\cubeX + \cubeL},{\cubeY + \cubeL});

                \draw (bl1) -- (br1) -- (tr1) -- (tl1) -- (bl1);

                \draw[blue, dotted] (br2) -- (tr2) -- (tl2);
                \draw (br3) -- (tr3) -- (tl3);

                \draw (tl1) -- (tl3);
                \draw (br1) -- (br3);
                \draw (tr1) -- (tr3);

                \draw[blue, dotted] (tm3) -- (tm1) -- (bm1);
                \draw[red, dashed] (tl1) -- (br1) -- (tr3);
                \draw[red, dashed] (bl1) -- (tr1) -- (br3);
                \draw[blue, dotted] (ml1) -- (mr1) -- (mr3);
                \draw[red, dashed] (tl1) -- (tr3);
                \draw[red, dashed] (tl3) -- (tr1);

                \node[vertex] at (tl1) {};
                \node[vertex] at (tr1) {};
                \node[vertex] at (bl1) {};
                \node[vertex] at (br1) {};
                \node[vertex] at (tl3) {};
                \node[vertex] at (tr3) {};
                \node[vertex] at (bl3) {};
                \node[vertex] at (br3) {};
                
                \draw[ultra thick] (bl1) -- (br1) -- (tr1) -- (tl1) -- (bl1);
                \draw[ultra thick] (tl1) -- (tl3) -- (tr3) -- (br3) -- (br1);
                \draw[ultra thick] (tr1) -- (tr3);
                \draw[ultra thick, dashed] (bl1) -- (bl3) -- (tl3);
                \draw [ultra thick, dashed] (bl3) -- (br3);
                
            \end{scope}

            \begin{scope}[shift={({2 * \cubeL + \cubeX}, 0)}]
                \coordinate (bl1) at (0,0);
                \coordinate (bm1) at ({\cubeL/2}, 0);
                \coordinate (br1) at ({\cubeL},0);
                \coordinate (ml1) at (0, {\cubeL/2});
                \coordinate (mm1) at ({\cubeL/2}, {\cubeL/2});
                \coordinate (mr1) at ({\cubeL}, {\cubeL/2});
                \coordinate (tl1) at (0,{\cubeL});
                \coordinate (tm1) at ({\cubeL/2}, {\cubeL});
                \coordinate (tr1) at ({\cubeL},{\cubeL});
    
                \coordinate (bl2) at ({\cubeX/2}, {\cubeY/2});
                \coordinate (bm2) at ({\cubeX/2 + \cubeL/2}, {\cubeY/2});
                \coordinate (br2) at ({\cubeX/2 + \cubeL},{\cubeY/2});
                \coordinate (ml2) at ({\cubeX/2}, {\cubeY/2 + \cubeL/2});
                \coordinate (mm2) at ({\cubeX/2 + \cubeL/2}, {\cubeY/2 + \cubeL/2});
                \coordinate (mr2) at ({\cubeX/2 + \cubeL}, {\cubeY/2 + \cubeL/2});
                \coordinate (tl2) at ({\cubeX/2}, {\cubeY/2 + \cubeL});
                \coordinate (tm2) at ({\cubeX/2 + \cubeL/2}, {\cubeY/2 + \cubeL});
                \coordinate (tr2) at ({\cubeX/2 + \cubeL},{\cubeY/2 + \cubeL});
    
                \coordinate (bl3) at ({\cubeX},{\cubeY});
                \coordinate (bm3) at ({\cubeX + \cubeL/2}, {\cubeY});
                \coordinate (br3) at ({\cubeX + \cubeL},{\cubeY});
                \coordinate (ml3) at ({\cubeX}, {\cubeY + \cubeL/2});
                \coordinate (mm3) at ({\cubeX + \cubeL/2}, {\cubeY + \cubeL/2});
                \coordinate (mr3) at ({\cubeX + \cubeL}, {\cubeY + \cubeL/2});
                \coordinate (tl3) at ({\cubeX},{\cubeY + \cubeL});
                \coordinate (tm3) at ({\cubeX + \cubeL/2}, {\cubeY + \cubeL});
                \coordinate (tr3) at ({\cubeX + \cubeL},{\cubeY + \cubeL});

                \draw (bl1) -- (br1) -- (tr1) -- (tl1) -- (bl1);

                \draw[blue, dotted] (br2) -- (tr2) -- (tl2);
                \draw (br3) -- (tr3) -- (tl3);

                \draw (tl1) -- (tl3);
                \draw (br1) -- (br3);
                \draw (tr1) -- (tr3);

                \draw[blue, dotted] (tm3) -- (tm1) -- (bm1);
                \draw[red, dashed] (tl1) -- (br1) -- (tr3);
                \draw[red, dashed] (bl1) -- (tr1) -- (br3);
                \draw[blue, dotted] (ml1) -- (mr1) -- (mr3);
                \draw[red, dashed] (tl1) -- (tr3);
                \draw[red, dashed] (tl3) -- (tr1);

                \node[vertex] at (mm1) {};
                \node[vertex] at (tm2) {};
                \node[vertex] at (mr2) {};
                \node[vertex] at (mm3) {};
                \node[vertex] at (ml2) {};
                \node[vertex] at (bm2) {};
                \draw [ultra thick] (mm1) -- (tm2) -- (mr2) -- (mm1);
                \draw [ultra thick] (mm1) -- (tm2) -- (ml2) -- (mm1);
                \draw [ultra thick] (mm1) -- (bm2) -- (mr2) -- (mm1);
                \draw [ultra thick] (ml2) -- (mm1) -- (bm2);
                
                \draw[ultra thick, dashed] (ml2) -- (bm2);
                \draw[ultra thick, dashed] (bm2) -- (mm3) -- (ml2);
                \draw[ultra thick] (tm2) -- (mm3) -- (mr2);
            \end{scope}

        \end{tikzpicture}
        \caption{The ambient polytopes of $W^{\{s_1,s_2\}}$ and $W^{\{s_2,s_3\}}$ from Example~\ref{ex:matroidPolys}.}
        \label{fig:B3-Coset-Polytopes}
    \end{figure}
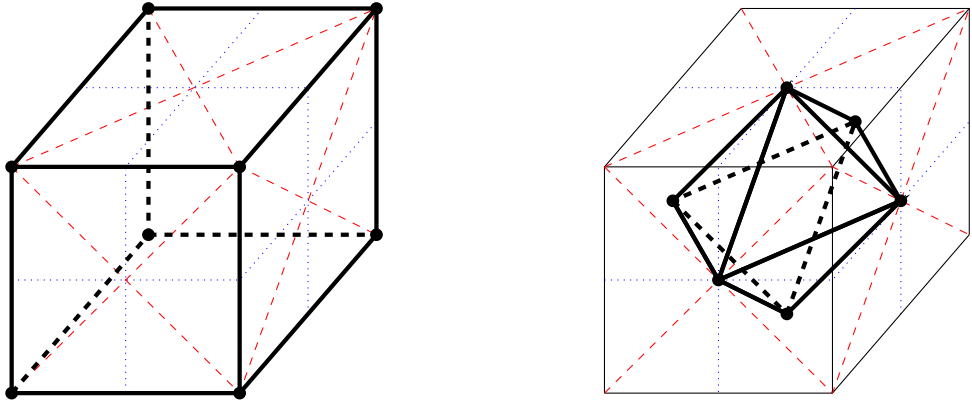
\end{example}

\subsubsection{Strong exchange property}
The \defn{strong exchange property for Coxeter matroids} is the following property 
that a Coxeter matroid $\matroid \subseteq \cosets{J}$ may have:
\begin{gather} \label{eq:SEP}
    \text{for any distinct cosets }A, B \text{ in } \matroid \subseteq \cosets{J} \\ 
    \text{ there is a reflecting hyperplane } H 
    \text{ separating } A \text{ and } B
    \text{ such that } s_H A, s_H B \in \matroid\, .\nonumber
\end{gather}
We say a Coxeter matroid is \defn{strong} if it satisfies the strong exchange property.
In fact the strong exchange property is a sufficient condition for an arbitrary subset $\matroid \subseteq \cosets{J}$ to form a Coxeter matroid.

For ordinary matroids, all matroids are strong, because recasting the strong exchange property in terms of bases yields the so-called symmetric basis exchange property, which is equivalent to the usual basis exchange property~\cite{Brualdi1969}.
But not every Coxeter matroid satisfies the strong exchange property in general.
Borovik, Gelfand and White ask the following within Open Problem 6.16.1 of \cite{BorovikGelfandWhite2003} as a potential characterisation of strong Coxeter matroids.
\begin{question}\label{q:6.16.1}
    Let $W$ be a Coxeter group and let $P$ be a parabolic subgroup of $W$.
    Is it true that every matroid for $W$ and $P$ satisfies the strong exchange property if and only if $P$ is a minuscule weight parabolic subgroup?
\end{question}

The next example answers this question in the negative,
showing that there exists a matroid $M$ for $W$ and $P$ such that $P$ is a minuscule weight parabolic subgroup and $M$ does not satisfy the strong exchange property.
\begin{example}\label{ex:3D-counterexample}
Let $W$ be the type $B_n$ Coxeter group generated by $\{s_1, s_2, s_3\}$ and let $J = \{s_1, s_2\}$.
Then $\parabolic{J}$ is a minuscule weight parabolic.
Fixing $\omega_J = (-\tshalf,-\tshalf,-\tshalf)$, the ambient polytope $\ambientPoly$ is the 3-cube $[-\tshalf,\tshalf]^3$ with vertices associated to $\cosets{J}$.
Suppose $M\subseteq \cosets{J}$ has polytope
\[
    \conv\{
        (-\tshalf,-\tshalf,-\tshalf),\,
        (-\tshalf,-\tshalf,\tshalf),\,
        (-\tshalf,\tshalf,-\tshalf),\,
        (\tshalf,-\tshalf,-\tshalf),\,
        (\tshalf,\tshalf,\tshalf)
    \}\, ,
\]
as displayed in Figure \ref{fig:3D-counterexample}.
As every edge is parallel to a root, this subset of cosets gives us a Coxeter matroid by definition.
Then taking $A$ and $B$ to be the cosets whose vertices are shown in the figure, one can verify that no such hyperplane $H$ exists such that both $s_H A$ and $s_H B$ are in~$M$.
\end{example}
\begin{figure}[ht]
    \centering
    \includegraphics[scale=0.3]{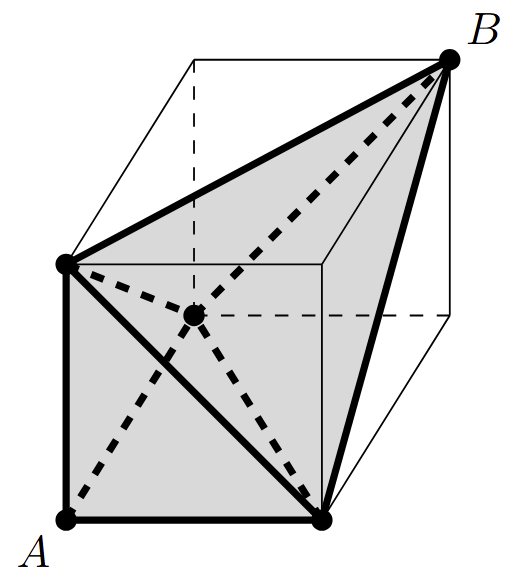}
    \caption{An example of a Coxeter matroid that is not a strong Coxeter matroid from Example~\ref{ex:3D-counterexample}.}
    \label{fig:3D-counterexample}
\end{figure}

\begin{lemma} \label{lem:face_reflection}
Let $M \subseteq W^J$ be a strong Coxeter matroid, $\ambientPoly$ its ambient polytope and $P(M)$ the corresponding Coxeter matroid polytope.
Let $A,B \in M$ and let $F \subseteq P(M)$ be a face containing both $A\cdot\omega_J$ and $B\cdot\omega_J$.
If $H$ is a reflecting hyperplane separating $A$ and $B$ such that $s_H A, s_H B \in M$, then $(s_H A)\cdot\omega_J$ and $(s_H B)\cdot\omega_J$ are both also contained in $F$.
\end{lemma}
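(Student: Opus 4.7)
My plan is to use the supporting hyperplane description of the face $F$, together with the explicit formula for the reflection $s_H$.

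First, choose a linear functional $v \in \vectorSpace$ and a constant $c \in \RR$ so that $F = \{x \in P(M) \st \InnerProd{x}{v} = c\}$ and $\InnerProd{x}{v} \leq c$ for all $x \in P(M)$. By hypothesis we then have $\InnerProd{A\cdot\omega_J}{v} = \InnerProd{B\cdot\omega_J}{v} = c$, and since $s_HA,\, s_HB \in M$ the points $(s_HA)\cdot\omega_J$ and $(s_HB)\cdot\omega_J$ are vertices of $P(M)$, so they satisfy $\InnerProd{(s_HA)\cdot\omega_J}{v} \leq c$ and $\InnerProd{(s_HB)\cdot\omega_J}{v} \leq c$. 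The goal then reduces to showing that both of these inequalities are equalities.

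Next, let $\alpha$ be a normal vector to $H$. Since $H$ is a reflecting hyperplane and $s_H$ acts by $s_H(x) = x - \scaledInnerProd{x}{\alpha}\alpha$, I can compute
\[
\InnerProd{(s_HA)\cdot\omega_J}{v} = \InnerProd{A\cdot\omega_J}{v} - \scaledInnerProd{A\cdot\omega_J}{\alpha}\InnerProd{\alpha}{v} = c - \scaledInnerProd{A\cdot\omega_J}{\alpha}\InnerProd{\alpha}{v},
\]
and analogously for $B$. Thus the inequalities $\InnerProd{(s_HA)\cdot\omega_J}{v} \leq c$ and $\InnerProd{(s_HB)\cdot\omega_J}{v} \leq c$ translate into
\[
\scaledInnerProd{A\cdot\omega_J}{\alpha}\InnerProd{\alpha}{v} \geq 0 \quad\text{and}\quad \scaledInnerProd{B\cdot\omega_J}{\alpha}\InnerProd{\alpha}{v} \geq 0.
\]

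Now I use the hypothesis that $H$ separates $A$ and $B$: the scalars $\scaledInnerProd{A\cdot\omega_J}{\alpha}$ and $\scaledInnerProd{B\cdot\omega_J}{\alpha}$ are both nonzero and have opposite signs. Multiplying both by $\InnerProd{\alpha}{v}$ produces two numbers of opposite sign (or both zero), yet both are nonnegative by the previous step. The only possibility is that both products vanish, which forces $\InnerProd{\alpha}{v} = 0$, and therefore $\InnerProd{(s_HA)\cdot\omega_J}{v} = c = \InnerProd{(s_HB)\cdot\omega_J}{v}$. This places both $(s_HA)\cdot\omega_J$ and $(s_HB)\cdot\omega_J$ on the supporting hyperplane defining $F$, completing the proof. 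The argument is really a short linear-algebraic computation and I do not anticipate any technical obstacle; the only subtlety to record is that separation gives strict sign opposition (not merely weak), so the sign-squeeze argument is legitimate.
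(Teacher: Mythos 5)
Your proof is correct and rests on the same underlying idea as the paper's: you pick a supporting hyperplane for $F$ and use the separation by $H$ together with linearity to force both reflected vertices back onto the face. The paper phrases the argument via the midpoint $\gamma$ of $[A\cdot\omega_J,B\cdot\omega_J]$, observing that $\gamma$ lies on $H$, is fixed by $s_H$, lies in $F$, and is the midpoint of the reflected pair; your version instead performs the explicit reflection computation $s_H(x)=x-\scaledInnerProd{x}{\alpha}\alpha$ and squeezes the sign of $\InnerProd{\alpha}{v}$. The two are essentially equivalent (the midpoint argument is a coordinate-free packaging of your cancellation), so this is a fine alternative write-up, and your closing remark about needing strict sign opposition correctly flags the only point where one should pause.
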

\begin{proof}
Consider the line segment $[A\cdot\omega_J,B\cdot\omega_J]$.
If $H$ separates $A\cdot\omega_J$ and $B\cdot\omega_J$, then the midpoint $\gamma$ of the line segment must lie on $H$.
In particular, it remains fixed under the reflection through $H$ and so $\gamma$ is also the midpoint of $[(s_H A)\cdot\omega_J,(s_H B)\cdot\omega_J]$.
Suppose $(s_H A)\cdot\omega_J \notin F$. Then there exists some linear function $\ell$ such that $\ell(x) \leq 0$ for all $x \in P(M)$ and the inequality is tight on~$F$, but $\ell((s_H A)\cdot\omega_J) < 0$.
As $\ell(\gamma) = 0$, it follows by convexity that $\ell((s_H B)\cdot\omega_J) > 0$ and hence $(s_H B)\cdot\omega_J$ is not contained in $P(M)$.
This is a contradiction, whence $(s_H A)\cdot\omega_J, (s_H B)\cdot\omega_J \in F$.
\end{proof}

\subsection{Ordinary matroids}
\label{ssec:ordinary-matroids}
In this section we describe matroids, first as set systems then as type $A$ Coxeter matroids. 
Let $\scP_{n, k}$ denote the set of $k$-element subsets of~$[n]$.
A set system $\matroid \subseteq \scP_{n, k}$ satisfies the \defn{basis exchange property} if
\[
    \text{for all } A, B \in \matroid \text{ and } a \in A \setminus B\text{, there exists } b \in B \setminus A\text{ such that } A \symDiff\{a, b\} \in \matroid\, .
\]
We call such a subset $\matroid$ satisfying the basis exchange property an \defn{(ordinary) matroid}.
There are many cryptomorphic definitions, but the one we will focus on is the strong exchange property for ordinary matroids, as given in \cite[Section 1.5.1]{BorovikGelfandWhite2003}.

A set system $\matroid \subseteq \scP_{n, k}$ satisfies the \defn{strong exchange property for (ordinary) matroids} if
\[
    \text{for all distinct }A, B \in \matroid \text{ there exist } a \in A \setminus B,\,b\in B\setminus A \text{ such that } A \symDiff\left\{ a, b \right\}, B\symDiff\left\{ a, b \right\} \in \matroid \, .
\]
It is shown in \cite[Theorem 1.5.2]{BorovikGelfandWhite2003} that a subset $\matroid \subseteq \scP_{n, k}$ satisfies the strong exchange property if and only if it is a matroid.

\begin{remark}
Brualdi gives another cryptomorphic description of matroids via the symmetric basis exchange property \cite{Brualdi1969}.
Brualdi's definition is also sometimes known as the strong exchange property in the literature, though we emphasise it is not a specialisation of \eqref{eq:SEP} to type $A$.
To avoid confusion, we will only ever use ``strong exchange property'' when referring to \eqref{eq:SEP} or its specialisation to a specific type.
\end{remark}

We now describe how matroids arise as Coxeter matroids of type~$A_{n-1}$.
Fix $\vectorSpace \cong \RR^n$ with orthonormal basis $\{e_1, \dots, e_n\}$.
The root system of $A_{n-1}$ has a choice of simple roots $\simpleRoots(A_{n-1}) = \{\alpha_i \coloneqq e_{i+1} - e_i\}$.
The corresponding Weyl group $W = W(A_{n-1}) \cong \fS_{n}$ is isomorphic to the symmetric group where the simple roots correspond to adjacent transpositions.

Fix some simple root $\alpha_k$ and the corresponding group of simple reflections $J = S \setminus \{s_k\}$.
The classification of minuscule representations, \Cref{eq:minuscule}, states that $P_J$ is minuscule for all $1 \leq k \leq n-1$.
To compute the ambient polytope $\ambientPoly$, we choose $\omega_J = \sum_{i=1}^k e_i$: note that this satisfies \eqref{eq:omega_J}.
Acting by $W$ permutes the coordinates of $\omega_J$, hence the ambient polytope of $\cosets{J}$ is the hypersimplex:
\begin{equation} \label{eq:hypersimplex}
    \cH_{n,k} := \conv\{W(A_{n-1})\cdot \omega_{S \setminus \{s_k\}}\} = \conv\set{\sum_{i \in I} e_i}{I \in \scP_{n, k}} \, .
\end{equation}
As the vertices are indexed by elements of $\scP_{n, k}$, this allows us to translate between matroids as set systems in terms of bases, subpolytopes of the hypersimplex, and as subsets of the cosets $W^J$.

The strong exchange property for Coxeter matroids in type $A$ translates exactly to the strong exchange property for (ordinary) matroids.
Explicitly, any reflecting hyperplane $H$ is orthogonal to some root $e_a - e_b$, so any distinct cosets $A, B \in M$ separated by hyperplane $H$ must have $a \in A \setminus B$ and $b \in B \setminus A$ or vice versa.
It is straightforward to verify $s_H(A) = A \symDiff \{a,b\}$ and $s_H(B) = B \symDiff \{a,b\}$.
This shows that matroids on $n$ elements are precisely the strong Coxeter matroids of type $A_{n-1}$.
It was shown in \cite{GelfandGoreskyMacphersonSerganova} that $M \subseteq \scP_{n,k}$ is a matroid if and only if each edge of $P(M)$ is parallel to some root direction $e_a - e_b$, i.e.\ $M$ is a Coxeter matroid of type $A_{n-1}$.
A corollary of these two results is that every type $A$ Coxeter matroid is strong.

\subsection{$\Delta$-matroids}\label{ssec:delta-matroids}
In this section we define $\Delta$-matroids and even $\Delta$-matroids, first as set systems then as type $B_n$ and $D_n$ Coxeter matroids respectively. 

\begin{definition}[{\cite[Definition 6]{Bouchet1987}}]
    A \defn{$\Delta$-matroid} is a set system $\matroid$ on $[n]$ such that the following \defn{symmetric exchange property for $\Delta$-matroids} is satisfied:
    \begin{equation}\label{axiom:symm-ex}
        \text{For all } A,B \in \matroid, a \in A \symDiff B, \text{ there exists } b \in A \symDiff B \text{ such that } A \symDiff \{a, b\} \in \matroid \, .
    \end{equation}
    Note that $a$ and $b$ might be equal.
    
    We say that a set system $\matroid$ is \defn{even} if all $A \in \matroid$ have the same parity.
    We say that $\matroid$ is an \defn{even $\Delta$-matroid} if it is a $\Delta$-matroid and an even set system.
    Despite the naming convention, we emphasise that the sets may be of odd or even cardinality.
\end{definition}

As with the other types of matroids, we consider the following \defn{strong exchange property for $\Delta$-matroids}:
\begin{equation}\label{axiom:strong-symm-ex}
    \text{For all } A,B \in \matroid, \text{ there exist }a,\, b  \in A \symDiff B,\, \text{ such that } A \symDiff \{a,b\} \, , \, B \symDiff \{a,b\} \in M \, .
\end{equation}
We emphasise that \eqref{axiom:strong-symm-ex} again allows $a, b$ to be equal.
This is a specialisation of the strong exchange property \eqref{eq:SEP} for Coxeter matroids to type $B_n$. 
In addition, we consider the following \defn{strong exchange property for even $\Delta$-matroids}:
\begin{equation}\label{axiom:even-strong-symm-ex}
    \text{For all } A,B \in \matroid, \text{ there exist distinct }a,\, b  \in A \symDiff B,\, \text{ such that } A \symDiff \{a,b\} \, , \, B \symDiff \{a,b\} \in M \, .
\end{equation}
This is a specialisation of the strong exchange property \eqref{eq:SEP} for Coxeter matroids to type $D_n$. 

We now describe how all (resp.\ even) $\Delta$-matroids arise as Coxeter matroids of type $B_n$ (resp.\ $D_n$).
The root systems of $B_n$ and~$D_n$ have the following choices of simple roots:
\begin{align*}
\simpleRoots(B_n) &= \set{\alpha_i \coloneqq e_{i} - e_{i+1}}{1 \leq i \leq n-1} \cup \{\alpha_n \coloneqq e_n\} \, , \\
\simpleRoots(D_n) &= \set{\alpha_i \coloneqq e_{i} - e_{i+1}}{1 \leq i \leq n-1} \cup \{\alpha_n \coloneqq e_{n-1} + e_n\}\, .
\end{align*}
For $W = W(B_n)$, the only minuscule parabolic subgroup is $P_J$ where $J = S \setminus \{s_n\}$.
By concretely choosing $\omega_J=(-\frac12,\ldots,-\frac12)$, we see that the ambient polytope is the cube $[-\frac12,\frac12]^n$.
As with ordinary matroids, we can identify set systems with subpolytopes of this cube by labelling its vertices by sets.
For a set $I\subseteq[n]$, define $\cubeVertex{I}\in \vectorSpace=\RR^n$ to be the indicator vector of~$I$ translated by $(-\frac12,\ldots,-\frac12)$. Then
\begin{equation}\label{eq:cubeVertex}
Q_n \coloneqq \set{\cubeVertex{I}}{I\subseteq[n]} = W(B_n) \cdot \omega_{S \setminus \{s_n\}} \, ,
\quad (\cubeVertex{I})_i = \begin{cases}
    -\frac12 & i\not\in I\\
    \frac12 & i\in I\, .
    \end{cases}
\end{equation}
The ambient polytope is the cube obtained as the convex hull of these points.
With this dictionary established, it follows that the strong exchange property for $\Delta$-matroids is a specialisation of the strong exchange property for Coxeter matroids to type $B_n$ for the parabolic subgroup that excludes $s_n$.

For $W = W(D_n)$, there are two minuscule parabolic subgroups\footnote{The third minuscule parabolic subgroup $\maximalParabolic{1}$ has the cross polytope as its ambient polytope and will be explored in \Cref{ssec:Dn-cross-poly}.} that give rise to $\Delta$-matroids: $\maximalParabolic{n-1}$ and $\maximalParabolic{n}$.
The ambient polytope of these are the two demicubes inside the cube whose vertices have a prescribed parity.
To see this, observe that acting on $\cubeVertex{I}$ by $W(D_n)$ does not change the parity of $I$.
For $J = S \setminus \{s_n\}$, we can take the same $\omega_J=\cubeVertex{\emptyset}$ as before, hence
\begin{equation}\label{eq:demicubeVertex+}
    \cD_{n,+} \coloneqq \set{\cubeVertex{I}}{|I| \equiv 0 \mod 2} = W(D_n) \cdot \omega_{S \setminus \{s_n\}} \, .
\end{equation}
The ambient polytope is the demicube obtained as the convex hull of these points.

For $J = S \setminus \{s_{n-1}\}$, we can take $\omega_J = (-\frac12,\ldots,-\frac12,\frac12) = \cubeVertex{n}$, and hence 
\begin{equation}\label{eq:demicubeVertex-}
    \cD_{n,-} \coloneqq \set{\cubeVertex{I}}{|I| \equiv 1 \mod 2} = W(D_n) \cdot \omega_{S \setminus \{s_{n-1}\}} \, .
\end{equation}
The ambient polytope is the demicube obtained as the convex hull of these points.
Note that this gives a partition of the cube vertices $Q_n = \cD_{n,+} \cup \cD_{n,-}$ into the demicube vertices.
This dictionary allows us to go between even $\Delta$-matroids and subpolytopes of these demicubes.
In particular, it follows that the strong exchange property for even $\Delta$-matroids is a specialisation of the strong exchange property for Coxeter matroids to type $D_n$.

In type $D_n$, we can upgrade the strong exchange property for even $\Delta$-matroids to \defn{Wenzel's exchange property}\footnote{Wenzel's exchange property is also commonly referred to as the ``strong exchange property''. Introduced by Wenzel~\cite{Wenzel1993}, it has been used throughout the literature, including in Chapter 4 of \cite{BorovikGelfandWhite2003}. We have renamed it to avoid confusion with the strong exchange property for Coxeter matroids.}:
\begin{equation}\label{axiom:wenzel-exchange}
    \text{For all } A,B \in \matroid, a \in A \symDiff B, \text{ there exists } b \in (A \symDiff B) \setminus a \text{ such that } A \symDiff \{a,b\} \, , \, B \symDiff \{a,b\} \in M \, .
\end{equation}
 
 Wenzel's main result is that a $\Delta$-matroid satisfies property \eqref{axiom:wenzel-exchange} if and only if it is even (\cite[Theorem 1]{Wenzel1993}). This result can then be used to show that every even $\Delta$-matroid satisfies the strong exchange property for even $\Delta$-matroids.
\begin{theorem}
    \label{t:evenIsStrong}
    Let $M \subseteq [n]$. The following are equivalent:
    \begin{enumerate}
        \item $M$ is an even $\Delta$-matroid; \label{i:delta+1}
        \item $M$ satisfies the strong exchange property for even $\Delta$-matroids; \label{i:delta+2}
        \item $M$ satisfies Wenzel's exchange property. \label{i:delta+3}
    \end{enumerate}
\end{theorem}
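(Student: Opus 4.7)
The plan is to use Wenzel's theorem \cite[Theorem 1]{Wenzel1993}, which supplies the equivalence of \ref{i:delta+1} and \ref{i:delta+3} directly, and then to bracket \ref{i:delta+2} between these by proving the chain \ref{i:delta+3} $\Rightarrow$ \ref{i:delta+2} $\Rightarrow$ \ref{i:delta+1}. The first implication will be immediate because Wenzel's property quantifies over every $a \in A \symDiff B$: given distinct $A, B \in M$, I would pick any $a \in A \symDiff B$ and invoke \ref{i:delta+3} to produce $b \neq a$ with both $A \symDiff \{a,b\}$ and $B \symDiff \{a,b\}$ in $M$, which is exactly the witness required by \ref{i:delta+2}.

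The substance lies in \ref{i:delta+2} $\Rightarrow$ \ref{i:delta+1}, which I would split into showing $M$ is even and showing $M$ is a $\Delta$-matroid. For evenness, I would argue by a minimal counterexample: if some pair $A, B \in M$ had $|A \symDiff B|$ odd, take a pair minimising this quantity. The case $|A \symDiff B| = 1$ is impossible because a singleton cannot contain two distinct elements, so \ref{i:delta+2} already fails for this pair. Otherwise $|A \symDiff B| \ge 3$, and applying \ref{i:delta+2} produces distinct $a, b \in A \symDiff B$ together with $B' := B \symDiff \{a,b\} \in M$; since $\{a, b\} \subseteq A \symDiff B$, one computes $|A \symDiff B'| = |A \symDiff B| - 2$, still odd and strictly smaller, contradicting minimality.

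For the symmetric exchange property I would induct on $|A \symDiff B|$. The base case $|A \symDiff B| = 2$ is immediate: the unique distinct pair in $A \symDiff B$ furnished by \ref{i:delta+2} equals $\{a, b\}$, so $A \symDiff \{a, b\} = B \in M$ directly. For the inductive step, given $A, B \in M$ and $a \in A \symDiff B$, I would apply \ref{i:delta+2} to produce distinct $a', b' \in A \symDiff B$ and $A', B' \in M$; if $a \in \{a', b'\}$, then the other element of the pair serves as $b$. Otherwise I would apply the inductive hypothesis to $(A, B')$, whose symmetric difference still contains $a$ but has strictly smaller size $|A \symDiff B| - 2$, to extract the desired $b \in A \symDiff B' \subseteq A \symDiff B$ completing the exchange. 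The main subtlety throughout is confirming that the witness pair from \ref{i:delta+2} sits entirely inside the relevant symmetric difference, so that applying it actually reduces the size rather than enlarging it; fortunately this is built into the statement of \ref{i:delta+2}, which requires $a', b' \in A \symDiff B$.
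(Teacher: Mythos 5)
Your proof is correct, and for the direction $\ref{i:delta+2} \Rightarrow \ref{i:delta+1}$ it takes a genuinely different route from the paper. The paper dispatches this implication in one line by appealing to the general Coxeter-matroid machinery: a subset of $W^J$ satisfying the strong exchange property \eqref{eq:SEP} is automatically a Coxeter matroid, and type-$D_n$ Coxeter matroids are exactly the even $\Delta$-matroids. That argument implicitly relies on $M$ already sitting inside a single demicube (i.e.\ being even), since otherwise $M$ cannot even be interpreted as a subset of $W(D_n)^J$. Your proof makes this evenness step explicit via the minimal-counterexample argument (if some $|A\symDiff B|$ were odd, it couldn't be $1$, and any exchange would strictly reduce it while preserving oddness), and then establishes the symmetric exchange axiom directly by induction on $|A\symDiff B|$. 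In exchange for being longer, your argument is entirely self-contained at the level of set systems and does not invoke the Coxeter-matroid framework at all; it also closes a small logical gap that the paper's terse invocation of the general theory leaves tacit. Both are valid; the paper's is shorter given the surrounding machinery, yours is more elementary and portable.
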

\begin{proof}
    By \cite[Theorem 1]{Wenzel1993} \Cref{i:delta+1} and \Cref{i:delta+3} are equivalent.
    It's clear that \Cref{i:delta+3} implies \Cref{i:delta+2}.
    Additionally \Cref{i:delta+2} implies \Cref{i:delta+1} as any strong Coxeter matroid is a Coxeter matroid.
\end{proof}

In contrast, not all $\Delta$-matroids satisfy the strong exchange property for $\Delta$-matroids.
We say a $\Delta$-matroid is \defn{strong} if it satisfies the strong exchange property for $\Delta$-matroids.
Recasting \Cref{ex:3D-counterexample} in the language of $\Delta$-matroids, we obtain the following nonexample.
\begin{example} \label{ex:delta+not+strong}
    Let $M$ be the following set system on $[3]$:
    \[
        M = \{\emptyset, 1, 2, 3, 123\} \, .
    \]
    These sets correspond to vertices of the polytope from \Cref{ex:3D-counterexample} under the identification \eqref{eq:cubeVertex}.
    It follows that $M$ is a $\Delta$-matroid but does not satisfy the strong exchange property.
    Taking $A = \emptyset$ and $B = 123$, we see there is no choice of $a$ and $b$ such that both $A \symDiff \{a,b\}$ and $B \symDiff \{a,b\}$ are contained in $M$, as at least one must have cardinality two.
\end{example}

\begin{remark}
    While Wenzel's exchange property was originally defined for all $\Delta$-matroids, \Cref{t:evenIsStrong} implies that the property is never satisfied for non-even $\Delta$-matroids.
    In particular, it is a strictly stronger property than the strong exchange property for $\Delta$-matroids.
\end{remark}

\section{Characterising strong Coxeter matroids} \label{s::charCoxMatroids}

In this section we give combinatorial characterisations of strong minuscule Coxeter matroids $\matroid$, type by type.
At least for the classical types, similar results are already in the literature,
but we want characterisations of a special form.
We only use conditions on the polytope $\polyOf{\matroid}$ that consider whether pairs of antipodal vertices on a given face $\face$ of the ambient polytope $\ambientPoly$ can be present,
in particular by requiring that among a chosen set of antipodes in~$\face$, not exactly one pair of antipodal vertices can appear in $\polyOf{\matroid}$.
The reason for preferring these particular conditions is that they can be cast as tropical equations, defined below, that are suitably homogeneous.
Then in \Cref{s:eqsfromGrass} we will see the same tropical equations by tropicalising equations for $\bbG/\bbP$.

\subsection{Preliminaries}

\subsubsection{Tropical equations}
We define tropical equations here in a bare-bones way. For context and a detailed treatment of tropical algebra, see \cite{MaclaganSturmfels}.

Let $\BB=(\{0,1\},\oplus,\odot)$ be the Boolean semifield,
where $0$ is the zero, $1$ the multiplicative identity, and $1\oplus 1=1$.
(Since we are not concerned here with any richer semifield of scalars,
we eschew the common tropical notation in which instead $\BB=\{\infty,0\}$.)
Let $X$ be a finite set of indeterminates.
Then $\BB[X]$ is the monoid semiring with coefficients in~$\BB$ of the free commutative monoid $\mathbb N^X=\Hom_{\mathrm{Set}}(X,\mathbb N)$ on the generators $X$.
Explicitly, for $a\in\mathbb N^X$, let $X^{\odot a}$ denote the formal monomial 
\raisebox{0pt}[0pt][0pt]{$\bigodot_{x\in X}\underbrace{x\odot\cdots\odot x}_{a(x)\mbox{\scriptsize\ times}}$}. 
Then every element of~$\BB[X]$ is of the form
\[\bigoplus_{a\in A}X^{\odot a}\]
for some finite subset $A\subset\mathbb N^X$.
Addition in $\BB[X]$ corresponds to union of the sets~$A$, and multiplication to Minkowski sum.
The \defn{tropicalisation} map $\trop:\CC[X]\to\BB[X]$
is defined by $\trop(\sum_{a\in A} z_a X^a) = \bigoplus_{a\in A} X^{\odot a}$ 
where $z_a\in\CC$ is nonzero for all $a\in A$.

When we call an element $f\in\BB[X]$ a \defn{tropical equation}, 
we are thinking of it as a condition that may or may not be satisfied by a tuple $p\in\BB^X$.
The tropical equation $f=\bigoplus_{a\in A}X^{\odot a}$ is \defn{satisfied} by~$p$ 
if the number of $a\in A$ such that $p^{\odot a}=1$ -- i.e.\ such that $p(x)=1$ for all $x$ with $a(x)>0$ -- 
is not exactly~1.
If $F\in\CC[X]$ is a complex polynomial such that $F(P)=0$ at the point $P\in\CC^X$
and we let $f=\trop(F)$ and $p\in\BB^X$ be the support of~$P$, i.e.\ $p(x)=1$ if and only if $P(x)\ne0$,
then the tropical equation $f$ is satisfied by~$p$.

We warn the reader that tropicalisation is not a homomorphism of additive semigroups, on account of cancellations in~$\CC[X]$.
When we present tropical exchange equations later in this section,
these will come with lower bounds on the sizes of sets involved.
Comparing to the equations over $\CC[X]$ in \Cref{s:eqsfromGrass}, we see that these bounds arise from ``sporadic'' cancellations of terms when the sets are small, making the tropical equation not agree with the tropicalisation.

Unless otherwise stated, the tropical equations arising in the remainder of this section will have $X=\set{x_S}{S\subseteq[n]}$.
Set systems $M$ on~$[n]$ are in bijection with tuples $\nu_M\in\BB^X$
by the rule that $\nu_M(x_S)=1$ if and only if $S\in M$.
In this way we may make sense of saying that a set system $M$ satisfies a tropical equation $f\in\BB[X]$:
we mean that $\nu_M$ satisfies~$f$.

\subsubsection{Warm up: Matroids (type A)}

As discussed in the Introduction, the tropical equations that characterise matroids have been known implicitly since Rota~\cite{RotaBowdoin}.
It is straightforward to verify that they are equivalent to the symmetric basis exchange property given by Brualdi~\cite{Brualdi1969}.

\paragraph{Strong exchange equations (Type A)}

The \defn{type A strong exchange equations} on $[n]$ are the collection of equations $\cF^{(A)}_{n,k} \coloneqq \set{f_{I,J}^{(A)} }{ I \in \scP_{n,k-1},\, J \in \scP_{n,k+1} , \, |J \setminus I| \geq 3 }$ where
\begin{equation} \label{eq:SEE-A}
f^{(A)}_{I,J} \coloneqq \bigoplus_{i \in J \setminus I} x_{I \cup i} \odot x_{J \setminus i} \in \BB[X] \, . 
\end{equation}

\begin{theorem}\label{t:Astrexch}
    A set system $M \subseteq \scP_{n, k}$ is a matroid if and only if $\nu_M$ satisfies the type $A$ strong exchange equations $\cF^{(A)}_{n,k}$.
\end{theorem}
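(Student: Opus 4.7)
The plan is to establish both directions through Brualdi's symmetric basis exchange characterisation of matroids \cite{Brualdi1969}: $M \subseteq \scP_{n,k}$ is a matroid if and only if for all $A, B \in M$ and all $a \in A \setminus B$ there exists $b \in B \setminus A$ with $A \symDiff \{a,b\}, B \symDiff \{a,b\} \in M$. The connection to the tropical equations runs through the natural bijection
\[
\bigl\{(I, J, i) : I \in \scP_{n,k-1},\, J \in \scP_{n,k+1},\, i \in J \setminus I\bigr\} \;\longleftrightarrow\; \bigl\{(A, B, a) : A,B \in \scP_{n,k},\, a \in A \setminus B\bigr\},
\]
sending $(I, J, i) \mapsto (A, B, a) = (I \cup i,\, J \setminus i,\, i)$ with inverse $(A, B, a) \mapsto (A \setminus a,\, B \cup a,\, a)$. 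Under this bijection, the terms $x_{I \cup i} \odot x_{J \setminus i}$ of $f^{(A)}_{I,J}$ indexed by $i \in J \setminus I$ correspond exactly to the pair $(A, B)$ (for $i = a$) together with all pairs $(A \symDiff \{a, b\},\, B \symDiff \{a, b\})$ obtained by swapping $a$ with any $b \in B \setminus A$. Note also that $J \setminus I = \{a\} \cup (B \setminus A)$, so the restriction $|J \setminus I| \geq 3$ is equivalent to $|B \setminus A| \geq 2$.

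For the forward direction, assume $M$ is a matroid and fix $I, J$ with $|J \setminus I| \geq 3$. If some $i_0 \in J \setminus I$ yields both $I \cup i_0, J \setminus i_0 \in M$, set $A = I \cup i_0$, $B = J \setminus i_0$, and apply Brualdi's exchange to $(A, B)$ with $a = i_0 \in A \setminus B$. This produces $b \in B \setminus A \subseteq (J \setminus I) \setminus \{i_0\}$ such that $I \cup b = A \symDiff \{i_0, b\}$ and $J \setminus b = B \symDiff \{i_0, b\}$ both lie in $M$, furnishing a second satisfied monomial of $f^{(A)}_{I,J}$.

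For the converse, assume $\nu_M$ satisfies every equation in $\cF^{(A)}_{n,k}$, and take $A, B \in M$ together with $a \in A \setminus B$. Put $I = A \setminus a$ and $J = B \cup a$. When $|B \setminus A| \geq 2$, the equation $f^{(A)}_{I,J}$ applies; the term at $i = a$ is satisfied by hypothesis, so by definition of tropical satisfaction at least one further $i = b$ must contribute, and this $b$ is the desired Brualdi partner. When $|B \setminus A| \leq 1$, Brualdi's exchange is automatic: either $A = B$ (so $A \setminus B = \emptyset$ and there is nothing to prove) or $B \setminus A = \{b\}$ forces $A \setminus B = \{a\}$ by equicardinality, whence $A \symDiff \{a, b\} = B$ and $B \symDiff \{a, b\} = A$.

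There is no substantive obstacle: the entire argument is really unpacking the bijection, and the most delicate point is merely verifying that the bound $|J \setminus I| \geq 3$ in the definition of $\cF^{(A)}_{n,k}$ matches the only regime in which the exchange condition is non-trivial. This streamlines Rota's classical observation and serves as the prototype for the exchange equations we will introduce in the other minuscule types.
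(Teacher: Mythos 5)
Your proof is correct, and it follows precisely the route the paper indicates but leaves unwritten: the paper states the theorem without proof, remarking only that the equivalence with Brualdi's symmetric basis exchange property is ``straightforward to verify.'' You have filled in that verification, making explicit the bijection $(I,J,i)\leftrightarrow(A,B,a)$ that matches terms of $f^{(A)}_{I,J}$ with exchange pairs, and correctly handling the boundary case $|J\setminus I|<3$ where the exchange is forced by equicardinality.
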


For each type A minuscule parabolic, the Coxeter strong exchange equations will have a geometric interpretation.
Explicitly, we will be able to determine whether $M$ is a strong Coxeter matroid by checking whether certain `antipodes' are present in the Coxeter matroid polytope $P(M)$.
In type $A$, we are interested in antipodes of the hypersimplex $\cH_{n,k}$ as defined in \eqref{eq:hypersimplex} and its sub-hypersimplices.
Fixing some $I\in \scP_{n,k-1}$ and $J \in \scP_{n,k+1}$, we define the hypersimplex
\[
\cH_{I,J} = \conv\set{e_K}{I \cap J \subseteq K \subseteq I \cup J\, , \, |K| = k} \subseteq \cH_{n,k} \, , \quad e_K := \sum_{i \in K}e_i \, .
\]
Note that $\cH_{I,J}$ is a face of $\cH_{n,k}$.
For each $i \in J \setminus I$, we have that $e_{I \cup i}$ and $e_{J \setminus i}$ are both vertices of $\cH_{I,J}$, and moreover are antipodes of each other in $\cH_{I,J}$: by abuse of notation we will call the pair $(e_{I \cup i}, e_{J \setminus i})$ an \defn{antipode} of $\cH_{I,J}$.

This viewpoint combined with \Cref{t:Astrexch} leads to a characterisation of a matroid $M$ in terms of its polytope $P(M)$.
Namely, the equation $f_{I,J}^{(A)}$ is satisfied by $\nu_M$ if and only if there is not exactly one antipode of the form $(e_{I \cup i}, e_{J \setminus i})$ for $i \in J \setminus I$ present in the polytope $P(M)$.

\subsection{Cubes}
This section deals with $\Delta$-matroids (type $B$) and even $\Delta$-matroids (type $D$), which can be viewed as subpolytopes of cubes.
The corresponding formulations in tropical equations are \Cref{t:Bstrexch} and \Cref{p:strexcheqstrop}.

\subsubsection{$\Delta$-matroids (Type $B$)} \label{ssec:Bstrexch}

\paragraph{Strong exchange equations (Type $B$)}

The \defn{type $B$ strong exchange equations} on $[n]$ are the equations in the collection
$\cF^{(B)}_n \coloneqq \set{f_{I,J}^{(B)}}{I, J \subseteq [n],\, |I\symDiff J| \ge 3}$ where
\begin{align} \label{eq:SEE-B-1}
    f^{(B)}_{I,J} &:= \bigoplus_{i \in I \symDiff J} x_{I \symDiff i} \odot x_{J \symDiff i} & \text{ if } |I \symDiff J| \equiv 0 \mod 2\, \rlap{\text{ and}} \\
    f^{(B)}_{I,J} &:=  x_I \odot x_J \oplus \bigoplus_{i \in I \symDiff J} x_{I \symDiff i} \odot x_{J \symDiff i} & \text{ if } |I \symDiff J| \equiv 1 \mod 2\, \rlap{.} \label{eq:SEE-B-2}
\end{align}

\begin{remark}
While not necessary for any proofs, we give some geometric intuition for these equations in terms of antipodes of cubes.
Recall from \eqref{eq:cubeVertex} the notation $\cubeVertex{I}\in\RR^n$ for the vertex of the cube $Q_n$ indexed by the subset $I\subseteq[n]$.
Given two sets $I,J \subseteq [n]$, we can consider the subcube of $Q_n$
\[
    \cube{I}{J} := \set{\cubeVertex{K}}{I \cap J \subseteq K \subseteq I \cup J} \subseteq Q_n \, .
\]
This notation has the property that $\cubeVertex{I}$ is an antipode of $\cubeVertex{J}$ within $\cube{I}{J}$: by an abuse of notation we will call the pair $(\cubeVertex{I},\cubeVertex{J})$ an \defn{antipode}.
Moreover, for each $S \subseteq I \symDiff J$ we also have that $(\cubeVertex{I\symDiff S},\cubeVertex{J\symDiff S})$ is an antipode of $\cube{I}{J}$.
In particular, the antipodes that neighbour $(\cubeVertex{I},\cubeVertex{J})$ along an antipodal pair of edges are of the form $(\cubeVertex{I\symDiff i},\cubeVertex{J\symDiff i})$ for $i \in I \symDiff J$.

With this viewpoint, the equation \eqref{eq:SEE-B-1} fixes some antipode $(\cubeVertex{I},\cubeVertex{J})$ and is satisfied by $\nu_M$ if and only if there is not exactly one neighbouring antipode present in the polytope $P(M)$.
The equation \eqref{eq:SEE-B-2} similarly fixes some antipode $(\cubeVertex{I},\cubeVertex{J})$ and is satisfied by $\nu_M$ if and only if there is not exactly one of $(\cubeVertex{I},\cubeVertex{J})$ or its neighbouring antipodes present in $M$.
\end{remark}

\begin{example}
    To better understand these equations, we work through them for $n = 3$ and $n = 4$.
    For ease of notation we will let $i_1\ldots i_k$ denote $\{i_1, \dots, i_k\}$, e.g.\ $12$ denotes $\{1, 2\}$.

    Let $n = 3$, where the list of subsets of~$[3]$ is $\emptyset$, $1$, $2$, $3$, $12$, $13$, $23$, $123$.
    The requirement that $\order{I\symDiff J}\ge3$ 
    means that $\cF^{(B)}_3$ contains no equation in which $\order{I \symDiff J} \equiv 0 \mod 2$.
    There are potentially four equations with $\order{I \symDiff J}=3 \equiv 1 \mod 2$.  Two of these are
    \begin{align}
        f^{(B)}_{\emptyset, 123} &= x_{\emptyset} \odot x_{123} \oplus \bigoplus_{i \in 123} x_{\emptyset \symDiff i} \odot x_{123 \symDiff i} = x_{\emptyset} \odot x_{123} \oplus x_{1} \odot x_{23} \oplus x_{2} \odot x_{13} \oplus x_3 \odot x_{12} \label{eq:ex B_3}\\
        f^{(B)}_{1, 23} &= x_{1} \odot x_{23} \oplus \bigoplus_{i \in 123} x_{1 \symDiff i} \odot x_{23 \symDiff i} = x_{1} \odot x_{23} \oplus x_{\emptyset} \odot x_{123} \oplus x_{13} \odot x_{2} \oplus x_{12} \odot x_{3} = f^{(B)}_{\emptyset, 123} \notag
    \end{align}
    We see that they are equal elements of $\BB[X]$, giving us the sum of all antipodes of the $3$-cube.
    The other two equations are equal to this same sum as well.
    So in fact $\order*{\cF^{(B)}_3}=1$.

    Before moving on, we look at examples of equations $f^{(B)}_{I,J}$ \textbf{excluded} from $\cF^{(B)}_3$ because $\order{I\symDiff J}<3$:
    \begin{align*}
        f^{(B)}_{\emptyset, 1} &= x_\emptyset \odot x_1 \oplus \bigoplus_{i \in 1} x_{\emptyset \symDiff i} \odot x_{1 \symDiff i} = x_\emptyset \odot x_1 \oplus x_1 \odot x_\emptyset  \\
        f^{(B)}_{\emptyset, 12} &= \bigoplus_{i \in 12} x_{\emptyset \symDiff i} \odot x_{12 \symDiff i}  
= x_1 \odot x_2 \oplus x_2 \odot x_1 \notag 
    \end{align*}
    As written, it appears these equations would be trivially satisfied: they are just two copies of the same monomial, so that at any~$\nu_M$ both summands or neither would evaluate to~$1\in\BB$, never exactly one.
    However, the definition of satisfaction in~$\BB[X]$ dictates that we first simplify each sum, to a single monomial.  So in fact they will fail to be satisfied whenever $M$ contains $\emptyset$ and~$1$, resp.\ $1$ and~$2$.
    We therefore must exclude these equations: there is no pair of sets whose mere presence in~$M$ certifies that $M$ is not a strong $\Delta$-matroid.
    As we will see in \Cref{s:eqsfromGrass}, the quadratic embedding equations in~$\CC[X]$ that would be expected to tropicalise to the above $f^{(B)}_{I,J}$ in fact cancel to zero.

    When $n = 4$, the set $\cF^{(B)}_4$ contains the $S_4$-orbit of the equation in \eqref{eq:ex B_3}, of size~$4$, and three additional $S_4$-orbits of sizes $1$, $1$, and~$4$, containing respectively:
    \begin{align*}
        f^{(B)}_{\emptyset, 1234} &= \bigoplus_{i \in 1234} x_{\emptyset \symDiff i} \odot x_{1234 \symDiff i} = x_{1} \odot x_{234} \oplus x_{2} \odot x_{134} \oplus x_{3} \odot x_{124} \oplus x_4 \odot x_{123} \\
        &= f^{(B)}_{12, 34}\\
        f^{(B)}_{1, 234} &= \bigoplus_{i \in 1234} x_{1 \symDiff i} \odot x_{234 \symDiff i} = x_{\emptyset} \odot x_{1234} \oplus x_{12} \odot x_{34} \oplus x_{13} \odot x_{24} \oplus x_{14} \odot x_{23}\\
        f^{(B)}_{1, 1234} &= x_{1} \odot x_{1234} \oplus \bigoplus_{i \in 234} x_{1 \symDiff i} \odot x_{1234 \symDiff i} = x_{1} \odot x_{1234} \oplus x_{12} \odot x_{134} \oplus x_{13} \odot x_{124} \oplus x_{14} \odot x_{123}
    \end{align*}
    Altogether we have $\order*{\cF^{(B)}_4}=10$. 
\end{example}

\medskip 

We aim to show that the type $B$ strong exchange equations characterise strong $\Delta$-matroids.
To do this, we need the following two technical lemmas.

\begin{lemma} \label{lem:odd+antipode+exchange}
    Let $M \subseteq [n]$ be a strong $\Delta$-matroid.
    If $I,J \in M$ are such that $|I \symDiff J| \equiv 1 \mod 2$, then there exists $i \in I \symDiff J$ such that $I \symDiff \{i\}, J \symDiff \{i\} \in M$.
\end{lemma}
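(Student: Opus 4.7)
The plan is to proceed by strong induction on $m := |I \symDiff J|$, which is assumed odd and at least~$1$. The base case $m = 1$ is immediate: writing $I \symDiff J = \{i\}$ forces $I \symDiff \{i\} = J \in M$ and $J \symDiff \{i\} = I \in M$.

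For the inductive step I would invoke the strong exchange property for $\Delta$-matroids on $(I, J)$, obtaining $a, b \in I \symDiff J$ with $I \symDiff \{a, b\}$ and $J \symDiff \{a, b\}$ in $M$. If $a = b$ then $i := a$ satisfies the conclusion, so the interesting case is $a \neq b$, handled next.

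I would first dispatch the subcase $m = 3$ directly: writing $I \symDiff J = \{a, b, c\}$, the subsets $\{a, b\}$ and $\{c\}$ are complementary inside $I \symDiff J$, and the elementary identity $I \symDiff ((I \symDiff J) \setminus S) = J \symDiff S$ shows that $I \symDiff \{a, b\} = J \symDiff \{c\}$ and $J \symDiff \{a, b\} = I \symDiff \{c\}$. Hence both $I \symDiff \{c\}$ and $J \symDiff \{c\}$ lie in $M$, so $i := c$ works. Geometrically, this reflects that the pair exchange by $\{a,b\}$ in the subcube $\cube{I}{J}$ produces the same unordered pair of antipodal vertices as the single exchange by its complement $\{c\}$.

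For $m \ge 5$, set $J' := J \symDiff \{a, b\} \in M$. Since $|I \symDiff J'| = m - 2$ is odd and strictly smaller, the inductive hypothesis applied to $(I, J')$ produces some $i' \in (I \symDiff J) \setminus \{a, b\}$ with $I \symDiff \{i'\} \in M$ and $J' \symDiff \{i'\} = J \symDiff \{a, b, i'\} \in M$. The pair $(J, J \symDiff \{a, b, i'\})$, both of whose elements lie in $M$, has symmetric difference $\{a, b, i'\}$ of size~$3$; invoking the $m = 3$ case just established yields $j \in \{a, b, i'\}$ with $J \symDiff \{j\}$ and $J \symDiff (\{a, b, i'\} \setminus \{j\})$ both in $M$. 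If $j = i'$ then $I \symDiff \{i'\}$ and $J \symDiff \{i'\}$ are both in $M$ and $i := i'$ completes the proof. The main obstacle will be the remaining case $j \in \{a, b\}$: I would close this by iterating the inductive hypothesis on further pairs such as $(I, J \symDiff \{b, i'\})$ (whose symmetric difference has odd size $m - 2$), producing additional single-exchange data that must eventually align on a common index for both $I$ and $J$; termination of the cascade is ensured by the finiteness of the subcube $\cube{I}{J}$, each application of the inductive hypothesis introducing a new element of $M$ inside this finite face.
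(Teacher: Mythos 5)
The first two-thirds of your proposal are sound and closely track the paper's argument: induction on the odd size $m = |I \symDiff J|$, a direct check at $m=3$ (which is equivalent to the paper's observation that ``any strong exchange suffices'' in the base cases), and an application of the inductive hypothesis to $(I, J \symDiff \{a,b\})$ to obtain $i'$ with $I \symDiff \{i'\}\in M$ and $J \symDiff \{a,b,i'\}\in M$. The problem is the final paragraph, which is where the real work is, and there you have only a sketch that does not close.

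Concretely: when the $m=3$ case applied to $(J, J\symDiff\{a,b,i'\})$ returns $j \in \{a,b\}$, you propose to iterate the inductive hypothesis on pairs of the form $(I, J\symDiff T)$ with $|T|=2$, asserting that ``each application of the inductive hypothesis introduce[es] a new element of $M$'' and that finiteness of the subcube forces termination with alignment. Neither claim holds up. The inductive hypothesis is an existence statement and may return the same index or revisit an already-known element of $M$; and even if all elements encountered were new, that does not force $\cI := \{x : I\symDiff\{x\}\in M\}$ and $\cJ := \{x : J\symDiff\{x\}\in M\}$ to intersect, which is what you need. Indeed the cascade of two-element sets $T_0 = \{a,b\}, T_1, T_2, \ldots$ that your process generates can cycle (e.g.\ $\{a,b\} \to \{b,i'\} \to \{i',a\} \to \{a,b\}$), and nothing in the sketch prevents this. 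So the termination-by-finiteness argument is a genuine gap, not a deferred detail.

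The paper avoids this by adding two ingredients your sketch lacks. First, it isolates and proves a weaker one-sided claim -- that some $i \in I\symDiff J$ has $J\symDiff\{i\}\in M$ -- before attempting to align the two sides. Second, and crucially, once it has a small supply of $J$-side single exchanges ($J\symDiff i$ and $J\symDiff j$ in $M$), it applies the strong exchange property to the pair $(I,\, I\symDiff\{i,j,\ell\})$ whose symmetric difference has size $3$; this forces one of $I\symDiff i$, $I\symDiff j$, $I\symDiff\{i,j\}$ to lie in $M$, and each branch either finishes immediately (because the matching $J$-side single is already known) or reduces the size of $I\symDiff J$ by $2$ so that a further IH/strong-exchange step resolves it. That trichotomy from strong exchange on the triple is the mechanism that terminates the argument, and it is exactly what is missing from your cascade. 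Without it, or some equally sharp replacement, the final case is not proved.
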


\begin{proof}
    We prove this by induction on $|I \symDiff J| = 2k + 1$.
    For the base cases of $k=0, 1$, any strong exchange suffices.

    For the inductive step where $k \geq 2$, we first show the weaker statement that there exists some $i \in I \symDiff J$ such that $J \symDiff i \in M$.
    Applying the strong exchange property to $I$ and $J$ for any $a \in I \symDiff J$, there exists $b \in I \symDiff J$ such that $I \symDiff \{a,b\}$ and $J \symDiff \{a,b\}$ are in $M$.
    If $a = b$, we have found such an $i$.
    Otherwise, applying the inductive hypothesis to $I \symDiff \{a,b\}$ and $J$ implies that there exists $i \in I \symDiff J \setminus \{a,b\}$ such that $J \symDiff i \in M$.

    We now prove the full statement.
    Applying the strong exchange property to $I, J$ and this $i \in I \symDiff J$ with $J \symDiff i \in M$, we find there exists $\ell \in I \symDiff J$ such that $I \symDiff \{i,\ell\}$ and $J \symDiff \{i, \ell\} \in M$.
    If $i = \ell$ we are done, so assume that $i \neq \ell$.
    Applying the inductive hypothesis to $I \symDiff \{i,\ell\}$ and $J$, there exists some $j \in I \symDiff J \setminus \{i,\ell\}$ such that $I \symDiff \{i,j,\ell\}$ and $J \symDiff j \in M$.
    We now apply the strong exchange property to $I$ and $I \symDiff \{i,j,\ell\}$ to obtain that either $I \symDiff i, I \symDiff j$ or $I \symDiff \{i,j\} \in M$.
    In the first two cases, we have already seen that $J \symDiff i$ and $J \symDiff j \in M$, so we are done.
    It remains to show the final case where $I \symDiff \{i,j\} \in M$.

    By applying the inductive hypothesis to $I \symDiff \{i,j\}$ and $J$, there exists some $k \in I \symDiff J \setminus\{i,j\}$ such that $I \symDiff \{i,j, k\}$ and $J \symDiff k \in M$.
    We now apply the strong exchange property to $I$ and $I \symDiff \{i,j,k\}$ to obtain that either $I \symDiff i, I \symDiff j$ or $I \symDiff k \in M$.
    The first two cases have already been addressed, and for the final case we also have $J \symDiff k \in M$.
    This completes the proof.
\end{proof}

\begin{lemma} \label{lem:even+antipode+exchange}
    Let $M \subseteq [n]$ be a strong $\Delta$-matroid.
    If $I,J \in M$ are such that $|I \symDiff J| \equiv 0 \mod 2$ and $i \in I \symDiff J$ is such that $I \symDiff \{i\}, J \symDiff \{i\} \in M$, then there exists some $j \in I \symDiff J \setminus \{i\}$ such that $I \symDiff \{j\}, J \symDiff \{j\} \in M$.
\end{lemma}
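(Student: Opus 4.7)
The plan is to proceed by induction on $k$, where $|I \symDiff J| = 2k \geq 2$.

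For the base case $k = 1$, write $I \symDiff J = \{i, j\}$ with $j \neq i$. A direct case analysis on whether $i$ and $j$ lie both in $I \setminus J$, both in $J \setminus I$, or one in each shows in every case that $I \symDiff \{i,j\} = J$. From this identity we immediately deduce $I \symDiff i = J \symDiff j$ and $J \symDiff i = I \symDiff j$, so the hypothesis $I \symDiff i, J \symDiff i \in M$ delivers $I \symDiff j, J \symDiff j \in M$.

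For the inductive step $k \geq 2$, the key move is to apply \Cref{lem:odd+antipode+exchange} twice. First to the pair $(I, J \symDiff i)$, both in $M$ with odd symmetric difference $(I \symDiff J)\setminus\{i\}$ of size $2k-1$: this produces $\ell \in (I \symDiff J)\setminus\{i\}$ such that $I \symDiff \ell, J \symDiff \{i,\ell\} \in M$. Second to $(I \symDiff i, J)$: this produces $m \in (I \symDiff J)\setminus\{i\}$ such that $I \symDiff \{i,m\}, J \symDiff m \in M$. In the favourable case $\ell = m$, taking $j := \ell$ closes the proof.

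In the remaining case $\ell \neq m$, the pair $(I \symDiff \ell, J \symDiff m)$ lies in $M$ and its symmetric difference $(I \symDiff J)\setminus\{\ell, m\}$ has size $2k-2$, strictly less than $2k$. The plan is to apply further exchanges (strong exchange on this pair, and \Cref{lem:odd+antipode+exchange} on adjacent odd-symmetric-difference pairs such as $(I \symDiff \{i,\ell\}, J \symDiff m)$ or $(I \symDiff \ell, J \symDiff \{i,m\})$) to enlarge the set of known elements of~$M$ until the inductive hypothesis can be invoked on a smaller even configuration whose exchange witness transfers back to a single-element exchange for the original pair $(I, J)$. The main obstacle lies precisely in this final chaining step: one must track witnesses carefully to guarantee that the single-element exchange thus produced for $(I, J)$ is distinct from~$i$, rather than collapsing back to $i$ or producing only two-element exchanges on intermediate pairs.
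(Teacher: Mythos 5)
The base case and the opening of the inductive step are sound, and the idea of applying \Cref{lem:odd+antipode+exchange} symmetrically to both $(I, J\symDiff i)$ and $(I\symDiff i, J)$ is a genuine variation on the paper's argument (the paper applies the odd lemma only once, to $(I, J\symDiff i)$, and then chains strong exchanges). When $\ell=m$ your argument closes cleanly. The problem is the case $\ell\neq m$, which you explicitly leave as a plan rather than a proof, and the plan as stated does not work: you propose applying exchanges to pairs such as $(I\symDiff\{i,\ell\}, J\symDiff m)$ or $(I\symDiff\ell, J\symDiff\{i,m\})$, but neither $I\symDiff\{i,\ell\}$ nor $J\symDiff\{i,m\}$ is known to lie in~$M$ -- your two applications of the odd lemma gave you $I\symDiff\ell$, $J\symDiff\{i,\ell\}$, $I\symDiff\{i,m\}$, and $J\symDiff m$, and the two ``mixed'' sets you name are not among these. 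So the proposed pairs are not available as inputs to either the strong exchange property or the odd lemma.

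More fundamentally, invoking the inductive hypothesis on the pair $(I\symDiff\ell, J\symDiff m)$ (which does lie in $M$ with symmetric difference of size $2k-2$) requires exhibiting a single-element common exchange for \emph{that} pair, and none of the known membership facts provides one -- the differences between $I\symDiff\ell$ and the other known sets are of odd size $\geq 3$, not size one. The paper avoids this by not introducing the second witness $m$: after the single application of the odd lemma it iterates the strong exchange property to either succeed outright or produce a pair $(I, J\symDiff\{j,k\})$ for which the hypothesis of the lemma \emph{is} verifiably satisfied before inducting. As you acknowledge, the final chaining step is the real content of the proof, and the proposal does not supply it.
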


\begin{proof}
    We prove this via induction on $|I \symDiff J| = 2k$.
    For the base case of $k=1$, we have $I \symDiff J = \{i,j\}$, and so $I \symDiff i = J \symDiff j$ and vice versa.
    Hence the claim trivially holds.

    For the inductive step, applying \Cref{lem:odd+antipode+exchange} to $I$ and $J \symDiff i$ implies there exists some $j \in I \symDiff J \setminus i$ such that $I \symDiff j$ and $J \symDiff \{i,j\}$ are in $M$.
    Applying the strong exchange property for $i \in (I \symDiff i) \symDiff (J \symDiff \{i,j\})$, we obtain that either $J \symDiff j \in M$ or there exists some $k \in I \symDiff J \setminus \{i,j\}$ such that $I \symDiff k$ and $J \symDiff \{j,k\}$ are both in $M$.
    In the former case we are done, so let us assume the latter case.
    Applying the strong exchange property to $J \symDiff i$ and $J \symDiff \{j,k\}$, we have either $J \symDiff j$, $J \symDiff k$ or $J \symDiff\{i,j,k\}$ are in $M$.
    If either of the first two hold then we are done, so assume the final case.

    Consider $I$ and $J \symDiff\{j,k\}$, and observe that we have $I \symDiff i$ and $J \symDiff\{i,j,k\}$ both contained in $M$.
    As $|I \symDiff (J \symDiff\{j,k\})| = 2k -2$, by induction there exists some $\ell \in (I \symDiff J) \setminus \{i,j,k\}$ such that $I \symDiff \ell$ and $J \symDiff \{j,k,\ell\}$ are both in $M$.
    Finally, applying the strong exchange property to $J \symDiff \{j,k,\ell\}$ and $J$ implies one of $J \symDiff j$, $J \symDiff k$ or $J \symDiff \ell$ are in $M$.
    In all cases, we are done.
\end{proof}

\begin{theorem}\label{t:Bstrexch}
    Let $M$ be a set system on $[n]$.
    Then $M$ is a strong $\Delta$-matroid if and only if $\nu_M$ satisfies the type $B$ strong exchange equations $\cF^{(B)}_n$.
\end{theorem}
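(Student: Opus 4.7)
The theorem is an iff statement, which I would prove by establishing each direction separately. The reverse direction turns out to be fairly direct, once one observes that for any pair $(I, J) \in M \times M$ with $|I \symDiff J| \geq 3$, there is an equation in $\cF^{(B)}_n$ containing $x_I \odot x_J$ as one of its monomials. The forward direction is the intricate one, and must leverage Lemma~\ref{lem:odd+antipode+exchange} and Lemma~\ref{lem:even+antipode+exchange} together with the strong exchange axiom applied iteratively.

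For the reverse direction, assume $\nu_M$ satisfies $\cF^{(B)}_n$. Given distinct $I, J \in M$, I would produce exchange data $a, b \in I \symDiff J$ as in \eqref{axiom:strong-symm-ex}. The cases $|I \symDiff J| \leq 2$ are immediate. If $|I \symDiff J| \geq 3$ is odd, the equation $f^{(B)}_{I, J}$ has $x_I \odot x_J$ as a monomial, activated since $I, J \in M$; satisfaction forces another monomial $x_{I \symDiff i} \odot x_{J \symDiff i}$ to be activated, supplying $a = b = i$. If $|I \symDiff J|$ is even, pick any $i \in I \symDiff J$ and use $f^{(B)}_{I \symDiff i, J \symDiff i}$: this equation's $k$-th monomial is $x_{I \symDiff \{i, k\}} \odot x_{J \symDiff \{i, k\}}$, and the choice $k = i$ yields precisely $x_I \odot x_J$; since this is activated, some $k \neq i$ also activates, supplying distinct $a = i, b = k$.

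For the forward direction, I would proceed by contradiction, assuming $M$ is strong but some $f^{(B)}_{I, J}$ has exactly one activated monomial. The easy odd subcase is when the activated monomial is $x_I \odot x_J$: Lemma~\ref{lem:odd+antipode+exchange} applied to $(I, J)$ immediately produces $i \in I \symDiff J$ with $I \symDiff i, J \symDiff i \in M$, a second activated monomial. In the remaining subcases (odd or even), the sole activated monomial corresponds to $(I_*, J_*) := (I \symDiff i^*, J \symDiff i^*)$. I would then apply strong exchange to $(I_*, J_*)$: the ``good'' outcomes (those where the exchange indices involve $i^*$) immediately yield either $(I, J) \in M$ or a second activated monomial $(I \symDiff j, J \symDiff j)$ with $j \neq i^*$. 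From $(I, J) \in M$ one then invokes Lemma~\ref{lem:even+antipode+exchange} in the even case (or Lemma~\ref{lem:odd+antipode+exchange} in the odd case) to extract a second activated monomial. The ``bad'' outcomes, where the exchange pair is shifted by two from $(I, J)$, must be handled by iterating strong exchange and the lemmas on pairs of intermediate elements of $M$ (at distances $|I \symDiff J|$ and $|I \symDiff J| - 1$), building up enough elements of $M$ to eventually force a second activated monomial.

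The main obstacle is the adversarial nature of the existential statements: Lemma~\ref{lem:odd+antipode+exchange} and strong exchange guarantee \emph{some} exchange but do not allow us to choose which. When the adversary keeps returning pairs that differ from $(I, J)$ by shift sets of intermediate size, no monomial of $f^{(B)}_{I, J}$ appears immediately, and the walk of ``reachable antipodes of $\cube{I}{J}$'' under single-element-shift exchanges could in principle avoid the designated shifts forever. A cleaner route is to work within the restriction $M \cap \cube{I}{J}$, which by Lemma~\ref{lem:face_reflection} inherits the strong exchange property within the subcube, and to argue the structural statement that among the antipodes of $\cube{I}{J}$ present in this restriction, the number with a ``designated'' shift (shift $0$ or $1$ in the odd case, shift $1$ in the even case) is never exactly one. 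Establishing this via carefully chained applications of the two lemmas -- in particular, using Lemma~\ref{lem:even+antipode+exchange} on pairs whose symmetric difference has size $|I \symDiff J| - 1$ to propagate exchange witnesses between adjacent antipodes -- is the key combinatorial step.
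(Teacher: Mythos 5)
Your reverse direction is correct and takes essentially the same approach as the paper: in both cases one passes to an equation of the form $f^{(B)}_{I\symDiff i, J\symDiff i}$ whose $k=i$ monomial is exactly $x_I\odot x_J$, which is active, so a second monomial must be. Splitting by parity as you do is a cosmetic rearrangement.

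The forward direction is where your proposal has a genuine gap. Your ``easy'' and ``good'' cases match the paper closely: the sole active monomial $x_I\odot x_J$ is dispatched by Lemma~\ref{lem:odd+antipode+exchange}, and when strong exchange on $(I_*,J_*)=(I\symDiff i^*,J\symDiff i^*)$ returns a pair containing $i^*$ you either exhibit a second shift-one term directly or land on $(I,J)\in M$ and appeal to Lemma~\ref{lem:even+antipode+exchange} (in the odd case $x_I\odot x_J$ itself already gives a second term; the extra call to Lemma~\ref{lem:odd+antipode+exchange} is harmless but unnecessary). But then you flag the ``bad outcome'' --- the exchange pair disjoint from $\{i^*\}$ --- and the proposal stops there: neither the iterative sketch nor the subcube route via Lemma~\ref{lem:face_reflection} is carried to completion, and your closing paragraph explicitly leaves ``the key combinatorial step'' unexecuted.

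Your worry is legitimate: the strong exchange property \eqref{axiom:strong-symm-ex} only guarantees existence of \emph{some} pair, not one containing a prescribed element. The paper's proof writes ``there exists some $j$ such that $I'\symDiff\{i,j\},\,J'\symDiff\{i,j\}\in M$,'' which amounts to a Brualdi-style exchange with $i$ pre-specified (allowing $j=i$); this is the analogue for strong $\Delta$-matroids of symmetric basis exchange for matroids and Wenzel's property for even $\Delta$-matroids. The cleanest way to finish your proposal is to prove that statement as a separate lemma (by an induction in the spirit of Lemmas~\ref{lem:odd+antipode+exchange} and~\ref{lem:even+antipode+exchange}); this eliminates the bad outcome outright and aligns your forward direction with the paper's. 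Identifying the crux is good, but it does not discharge the missing argument.
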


\begin{proof}
    Suppose $\nu_M$ satisfies $\cF^{(B)}_n$.
    For any $I, J \in M$ and $i \in I \symDiff J$, we show there exists $j \in I \symDiff J$ such that $I\symDiff \{i,j\},\, J \symDiff \{i,j\} \in M$.
    Consider the equation $f^{(B)}_{I',J'}$ where $I' = I \symDiff i$ and $J' = J \symDiff i$, i.e.
    \[
        f^{(B)}_{I',J'} \coloneqq \left(\bigoplus_{k \in I' \symDiff J'} x_{I' \symDiff k} \odot x_{J' \symDiff k} \right) \oplus \underbrace{x_{I'} \odot x_{J'}}_{|I' \symDiff J'| \equiv 1 \mod 2} \, ,
    \]
    where the last term only appears when $|I' \symDiff J'| \equiv 1 \mod 2$.
    As $I \symDiff J = I' \symDiff J'$, we have $i \in I' \symDiff J'$ and so $x_{I \symDiff i \symDiff i} \odot x_{J \symDiff i \symDiff i} = x_I \odot x_J$ is a term of $f^{(B)}_{I',J'}$.
    As $\nu_M(x_I \odot x_J) = 1$ and $\nu_M$ satisfies $f^{(B)}_{I',J'}$\,, there exists at least one other term of $f^{(B)}_{I',J'}$ on which $\nu_M$ evaluates to $1$.
    If there exists $j \in (I' \symDiff J') \setminus i$ such that $\nu_M(x_{I' \symDiff j} \odot x_{J' \symDiff j}) = 1$, then it follows $I \symDiff \{i,j\} ( = I' \symDiff j) $ and $J \symDiff \{i.j\} ( = J' \symDiff j)$ are in $M$ and so $M$ satisfies the strong exchange property.
    If $\nu_M(x_{I'} \odot x_{J'}) = 1$, then it follows that $M$ satisfies the strong exchange property with $i = j$. 

    Conversely, suppose $M$ is a strong $\Delta$-matroid.
    Fix some $I, J \subseteq [n]$. We show that $\nu_M$ satisfies $f^{(B)}_{I,J}$.
    Suppose there exists some $i \in I \symDiff J$ such that $I' \coloneqq I \symDiff i$ and $J' \coloneqq J \symDiff i$ are contained in $M$.
    This implies that $\nu_M(x_{I \symDiff i} \odot x_{J \symDiff i}) = 1$.
    By the strong exchange property, there exists some $j \in I' \symDiff J' = I \symDiff J$ such that $I'' \coloneqq I' \symDiff \{i,j\}$ and $J'' \coloneqq J' \symDiff \{i,j\}$ are both elements of $M$.
    If $j \neq i$, then $I'' = I \symDiff j$ and $J'' = J \symDiff j$ and hence $\nu_M(x_{I \symDiff j} \odot x_{J \symDiff j}) = 1$, proving $\nu_M$ satisfies $f^{(B)}_{I,J}$.
    If $j = i$, then $I'' = I$ and $J'' = J$ and hence $\nu_M(x_{I} \odot x_{J}) = 1$.
    If $I \symDiff J \equiv 1 \mod 2$, this proves that $\nu_M$ satisfies $f^{(B)}_{I,J}$.
    If $I \symDiff J \equiv 0 \mod 2$, \Cref{lem:even+antipode+exchange} implies there exists some distinct $k \in (I \symDiff J) - j$ such that $I \symDiff k$ and $J \symDiff k$ are elements of $M$.
    This implies there is some other term of $f^{(B)}_{I,J}$ with $\nu_M(x_{I \symDiff k} \odot x_{J \symDiff k}) = 1$, hence $\nu_M$ satisfies $f^{(B)}_{I,J}$.

    Finally suppose there exists no $i \in I \symDiff J$ such that $I' \coloneqq I \symDiff i$ and $J' \coloneqq J \symDiff i$ are contained in $M$.
    If $|I\symDiff J| \equiv 0 \mod 2$, these account for all of the terms of $f^{(B)}_{I,J}$ and so $\nu_M$ trivially satisfies it.
    If $|I\symDiff J| \equiv 1 \mod 2$, \Cref{lem:odd+antipode+exchange} implies that we can't have both $I$ and $J$ in $M$.
    This accounts for the final additional term of $f^{(B)}_{I,J}$, hence $\nu_M$ trivially satisfies $f^{(B)}_{I,J}$.
\end{proof}

\subsubsection{Even $\Delta$-matroids (Type $D$)}

We next consider even $\Delta$-matroids.
As these are a special class of $\Delta$-matroids, our alternative characterisation follows fairly straightforwardly from the analysis in \Cref{ssec:Bstrexch}.

\paragraph{Strong exchange equations (Type $D$)}

The \defn{type $D$ strong exchange equations} on $[n]$ are the collection of equations $\cF^{(D)}_n \coloneqq \set{f_{I,J}^{(D)}}{I, J \subseteq [n] , \, |I \symDiff J| \ge 4,\, |I \symDiff J| \equiv 0 \bmod 2}$ where
\begin{equation} \label{eq:SEE-D}
f^{(D)}_{I,J} \coloneqq \bigoplus_{i \in I \symDiff J} x_{I \symDiff i} \odot x_{J \symDiff i} \in \BB[X] \, . 
\end{equation}
Note that these are the subset of the type $B$ strong exchange equations where $I,J$ have the same parity.

The following result characterises even $\Delta$-matroids as the even set systems satisfying the type $D$ strong exchange equations.

\begin{proposition}\label{p:strexcheqstrop}
    Let $M$ be an even set system on $[n]$.
    Then $M$ is an even $\Delta$-matroid if and only if $\nu_M$ satisfies the type $D$ strong exchange equations $\cF^{(D)}_n$.
\end{proposition}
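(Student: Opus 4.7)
The plan is to reduce this proposition to Wenzel's exchange property via \Cref{t:evenIsStrong}, which tells us that $M$ is an even $\Delta$-matroid if and only if $M$ satisfies Wenzel's exchange property \eqref{axiom:wenzel-exchange}. So I would prove that an even set system $M$ satisfies Wenzel's exchange property if and only if $\nu_M$ satisfies $\cF^{(D)}_n$. The central idea is the substitution $I = A \symDiff a$, $J = B \symDiff a$, which converts the data $(A,B,a)$ of Wenzel's exchange into the data $(I,J)$ indexing the type $D$ strong exchange equations, with the witness $i_0 = a \in I \symDiff J = A \symDiff B$ recording that $A, B \in M$.

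For the forward direction, fix $I,J\subseteq [n]$ with $|I\symDiff J|\ge 4$ even. If there is no $i\in I\symDiff J$ with both $I\symDiff i$ and $J\symDiff i$ in $M$, then $\nu_M$ trivially satisfies $f^{(D)}_{I,J}$. Otherwise, pick such an $i_0$ and set $A = I\symDiff i_0, B = J\symDiff i_0 \in M$. Then $A\symDiff B = I\symDiff J$ and $i_0 \in A\symDiff B$, so Wenzel's exchange provides $b \in (A\symDiff B)\setminus i_0$ with $A\symDiff\{i_0,b\} = I\symDiff b \in M$ and $B\symDiff\{i_0,b\} = J\symDiff b \in M$. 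This is a second witness $i_1 = b \neq i_0$ for the tropical equation, so at least two terms evaluate to~$1$.

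For the backward direction, suppose $\nu_M$ satisfies $\cF^{(D)}_n$ and take $A,B\in M$ with $a\in A\symDiff B$. Since $M$ is even, $|A\symDiff B|$ is a positive even integer, so either $|A\symDiff B| = 2$ or $|A\symDiff B|\ge 4$. In the former case, write $A\symDiff B = \{a,c\}$; then $A\symDiff\{a,c\} = B \in M$ and $B\symDiff\{a,c\} = A \in M$, so $b := c$ fulfils Wenzel's exchange. In the latter case, set $I = A\symDiff a$, $J = B\symDiff a$, so that $f^{(D)}_{I,J}\in\cF^{(D)}_n$ and $i=a$ witnesses $I\symDiff a = A\in M$, $J\symDiff a = B\in M$. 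As $\nu_M$ satisfies this equation, there must exist another $i'\in (I\symDiff J)\setminus a = (A\symDiff B)\setminus a$ with $I\symDiff i' = A\symDiff\{a,i'\} \in M$ and $J\symDiff i' = B\symDiff\{a,i'\}\in M$, supplying the needed $b := i'$.

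I expect the only real subtlety to be the small-$|A\symDiff B|$ edge case: the lower bound $|I\symDiff J|\ge 4$ in the definition of $\cF^{(D)}_n$ means the tropical equations give no information when $|A\symDiff B| = 2$, so this case must be dispatched directly (and does so trivially). Everything else is essentially the observation that the substitution $I=A\symDiff a$, $J=B\symDiff a$ identifies Wenzel's ``second antipode'' with a second term of $f^{(D)}_{I,J}$ evaluating to~$1$ under~$\nu_M$.
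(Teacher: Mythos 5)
Your proof is correct. It does, however, take a genuinely different route from the paper.

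The paper proves this proposition as a quick corollary of the type $B$ result (Theorem~\ref{t:Bstrexch}): for an even set system, every term of $f^{(B)}_{I,J}$ with $|I\symDiff J|$ odd vanishes under $\nu_M$, so $\nu_M$ satisfies $\cF^{(B)}_n$ iff it satisfies $\cF^{(D)}_n$; then Theorem~\ref{t:Bstrexch} converts this to ``$M$ is a strong $\Delta$-matroid,'' and Theorem~\ref{t:evenIsStrong} (the equivalence of (\ref{i:delta+1}) and (\ref{i:delta+2})) converts ``strong even $\Delta$-matroid'' into ``even $\Delta$-matroid.'' That route inherits the work behind Theorem~\ref{t:Bstrexch}, namely the inductive Lemmas~\ref{lem:odd+antipode+exchange} and~\ref{lem:even+antipode+exchange}, which are needed to control the odd-cube equations. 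Your proof sidesteps type~$B$ entirely: you invoke the other leg of Theorem~\ref{t:evenIsStrong}, (\ref{i:delta+1}) $\Leftrightarrow$ (\ref{i:delta+3}), and show directly that Wenzel's exchange property is equivalent (for even set systems) to satisfying $\cF^{(D)}_n$ via the substitution $I=A\symDiff a$, $J=B\symDiff a$. This is a clean observation --- Wenzel's property is already phrased as ``for each $a$ there is a $b\ne a$,'' which aligns perfectly with the requirement that a second term of $f^{(D)}_{I,J}$ be nonzero --- and the only care needed is the $|A\symDiff B|=2$ case, which you handle correctly (the bound $|I\symDiff J|\ge 4$ means $\cF^{(D)}_n$ says nothing there, but Wenzel's exchange is automatic). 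The trade-off is which black box each approach leans on: the paper's proof reuses the type $B$ machinery it already develops, while yours is more self-contained for type~$D$ but relies on the (iii)-leg of Theorem~\ref{t:evenIsStrong}, i.e.\ on Wenzel's theorem from the literature. Both are legitimate; in the context of the paper, where Theorem~\ref{t:Bstrexch} is available anyway, the authors' reduction is shorter, but your argument would be the more natural choice if one wanted to establish the $D$ case first or independently.
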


\begin{proof}[Proof of {\Cref{p:strexcheqstrop}}]
    Let $M$ be an even set system.
    By definition, each $I \in M$ has the same parity, and so for any $I, J \in M$ we have $\order{I \symDiff J} \equiv 0 \mod 2$.
    In particular, $f_{I,J}^{(B)}$ are trivially satisfied by $\nu_M$ for $\order{I \symDiff J} \equiv 1 \mod 2$, and hence $\nu_M$ satisfies $\cF^{(B)}_n$ if and only if it satisfies $\cF^{(D)}_n$.
    By \Cref{t:Bstrexch} and using the fact that $M$ is even, $M$ is a strong even $\Delta$-matroid if and only if $\nu_M$ satisfies $\cF^{(D)}_n$.
    As every even $\Delta$-matroid is strong by \Cref{t:evenIsStrong}, we have our desired result.
\end{proof}

\begin{remark}
    The type $D$ strong exchange equations are a subset of the tropical \defn{Wick relations} \cite[Definition 4.2]{Rincon}, where one defines $f^{(D)}_{I,J}$ for all $I, J \subseteq [n]$ and not just those of the same parity.
    Adding in these extra equations allows one to drop the assumption from \Cref{p:strexcheqstrop} that $M$ is already an even set system.
\end{remark}

\begin{remark}
    A corollary of \Cref{t:Bstrexch} and \Cref{p:strexcheqstrop} is that if $M \subseteq [n]$ is a strong $\Delta$-matroid, then its restriction to the sets of odd and even parity are both even $\Delta$-matroids.
    Note that the converse is not true: \Cref{ex:delta+not+strong} gives an example of a non-strong $\Delta$-matroid whose odd and even restrictions are both even $\Delta$-matroids.

    To see a $\Delta$-matroid whose odd and even restrictions are not both even $\Delta$-matroids, consider the set system $M$ containing all subsets of $[4]$ except those in~$\scP_{4, 2}$.
    It is routine to check that this is a $\Delta$-matroid, but not strong.
    Furthermore, its even restriction is $\{\emptyset, 1234\}$, and hence not an even $\Delta$-matroid.
\end{remark}
\begin{remark}\label{rem:strongB_nisStrongD_n+1}
 One can construct a map from the vertices of the $n$-cube $Q_n$ to the even (or odd) demicube $\cD_{n+1}^+$ ($\cD_{n+1}^-$): the map to the even demicube is
 \[ 
 I \mapsto \begin{cases}
    I &  |I| \text{ even,} \\
    I \cup \{n+1\} & |I| \text{ odd}
 \end{cases}
 \]
 with the odd version defined similarly.
 \cite[Theorem 3.2]{Murota2021} shows that a set system is a type $B_n$ strong Coxeter matroid if and only if its image under this map is a type $D_{n+1}$ (strong) Coxeter matroid.
 This can also be deduced from the strong exchange equations: under this map, the type $B$ strong exchange equations \eqref{eq:SEE-B-1} and \eqref{eq:SEE-B-2} map to the type $D$ strong exchange equations \eqref{eq:SEE-D}.
\end{remark}

\subsection{Cross polytopes}
In this section, we consider the remaining minuscule parabolic subgroups in types $C$ and $D$.
In both cases, the resulting orbit polytopes will be \defn{cross polytopes}.
Given an orthogonal basis $\{e_1, \dots, e_n\}$ for a Euclidean vector space $\vectorSpace$, the $n$-cross polytope is the convex hull of the basis vectors and their negatives:
\[
\crossPoly_n = \conv(\pm e_i \st i \in [n]) \, .
\]
The edges of $\crossPoly_n$ are all parallel to $\pm e_i \pm e_j$ for some $i \neq j$.
Moreover, every pair of vertices in $\crossPoly_n$ are adjacent aside from pairs of the form $\{e_i, -e_i\}$: we call these the \defn{antipodes} of $\crossPoly_n$.

\subsubsection{$D_n$ cross polytope} \label{ssec:Dn-cross-poly}

Fix the root system $D_n$ and its corresponding Weyl group $W = W(D_n)$.
The classification of minuscule weights gives us a single outstanding case, where $J = S \setminus \{s_1\}$.
The standard parabolic subgroup generated by $J$ is $W(D_{n-1})$, hence Coxeter matroids of this type are subsets of $W^J = W(D_n)/W(D_{n-1})$.

Fix the choice of simple roots $\alpha_i = e_i - e_{i+1}$ for $1 \leq i \leq n-1$ and $\alpha_n = e_{n-1} + e_n$.
Under this choice of roots, we can choose $\omega_J = -e_1$ to satisfy \eqref{eq:omega_J}.
Then the ambient polytope $\ambientPoly$ is precisely the $n$-cross polytope $\crossPoly_n = \conv(\pm e_i \st i \in [n])$.

To construct tropical equations characterising Coxeter matroids in this type, we set the variables to be $X = \{x_i\}_{i=1}^n \cup \{x_{-i}\}_{i=1}^n$.
Subsets $M \subseteq W^J$ are is bijection with tuples $\nu_M \in \BB^X$ by the rule that $\nu_M(x_{\pm i}) = 1$ if and only if $A \in M$ for $A\cdot \omega_J = \pm e_{i}$.
We derive a single tropical equation from $\crossPoly_n$ by taking the tropical sum of its antipodes:
\begin{equation} \label{eq:SEE-D*}
    f^{(D)}_{\crossPoly_n} \coloneqq \bigoplus_{i=1}^n x_{i} \odot x_{-i} \in \BB[X] \, . 
\end{equation}

\begin{lemma}\label{l:strexchDncross}
Let $M \subseteq W^J$ where $W = W(D_n)$ and $J = S \setminus \{s_1\}$.
Then $M$ is a strong Coxeter matroid if and only if $\nu_M$ satisfies the tropical equation $f^{(D)}_{\crossPoly_n}$.
\end{lemma}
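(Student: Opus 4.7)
The plan is to unpack the strong exchange property directly from the geometry of the cross polytope $\crossPoly_n$, whose edge structure is very restrictive. The key observation to record first is that any two vertices of $\crossPoly_n$ which do not form an antipodal pair $\{e_i, -e_i\}$ are already connected by an edge of $\crossPoly_n$, parallel to a $D_n$-root of the form $\pm e_j \pm e_k$. Consequently, for any non-antipodal pair $v, w \in M$, the perpendicular bisector of $[v,w]$ is itself a reflecting hyperplane that swaps $v$ and $w$, so the strong exchange condition holds automatically for such pairs with no hypothesis on $M$ beyond containing $v$ and $w$.

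The only case requiring thought is a pair of antipodes $\{e_i, -e_i\} \subseteq M$. I would next enumerate the reflecting hyperplanes that separate $e_i$ from $-e_i$: since the $D_n$-roots are $\pm e_j \pm e_k$, these are precisely the hyperplanes perpendicular to $e_i - e_k$ or $e_i + e_k$ for some $k \ne i$. A short sign-tracking computation shows that each such reflection sends the unordered pair $\{e_i, -e_i\}$ to the unordered pair $\{e_k, -e_k\}$. Hence the strong exchange property for $\{e_i, -e_i\}$ is equivalent to the existence of some $k \ne i$ with $\{e_k, -e_k\} \subseteq M$, that is, to $M$ containing a \emph{second} antipodal pair.

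Combining these two cases, $M$ is a strong Coxeter matroid if and only if $M$ does not contain exactly one antipodal pair of $\crossPoly_n$. Since $f^{(D)}_{\crossPoly_n} = \bigoplus_{i=1}^n x_i \odot x_{-i}$ is satisfied by $\nu_M$ precisely when the number of indices $i$ with $\{e_i, -e_i\} \subseteq M$ is not equal to one, this is exactly the conclusion of the lemma. No serious obstacle is anticipated; the argument consists of the two-case geometric observation above, with the only nontrivial step being the sign bookkeeping for the reflections perpendicular to $e_i \pm e_k$.
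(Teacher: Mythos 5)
Your argument is correct and follows essentially the same route as the paper: both reduce to the two observations that non-antipodal pairs in $\crossPoly_n$ are exchanged by the reflection orthogonal to their edge direction, and that any reflection in $W(D_n)$ separating an antipode $\{e_i,-e_i\}$ carries it to another antipode $\{e_k,-e_k\}$. You phrase the conclusion as a single characterisation (``$M$ is strong iff it does not contain exactly one antipodal pair'') rather than splitting into the two implications, but the geometric content is identical.
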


\begin{proof}
Let $M \subseteq W^J$ be a strong Coxeter matroid and assume $P(M)$ contains the antipode $(e_i, -e_i)$.
By the strong exchange property, there exists some hyperplane $H$ in $W(D_n)$ separating $e_i$ and $-e_i$ such that $s_H(e_i)$ and $s_H(-e_i)$ are also in $P(M)$.
Considering the roots of $D_n$, this hyperplane is orthogonal to $e_i \pm e_j$ for some $j \neq i$, hence $s_H(e_i) = e_j$ and  $s_H(-e_i) = -e_j$ forms another antipode of $\crossPoly_n$.
It immediately follows that $\nu_M$ satisfies the tropical equation $f^{(D)}_{\crossPoly_n}$.

Conversely, suppose $\nu_M$ satisfies $f^{(D)}_{\crossPoly_n}$ and let $A, B \in M$.
If $(A\cdot \omega_J, B\cdot \omega_J)$ forms an antipode $(e_i, -e_i)$, the tropical equation $f^{(D)}_{\crossPoly_n}$ implies there exists another antipode $(e_j, -e_j)$ in $P(M)$.
This can be obtained from $(e_i, -e_i)$ via a reflection through the hyperplane orthogonal to $e_i - e_j$.
If $(A\cdot \omega_J, B\cdot \omega_J)$ do not form an antipode, they are adjacent vertices in $\crossPoly_n$ and reflecting them through the hyperplane orthogonal to $A\cdot \omega_J - B\cdot \omega_J$ exchanges them.
As such, $M$ satisfies the strong exchange property.
\end{proof}

\begin{corollary}
    Let $M \subseteq W^J$ be a Coxeter matroid where $W = W(D_n)$ and $J = S \setminus \{s_1\}$.
    Then $M$ is a strong Coxeter matroid.
\end{corollary}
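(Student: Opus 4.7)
The plan is to invoke \Cref{l:strexchDncross} to reduce the corollary to a single tropical condition: every Coxeter matroid $M$ of this type should satisfy $f^{(D)}_{\crossPoly_n} = \bigoplus_{i=1}^n x_i \odot x_{-i}$. I would argue by contrapositive, assuming that $\nu_M$ fails this equation, so that there is a unique index $i \in [n]$ for which both $e_i$ and $-e_i$ are vertices of $P(M)$, and then exhibit an edge of $P(M)$ whose direction is not parallel to any root of $D_n$. This contradicts the defining edge-condition for Coxeter matroids.

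The key step is to show that the segment $[e_i, -e_i]$ is itself a face of $P(M)$. To this end, I would set $M^+ = \{j \in [n]\setminus\{i\} : e_j \in P(M)\}$ and $M^- = \{j \in [n]\setminus\{i\} : -e_j \in P(M)\}$; these are disjoint by the uniqueness hypothesis on the antipodal pair. I would then construct a linear functional $\ell(x) = \sum_k c_k x_k$ by setting $c_i = 0$, $c_j = -1$ for $j \in M^+$, and $c_j = +1$ for $j \in M^-$ (with arbitrary choice on any remaining coordinates). On the vertex set of $P(M)$, this $\ell$ vanishes on $\{e_i, -e_i\}$ and is strictly negative on every other vertex, so the face of $P(M)$ on which $\ell$ attains its maximum is precisely $[e_i, -e_i]$. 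The direction of this face is $e_i$, and since every root of $D_n$ has the form $\pm e_j \pm e_k$ with $j \neq k$, this face is not parallel to a root.

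I do not anticipate a serious obstacle here: the argument is a one-shot linear-functional construction, and the disjointness of $M^+$ and $M^-$ inherited from the hypothesis is exactly what makes the signs on $\ell$ consistent. The only minor care required is the degenerate case $M^+ \cup M^- = \emptyset$, where $P(M)$ coincides with the segment $[e_i, -e_i]$; then $P(M)$ is itself a $1$-dimensional polytope whose unique edge has direction $e_i$, which again fails the root-parallel test and contradicts the Coxeter matroid property.
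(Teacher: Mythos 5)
Your proof is correct and matches the paper's argument almost exactly: both reduce to \Cref{l:strexchDncross} and then construct a linear functional (the paper's vector $a$ with $a_i=0$, $a_j=-1$ if $e_j\in P(M)$, $a_j=+1$ otherwise) whose maximum face of $P(M)$ is the segment $[e_i,-e_i]$, an edge not parallel to any $D_n$ root. Your separate treatment of the degenerate case $M^+\cup M^-=\emptyset$ is unnecessary since the functional argument already covers it, but it is harmless.
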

\begin{proof}
    Let $M \subseteq W^J$ and assume that $\nu_M$ does not satisfy $f^{(D)}_{\crossPoly_n}$.
    Hence $P(M)$ contains some antipode, say $(e_1,-e_1)$, and contains at most one of the vertices $e_i$ and $-e_i$ for each $2 \leq i \leq n$.
    Define the vector $a \in \RR^n$ where $a_1 = 0$ and
    \[
        a_i = \begin{cases}
            -1 & e_i \in P(M) \\
            +1 & e_i \notin P(M)
        \end{cases}
        \quad\text{ for all } 2 \leq i \leq n \, .
    \]
    By construction, the linear functional $\ell(x) = a \cdot x$ takes the value $0$ on the antipode $(e_1, -e_1)$ and $-1$ on all other vertices of $P(M)$.
    As such, $(e_1, -e_1)$ is an edge of $P(M)$ that is not parallel to a root direction, hence $M$ is not a Coxeter matroid.
\end{proof}

\subsubsection{$C_n$ cross polytope}

Fix the root system $C_n$ and its corresponding Weyl group $W = W(C_n)$.
The first fundamental weight is minuscule.
In this case, we have $J = S\setminus\{ s_1\}$ generates the parabolic Weyl group $W(C_{n-1})$, hence Coxeter matroids of this type are subsets of $W^J = W(C_n)/W(C_{n-1})$.

Fix the choice of simple roots $\alpha = e_i - e_{i+1}$ for $1 \leq i \leq n-1$ and $\alpha_n = 2e_n$.
Under this choice of roots, we can choose $\omega_J = -e_1$ again to satisfy \eqref{eq:omega_J}.
Then the ambient polytope $\ambientPoly$ is also the $n$-cross polytope $\crossPoly_n = \conv(\pm e_i \st i \in [n])$.

Despite having the same ambient polytope, it is worth noting that the $W(C_n)$ setting is slightly different from the $W(D_n)$ cross polytope.
This is due to the fact that the Weyl group $W(C_n)$ and its set of roots $\rootSystem(C_n)$ is strictly bigger than $W(D_n)$ and its roots $\rootSystem(D_n)$.
In particular, the vertices $e_i$ and $-e_i$ can be exchanged by a reflection through the hyperplane orthogonal to $2e_i \in \rootSystem(C_n)$, hence antipodes of $\crossPoly_n$ are no obstruction unlike for $D_n$.
This is displayed in Figure~\ref{fig:crossPoly_reflection}.
In fact, we find every subset of $W(C_n)^J$ is a strong Coxeter matroid and hence have no tropical equations to impose.

\begin{lemma}\label{l:strexchCncross}
    Let $M \subseteq W^J$ where $W = W(C_n)$ and $J = S \setminus \{s_1\}$.
    Then $M$ is a strong Coxeter matroid.
\end{lemma}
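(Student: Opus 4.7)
The plan is to show directly that every pair of distinct vertices of the cross polytope $\crossPoly_n$ is exchanged by a single reflection whose reflecting hyperplane lies in the Coxeter arrangement of $W(C_n)$. Because the paper has already noted that the strong exchange property is sufficient for a subset of $W^J$ to be a Coxeter matroid, this immediately yields both that $M$ is a Coxeter matroid and that it is strong, irrespective of which subset $M \subseteq W^J$ we take.

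Concretely, I would take two distinct cosets $A,B \in M$ corresponding to vertices $A\cdot\omega_J = \varepsilon e_i$ and $B\cdot\omega_J = \varepsilon' e_j$ of $\crossPoly_n$, with $\varepsilon,\varepsilon' \in \{\pm 1\}$, and split into two cases. In the non-antipodal case $i \neq j$, the difference $\varepsilon e_i - \varepsilon' e_j$ is a root of $C_n$, so the reflection $s_H$ through the hyperplane orthogonal to this root separates and exchanges $A\cdot\omega_J$ and $B\cdot\omega_J$; in particular $s_H A = B \in M$ and $s_H B = A \in M$. In the antipodal case $i = j$ and $\varepsilon = -\varepsilon'$, I would invoke the crucial distinguishing feature of type $C_n$ emphasised just before the lemma: since $\alpha_n = 2e_n$ is a simple root, the Weyl group orbit of $\alpha_n$ contains $\pm 2e_i$ for every $i$, so the hyperplane orthogonal to $2e_i$ is a reflecting hyperplane of $W(C_n)$. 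The corresponding reflection swaps $e_i$ and $-e_i$, so again $s_H A = B \in M$ and $s_H B = A \in M$.

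In both cases the reflection $s_H$ separates $A$ and $B$ and sends $\{A,B\}$ to itself, so the images automatically lie in $M$ without any hypothesis on $M$. Thus $M$ satisfies the strong exchange property \eqref{eq:SEP}, and hence is a strong Coxeter matroid. The main (and essentially only) substantive point is the contrast with \Cref{l:strexchDncross}: in type $D_n$ the root $2e_i$ is absent, so an antipodal pair $(e_i,-e_i)$ cannot be exchanged by a single reflection and the tropical equation $f^{(D)}_{\crossPoly_n}$ genuinely constrains $M$, whereas in type $C_n$ this obstruction disappears and no tropical equations are needed.
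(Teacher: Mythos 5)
Your argument is correct and matches the paper's proof: both handle the antipodal case via reflection in the hyperplane orthogonal to $2e_i$ (exploiting the long root of $C_n$) and the non-antipodal case via reflection in the hyperplane orthogonal to $\varepsilon e_i - \varepsilon' e_j$. The only cosmetic difference is that the paper delegates the non-antipodal case to \Cref{l:strexchDncross} rather than spelling it out as you do.
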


\begin{proof}
    Let $M \subseteq W^J$ and take any pair of cosets $A,B \in M$.
    If $(A\cdot \omega_J, B\cdot \omega_J)$ forms an antipode $(e_i,-e_i)$ in $\crossPoly_n$, we can reflect through the hyperplane orthogonal to $2e_i$ to exchange them.
    The case where $(A\cdot \omega_J, B\cdot \omega_J)$ do not form an antipode is identical to \Cref{l:strexchDncross}.
\end{proof}

\begin{figure}[hbt]
    \centering
    \includegraphics[width=0.5\linewidth]{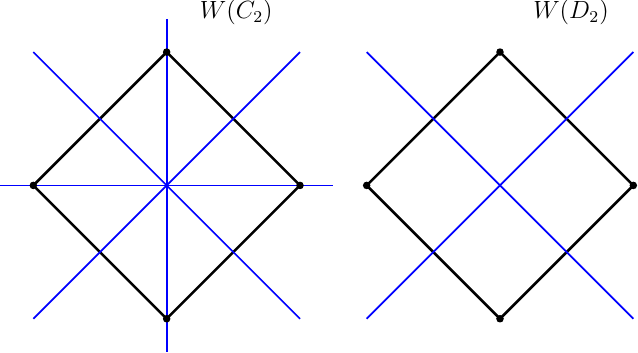}
    \caption{The cross polytope $\crossPoly_2$ and the hyperplanes associated to $W(C_2)$ and $W(D_2)$.
    In $W(C_2)$, we can exchange vertices of an antipode via a reflection, while no such reflection exists in $W(D_2)$.}
    \label{fig:crossPoly_reflection} 
\end{figure}

\begin{remark}
A trivial corollary of \Cref{l:strexchCncross} is that for $W = W(C_n)$ and $J = S \setminus \{s_1\}$, every Coxeter matroid $M \subseteq W^J$ is also a strong Coxeter matroid.
\end{remark}

\subsection{The exceptional Weyl groups}
\label{ssec:the-exceptional-Weyl-groups}
We will consider the $E_6$ and $E_7$ cases in this subsection.
We will give an explicit description of these root systems as subsystems of $E_8$.
Fix $\vectorSpace \cong \RR^8$ with orthonormal basis $\{e_1, \dots, e_8\}$.
We write the root system of $E_8$ as
\begin{align*}
\rootSystem(E_8) 
&= \set{\pm e_i \pm e_j }{ i \neq j} \cup \set{ \tshalf\left(\pm e_1 \pm \cdots \pm e_8\right) }{ \text{even number of minuses }} \, .
\end{align*}
Let $\rho = \half\left(\sum_{i=1}^8 e_i\right)$.
We can write $\rootSystem(E_7)$ as the subsystem of roots orthogonal with $\rho$, i.e.
\begin{align}
    \rootSystem(E_7) &= \set{ \alpha \in \rootSystem(E_8) }{ \scaledInnerProd{\alpha}{\rho} = 0} \nonumber \\ 
    &= \set{\alpha_{ij} \coloneqq e_i - e_j }{ 1 \leq i,j \leq 8,  i \neq j} \cup \set{ \beta_{I} \coloneqq \half\left(\sum_{i\in I}e_i - \sum_{j\in [8]\setminus I}e_j\right) }{ I \in \binom{[8]}{4} } \, . \label{eq:E7}
\end{align}
We denote these two families of roots by $\alpha_{ij}$ for $i\neq j$ and $\beta_I$ for $I \in \binom{[n]}{8}$.
Using this notation, we obtain a combinatorial description of $\rootSystem(E_6)$ as the subsystem of $\rootSystem(E_7)$ of roots orthogonal with $e_7 + e_8$:
\begin{align}
\rootSystem(E_6) &= \set{ \alpha \in \rootSystem(E_7) }{ \scaledInnerProd{\alpha}{e_7+e_8} = 0} \nonumber \\
&= \set{\alpha_{ij} \in \rootSystem(E_7) }{ |\{i,j\} \cap \{7,8\}| \neq 1 } \cup \set{ \beta_I \in \rootSystem(E_7) }{ |I \cap \{7,8\}| = 1} \, . \label{eq:E6}
\end{align}

The previous construction is slightly different than the standard construction of Bourbaki in~\cite{Bourbaki2007b} where $E_7$ is the subsystem of $\rootSystem(E_8)$ orthogonal to $e_7 + e_8$ and where $E_6$ is the subsystem of $\rootSystem(E_7)$ orthogonal to $e_6 + e_8$.
The construction we use is due to Kac \cite{kac+lecture+notes}, which we have chosen as the indexing works better with our polytopes.

\subsubsection{$E_6$ minuscule case}

Fix the root system $E_6$ and its corresponding Weyl group $W = W(E_6)$.
The classification of minuscule weights, \Cref{eq:minuscule}, gives us two cases: $J= S \setminus \{s_1\}$ and $J= S \setminus \{s_6\}$.
From the $E_6$ Dynkin diagram, we see that $\maximalParabolic{1} \cong \maximalParabolic{6} \cong D_5$ and moreover that $\maximalCosets{1} \cong \maximalCosets{6} \cong W(E_6)/W(D_5)$ as sets with a $W$-action, hence we do not need to distinguish between the cases. 

The ambient polytope $\ambientPoly$ associated to $W^J$ is the polytope $2_{21}$.
Following Coxeter~\cite{Coxeter1940}, this is the convex hull of the following 27 points in $\vectorSpace$:
\begin{align}\label{eq:explicit+2_21}
a_i = 2e_i + 2e_7 \, , \; b_i = 2e_i + 2e_8 \quad\text{ for } 1 \leq i \leq 6 \, , \quad
c_{ij} = \left(\sum_{k=1}^8 e_k\right) - 2e_i - 2e_j \quad \text{ for } 1 \leq i < j \leq 6 \, .
\end{align}
Alternatively, it is straightforward to verify one could fix $w_J$ equal to any of these points and take the convex hull of its $W(E_6)$ orbit.
It is a 6-dimensional polytope due to the relations $x_1 + \cdots + x_6 = x_7 + x_8 = 2$.

We will give a number of facts about $2_{21}$ without proof: these either can be found in the articles of Coxeter~\cite{Coxeter1940,Coxeter1988} or are straightforward to manually check using this explicit representation.
Recall from \Cref{rem:combinatorial+equivalence} that any other choice of $\ambientPoly$ is combinatorially equivalent and hence has the same structure.

The 1-skeleton of $2_{21}$ is the Sch\"afli graph, in which each vertex is adjacent to sixteen other vertices, and at distance two from the remaining ten vertices.
Its facets are $72$ 5-simplices and $27$ cross polytopes $\crossPoly_5$, the latter of which are in one-to-one correspondence with the vertices of $2_{21}$.
Explicitly, each cross polytope is the convex hull of vertices non-adjacent to some fixed vertex of $2_{21}$, hence for each $A \in W^J$ we have a cross polytope
\[
\crossPoly(A) \coloneqq \conv(B\cdot \omega_J \st A\cdot \omega_J, B\cdot \omega_J \text{ distinct and non-adjacent}) \subseteq \vectorSpace \, .
\]
Furthermore, each $\crossPoly_5$ is the orbit polytope of a vertex in $2_{21}$ 
under a conjugate of the standard $W(D_5)$ subgroup of~$W(E_6)$, hence we can speak of antipodes within them following the definition in \Cref{ssec:Dn-cross-poly}.
In the explicit coordinates \eqref{eq:explicit+2_21}, these cross polytopes are:
\begin{align*}
    \crossPoly(a_i) &= \conv\left(\set{b_j }{ j \neq i} \cup \set{c_{ij} }{ j \neq i}\right)  \\
    \crossPoly(b_i) &= \conv\left(\set{a_j }{ j \neq i} \cup \set{c_{ij} }{ j \neq i}\right)  \\
    \crossPoly(c_{ij}) &= \conv\left(\{a_i,a_j,b_i,b_j\} \cup \set{c_{k\ell} }{ k,\ell \neq i,j}\right)\, .
\end{align*}
where the antipodes of $\crossPoly(a_i)$ are the sets given by $\set{b_j, c_{ij}}{j \neq i}$, the antipodes of $\crossPoly(b_i)$ are the sets given by $\set{a_j, c_{ij}}{j \neq i}$, and the antipodes of $\crossPoly(c_{ij})$ are the sets given by $\{a_i, b_j\}, \{a_j, b_i\}$, and $\{c_{k\ell},c_{k'\ell'} \}$ for all $\{k,k',\ell,\ell'\} = [6] \setminus \{ij\}$.

\begin{figure}
\centering
\includegraphics[scale=1]{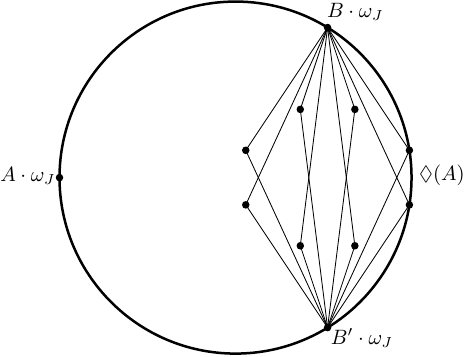}
\caption{A schematic of $2_{21}$. Given some fixed vertex $A\cdot\omega_J$, the cross polytope $\crossPoly(A)$ is the collection of vertices non-adjacent to it. One such antipode of $\crossPoly(A)$ is the pair of non-adjacent points $B\cdot\omega_J,B'\cdot\omega_J$. The remaining 16 vertices not pictured are adjacent to $A\cdot\omega_J$.}
\label{fig:2_21+polytope}
\end{figure}

To construct tropical equations characterising Coxeter matroids of $W(E_6)^J$, we set the variables $X = \set{x_A}{A \in W(E_6)^J}$ to be indexed by cosets in $W(E_6)^J$, or equivalently vertices of $2_{21}$.
As such, subsets $M \subseteq W^J$ are naturally in bijection with tuples $\nu_M \in \BB^X$ by the rule that $\nu_M(x_{A}) = 1$ if and only if $A \in M$.
We obtain a system of 27 tropical equations by tropically summing variables indexed by the antipodes of the 27 $D_5$ cross polytopes.

\paragraph{Strong exchange equations (Type $E_6$)}

The \defn{type $E_6$ strong exchange equations} are the equations in the collection
$\cF^{(E)}_6 \coloneqq \set{f_A }{ A \in W^J}$ where
\begin{align} \label{eq:SEE-E6}
f_A = \bigoplus_{\cA} x_B \odot x_{B'} \, , \quad \cA = \set{(B,B')}{(B\cdot\omega_J,B' \cdot \omega_J) \text{ antipode in } \crossPoly(A)} \, .
\end{align}
In the explicit coordinates \eqref{eq:explicit+2_21}, this is the collection of 27 equations:
\begin{align}\label{eq:strongexEqE6}
f_{a_i} &= \bigoplus_{j\neq i} x_{b_j} \odot x_{c_{ij}} \\
f_{b_i} &= \bigoplus_{j\neq i} x_{a_j} \odot x_{c_{ij}} \\
f_{c_{ij}} &= x_{a_i} \odot x_{b_j} \oplus x_{a_j}\odot x_{b_i} \oplus x_{c_{k\ell}} \odot x_{c_{k'\ell'}} \oplus x_{c_{k\ell'}} \odot x_{c_{k'\ell}} \oplus x_{c_{kk'}} \odot x_{c_{\ell\ell'}} 
\end{align}

Our main result of this subsection is that the type $E_6$ strong exchange equations $\cF^{(E)}_6$ characterise strong Coxeter matroids of $W(E_6)^J$.

\begin{proposition} \label{prop:E_6+matroid}
Let $M \subseteq W^J$ where $W = W(E_6)$ and $J = \maximalRoots{1}$ or $\maximalRoots{6}$.
Then $M$ is a strong Coxeter matroid if and only if $\nu_M$ satisfies the type $E_6$ strong exchange equations $\cF^{(E)}_6$.
    \end{proposition}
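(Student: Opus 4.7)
My plan is to prove both directions by reducing to the well-understood structure of the ambient polytope $2_{21}$, using that its two kinds of facets are $5$-simplices and $5$-cross polytopes $\crossPoly(A)$, one for each vertex $A\cdot\omega_J$, and that the stabiliser of $A\cdot\omega_J$ in $W(E_6)$ is a copy of $W(D_5)$ acting on $\crossPoly(A)$ in the standard way. A key preliminary observation is that any non-adjacent pair $(B,B')$ of vertices of $2_{21}$ lies inside a unique cross polytope facet $\crossPoly(A)$ as an antipode: by $W(E_6)$-symmetry it suffices to check this for $B=a_1$ (reducing to the two $W(D_5)$-orbit representatives $B'=b_2$ and $B'=c_{12}$), where a direct computation with \eqref{eq:explicit+2_21} yields $A=c_{12}$ and $A=b_2$ respectively.

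For the forward direction, assume $M$ is strong and fix $A\in W^J$. If no antipode of $\crossPoly(A)$ has both members in $M$, the equation $f_A$ is trivially satisfied. Otherwise take an antipode $(B,B')$ with $B,B'\in M$ and apply the strong exchange property to obtain a reflecting hyperplane $H$ separating $B\cdot\omega_J,B'\cdot\omega_J$ with $s_HB,s_HB'\in M$. Because $(B,B')$ is non-adjacent, the only facet of $\ambientPoly$ containing both is $\crossPoly(A)$; so the corresponding face of $P(M)$ (cut out by the supporting hyperplane of $\crossPoly(A)$) contains $B\cdot\omega_J,B'\cdot\omega_J$, and \Cref{lem:face_reflection} forces $s_HB\cdot\omega_J,s_HB'\cdot\omega_J$ into $\crossPoly(A)$. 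Since $H$ separates $B\cdot\omega_J,B'\cdot\omega_J$ it passes through their midpoint $c_A$, the centre of $\crossPoly(A)$, so $s_H$ fixes $c_A$ and the image $(s_HB,s_HB')$ is again a diagonal of $\crossPoly(A)$, i.e.\ another antipode.

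The step I see as the main obstacle in the forward direction is ruling out the degenerate possibility that $s_H$ simply swaps $B$ and $B'$, giving no new term of $f_A$. This requires showing $B\cdot\omega_J-B'\cdot\omega_J$ is not parallel to any root of $E_6$; I would verify this by computing the length of an antipodal diagonal in a single case, e.g.\ $b_2-c_{12}=e_1+3e_2-e_3-e_4-e_5-e_6-e_7+e_8$ has squared length $16$ while roots of $E_6$ have squared length $2$, and then invoking the transitivity of $W(E_6)$ on antipodes of cross-polytope facets.

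For the backward direction, assume $\nu_M$ satisfies $\cF^{(E)}_6$ and take distinct $B,B'\in M$. If they are adjacent in $\ambientPoly$ then their difference is parallel to a root $\alpha$ and $s_\alpha$ already exchanges them. Otherwise the observation above gives a unique $A$ such that $(B,B')$ is antipodal in $\crossPoly(A)$; since $\nu_M$ satisfies $f_A$ and $\nu_M(x_B\odot x_{B'})=1$, there is a second antipode $(B'',B''')\subseteq M$ of $\crossPoly(A)$. Because the stabiliser of $A\cdot\omega_J$ is the copy of $W(D_5)$ acting naturally on $\crossPoly(A)\cong\crossPoly_5$, transitivity of $W(D_5)$ on antipodes of $\crossPoly_5$ via reflections $s_{e_i-e_j}$ (exactly as in the proof of \Cref{l:strexchDncross}) supplies a reflection $s_H\in W(E_6)$ sending $\{B,B'\}$ to $\{B'',B'''\}$, and this reflection separates $B$ and $B'$ by the same computation as in the cross-polytope argument.
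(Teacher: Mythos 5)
Your proposal is correct and follows essentially the same approach as the paper's proof: the adjacent case is dispatched by a root reflection, and in the non-adjacent case the antipode lies in a unique cross-polytope facet, with \Cref{lem:face_reflection} controlling the forward direction and a reduction to the $W(D_5)$ analysis of \Cref{l:strexchDncross} handling the converse. The only differences are cosmetic — you make explicit the uniqueness of the cross-polytope facet containing a non-adjacent pair, and you verify by the squared-length computation ($16$ vs.\ $2$) that an antipodal diagonal is not parallel to a root, a fact the paper simply asserts; these are sound additions but not changes of method.
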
 
    \begin{proof}
    Suppose $\nu_M$ satisfies $\cF_6^{(E)}$.
    We show that for any $A,B \in M$, there exists a hyperplane $H$ separating $A, B$ such that such that $s_H A, s_H B$ are also in $M$.
    If $A\cdot\omega_J,B\cdot\omega_J$ are adjacent in $2_{21}$, then we can take $H$ to be the hyperplane orthogonal to the root direction $A\cdot\omega_J - B\cdot\omega_J$. Hence we assume they are not adjacent.
    Then there exists some cross polytope $\crossPoly$ in which $\{A\cdot\omega_J, B\cdot\omega_J\}$ is an antipode.
    Moreover as $\nu_M$ satisfies $\cF_6^{(E)}$, there exists another antipode $\{A'\cdot\omega_J, B'\cdot\omega_J\}$ of $\crossPoly$ such that $A',B' \in M$.
    As $\crossPoly$ is the orbit polytope of $W(D_5)$, we can restrict to $D_5$ and use the same analysis as in \Cref{l:strexchDncross} to show there exists a hyperplane $H$ separating $A,B$ such that $s_H A = A'$ and $s_H B = B'$.

Conversely, suppose $M$ is a strong $E_6$-matroid and fix some $D_5$ cross polytope $\crossPoly$.
As $\crossPoly$ is a facet of $2_{21}$, the restriction $F \coloneqq P(M) \cap \crossPoly$ is a face of $P(M)$.
Suppose both $P(M)$ and $\crossPoly$ contain the antipode $\{A\cdot\omega_J,B\cdot\omega_J\}$. Then $\{A\cdot\omega_J,B\cdot\omega_J\}$ is contained in the face $F$.
By the strong exchange property, there exists some separating hyperplane $H$ such that $\{(s_H A)\cdot \omega_J, (s_H B)\cdot \omega_J \}$ are also points in $P(M)$.
Applying \Cref{lem:face_reflection}, we see that $\{(s_H A)\cdot \omega_J, (s_H B)\cdot \omega_J \}$ must also be contained in~$F$, and is another antipode of $\crossPoly$.
Moreover, they must be distinct as there are no roots parallel to antipodes of $\crossPoly$.
\end{proof}

\subsubsection{$E_7$ minuscule case}

Fix the root system $E_7$ and its corresponding Weyl group $W = W(E_7)$.
The classification of minuscule weights, \Cref{eq:minuscule}, gives us only one case, that where $J= S \setminus \{s_7\}$.
As such, Coxeter matroids of this type are subsets $M \subseteq W^J = W(E_7)^{S \setminus \{s_7\}}$.

The ambient polytope $\ambientPoly$ associated to $W^J$ is the polytope $3_{21}$.
Following Coxeter~\cite{Coxeter1988}, one such realisation of $3_{21}$ is as the convex hull of the following 56 points in $\vectorSpace$:
\begin{align} \label{eq:explicit+3_21}
a_{ij} = 4 e_i + 4 e_j - \sum_{k=1}^8 e_k \quad \text{ and } -a_{ij} \quad \text{ for } 1 \leq i < j \leq 8 \, .    
\end{align}
Alternatively, one can fix $\omega_J$ equal to any of these points and take the convex hull of its $W(E_7)$ orbits.
It is a $7$-dimensional polytope as every point has coordinate sum equal to zero.

As with $2_{21}$, we will give a number of facts about $3_{21}$ without proof: these can be found in~\cite{Coxeter1988} or manually checked using this explicit representation.

The 1-skeleton of $3_{21}$ is the Gosset graph, whose properties are listed in~\cite[Example 2]{Haemers1996}.
In particular, every vertex has $27$ neighbours, a further $27$ vertices at graph distance two away and one final vertex at graph distance three away.
We call this final vertex the \defn{antipode} of $A\cdot \omega_J$ in $3_{21}$, and denote it $A\cdot(-\omega_J)$.
In the explicit coordinates from~\eqref{eq:explicit+3_21}, the distance between vertices $v,w$ is given by $d(v,w)= (v,w)/8$, hence the antipode of $a_{ij}$ is $-a_{ij}$.

The facets of $3_{21}$ are 576 6-simplices and $126$ 6-cross polytopes $\crossPoly_6$.
Choosing two vertices $A\cdot \omega_J, B \cdot \omega_J$ that are distance two apart determines a unique $\crossPoly_6$ that contains both.
The remaining ten vertices of this $\crossPoly_6$ are those adjacent to both $A\cdot \omega_J$ and $B \cdot \omega_J$: a schematic of this structure is given in \Cref{fig:3_21+polytope}.
As with the $5$-cross polytopes of $2_{21}$, each $\crossPoly_6$ is the orbit polytope of a vertex in $3_{21}$ under a conjugate of the standard $W(D_6)$ subgroup of $W(E_7)$, hence we can also speak of antipodes within them as in \Cref{ssec:Dn-cross-poly}.

\begin{figure}
    \centering
    \includegraphics[width=0.5\linewidth]{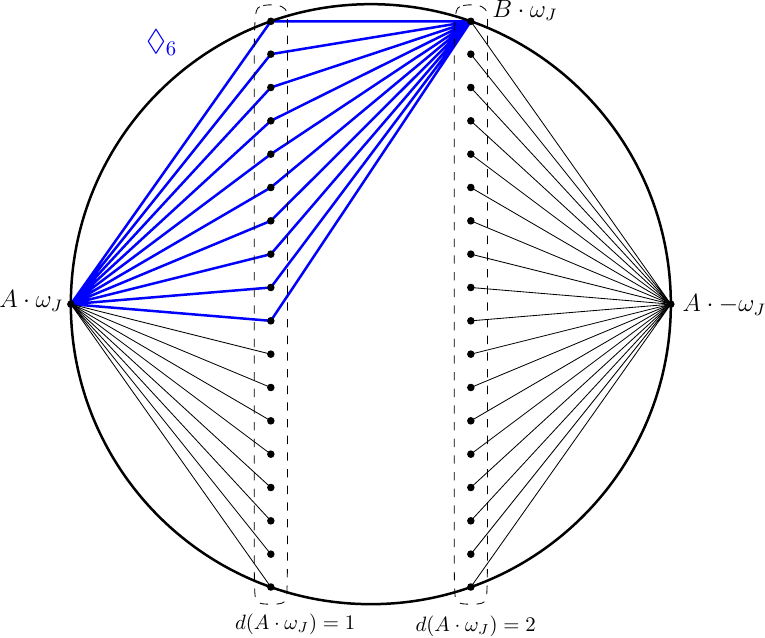}
    \caption{A schematic of $3_{21}$. Given some fixed vertex $A\cdot \omega_J$, there are 27 vertices distance one away, 27 vertices distance two away and a unique vertex distance three away, the antipode $A \cdot -\omega_J$.
    Fixing some vertex $B\cdot \omega_J$ distance two away determines a $6$-cross polytope $\lozenge_6$ whose remaining vertices are those ten adjacent to both $A\cdot \omega_J$ and $B\cdot \omega_J$.}
    \label{fig:3_21+polytope}
\end{figure}

To construct tropical equations characterising Coxeter matroids of $W(E_7)^J$, we set the variables $X = \set{x_A}{A \in W(E_7)^J}$ to be indexed by cosets in $W(E_7)^J$, or equivalently vertices of $3_{21}$.
As such, subsets $M \subseteq W^J$ are naturally in bijection with tuples $\nu_M \in \BB^X$ by the rule that $\nu_M(x_{A}) = 1$ if and only if $A \in M$.
We obtain a system of 126 tropical equations by tropically summing variables indexed by the antipodes of the 126 $D_6$ cross polytopes.
We also obtain a single extra equation by tropically summing variables indexed by the antipodes of $3_{21}$.

\paragraph{Strong exchange equations (Type $E_7$)}

The \defn{type $E_7$ strong exchange equations} are the equations in the collection
$\cF^{(E)}_7 \coloneqq \set{f_\crossPoly }{ \crossPoly \text{ facet of }3_{21} } \cup \{f_{3_{21}}\}$ where
\begin{align} \label{eq:SEE-E7}
    f_\crossPoly &= \bigoplus_{\cA(\crossPoly)} x_B \odot x_{B'} \, , & \cA(\crossPoly) &= \set{(B,B')}{(B\cdot\omega_J,B' \cdot \omega_J) \text{ antipode in } \crossPoly} \, , \\
    f_{3_{21}} &= \bigoplus_{\cA(3_{21})} x_B \odot x_{B'} \, , & \cA(3_{21}) &= \set{(B,B')}{(B\cdot\omega_J,B' \cdot \omega_J) \text{ antipode in } 3_{21}} \, .
\end{align}

    \begin{proposition} \label{prop:E_7+matroid}
Let $M \subseteq W^J$ where $W = W(E_7)$ and $J = \maximalRoots{7}$.
Then $M$ is a strong Coxeter matroid if and only if $\nu_M$ satisfies the type $E_7$ strong exchange equations $\cF^{(E)}_7$.
    \end{proposition}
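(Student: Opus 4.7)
The plan is to follow the template of \Cref{prop:E_6+matroid}, augmented with a new case handling antipodes of the ambient polytope $3_{21}$ itself. The Gosset graph has diameter $3$ rather than $2$ as in $2_{21}$, and the additional equation $f_{3_{21}}$ is precisely what is needed to encode the behaviour of the $28$ antipodal pairs at distance $3$ in $3_{21}$.

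For the $(\Leftarrow)$ direction, given distinct $A, B \in M$, the proof splits into cases according to the Gosset distance $d = d(A\cdot\omega_J, B\cdot\omega_J)$. When $d=1$, the difference $A\cdot\omega_J - B\cdot\omega_J$ is root-parallel and the associated reflection witnesses strong exchange directly. When $d=2$, the pair forms an antipode in a unique $\crossPoly_6$ facet $\crossPoly$ of $3_{21}$; the equation $f_\crossPoly$ supplies a second antipode pair $\{A',B'\} \in M$ of $\crossPoly$, and since $\crossPoly$ is a $W(D_6)$-orbit polytope, the argument of \Cref{l:strexchDncross} applied inside $\crossPoly$ produces a reflection in $W(D_6) \subseteq W(E_7)$ sending $\{A,B\}$ to $\{A',B'\}$. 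When $d=3$, we have $B\cdot\omega_J = -A\cdot\omega_J$, and $f_{3_{21}}$ supplies a second antipode pair $\{A',B'\}$. An inner-product computation in the coordinates~\eqref{eq:explicit+3_21} shows that one of $A'\cdot\omega_J, B'\cdot\omega_J$ is at Gosset distance $1$ from $A\cdot\omega_J$; WLOG the former, so $A\cdot\omega_J - A'\cdot\omega_J$ is parallel to a root $\alpha$ and $s_\alpha$ sends the antipode pair $\{A,B\}$ to $\{A',B'\}$. The hyperplane $\alpha^\perp$ separates $A$ from $B$ provided $A\cdot\omega_J$ is not itself root-parallel, which is directly visible in~\eqref{eq:explicit+3_21}.

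For the $(\Rightarrow)$ direction, I would invoke \Cref{lem:face_reflection} as in the $E_6$ argument. Fix any $\crossPoly_6$ facet $\crossPoly$; the intersection $F = P(M) \cap \crossPoly$ is a face of $P(M)$, so whenever an antipode $\{A,B\}$ of $\crossPoly$ is in $M$, strong exchange furnishes $s_H A, s_H B \in M$ and the lemma places them in $F \subseteq \crossPoly$. Since antipode differences in $\crossPoly$ (e.g.\ $a_{12}-a_{34}$) are not parallel to any root in either family of~\eqref{eq:E7}, and the $W$-action preserves root-parallelism, the image pair $\{s_H A, s_H B\}$ is again an antipode of $\crossPoly$, and a parallel argument shows it is distinct from $\{A,B\}$; this is exactly the content of $f_\crossPoly$. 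For $f_{3_{21}}$, the analogous argument is simpler: $3_{21}$ is centered at the origin, so $s_H$ automatically preserves the class of $3_{21}$-antipodes, and the fact that vertices $a_{ij}$ are not themselves root-parallel again delivers distinctness.

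The main obstacle is the $d=3$ case of the $(\Leftarrow)$ direction, which relies on the claim that every $3_{21}$-antipode pair other than $\{A,B\}$ has a representative Gosset-adjacent to $A\cdot\omega_J$. This follows from the explicit inner-product table for vertices of the form $\pm a_{ij}$ in~\eqref{eq:explicit+3_21}, but it sacrifices some of the type-uniform flavour of earlier arguments. A small amount of explicit computation in these coordinates is the price of avoiding a more abstract combinatorial lemma here.
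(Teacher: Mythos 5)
Your proposal is correct and follows essentially the same route as the paper's proof: a case split on Gosset distance for the $(\Leftarrow)$ direction (distance $1$, $2$, $3$), with the distance-$2$ case resolved inside a $\crossPoly_6$ facet via the type $D_6$ analysis, the distance-$3$ case via the new equation $f_{3_{21}}$ and the observation that the two edges joining $\{A,B\}$ to $\{A',B'\}$ are parallel to a common root, and the $(\Rightarrow)$ direction handled facet-by-facet with \Cref{lem:face_reflection} plus the non-root-parallelism of cross-polytope and $3_{21}$ antipode diagonals for distinctness. The only wrinkle is the phrase ``provided $A\cdot\omega_J$ is not itself root-parallel'' when arguing separation in the $d=3$ case: the relevant condition is that $A\cdot\omega_J \notin \alpha^\perp$, which is automatic once $s_\alpha(A\cdot\omega_J) = A'\cdot\omega_J \neq A\cdot\omega_J$, so your conclusion is right but the stated justification is slightly off.
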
 

    \begin{proof}
    Suppose $\nu_M$ satisfies $\cF_7^{(E)}$.
    We show that for any $A,B \in M$, there exists a hyperplane $H$ separating $A, B$ such that such that $s_H A, s_H B$ are also in $M$.
    If $A\cdot\omega_J,B\cdot\omega_J$ are adjacent in $3_{21}$, then we can take $H$ to be the hyperplane orthogonal to the root direction $A\cdot\omega_J - B\cdot\omega_J$. Hence we assume they are not adjacent.
    If they are distance two away, then there exists some cross polytope $\crossPoly$ such that $\{A\cdot\omega_J, B\cdot\omega_J\}$ is an antipode.
    As $\nu_M$ satisfies $\cF_7^{(E)}$, there exists another antipode $\{A'\cdot\omega_J, B'\cdot\omega_J\}$ of $\crossPoly$ such that $A',B' \in M$.
    As $\crossPoly$ is the orbit polytope of $W(D_6)$, we can restrict to $D_6$ and use the same analysis as in \Cref{l:strexchDncross} to show there exists a hyperplane $H$ separating $A,B$ such that $s_H A = A'$ and $s_H B = B'$.
    If they are distance three away, then $\{A\cdot\omega_J, B\cdot\omega_J\}$ forms an antipode in $3_{21}$.
    As $\nu_M$ satisfies $\cF_7^{(E)}$, there exists another antipode $\{A'\cdot\omega_J, B'\cdot\omega_J\}$ of $3_{21}$ such that $A',B' \in M$.
    At most one of $\{A'\cdot\omega_J, B'\cdot\omega_J\}$ can be distance two away from $A\cdot\omega_J$, similarly for $B\cdot\omega_J$, hence we can assume without loss of generality that $A\cdot\omega_J, A'\cdot\omega_J$ and are $B\cdot\omega_J, B'\cdot\omega_J$ are adjacent.
    Moreover the edges between them are parallel to the same root, hence reflection through the orthogonal hyperplane suffices.

Conversely, suppose $M$ is a strong Coxeter matroid and fix some $D_6$ cross polytope $\crossPoly$: we first show $\nu_M$ satisfies $f_\crossPoly$.
As $\crossPoly$ is a facet of $3_{21}$, the restriction $F \coloneqq P(M) \cap \crossPoly$ is a face of $P(M)$.
Suppose both $P(M)$ and $\crossPoly$ contain the antipode $\{A\cdot\omega_J,B\cdot\omega_J\}$, then $\{A\cdot\omega_J,B\cdot\omega_J\}$ is contained in the face $F$.
By the strong exchange property, there exists some separating hyperplane $H$ such that $\{(s_H A)\cdot \omega_J, (s_H B)\cdot \omega_J \}$ are also points in $P(M)$.
Applying \Cref{lem:face_reflection}, we see that $\{(s_H A)\cdot \omega_J, (s_H B)\cdot \omega_J \}$ must also be contained in $F$, hence are antipodes of $\crossPoly$.
Moreover, they must be distinct as there are no roots parallel to antipodes of $\crossPoly$.
Finally, consider the antipodes $A \cdot \omega_J$ and $A\cdot(-\omega_J)$ in $3_{21}$.
There is some separating hyperplane $H$ through which we can reflect to get the antipode $s_HA \cdot \omega_J$ and $s_HA\cdot(-\omega_J)$.
Moreover, this antipode is distinct as $A \cdot \omega_J - A\cdot(-\omega_J)$ is not parallel to a root, hence $\nu_M$ satisfies $f_{3_{21}}$.
\end{proof}

\section{Equations from homogeneous spaces}\label{s:eqsfromGrass}

In this section, we show that the strong exchange equations defined in \Cref{s::charCoxMatroids} arise as tropicalisations of equations of the Lichtenstein embedding,
the inclusion of the homogeneous space $\bbG/\bbP_\alpha$ into the projective space $\proj(V_\lambda)$ (\Cref{sssec:GrassmannPrelims}).
For each minuscule type, we will derive a set of quadratic embedding equations whose tropicalisations match the previous section
and which span the degree~2 graded component of the Lichtenstein embedding equations. This section is a case-by-case proof of \Cref{thm:A},
excluding the well-established case of type~$A_n$ which we state without proof.
Table~\ref{table:where} lists 
the specialisation of \Cref{thm:A} for each $\bbG,\bbP$ with $\bbG$ complex and semisimple and $\bbP$ minuscule.

\begin{remark}\label{rem:coefficients}
We will see that the coefficients of the quadrics in our results can all be taken to be integers,
so that in fact the image of the embedding $\bbG/\bbP\subset\proj(V_\lambda)$ is defined over~$\ZZ$.
For classical groups $\bbG$ and $E_6$, the coefficients of the quadrics are all $\pm1$,
and therefore for these types we can change base from $\ZZ$ to~$\FF_p$ for any prime $p$ and still have a variety defined by equations whose tropicalisations agree with \Cref{thm:A}.
For $E_7$, coefficients $\pm3$ appear and thus for $E_7$ we can only change base from $\mathbb{Z}$ to $\mathbb{F}_p$ for a prime $p \neq 3$.
\end{remark}

\subsection{Preliminaries}

\subsubsection{Minuscule varieties}\label{sssec:GrassmannPrelims}
We begin with some preliminaries on the Lie theoretic approach to Grassmannians and their generalisations, minuscule varieties,
in particular how we obtain equations cutting out these minuscule varieties.
A general reference for this section is~\cite{Fulton+Harris:2004}.

Let $W$ be a Weyl group with root system $\rootSystem$ and a fixed choice of simple roots $\simpleRoots$.
Fix $\alpha \in \simpleRoots$ and let $\maximalParabolic{\alpha}$ be the parabolic subgroup of $W$ associated to $\simpleRoots \setminus \{\alpha\}$.
Recall from \Cref{sssec:coxeter-matroids-from-polytopes} that a Coxeter matroid is a subset of $\cosets{\simpleRoots\setminus \alpha} = W/\maximalParabolic{\alpha}$ whose associated polytope $P(M)$ has edges parallel to roots.
Let $\bbG$ be the simply connected complex Lie group with root system $\rootSystem$ and Lie algebra $\fg = \Lie(\bbG)$.
Let $\{\lambda_i\}$ be the set of fundamental weights in bijection with $\simpleRoots$.
We fix $\lambda$ to be the fundamental weight that pairs with the omitted simple root $\alpha$ and let $V_\lambda$ be the irreducible representation of $\bbG$ with highest weight $\lambda$ and highest weight vector $v_\lambda$. The homogeneous space we study in this section is $\bbG/\bbP_{\alpha}$ where $\bbP_\alpha$ is the stabiliser of $v_\lambda \in \proj(V_{\lambda})$.
The subgroup $\bbP_\alpha$ is a maximal parabolic of $\bbG$ and $\bbG /\bbP_\alpha$ embeds into the projective space $\proj(V_\lambda)$ as the $\bbG$-orbit of $v_\lambda$.

The equations that cut out $\bbG /\bbP_\alpha$ in $\proj(V_\lambda)$ are all quadrics, hence we can view them as linear forms on $S^2(V_\lambda)$.
Moreover, they can be calculated by the action of the Casimir operator $\Omega$.

\begin{theorem}[\cite{Lichtenstein:1982}]\label{t:eqfromcas}
Let $S^2(V_\lambda)$ be the symmetric square of $V_\lambda$, and $\rho$ half the sum of the positive roots.
The system of quadrics
 \begin{equation} \label{eq:casimir+eqns}
 \Omega (u \otimes v ) - \InnerProd{ 2\lambda}{\, 2\lambda+ 2\rho}(u \otimes v) = 0
 \end{equation}
 cuts out $\bbG/\bbP$ inside $\proj(V_\lambda)$.
\end{theorem}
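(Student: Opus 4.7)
The plan is to identify the Cartan component $V_{2\lambda} \subseteq S^2(V_\lambda)$ with the Casimir eigenspace of eigenvalue $\InnerProd{2\lambda}{2\lambda+2\rho}$, and then to recognise the Veronese image $\{x \cdot x : x \in V_\lambda\} \cap V_{2\lambda}$ as the affine cone over the highest-weight orbit $\bbG \cdot v_\lambda$.

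First I would decompose the symmetric square into irreducible $\fg$-modules $S^2(V_\lambda) = V_{2\lambda} \oplus R$, where $V_{2\lambda}$ is generated by the highest-weight vector $v_\lambda \cdot v_\lambda$ and every highest weight $\mu$ of a component of $R$ is dominant with $\mu \prec 2\lambda$ strictly. The Casimir $\Omega$ acts on any irreducible $V_\mu$ by the scalar $\InnerProd{\mu}{\mu+2\rho} = \|\mu + \rho\|^2 - \|\rho\|^2$, so to verify that $V_{2\lambda}$ is the full $\InnerProd{2\lambda}{2\lambda+2\rho}$-eigenspace I need that no other constituent of $S^2(V_\lambda)$ shares this eigenvalue. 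This follows from the identity
\[ \|2\lambda + \rho\|^2 - \|\mu + \rho\|^2 \;=\; \InnerProd{\mu + 2\lambda + 2\rho}{2\lambda - \mu}, \]
which is strictly positive because $\mu + 2\lambda + 2\rho$ is strictly dominant while $2\lambda - \mu$ is a nonzero nonnegative integral combination of positive roots. Hence the operator $\Omega - \InnerProd{2\lambda}{2\lambda+2\rho}\cdot\mathrm{Id}$ on $S^2(V_\lambda)$ has kernel exactly $V_{2\lambda}$ and image $R$, so the equations of the theorem express precisely that $x \cdot x$ projects to zero in~$R$.

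The forward inclusion is now immediate: $v_\lambda \cdot v_\lambda$ lies in $V_{2\lambda}$ as a highest-weight vector, and $\bbG$-equivariance of the map $v \mapsto v \cdot v$ gives $(g v_\lambda)(g v_\lambda) = g \cdot (v_\lambda \cdot v_\lambda) \in V_{2\lambda}$ for every $g \in \bbG$, so the Casimir quadrics vanish on the affine cone over $\bbG \cdot [v_\lambda]$.

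The hard part will be the reverse inclusion: showing that the closed $\bbG$-invariant subvariety $X \subseteq \proj(V_\lambda)$ cut out by these quadrics coincides with $\bbG \cdot [v_\lambda]$ rather than being strictly larger. Since $V_\lambda$ is irreducible, $\bbG \cdot [v_\lambda]$ is the unique closed $\bbG$-orbit in $\proj(V_\lambda)$ by Borel's fixed-point theorem applied to orbit closures, so it is contained in~$X$. To upgrade containment to equality I would use the Borel--Weil identification $V_{2\lambda} \cong H^0(\bbG/\bbP, \cL_\lambda^{\otimes 2})$: the multiplication of sections $V_\lambda \otimes V_\lambda \twoheadrightarrow V_{2\lambda}$ has kernel $R$, dually identifying the space of Casimir quadrics with the degree-$2$ piece of the homogeneous ideal of $\bbG/\bbP \subseteq \proj(V_\lambda)$. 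The theorem then reduces to the fact that this ideal is generated by its quadrics, a classical result proved via standard monomial theory by Seshadri--Musili--Lakshmibai and also attributed to Kostant. This step is the technical heart of the argument, and is where I would invoke \cite{Lichtenstein:1982}, \cite{Garfinkle1982}, or \cite{Kumar2002} to close the proof.
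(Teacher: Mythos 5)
The paper states Theorem~\ref{t:eqfromcas} as a citation to Lichtenstein and supplies no proof of its own, so there is no internal argument to compare with. Your eigenvalue separation (no irreducible constituent of $S^2(V_\lambda)$ other than $V_{2\lambda}$ shares the Casimir eigenvalue $\InnerProd{2\lambda}{2\lambda+2\rho}$, since $\InnerProd{2\lambda+\mu+2\rho}{2\lambda-\mu}>0$ whenever $\mu\ne 2\lambda$) is correct, and is in fact the same computation the paper carries out in the proof of Lemma~\ref{lem:grassmann+embedding+equations} --- which, however, already \emph{takes} Theorem~\ref{t:eqfromcas} as given. Your forward inclusion via $\bbG$-equivariance of $v\mapsto v\cdot v$ is also correct.

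The gap is in the reverse inclusion, which you rightly flag as the technical heart. You reduce it to quadratic generation of the homogeneous ideal $I(\bbG/\bbP)$ (together with degree-two projective normality, which is actually automatic: the image of $S^2H^0(\cL_\lambda)\to H^0(\cL_\lambda^{\otimes 2})$ is a nonzero $\bbG$-subrepresentation of the irreducible $H^0(\cL_\lambda^{\otimes 2})$, hence surjects). This is a valid reduction --- quadratic generation is strictly stronger than the set-theoretic ``cuts out'' claimed --- but you then propose to \emph{invoke} the references, and one of the references named is Lichtenstein's own paper. Citing \cite{Lichtenstein:1982} to close the hard step of a theorem attributed to \cite{Lichtenstein:1982} is circular. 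If you instead cite quadratic generation from standard monomial theory (\cite{seshadri1978geometry}, or \cite{Kumar2002}), the argument becomes non-circular, but it is then a genuinely different proof from Lichtenstein's, which works more directly with the representation theory rather than detouring through projective normality and Gr\"obner-type results. As written, the proposal is a correct reduction of the theorem to a known hard fact, not a self-contained proof, and the choice of reference must be made carefully to avoid circularity.
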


Rather than working directly with the Casimir operator, consider the decomposition of $S^2(V_\lambda) = \bigoplus V_\mu$ into irreducible $\bbG$-modules.
Fixing a basis for each $V_\mu$, their union is a basis $\{b_i\}$ of $S^2(V_\lambda)$ with associated dual basis $\{b_i^\vee\}$ of $S^2(V_\lambda^\vee)$.
The following lemma states that the quadratic equations \eqref{eq:casimir+eqns}
have a simple description in the dual basis, which is a basis for the space of quadrics on~$V_\lambda$.

\begin{lemma}\label{lem:grassmann+embedding+equations}
    Let $S^2(V_\lambda) = \bigoplus V_\mu$ be a decomposition into irreducible $\bbG$-modules and $\{b_1, \dots, b_k\}$ a basis for $S^2(V_\lambda)$ such that each $b_i \in V_\mu$ for some highest weight $\mu$.
    Then the system of quadrics cutting out $\bbG/\bbP$ inside $\proj(V_\lambda)$ is the subspace
  
    \begin{equation} \label{eq:grassmann+embedding+equations}
        \mathrm{Span}\{ b_i^\vee \, \st \, b_i \notin V_{2\lambda} \} \subset S^2(V_\lambda^\vee) \, ,
    \end{equation}
    where $\{b_1^\vee, \dots, b_k^\vee\}$ is the dual basis of $\{b_1,\ldots,b_k\}$ for $S^2(V_\lambda^\vee)$.
\end{lemma}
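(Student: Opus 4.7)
The plan is to use \Cref{t:eqfromcas} to express the space of quadrics vanishing on $\bbG/\bbP$ as the image of the operator $\Omega - c \cdot \Id$ acting on $S^2(V_\lambda^\vee)$, where $c := \InnerProd{2\lambda}{\,2\lambda + 2\rho}$, and then compute this image using the isotypic decomposition. Viewing a quadric on $V_\lambda$ as an element of $S^2(V_\lambda^\vee)$, \Cref{t:eqfromcas} asserts that every element of the image of $\Omega - c \cdot \Id$ vanishes on $\bbG/\bbP$.

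I would compute this image componentwise. The Casimir operator $\Omega$ acts on an irreducible summand $V_\mu$, and dually on $V_\mu^\vee$, as the scalar $\InnerProd{\mu}{\,\mu + 2\rho}$; hence $\Omega - c \cdot \Id$ acts on $V_\mu^\vee$ as $\InnerProd{\mu}{\,\mu + 2\rho} - c$. When $\mu = 2\lambda$ this scalar vanishes. For any other highest weight $\mu$ of an irreducible summand of $S^2(V_\lambda^\vee)$ we have $\mu \prec 2\lambda$, and the difference
\[ c - \InnerProd{\mu}{\,\mu + 2\rho} \;=\; \InnerProd{2\lambda + \mu + 2\rho}{\,2\lambda - \mu} \]
is strictly positive: $2\lambda - \mu$ is a nonzero non-negative integer combination of simple roots, while $2\lambda + \mu + 2\rho$ is strictly dominant (being the sum of the dominant weights $2\lambda,\mu$ and the strictly dominant weight $2\rho$). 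Hence $\Omega - c \cdot \Id$ restricts to an isomorphism on each $V_\mu^\vee$ with $\mu \neq 2\lambda$, and its total image is $\bigoplus_{\mu \neq 2\lambda} V_\mu^\vee = \mathrm{Span}\{\,b_i^\vee \st b_i \notin V_{2\lambda}\}$.

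To conclude, I would show that no nonzero element of $V_{2\lambda}^\vee$ vanishes on $\bbG/\bbP$, which upgrades the inclusion given by \Cref{t:eqfromcas} to an equality. The vector $v_\lambda^{\otimes 2} \in S^2(V_\lambda)$ is a highest weight vector of the Cartan component $V_{2\lambda}$, and since $[v_\lambda] \in \bbG/\bbP$, the $\bbG$-orbit of $v_\lambda^{\otimes 2}$ spans $V_{2\lambda}$; any $f \in V_{2\lambda}^\vee$ vanishing on $\bbG/\bbP$ must annihilate this spanning set, forcing $f = 0$. The main obstacle is the strict monotonicity of $\mu \mapsto \InnerProd{\mu}{\,\mu + 2\rho}$ along the dominance order among dominant weights, which is handled by the inner product identity displayed above together with the standard fact that every irreducible summand of $S^2(V_\lambda)$ has highest weight $\preceq 2\lambda$.
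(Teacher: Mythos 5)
Your proof is correct and its central step coincides with the paper's: you compute the Casimir eigenvalue difference $\InnerProd{\mu}{\mu+2\rho} - \InnerProd{2\lambda}{2\lambda+2\rho}$ and show it is nonzero precisely when $\mu\neq 2\lambda$, via the factorisation $\InnerProd{2\lambda+\mu+2\rho}{2\lambda-\mu}$, which is algebraically the same expansion the paper uses after writing $\mu = 2\lambda-\mu'$. The paper then simply rewrites Lichtenstein's system from \Cref{t:eqfromcas} in the adapted basis and reads off the span of $\{b_i^\vee \st b_i\notin V_{2\lambda}\}$, whereas you frame the computation cleanly as the image of $\Omega - c\cdot\Id$ on $S^2(V_\lambda^\vee)$ and then go a step further: your final paragraph shows that this span is the \emph{entire} degree-two piece of the ideal of $\bbG/\bbP$, by noting that the $\bbG$-orbit of $v_\lambda^{\otimes 2}$ spans the Cartan component $V_{2\lambda}$, so no nonzero element of $V_{2\lambda}^\vee$ can vanish there. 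That extra observation is sound and in fact proves a slightly stronger statement than what the paper's argument establishes (the paper treats the lemma as a change of basis for the Casimir system and does not verify that these are all the quadrics in the ideal), but it is not needed for the use the paper makes of the lemma.
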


\begin{proof}
Recall the system of quadrics \eqref{eq:casimir+eqns} from \Cref{t:eqfromcas}.
The Casimir acts on an irreducible module by a scalar, namely, if $u \otimes v \in V_\mu$ then $\Omega(u \otimes v) = \InnerProd{ \mu}{\, \mu + 2 \rho} \left( u \otimes v\right)$~\cite[page 429]{Fulton+Harris:2004}.
Given our choice of basis, the equations~\eqref{eq:casimir+eqns} become 
\begin{equation}
\label{eq:intermediate+casimir}
\Omega(b_i) - \InnerProd{ 2\lambda}{ 2\lambda + 2\rho } b_i = (\InnerProd{ \mu}{\, \mu + 2 \rho} - \InnerProd{ 2\lambda}{\, 2\lambda + 2\rho }) b_i =  0 \, .    
\end{equation}
Let $\mu$ be the highest weight of some irreducible summand of $S^2(V_\lambda)$.
We show that $\InnerProd{ \mu}{\mu + 2 \rho} = \InnerProd{ 2\lambda}{2\lambda + 2\rho}$ if and only if $\mu = 2 \lambda$.

Recall that any weight of $S^2(V_\lambda)$ is a sum of two weights of $V_\lambda$ and that every weight of $V_\lambda$ is of the form $\lambda -\sum_{\alpha \in \rootSystem^+} c_\alpha \alpha$ for some $c_\alpha\geq 0$.
As such, we can write $\mu = 2\lambda - \mu'$ where $\mu'$ is a (possibly empty) sum of positive roots.
Then 
\begin{align*} 
    \InnerProd{\mu}{\mu + 2 \rho} &= \InnerProd{2 \lambda - \mu'}{2\lambda-\mu' + 2\rho} \\
    &= \InnerProd{2\lambda}{2\lambda + 2\rho} -\InnerProd{\mu'}{2\lambda + 2\rho}  -\InnerProd{2\lambda}{\mu'} + \InnerProd{\mu'}{\mu'} \\
    &= \InnerProd{2 \lambda}{2\lambda + 2\rho} - \InnerProd{\mu'}{2\rho + (2\lambda - \mu') + 2\lambda} .
\end{align*}
Since $2\lambda$, $\mu$ and $2\rho$ are highest weights of representations, they are all dominant and thus pair with a sum of positive roots non-negatively, i.e.\ $\InnerProd{\mu'}{2\rho}$, $\InnerProd{\mu'}{2\lambda - \mu'}$, $\InnerProd{\mu'}{2\lambda}$ are all non-negative.
Furthermore, $2\rho$ is not stabilised by any Weyl group element and thus is strictly dominant.
Hence $\InnerProd{\mu'}{2\rho} > 0$ for any positive $\mu' \neq 0$.
Therefore $\InnerProd{\mu'}{2\rho + 2\lambda - \mu' + 2\lambda} \geq 0$ with equality if and only if $\mu' = 0$.
It follows that $\InnerProd{\mu}{\mu + 2\rho} = \InnerProd{2 \lambda}{2 \lambda + 2\rho}$ if and only if $\mu' = 0$, i.e.~$\mu = 2\lambda$.

With this observation, we have that equation \eqref{eq:intermediate+casimir} simplifies to
\begin{equation*} 
b_i = 0  \quad \forall \, b_i \notin V_{2\lambda} \, .
\end{equation*}
As an element of $S^2(V_\lambda^\vee)$, this equation corresponds to the dual basis vector $b_i^\vee$.
\end{proof}

We call the equations \eqref{eq:grassmann+embedding+equations} from \Cref{lem:grassmann+embedding+equations} the \defn{Lichtenstein embedding equations}. 
While these are elegantly phrased in this basis, they do not yield explicit polynomial equations in terms of variables on $V_\lambda$.
To obtain explicit equations, we restrict our analysis to minuscule representations for the following reason.

Recall we can decompose $V_\lambda$ as a direct sum of weight spaces $V_\lambda = \bigoplus_{\mu \in \Lambda} V_\lambda(\mu)$.
If $V_\lambda$ is minuscule, each weight space is one-dimensional.
Fixing some vector $v_\mu \in V_\lambda(\mu)$ for each $\mu \in \Lambda$ gives us a canonical basis $\set{v_\mu}{\mu \in \Lambda}$ for $V_\lambda$ up to scaling.
Likewise, we get a canonical dual basis $\set{x_\mu}{\mu \in \Lambda}$ for $V_\lambda^\vee$.
In this sense, the equations \eqref{eq:casimir+eqns} can be viewed not just as elements of $S^2(V_\lambda)^\vee \cong S^2(V_\lambda^\vee)$, but as quadrics in variables $x_\mu$ indexed by the weights of $V_\lambda$.
This allows us to write the Lichtenstein embedding equations from \Cref{lem:grassmann+embedding+equations} as explicit polynomial equations using only the weight space decomposition.

We note that if $V_\lambda$ is not minuscule there is no clear canonical basis for $V_\lambda$ and hence we would have to choose a particular basis.
Tropicalisation is very sensitive to the choice of basis and it is not clear what the correct choice should be.

\subsubsection{Grassmann cone preserving maps}
The collections of strong exchange equations for the various types in \Cref{s::charCoxMatroids}
share the feature that many of the equations concern just vertices on a particular face $\face$ of $\ambientPoly$.
Furthermore, $\face$ is itself a $W'$ orbit polytope where $W'$ is the stabiliser of $\face$ in~$W$,
and moreover these equations supported on~$\face$ are also strong exchange equations for~$W'$.
This property has an algebro-geometric meaning,
which we explain here for context, 
although we will do our computations without relying on it.

With the notations of the start of this section, let $\rootSystem'\subset\rootSystem$ be the root system of~$W'$. 
Let $\simpleRoots'$ be the choice of simple roots for~$\rootSystem'$
that makes a root of~$\rootSystem'$ positive with respect to $\simpleRoots'$ if and only if it is positive with respect to $\simpleRoots$ in~$\rootSystem$.
Let $\bbG'$ be a complex reductive group with root system~$\rootSystem'$.
By restriction $V_\lambda$ is a $\bbG'$-module.
Since complex reductive groups are semisimple, any $\bbG'$-submodule $V'$ of $V_\lambda$ splits,
so there exists a projection $p:V_\lambda\to V'$ of $\bbG'$-modules,
and its projective counterpart the rational map $\proj(p):\proj(V_\lambda)\dashrightarrow\proj(V')$.

It may happen that $\proj(p)(\bbG/\bbP)$ is a single $\bbG'$-orbit in $\proj(V')$, 
indeed the orbit of a highest weight vector $v_{\lambda'}$.
By composing $p$ with an automorphism of~$V'$ we may assume that $p(v_\lambda) = v_{\lambda'}$.
Let $\bbP'$ be the stabiliser of $v_{\lambda'}\in\proj(V')$.
Then $\bbP'$ is a parabolic subgroup of $\bbG'$
and we obtain an inclusion
\[
    \bbG'/\bbP'\cong\proj(p)(\bbG/\bbP)\subseteq\proj(V')\, .
\]

When a linear projection $p:V_\lambda\to V'$ is applied to a projective variety $X\subseteq\proj(V_\lambda)$, 
the ideal of homogeneous polynomials vanishing on~$p(X)$ is obtained by intersecting the ideal of~$X$ with a subring.
Therefore, in the situation of this section,
the equations in $S^2(V_{\lambda'}^\vee)$ defining $\bbG'/\bbP'$, under the inclusion $p^*:S^2(V_{\lambda'}^\vee)\hookrightarrow S^2(V_{\lambda}^\vee)$,  are equations that vanish on~$\bbG/\bbP$.
These equations in the image of the pullback $p^*$ will contain only variables on the face $\face$
and it is their tropicalisations which will be the strong exchange equations supported on~$\face$.

\begin{remark}
    In \cite{KPRS}, Kasman, Pedings, Reiszl and Shiota
    observe that wedge powers of linear maps are examples of projections $V_\lambda\to V'$ that send Grassmannians to Grassmannians,
    which they call \defn{Grassmann cone preserving maps}.
    Their main result is that all Grassmannians are set-theoretically defined
    by equations pulled back from the single Grassmannian of 2-planes in 4-space.
    Seynnaeve and Tairi \cite{SeynnaeveTairi}
    show a counterpart for isotropic Grassmannians in types $B$, $C$, and~$D$,
    where the equations are pulled back from those for isotropic 3-planes in 7-space or 4-planes in 8-space.
    
    For both of these results, it is crucial that Grassmann cone preserving maps be considered
    which do not send weight spaces of $V_\lambda$ to weight spaces of~$V'$ (or zero).
    The maps $p$ that arise from faces $\face$ of $\ambientPoly$
    do preserve weight spaces in this sense,t
    and pullbacks along only Grassmann cone preserving maps which preserve weight spaces do not set-theoretically cut out the correct varieties.
\end{remark}

\subsubsection{Matroids (type A)}

As a warm-up, we detail the setup in type $A_{n-1}$.
Its simply connected complex Lie group is $\bbG=\mathrm{SL}_n$ with corresponding Lie algebra $\mathfrak{sl}_n$.
The fundamental weight $\lambda_k$ is minuscule for all $1 \leq k \leq n-1$, and its corresponding fundamental $\mathfrak{sl}_n$-representation is $V_{\lambda_k} = \bigwedge^k \CC^n$ whose weights are $\set{\sum_{i \in I}e_i}{I \in \scP_{n,k}}$.
Hence, if $\PP = \PP_{\alpha_k}$ is the maximal parabolic corresponding to the $k$-th fundamental weight, we have that $\bbG/\PP \subset \proj(\bigwedge^k \CC^n)$.

Recall that we can write the embedding equations cutting out $\bbG/\PP \subset \proj(\bigwedge^k \CC^n)$ in terms of variables indexed by the weights of $\bigwedge^k \CC^n$.
Hence, we write $X = \set{x_I}{I \in \scP_{n,k}}$ which we consider both as a set of variables and a dual basis for $\bigwedge^k \CC^n$.
To find the Lichtenstein embedding equations for type $A$ we could find the irreducible decomposition $S^2(\bigwedge^k \CC^n) = \bigoplus V_{\mu}$ and then write the equations from \Cref{lem:grassmann+embedding+equations} terms of the dual basis vectors $X$.
However, in type $A$ these equations are already well known: they are the Grassmann-Pl\"ucker relations.

\begin{definition}
    The \defn{quadratic Grassmann--Pl\"ucker relations} are the collection of equations 
    \begin{align*} 
        \cE^{(A)}_{n,k} &\coloneqq \set{F_{I,J}^{(A)} }{ I \in \scP_{n,k-1} \, , \, J \in \scP_{n,k+1} \, , \, |J \setminus I| \geq 3 } \text{ where } \\
        F^{A}_{I,J} &\coloneqq \sum_{i \in J \setminus I} (-1)^{\lngS{i,I} + \lngS{i,J}} x_{I \cup i}x_{J \setminus i} \in \CC[X] \, .
    \end{align*}
\end{definition}

The following theorem is standard. An exposition can be found in the textbook \cite[Section 4.4]{MaclaganSturmfels}.

\begin{theorem}\label{t:Anmain}
Let $\bbG=\mathrm{SL}_n$ and $\bbP$ be the $k$-th maximal parabolic subgroup,
so that $\bbG/\bbP$ is the Grassmannian $\mathrm{Gr}(k,n)$.
The quadratic Grassmann--Pl\"ucker relations $\cE^{(A)}_{n,k}$ are a basis for the equations of the embedding $\bbG/\bbP\subset\proj(\bigwedge^k \CC^n)$,
and tropicalise to the type $A$ strong exchange equations $\cF^{(A)}_{n,k}$.
\end{theorem}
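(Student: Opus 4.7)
The plan is to handle the two assertions of the theorem separately: first that $\cE^{(A)}_{n,k}$ forms a basis for the degree-$2$ part of the ideal of $\mathbb{G}/\mathbb{P}$ in $\proj(\bigwedge^k\CC^n)$, and second that the tropicalisation agrees with $\cF^{(A)}_{n,k}$ termwise. The combinatorial content, that matroids are exactly the set systems satisfying $\cF^{(A)}_{n,k}$, was already recorded as \Cref{t:Astrexch} and will then close the proof.

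For the first assertion, my approach is to invoke \Cref{lem:grassmann+embedding+equations} after identifying the irreducible decomposition of $S^2(\bigwedge^k \CC^n)$ as an $\mathrm{SL}_n$-module. This decomposition is the classical Cauchy-type identity $S^2(\bigwedge^k \CC^n)=\bigoplus_{j\ge 0} V_{\lambda_{k-2j}+\lambda_{k+2j}}$ (with $\lambda_0=\lambda_n=0$ and terms outside the valid range suppressed), so that the unique summand to be discarded is $V_{2\lambda_k}$ and the embedding equations span $\bigoplus_{j\ge 1} V_{\lambda_{k-2j}+\lambda_{k+2j}}$. I would then show that each $F^{(A)}_{I,J}$ is an embedding equation: either by the direct multilinear algebra argument, namely that $\sum_i (-1)^{\lngS{i,I}+\lngS{i,J}} (v_{I\cup i}\wedge\cdots)\otimes(v_{J\setminus i}\wedge\cdots)$ vanishes when applied to vectors in a common $k$-plane (this is the classical Laplace expansion identity behind the Plücker relations), or representation-theoretically by exhibiting the $F^{(A)}_{I,J}$ as highest-weight vectors and their $\mathrm{SL}_n$-translates in $V_{\lambda_{k-1}+\lambda_{k+1}}$ and the analogous $V_{\lambda_{k-2j}+\lambda_{k+2j}}$ components, and counting dimensions to see they span. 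The dimension count is the main bookkeeping step; alternatively one can appeal to the classical result in, e.g., \cite[Section 4.4]{MaclaganSturmfels} that these quadrics generate the Grassmannian ideal, which together with \Cref{t:eqfromcas} (saying the ideal is quadratically generated) implies the span statement.

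For the tropicalisation, the computation is immediate from the definitions: the monomials of $F^{(A)}_{I,J}$ are indexed by $i\in J\setminus I$, and for distinct $i\ne i'$ the subsets $I\cup i$ and $I\cup i'$ differ, so the monomials $x_{I\cup i}x_{J\setminus i}$ are pairwise distinct as elements of $\CC[X]$. No cancellation of monomials occurs, so applying $\trop$ drops the signs and yields exactly $f^{(A)}_{I,J}$. The lower-bound hypothesis $|J\setminus I|\ge 3$ is not needed here for the tropicalisation per se, but it matches the range of $\cF^{(A)}_{n,k}$ (for $|J\setminus I|\le 2$ the equation $F^{(A)}_{I,J}$ either vanishes identically or reduces to a trivial relation and is not part of a basis).

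The main obstacle, if any, is the first claim, since it requires either quoting nontrivial representation-theoretic facts (the Cauchy decomposition and the identification of the highest-weight vectors with Plücker relations) or appealing to the well-known fact that the Plücker relations generate the Grassmannian ideal. Given that the paper explicitly marks this type as known and cites \cite{MaclaganSturmfels}, my proposal is to state the first claim by citing the classical identification of the Grassmann--Plücker relations with the equations of $\mathrm{Gr}(k,n)\subset\proj(\bigwedge^k\CC^n)$ and to give the tropicalisation comparison directly, then conclude by combining with \Cref{t:Astrexch}.
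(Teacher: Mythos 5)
Your proposal is correct and aligns with the paper's treatment: the paper gives no proof of Theorem~\ref{t:Anmain} at all, stating only that it is standard and citing \cite[Section 4.4]{MaclaganSturmfels}, which is exactly the endpoint you arrive at. The extra detail you supply — the decomposition $S^2(\bigwedge^k\CC^n)\cong\bigoplus_{j\ge0}V_{\lambda_{k-2j}+\lambda_{k+2j}}$ (equivalently $\bigoplus_j V_{(2^{k-2j},1^{4j})}$ for $\mathrm{GL}_n$), the identification of the $j=0$ summand with $V_{2\lambda_k}$, and the observation that the monomials $x_{I\cup i}x_{J\setminus i}$ for $i\in J\setminus I$ are pairwise distinct so tropicalisation merely drops signs — is accurate and would turn the paper's one-line citation into a genuine proof sketch; in particular it parallels how the paper's Appendix~\ref{App:B} proceeds for type~$B$ via the symmetric square decomposition and \Cref{lem:grassmann+embedding+equations}.
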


\subsection{Cubes}

\subsubsection{Type $B$ embedding equations}
\label{sssec:type-B-embedding-equations}

We begin with type $B_n$, where the only minuscule fundamental weight is $\lambda_n$ corresponding to the simple root $\alpha_n$.
This means we have simply connected complex Lie group $\bbG = \mathrm{Spin}(2n+1)$ with corresponding Lie algebra $\mathfrak{so}_{2n+1}$.
The fundamental representation of $\mathfrak{so}_{2n+1}$ corresponding to $\lambda_n$ is the \defn{spin representation} $\Sp$.
It is $2^n$-dimensional with weights $(\pm \half,\, \ldots,\, \pm \half)$.
Hence, if $\bbP = \bbP_{\alpha_n}$ is the maximal parabolic corresponding to $\alpha_n$ we have that $\bbG/\bbP \subset \proj(\Sp)$.

Recall that we define the embedding equations cutting out $\bbG/\bbP \subset \proj(\Sp)$ in terms of variables indexed by the weights of $\Sp$.
The weights of $\Sp$ are exactly $Q_n = \set{\cubeVertex{I}}{I \subseteq [n]}$ as defined in \eqref{eq:cubeVertex} and hence are in 1-1 correspondence with subsets of~$[n]$ or, equivalently, the vertices of the $n$-cube.
Hence, we write $X=\set{x_I}{I\subseteq[n]}$ where $x_I$ is the variable corresponding to weight $\cubeVertex{I}$.

\begin{definition} \label{d:quadraticembeddingeqs} 
The \defn{type $B$ quadratic embedding equations} are the collection of equations
\begin{align}
    \cE^{(B)}_{n} &:= \set{F_{I,J}^{(B)} }{ I, J \subseteq [n], |I \symDiff J| \geq 3 }  \quad \text{ where } \nonumber\\
    F^{(B)}_{I,J} &:= \sum_{i \in I \symDiff J} (-1)^{\lngS{i,I}+\lngS{i,J}}x_{I \symDiff i} x_{J \symDiff i} & \text{ if } |I \symDiff J| \equiv 0 \mod 2\phantom. \label{eq:GP-B-even} \\
    F^{(B)}_{I,J} &:= - x_Ix_J + \sum_{i \in I \symDiff J}(-1)^{\lngS{i,I}+\lngS{i,J}}x_{I \symDiff i} x_{J \symDiff i} & \text{ if } |I \symDiff J| \equiv 1 \mod 2. \label{eq:GP-B-odd}
\end{align}
\end{definition}

We motivate calling them the quadratic embedding equations via the following theorem.

\begin{theorem}\label{t:BnSpinmain}
Let $\bbG = \mathrm{Spin}(2n+1)$ and $\bbP$ be a the maximal parabolic corresponding to the last fundamental weight. 
The type $B$ quadratic embedding equations $\cE^{(B)}_n$ are a spanning set for the equations of the embedding $\bbG/\bbP\subset\proj(\Sp)$ and tropicalise to the type $B$ strong exchange equations $\cF^{(B)}_n$.
\end{theorem}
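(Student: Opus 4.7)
The plan is to combine \Cref{lem:grassmann+embedding+equations} with the classical decomposition of $\Sp\otimes\Sp$ for $\mathfrak{so}_{2n+1}$ and then match weight spaces to identify the $F^{(B)}_{I,J}$ explicitly. As $\mathfrak{so}_{2n+1}$-modules one has
\[
\Sp \otimes \Sp \;\cong\; \bigoplus_{k=0}^{n} \twedge^{k} V,
\]
where $V = \CC^{2n+1}$ is the standard representation, each $\twedge^{k}V$ is irreducible, and $\twedge^{n}V \cong V_{2\lambda_n}$. A standard parity rule (depending on $n\bmod 4$) splits the summands between $S^2(\Sp)$ and $\twedge^2(\Sp)$, with $V_{2\lambda_n}$ always landing in $S^2(\Sp)$. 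By \Cref{lem:grassmann+embedding+equations} it therefore suffices to show that the $F^{(B)}_{I,J}$ span the dual of the sum of the remaining summands of $S^2(\Sp)$.

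The main obstacle is to verify that each $F^{(B)}_{I,J}$ genuinely lies in the ideal of $\bbG/\bbP$. Weight-homogeneity is immediate: all monomials of $F^{(B)}_{I,J}$ have weight $\cubeVertex{I}+\cubeVertex{J}$, since passing from $(I,J)$ to $(I\symDiff i,\,J\symDiff i)$ flips the $i$-coordinate twice. What remains is to confirm that $F^{(B)}_{I,J}$ is orthogonal to the weight-$(\cubeVertex{I}+\cubeVertex{J})$ component of $V_{2\lambda_n}$, and this is where the signs $(-1)^{\lngS{i,I}+\lngS{i,J}}$ become essential: they match the natural sign conventions arising from an explicit realisation of $V_{2\lambda_n}$ inside $S^2(\Sp)$ via the spinor action. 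This direct computation I would defer to \Cref{app:equations}. Granted this, spanning follows from a weight-space count: at each weight $\nu$ with $m$ zero coordinates the weight space of $S^2(\Sp)$ has dimension $2^{m-1}$, while the weight space of $\twedge^{n}V$ contributes only when $m\in\{0,1\}$; and a straightforward sign check shows that all $F^{(B)}_{K,L}$ with $\cubeVertex{K}+\cubeVertex{L}=\nu$ coincide up to sign, so a single representative per weight suffices to fill the complement of $V_{2\lambda_n}$.

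The restriction $|I\symDiff J|\ge 3$ is justified by sporadic vanishings $F^{(B)}_{I,J}=0$ for $|I\symDiff J|\le 2$: when $|I\symDiff J|=2$ the two monomials $x_{I\symDiff i}\,x_{J\symDiff i}$ for the two choices of $i$ coincide, and the sign formula forces their coefficients to cancel; the cases $|I\symDiff J|\in\{0,1\}$ collapse trivially. Finally, for the tropicalisation step, when $|I\symDiff J|\ge 3$ the monomials of $F^{(B)}_{I,J}$ are pairwise distinct as elements of $\CC[X]$: an equality $\{I\symDiff i,\,J\symDiff i\}=\{I\symDiff i',\,J\symDiff i'\}$ with $i\neq i'$ would force $I\symDiff J\subseteq\{i,i'\}$, contradicting the bound. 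Consequently $\trop(F^{(B)}_{I,J})$ reproduces $f^{(B)}_{I,J}$ monomial by monomial, matching \eqref{eq:GP-B-even}--\eqref{eq:GP-B-odd} against \eqref{eq:SEE-B-1}--\eqref{eq:SEE-B-2} and completing the proof.
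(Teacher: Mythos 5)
Your proposal falls apart at the spanning step, and the gap is substantial. You claim that at a weight $\nu$ with $m$ zero coordinates, the weight space of $\twedge^n V \cong V_{2\lambda_n}$ ``contributes only when $m\in\{0,1\}$,'' and that all $F^{(B)}_{K,L}$ of a given weight coincide up to sign, so a single equation per weight suffices. Both claims are false for $m\ge 4$. In fact $\dim\bigl((\twedge^n V)_\nu\bigr)=\binom{m}{\lfloor m/2\rfloor}$: one selects $\lfloor m/2\rfloor$ of the $m$ zero-coordinate slots to fill with a pair $v_i\wedge x_i$, optionally adding $w$ to reach $n$ total factors. Thus the complement of $V_{2\lambda_n}$ in $S^2(\Sp^\vee)_\nu$ has dimension $2^{m-1}-\binom{m}{\lfloor m/2\rfloor}$, which is $1$ for $m=3$ (so your argument happens to work there), but $2$ for $m=4$, $6$ for $m=5$, and grows thereafter. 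The paper's Example~\ref{ex:B5} makes this concrete: for $n=5$ the weight-zero component of the Lichtenstein equations is $6$-dimensional, so a single spanning representative cannot exist.

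Relatedly, your ``straightforward sign check'' that the $F^{(B)}_{K,L}$ at a common weight coincide up to sign cannot succeed: for even $|I\symDiff J|\ge 4$, the equation $F^{(B)}_{I,J}$ is supported on the antipodes $(I\symDiff i,J\symDiff i)$ at Hamming distance $1$ from $(I,J)$, whereas $F^{(B)}_{I\symDiff i,J\symDiff i}$ is supported on $(I,J)$ itself together with antipodes at distance $2$. These supports are disjoint, so the equations are not scalar multiples of one another. (For $m=3$ the supports do coincide, being all four antipodes of the $3$-cube, which is why the small case misleads.) The paper sidesteps this entirely by producing, via the chain of isomorphisms $\Sp^\vee\otimes\Sp^\vee\cong\End(\Sp)\cong\Cl^{\mathrm{even}}(V)\cong\twedge^{\mathrm{even}}V$, an explicit weight-basis $\{p^M_{N,L}\}$ for the Lichtenstein equations (\Cref{t:BspinEqs}) and then showing by direct linear-algebraic computation that $\mathrm{Span}\{F^{(B)}_{I,J}\}=\mathrm{Span}\{p^M_{N,L}\}$ in both inclusions (\Cref{cor:quadsareingrassmann} and \Cref{l:psareinex}), without ever needing an abstract orthogonality-to-$V_{2\lambda}$ argument. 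To repair your approach you would need to exhibit, at each weight with $m\ge 4$ zeros, enough linearly independent $F^{(B)}_{I,J}$ (a nontrivial linear-independence statement among the various shell supports), which is essentially the hard content that \Cref{l:psareinex} supplies.
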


The claim in the theorem about tropicalisation is clear by inspection when one compares (\ref{eq:GP-B-even},~\ref{eq:GP-B-odd}) to (\ref{eq:SEE-B-1},~\ref{eq:SEE-B-2}) above.
The proof of the rest is the subject of \Cref{App:B,App:B.2}.
The equations are not new, but we have provided a proof as we could not find a reference that produces these explicit quadratic polynomials, especially with their signs overt. See further \Cref{rem:B+D+Gr+iso}.

\begin{example}\label{ex:B5}
    We emphasise that the type $B$ quadratic embedding equations do not form a basis for the Lichtenstein embedding equations, as the following example demonstrates.
    Consider the $n=5$ case: this corresponds to the minuscule variety $\mathrm{Spin}(11)/ \bbP_{\alpha_5} \subseteq \proj(\Sp)$ where $\dim(\Sp) = 2^5$.
    The analysis in \Cref{p:Ssquaredecomp} demonstrates that $S^2(\Sp)$ decomposes into $\twedge^2 \CC^{11} \oplus \twedge^{6} \CC^{11} \oplus \twedge^{10} \CC^{11}$, where $\twedge^{6} \CC^{11}$ is the component with highest weight $2\lambda = (1, \dots, 1)$.
    It follows from \Cref{lem:grassmann+embedding+equations} that the space of Lichtenstein embedding equations is of dimension $\binom{11}{2} + \binom{11}{10} = 66$.
    Moreover, the space of Lichtenstein embedding equations of weight zero is six-dimensional, corresponding to the direct sum of the Cartan subalgebra (dimension $5$) and the $1$-dimensional span of the weight zero vector in $(\CC^{11})^\vee \cong \twedge^{10} \CC^{11}$.
    This can be equivalently computed via the analysis in \Cref{App:BspinEqs} by counting subsets $M \subseteq [5]$ such that 
    \[
    2|M| \in \cM_{\emptyset,\emptyset} = \set{4k + 6 - \epsilon}{k \in \ZZ \, , \, \varepsilon \in \{0,1\}} \setminus \{3,4,5,6,7\} \, .
    \]
    There are $6$ such subsets, namely, $M$ is either a singleton or equal to $[5]$.

    On the other hand, the quadratic embedding equations of weight zero are precisely those $F_{I,J}^{(B)}$ with $I \symDiff J = [5]$.
    The number of these is equal to the number of antipodes, which is $2^4 = 16$.
    This demonstrates that, although the quadratic embedding equations span the Lichtenstein embedding equations, they are far from a basis. 

    Note that in Theorem \ref{t:BspinEqs} we give equations that do form a basis for the Lichtenstein embedding equations.
\end{example}

\subsubsection{Type $D$ embedding equations}

We now look at type $D_n$, where the minuscule fundamental weights are $\lambda_1, \lambda_{n-1}$ and $\lambda_n$.
We have simply connected complex Lie group $\bbG = \mathrm{SO}(2n)$ with corresponding Lie algebra $\mathfrak{so}_{2n}$.
The fundamental representation of $\mathfrak{so}_{2n}$ corresponding to $\lambda_1$ is the standard representation $V_{\lambda_1} = \CC^{2n}$: we will treat this case in \Cref{s:Dncrosspolytope} as its analysis is quite different.
The fundamental representations corresponding to $\lambda_{n-1}$ and $\lambda_n$ are the \defn{half spin representations} $V_{\lambda_{n-1}} = \Sp_-$ and $V_{\lambda_n} = \Sp_+$.
These have highest weights $(\half,\ldots,\half,-\half)$ and $(\half,\ldots,\half)$ respectively, and are both of dimension $2^{n-1}$.
Hence, we have $\bbG/\bbP_{\alpha_{n-1}} \subset \proj(\Sp_-)$ and $\bbG/\bbP_{\alpha_{n}} \subset \proj(\Sp_+)$.

The weights of $\Sp_+$ are exactly $\cD_{n,+} = \set{\cubeVertex{I}}{|I| \equiv 0 \mod 2}$ as given in \eqref{eq:demicubeVertex+} and the weights of $\Sp_-$ are $\cD_{n,-} = \set{\cubeVertex{I}}{|I| \equiv 1 \mod 2}$  as in \eqref{eq:demicubeVertex-}.
We write $X_+ = \set{x_I}{|I| \equiv 0 \mod 2}$ and $X_- = \set{x_I}{|I| \equiv 1 \mod 2}$ for the variables corresponding to the weights of $\cD_{n,+}$ and $\cD_{n,-}$ respectively.
As $\cD_{n,+}$ and $\cD_{n,-}$ partition $Q_n$, the variables $X_+$ and $X_-$ partition the variables $X = \set{x_I}{I \subseteq [n]}$.
Just as the Type $B_n$ quadratic embedding equations were supported on antipodes in subcubes, we will define the Type $D_n$ quadratic embedding equations to be supported on antipodes in subdemicubes. 

The \defn{type $D$ quadratic embedding equations} are the collection of equations $\cE_n^{(D)} = \cE_{n,+}^{(D)} \cup \cE_{n,-}^{(D)}$ where
\begin{align*}
\cE_{n,+}^{(D)} &:= \set{F_{I,J}^{(D)}}{I,J \subseteq [n]\, , \, \order{I \symDiff J} \geq 4 \, , \, \order{I} \equiv \order{J} \equiv 1 \mod 2} \subseteq \CC[X_+] \\
\cE_{n,-}^{(D)} &:= \set{F_{I,J}^{(D)}}{I,J \subseteq [n]\, , \, \order{I \symDiff J} \geq 4 \, , \, \order{I} \equiv \order{J} \equiv 0 \mod 2} \subseteq \CC[X_-] \\
F_{I,J}^{(D)} &:= \sum_{i \in I \symDiff J} (-1)^{\lngS{i,I}+ \lngS{i,J}} x_{I \symDiff i} x_{J \symDiff i} \in \CC[X] \, .
\end{align*}

Observe that we can obtain the type $D$ quadratic embedding equations as a subset of the type $B$ quadratic embedding equations that are entirely supported on the demicubes $\cD_{n,+}$ and $\cD_{n,-}$, i.e.
\[
\cE_{n,+}^{(D)} = \cE_{n}^{(B)} \cap \CC[X_+] \, , \quad \cE_{n,-}^{(D)} = \cE_{n}^{(B)} \cap \CC[X_-] \, .
\] 
Proposition \ref{p:intersectgrassBtoD}, proved in the appendix, states that the analogous statement holds for the Type $D_n$ Lichtenstein embedding equations.

\begin{restatable}{proposition}{thDneqs}
 \label{p:intersectgrassBtoD}
    The subspace of type $D_n$ Lichtenstein embedding equations for $\bbG/\PP_{\alpha_n}$ (respectively $\bbG/\PP_{\alpha_{n-1}}$) is the subspace of type $B_n$ Lichtenstein embedding equations that are entirely supported on the even demicube $\cD_{n,+}$ (respectively the odd demicube $\cD_{n,-}$).
\end{restatable}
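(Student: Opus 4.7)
The crux of the argument is the $\mathfrak{so}_{2n}$-module decomposition of the $\mathfrak{so}_{2n+1}$-spin representation: $\Sp\vert_{\mathfrak{so}_{2n}} = \Sp^+ \oplus \Sp^-$, where $\Sp^\pm$ denotes the $\mathfrak{so}_{2n}$-subrepresentation of $\Sp$ spanned by $\{x_I : \cubeVertex{I} \in \cD_{n,\pm}\}$. Depending on the parity of $n$, $\Sp^+$ is either the half-spin representation of type $D_n$ with highest weight $\lambda_n^D$ or the one with highest weight $\lambda_{n-1}^D$; the argument is symmetric in the two half-spin reps, so I treat only the $\cD_{n,+}$ case. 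This splitting induces an $\mathfrak{so}_{2n}$-invariant decomposition
\[
S^2(\Sp^\vee) = S^2((\Sp^+)^\vee) \oplus ((\Sp^+)^\vee \otimes (\Sp^-)^\vee) \oplus S^2((\Sp^-)^\vee),
\]
in which $S^2((\Sp^+)^\vee)$ is exactly the subspace of quadrics in $S^2(\Sp^\vee)$ supported on $\cD_{n,+}$.

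By \Cref{lem:grassmann+embedding+equations}, the Lichtenstein embedding equations in either type are the $\fg$-submodule complement of the respective $V_{2\lambda}$-isotypic component. Writing $A_B = V^B_{2\lambda_n^B} \subseteq S^2(\Sp^\vee)$ and $A_D = V^D_{2\lambda} \subseteq S^2((\Sp^+)^\vee)$, with $\lambda$ the highest weight of $\Sp^+$, the proposition is equivalent to the $\mathfrak{so}_{2n}$-submodule identity $A_B \cap S^2((\Sp^+)^\vee) = A_D$.

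The inclusion $A_D \subseteq A_B \cap S^2((\Sp^+)^\vee)$ would follow from a geometric argument: under $\mathrm{Spin}(2n) \subseteq \mathrm{Spin}(2n+1)$, the type $D_n$ minuscule variety $X_D \subseteq \proj(\Sp^+) \subseteq \proj(\Sp)$ is a $\mathrm{Spin}(2n)$-invariant closed subvariety of the type $B_n$ minuscule variety $X_B \subseteq \proj(\Sp)$, realised as the $\mathrm{Spin}(2n)$-orbit of the highest weight line $[v_\lambda]$. Hence the restriction of degree-two pieces of the homogeneous coordinate rings $A_B^\vee \twoheadrightarrow A_D^\vee$ is surjective, and dualises to an injection $A_D \hookrightarrow A_B$ landing inside $S^2((\Sp^+)^\vee)$.

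For the opposite inclusion, I would apply the classical branching rule $\mathrm{Spin}(2n+1) \downarrow \mathrm{Spin}(2n)$ to $A_B \cong \twedge^n \CC^{2n+1}$, obtaining
\[
A_B\vert_{\mathfrak{so}_{2n}} \;\cong\; V^D_{2\lambda_n^D} \oplus V^D_{2\lambda_{n-1}^D} \oplus \twedge^{n-1}\CC^{2n}.
\]
Exactly one summand is isomorphic to $A_D$; the remaining task is to verify that the other two summands meet $S^2((\Sp^+)^\vee)$ trivially, or equivalently that they are contained in $((\Sp^+)^\vee \otimes (\Sp^-)^\vee) \oplus S^2((\Sp^-)^\vee)$. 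I would do this by exhibiting explicit highest-weight vectors for each branching summand inside $A_B$ using the Clifford-algebraic model developed in Appendix~\ref{App:B}, and read off which of the three $\mathfrak{so}_{2n}$-invariant summands of $S^2(\Sp^\vee)$ each lies in. The main obstacle is this weight-tracking computation, which hinges on relating the parity of $|I|$ in the basis vectors $x_I$ of $\Sp^\vee$ to the branching data; once it is done, comparing complements finishes the proof.
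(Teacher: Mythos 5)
Your high-level strategy is sound and lines up with the structure the paper uses: reduce to the $\mathfrak{so}_{2n}$-module decomposition $S^2(\Sp^\vee) = S^2(\Sp_+^\vee) \oplus (\Sp_+^\vee \otimes \Sp_-^\vee) \oplus S^2(\Sp_-^\vee)$, and show that $V_{2\lambda^B}$ (hence its complement) respects it. The paper's own proof is in fact terser than yours: it asserts the conclusion directly from the observation that $S^2(\Sp_+^\vee)$ is the subspace of symmetric $2$-forms supported on $\cD_{n,+}$, without spelling out the branching.

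Two remarks on the parts you did write. First, the reduction ``the proposition is equivalent to $A_B \cap S^2((\Sp^+)^\vee) = A_D$'' is not automatic: an intersection of a submodule with one summand of an ambient direct-sum decomposition need not split the submodule, so you already need the branching analysis to justify that $V_{2\lambda^B}$ decomposes compatibly with the demicube grading before the equivalence holds. Second, your geometric argument via surjectivity of restriction of coordinate rings is more than you need for $A_D \subseteq A_B$: both $V_{2\lambda^D}$ and $V_{2\lambda^B}$ are cyclic modules generated by $x_{[n]}^{\,2}$, the former over $\mathfrak{so}_{2n}$ and the latter over $\mathfrak{so}_{2n+1} \supseteq \mathfrak{so}_{2n}$, so $V_{2\lambda^D} \subseteq V_{2\lambda^B}$ for free.

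The genuine gap is exactly the step you flag as the ``main obstacle'': verifying that the three branching summands $V^D_{2\lambda_n}$, $V^D_{2\lambda_{n-1}}$, $\twedge^{n-1}\CC^{2n}$ land respectively in $S^2(\Sp_+^\vee)$, $S^2(\Sp_-^\vee)$, $\Sp_+^\vee \otimes \Sp_-^\vee$. This is the real content of the proposition, so leaving it as a task to be done means the proof is not complete. That said, the gap is short to fill and does not require chasing the Clifford model. The three highest weights $(1,\ldots,1)$, $(1,\ldots,1,-1)$, $(1,\ldots,1,0)$ each occur with multiplicity one in $S^2(\Sp^\vee)$, since in each case there is a unique pair of $\Sp^\vee$-weights summing to it. Multiplicity one means each of the three irreducibles appears exactly once in $S^2(\Sp^\vee)$ over $\mathfrak{so}_{2n}$, is wholly generated by its highest weight vector, and hence lies entirely inside whichever of the three $\mathfrak{so}_{2n}$-summands contains that vector — which is read off immediately from the parity of $|I|$ in $x_I x_J$. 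This simultaneously justifies the compatibility you implicitly used in the reduction.
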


Our main theorem of this subsection follows from Theorem \ref{t:BnSpinmain}, the analogous statement for type $B_n$.

\begin{theorem}\label{t:Dnspinmain}
Let $\bbG = \mathrm{SO}(2n)$.
\begin{itemize}
    \item The equations $\cE_{n,+}^{(D)}$ are a spanning set for the equations of the embedding $\bbG/\bbP_{\alpha_n}\subset\proj(\Sp_+)$,
    \item The equations $\cE_{n,-}^{(D)}$ are a spanning set for the equations of the embedding $\bbG/\bbP_{\alpha_{n-1}}\subset\proj(\Sp_-)$.
\end{itemize}
Moreover, the type $D$ quadratic embedding equations $\cE^{(D)}_n$ tropicalise to the type $D$ strong exchange equations $\cF^{(D)}_n$.
\end{theorem}

\begin{proof}
    Observe from \eqref{eq:GP-B-even} and \eqref{eq:GP-B-odd} that
$\cE_{n}^{(B)} = \cE_{n,+}^{(D)} \cup \cE_{n,-}^{(D)} \cup \cE_{\pm}^{(B)}$ where $\cE_{\pm}^{(B)}$ are the equations \eqref{eq:GP-B-odd}.
Furthermore, each monomial in every equation in $\cE_{\pm}^{(B)}$ is the product of a variable in $X_+$ and a variable in $X_-$, hence $\operatorname{Span}\cE_{\pm}^{(B)} \cap \CC[X_+] = \operatorname{Span}\cE_{\pm}^{(B)} \cap \CC[X_-] = \{0\}$.
It follows that
\begin{equation}\label{eq:BtoD+intersect}
    \mathrm{Span} (\cE_{n}^{(B)} ) \cap \CC[X_+] =\mathrm{Span} ( \cE_{n,+}^{(D)}) \, , \quad \mathrm{Span} (\cE_{n}^{(B)} ) \cap \CC[X_-] =\mathrm{Span} ( \cE_{n,-}^{(D)}) \, .
\end{equation}
By \Cref{t:BnSpinmain}, the subspace $\mathrm{Span} (\cE_{n}^{(B)})$ equals the subspace of type $B_n$ Lichtenstein embedding equations.
\Cref{p:intersectgrassBtoD} states that intersecting with $\CC[X_+]$ (or equivalently those equations supported on $\cD_{n,+}$) gives the type $D_n$ Lichtenstein embedding equations for $\bbG/\bbP_\alpha$.
It follows from \eqref{eq:BtoD+intersect} that $\cE_{n,+}^{(D)}$ are a spanning set for these Lichtenstein embedding equations.
The proof for $\cE_{n,-}^{(D)}$ follows analogously.
\end{proof}

\begin{remark}\label{rem:B+D+Gr+iso}
    The Type $B_n$ minuscule variety, the orthogonal Grassmannian $\mathrm{OGr}(n,2n+1)$, is diffeomorphic to either of the type $D_{n+1}$ minuscule varieties, the orthogonal Grassmannians $\mathrm{OGr}^+(n+1,2n+2)\cong\mathrm{OGr}^-(n+1,2n+2)$. 
    In fact either of the half spin representations of $\mathrm{Spin}(2n+2)$, restricted to $\mathrm{Spin}(2n+1)$, is isomorphic to the single spin representation. Thus one can include the spin representation of $\mathrm{Spin}(2n+1)$ in either of the half spin representations of $\mathrm{Spin}(2n+2)$ (\Cref{lem:isospinmod}).  The isomorphism sends the type~$B_n$ variable $x_I$ to the type~$D_{n+1}$ variables $x_I$ or $\sqrt{-1} x_{I\cup\{n+1\}}$ depending on which variable has a subscript of the correct parity.
    Hence, this induces an isomorphism between the Lichtenstein embedding equations of $\mathrm{OGr}(n,2n+1)$ and the Lichtenstein embedding equations of $\mathrm{OGr}^\pm(n+1,2n+2)$, and likewise between the two sets of the quadratic embedding equations.
    Note that the map from \Cref{rem:strongB_nisStrongD_n+1} is the combinatorial analogue of this isomorphism.
   
    
    
    


    
    In fact the equations $\cE_{n,+}^{(D)}$ are the Wick relations \cite{conciniprocesi82}. We failed to find a reference that proves the Wick relations cut out the orthogonal Grassmannian $\mathrm{OGr}^+(n+1,2n+2)$ although this does seem to be folklore, mentioned in \cite{sturmfelsvelasco2010} and \cite{Rincon}.
    Manivel gives these equations up to signs in \cite[6]{manivel2009}, in which he states 
    \begin{quote}The [signs] can be written down explicitly but seemingly not in a pleasant way. These relations are probably known, but we do not know any suitable reference.\end{quote}
    This reflects our feeling precisely.
    Procesi states only that these equations can be deduced from an equation \cite[434]{procesi2007} using two identities \cite[(7.2.3)]{procesi2007} and \cite[(7.2.1)]{procesi2007}. Seshadri \cite{seshadri1978geometry} gives quadratic equations up to signs with different support.
    In \cite[Corollary 16]{chirivi2013pfaffians}, Chirivì and Maffei give an alternative presentation using shuffle relations, but their equations have strictly larger support.
    It seems that in the literature the equations $\cE_{n}^{(D)}$ are implicitly mentioned but not explicitly calculated. For completeness we have therefore included a full derivation.

\end{remark}

\subsection{Cross polytopes} 

\subsubsection{$D_n$ cross polytope}\label{s:Dncrosspolytope}

In this section we consider $\bbG/\bbP$ to be such that $\bbG = \mathrm{SO}(2n)$ and $\bbP_{\alpha_1}$ is the maximal parabolic associated to the first fundamental weight.
The standard representation $V_{\lambda_1} = \CC^{2n}$ of $\mathfrak{so}_{2n}$ is minuscule with weights $\set{\pm e_i}{1 \leq i \leq n}$, corresponding to the $n$-cross polytope $\lozenge_n$.
Let $\{v_1,\ldots ,v_n,v_{-1} ,\ldots, v_{-n}\}$ be a basis of weight vectors in $V_{\lambda_1}$ such that $v_i$ has weight $e_i$ and $v_{-i}$ weight $-e_i$.
The form $B: V_{\lambda_1}\times V_{\lambda_1} \to \CC$ defining $\SO(2n) = \SO(2n,B)$ is such that $B(v_i,v_{-j}) = \delta_{ij} = B(v_{-j},v_{i})$ and $B(v_i,v_j) = 0 = B(v_{-i},v_{-j})$.
It is well known, as demonstrated using Newell-Littlewood coefficients \cite{Littlewood1958,Newell1951}, that as a $\mathfrak{so}(2n)$ representation
\[
    S^2(V_{\lambda_1}) = V_{2\lambda_1} \oplus V_0 \, .
\]
As $V_0$ is one-dimensional, applying \Cref{lem:grassmann+embedding+equations} we see there is a single Lichtenstein embedding equation given by the dual basis vector to $V_0$.
We can write this in terms of the dual basis for $V_{\lambda_1}$ as follows.
The form $B$ on $V_{\lambda_1}$ defining $\mathfrak{so}(2n)$ is symmetric and by definition $\mathfrak{so}(2n)$ invariant. Thus $B \in S^2(V_{\lambda_1}^\vee)$ spans the submodule dual to $V_0 \subset S^2(V_{\lambda_1})$.
Write $X = \{x_i\} \cup \{x_{-i}\}$ for the dual basis in $V_{\lambda_1}^\vee$.
Because $V^\vee \cong V$ as $\mathfrak{so}(2n)$ modules, we find that 
\[ 
    F_{\lozenge_n}^{(D)} := \sum_{i=1}^n x_ix_{-i} \in \CC[X]
\] 
is equal to $B$ and hence is our single quadratic embedding equation.
This is summarised in the following lemma.
\begin{lemma}\label{l:Dncrossmain}
Let $\bbG = \mathrm{SO}(2n)$ and $\bbP_{\alpha_1}$ be the maximal parabolic associated to the first fundamental weight.
The quadratic embedding equation $F_{\lozenge_n}^{(D)}$ cuts out the embedding $\bbG/\bbP_{\alpha_1} \subset \proj(\CC^{2n})$, and tropicalises to the strong exchange equation $f_{\lozenge_n}^{(D)}$.
\end{lemma}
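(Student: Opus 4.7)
The plan is essentially to assemble the ingredients that have already been laid out in the discussion immediately preceding the lemma statement; the heart of the argument lies in the decomposition of $S^2(V_{\lambda_1})$ rather than in anything combinatorial.

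First I would verify the first assertion: that $F^{(D)}_{\lozenge_n}$ cuts out $\mathbb G/\mathbb P_{\alpha_1}$ in $\proj(\mathbb C^{2n})$. By \Cref{lem:grassmann+embedding+equations}, the space of Lichtenstein embedding equations is the span of the dual basis vectors to the irreducible summands of $S^2(V_{\lambda_1})$ other than $V_{2\lambda_1}$. Using the decomposition $S^2(V_{\lambda_1}) = V_{2\lambda_1} \oplus V_0$ (obtained, e.g., from the Newell--Littlewood rule), this space is exactly $1$-dimensional, equal to the line dual to $V_0 \subseteq S^2(V_{\lambda_1})$. Under the identification $V_{\lambda_1}^\vee \cong V_{\lambda_1}$ as $\mathfrak{so}(2n)$-modules, the defining symmetric bilinear form $B$ is an $\mathfrak{so}(2n)$-invariant element of $S^2(V_{\lambda_1}^\vee)$, and hence spans the required line. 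In the weight basis fixed in the statement, $B$ is visibly $F^{(D)}_{\lozenge_n} = \sum_{i=1}^n x_i x_{-i}$, which is therefore a nonzero spanning element of the space of equations and so cuts out $\mathbb G/\mathbb P_{\alpha_1}$.

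Next I would handle the tropicalisation assertion, which is immediate: every coefficient of $F^{(D)}_{\lozenge_n}$ is nonzero, so
\[
    \trop\!\left(\sum_{i=1}^n x_i x_{-i}\right) \;=\; \bigoplus_{i=1}^n x_i \odot x_{-i} \;=\; f^{(D)}_{\lozenge_n},
\]
matching \eqref{eq:SEE-D*}.

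The only potential obstacle is citing or establishing the decomposition $S^2(V_{\lambda_1}) = V_{2\lambda_1} \oplus V_0$, which is classical. If a self-contained justification is preferred, one can argue directly: $V_{2\lambda_1}$ appears with multiplicity one as the component generated by $v_{\lambda_1}\otimes v_{\lambda_1}$, its dimension is $\binom{2n+1}{2}-1$, and $\dim S^2(V_{\lambda_1}) = \binom{2n+1}{2}$, leaving room only for a trivial summand; the presence of the trivial summand is witnessed by the invariant form $B$. No combinatorial work on strong Coxeter matroids is needed here, since the characterisation on the tropical side was already obtained as \Cref{l:strexchDncross}.
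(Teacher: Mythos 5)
Your proposal is correct and follows exactly the route the paper itself takes: decompose $S^2(V_{\lambda_1})=V_{2\lambda_1}\oplus V_0$, apply \Cref{lem:grassmann+embedding+equations} to get a one-dimensional space of embedding equations spanned by the invariant form $B$, observe that $B=\sum_i x_ix_{-i}$ in the weight basis, and tropicalise. Your optional dimension-count justification of the decomposition (multiplicity one of $V_{2\lambda_1}$ plus $\dim V_{2\lambda_1}=\binom{2n+1}{2}-1$ and $\dim S^2(\CC^{2n})=\binom{2n+1}{2}$) is a sound and slightly more self-contained alternative to citing Newell--Littlewood, but the substance of the argument is the same.
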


\subsubsection{$C_n$ cross polytope}
In this section, we consider $\bbG/\PP$ to be such that $\bbG = \mathrm{Sp}(2n)$ and $\PP_{\alpha_1}$ is the maximal parabolic associated to the first fundamental weight.
As in the previous case, the standard representation $V_{\lambda_1} = \CC^{2n}$ of $\mathfrak{sp}_{2n}$ is minuscule with weights $\set{\pm e_i}{1 \leq i \leq n}$, corresponding to the $n$-cross polytope $\lozenge_n$.
In this case, using character calculations (as described in \cite{Littlewood1958}) the representation $S^2(V_{\lambda_1})$ is isomorphic to $V_{2\lambda_1}$, thus 
there are no Lichtenstein embedding equations in this setting.
This precisely matches the equivalent statement \Cref{l:strexchCncross} which states that the strong exchange condition holds for any subset of the $C_n$ cross polytope.

\begin{lemma}\label{l:Cncrossmain}
    Let $\bbG = \mathrm{Sp}(2n)$ and $\bbP_{\alpha_1}$ be the maximal parabolic associated to the first fundamental weight.
 Then $\bbG/\bbP_{\alpha_1} = \proj(\CC^{2n})$.
\end{lemma}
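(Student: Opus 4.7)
The plan is to give a direct proof by showing that $\Sp(2n)$ acts transitively on $\proj(\CC^{2n}) = \proj(V_{\lambda_1})$. Since by construction $\bbP_{\alpha_1}$ is the stabiliser of the highest weight line $[v_{\lambda_1}]$, transitivity of the action on $\proj(V_{\lambda_1})$ immediately identifies the orbit $\bbG \cdot [v_{\lambda_1}] = \bbG/\bbP_{\alpha_1}$ with all of $\proj(\CC^{2n})$.

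To establish transitivity, I would invoke Witt's extension theorem for the nondegenerate alternating form $B$ defining $\Sp(2n)$: any nonzero vector $v \in \CC^{2n}$ can be completed to a symplectic basis $\{v, w, \ldots\}$ of $\CC^{2n}$. The linear map sending a fixed symplectic basis to this new one then lies in $\Sp(2n)$ and carries $v_{\lambda_1}$ to $v$. Projectivising yields transitivity of $\bbG$ on $\proj(\CC^{2n})$.

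As a sanity check, I would cross-reference with the representation-theoretic route already recorded in the excerpt: since $S^2(V_{\lambda_1}) \cong V_{2\lambda_1}$ is irreducible, Lemma \ref{lem:grassmann+embedding+equations} says the Lichtenstein embedding equations span the zero subspace. By Theorem \ref{t:eqfromcas}, the subvariety $\bbG/\bbP_{\alpha_1} \subset \proj(V_{\lambda_1})$ is cut out scheme-theoretically by these (empty) quadrics, and therefore must equal the ambient projective space. This confirms the direct argument and shows that no further equations of higher degree are lurking.

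No serious obstacle is expected: both the Witt extension input and the $S^2(V_{\lambda_1}) \cong V_{2\lambda_1}$ decomposition are classical, and the proof reduces to a couple of lines assembling them.
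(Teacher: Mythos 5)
Your proposal is correct, and your primary argument (transitivity of $\Sp(2n)$ on $\proj(\CC^{2n})$ via Witt extension) is a genuinely different and more elementary route than the one the paper takes. The paper goes entirely through the representation theory: it invokes character calculations to see that $S^2(V_{\lambda_1}) \cong V_{2\lambda_1}$ is irreducible, so by \Cref{lem:grassmann+embedding+equations} the space of Lichtenstein embedding equations is zero, and hence $\bbG/\bbP_{\alpha_1}$ is cut out by no quadrics in $\proj(V_{\lambda_1})$ and must be the whole projective space. That is exactly the argument you record as a "sanity check." The trade-off: your Witt-theorem argument is direct, self-contained, and does not rely on \Cref{t:eqfromcas} or the decomposition of $S^2(V_{\lambda_1})$, whereas the paper's argument is uniform with the treatment of the other types and makes explicit that the quadric ideal in this case is trivial — which is actually the fact the paper needs downstream, since the whole point is to match the (here empty) set of tropical equations to \Cref{l:strexchCncross}. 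In the context of this paper the representation-theoretic route is the one doing the work; your direct argument is a clean independent confirmation that the equality $\bbG/\bbP_{\alpha_1}=\proj(\CC^{2n})$ holds set-theoretically.
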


\subsection{The exceptional cases}

We consider the $E_6$ and $E_7$ cases in this section.
Explicit descriptions of the root systems for $E_6$ and $E_7$ were given in \Cref{ssec:the-exceptional-Weyl-groups}. Many of the results and facts in this section have been verified by computer check: please see our repository \cite{githubrepo} 
for explicit implementation.

\subsubsection{$E_6$ minuscule case(s)}
For type $E_6$, recall that there are two minuscule fundamental weights, the last fundamental weight $\lambda = \lambda_6$ and the first fundamental weight $\lambda' = \lambda_1$.
As these give rise to diffeomorphic varieties, it suffices to consider only $\lambda$.
The fundamental representation $V_{\lambda}$ is $27$-dimensional and its weights are, up to translation and dilation, the vertices of the $2_{21}$ polytope.
A computer calculation shows the symmetric square decomposes as $S^2(V_\lambda^\vee) \cong V_{2\lambda}^\vee \oplus V_{\lambda'}^\vee$.
As $V_{\lambda'}^\vee$ is $27$-dimensional, this gives us $27$ quadrics as a basis for the Lichtenstein embedding equations.
Moreover, as $V_{\lambda'}^\vee$ is minuscule, we can choose a basis such that there is one equation per weight space of $V_{\lambda'}^\vee$.

To explicitly write these equations, set $X = \set{x_A}{A \in W(E_6)^{S \setminus s_6}}$ and recall that $S = \{s_1, \dots, s_6\}$ is the set of simple reflections for $W(E_6)$.
Recalling that each coset $A$ has an associated minimal coset representative $w_A$, we can equivalently think of $X$ as variables, or as a basis for $V_{\lambda}^\vee$, where $x_A := x_{w_A(\lambda)}$ is the vector associated to weight $w_A(\lambda)$.
Consider the equation
\begin{align*}
    F_6 := 
     x_\lambda & \cdot x_{s_6 s_5 s_4 s_3 s_2 s_4 s_5 s_6 (\lambda)}\\
     &-\left( x_{s_6(\lambda)} \cdot x_{s_5 s_4 s_3 s_2 s_4 s_5 s_6 (\lambda)} \right)\\
     &-\left( x_{s_5 s_6 (\lambda)} \cdot  x_{s_4 s_3 s_2 s_4 s_5 s_6 (\lambda)} \right)\\
     &-\left( x_{s_4 s_5 s_6 (\lambda)} \cdot x_{s_3 s_2 s_4 s_5 s_6 (\lambda)} \right)\\
     &+\left(x_{s_2 s_4 s_5 s_6 (\lambda)} \cdot x_{s_3 s_4 s_5 s_6 (\lambda)} \right)\, .
\end{align*}
This is one of the Lichtenstein embedding equations or, equivalently, an element of $V_{\lambda'}^\vee \subset S^2(V_{\lambda}^\vee)$. Its support corresponds to the five antipodes in a $D_5$ cross-polytope in $2_{21}$: in particular, it tropicalises to one of the type $E_6$ strong exchange equations \eqref{eq:E6}.

To construct the the type $E_6$ quadratic embedding equations, we first observe that the $\fe_6$-submodule generated by $F_6$ is exactly $V_{\lambda'}^\vee$.
As such, we can compute a basis for $V_{\lambda'}^\vee \subset S^2(V_{\lambda}^\vee)$ via computer: the code in \Cref{app:code} produces a basis that we call the \emph{type $E_6$ quadratic embedding equations} $\cE^{(E)}_{6}$.
These equations have the additional property that each one is supported on one of the 27 copies of the $D_5$ cross polytope in $2_{21}$, hence it tropicalises to the type $E_6$ strong exchange equations \eqref{eq:E6}.

\begin{proposition}\label{p:mainthmE6}
Let $\bbG$ be the simply connected complex Lie group of type $E_6$ and $\bbP$ the minuscule parabolic associated to the last fundamental weight $\lambda$.
The type $E_6$ quadratic embedding equations cut out the embedding $\bbG/\PP \subseteq \proj(V_{\lambda})$.
Moreover, the type $E_6$ quadratic embedding equations $\cE^{(E)}_{6}$ tropicalise to the type $E_6$ strong exchange equations $\cF_6^{(E)}$.

\end{proposition}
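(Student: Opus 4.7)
The plan is to apply \Cref{lem:grassmann+embedding+equations} and reduce both halves of the statement to verifiable properties of the explicit quadric $F_6$ from which the other 26 equations of $\cE^{(E)}_6$ are generated. Specifically, a direct character (or computer) calculation gives the decomposition $S^2(V_{\lambda}) \cong V_{2\lambda} \oplus V_{\lambda'}$, and by \Cref{lem:grassmann+embedding+equations} the space of Lichtenstein embedding equations is then exactly the 27-dimensional summand $V_{\lambda'}^\vee \subset S^2(V_{\lambda}^\vee)$. So it suffices to prove that $\cE^{(E)}_6$ spans this summand and that each of its elements tropicalises to one of \eqref{eq:strongexEqE6}.

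To see that $\cE^{(E)}_6$ spans $V_{\lambda'}^\vee$, I would first verify that $F_6 \in V_{\lambda'}^\vee$. This amounts to checking that $F_6$ is annihilated by the $\fe_6$-equivariant projection onto $V_{2\lambda}^\vee$, or equivalently that the Casimir operator acts on $F_6$ by the scalar $\InnerProd{\lambda'}{\lambda'+2\rho}$ computed in \Cref{lem:grassmann+embedding+equations}. Both are finite linear-algebraic checks in the explicit basis of $V_\lambda$ indexed by $W^J$, and we carry them out using the code in \Cref{app:code}. Since $V_{\lambda'}^\vee$ is an irreducible $\fe_6$-module and $F_6 \neq 0$, the submodule it generates is all of $V_{\lambda'}^\vee$; applying Chevalley generators then produces the 27 equations comprising $\cE^{(E)}_6$.

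For the support and tropicalisation claims, the key observation is that $V_{\lambda'}^\vee$ is minuscule, so its weight spaces are one-dimensional and its weights are in bijection with $W^J$. Under the natural $\fh$-grading on $S^2(V_\lambda^\vee)$, a monomial $x_Bx_{B'}$ sits in weight $-(B\cdot\omega_J + B'\cdot\omega_J)$, and for a given total weight $\mu$ the pairs $(B,B')$ contributing are precisely the antipodes of a single $D_5$ cross polytope $\crossPoly(A)$ where $A$ is determined by $\mu$. Hence the weight basis for $V_{\lambda'}^\vee$ produced by the code in \Cref{app:code} has each basis vector supported only on the five antipodes of some $\crossPoly(A)$, and a computer check confirms that the coefficients on all five antipodes are nonzero (indeed $\pm 1$, matching \Cref{rem:coefficients}). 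Tropicalising term by term then yields exactly the equation $f_A$ of \eqref{eq:strongexEqE6}, completing both halves.

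The main obstacle is the verification step that $F_6$ has Casimir eigenvalue matching $V_{\lambda'}$ rather than $V_{2\lambda}$, together with the non-vanishing of all five coefficients in each weight basis vector. Both verifications are finite but tedious in the explicit $27$-dimensional coordinates, and are what the computer algebra implementation in \Cref{app:code} is designed to handle; once these are in hand, the combinatorial identification with $\cF_6^{(E)}$ and the appeal to \Cref{prop:E_6+matroid} are purely formal.
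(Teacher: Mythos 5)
Your proposal is correct and takes essentially the same approach as the paper: invoke \Cref{lem:grassmann+embedding+equations} to reduce to the decomposition $S^2(V_\lambda^\vee) \cong V_{2\lambda}^\vee \oplus V_{\lambda'}^\vee$, verify by computer that $F_6$ generates $V_{\lambda'}^\vee$, and then read off the tropicalisations from a weight basis. The only small variations are that you confirm $F_6 \in V_{\lambda'}^\vee$ directly via the Casimir eigenvalue together with irreducibility, where the paper simply computes the dimension of the submodule $F_6$ generates; and your explicit observation that the weight grading on $S^2(V_\lambda^\vee)$ forces each weight vector of $V_{\lambda'}^\vee$ to be supported precisely on the antipodes of a single $\crossPoly(A)$ (plus a nonvanishing check on the five coefficients) is a welcome clarification of the paper's brief appeal to the induced $W(E_6)$ action on the support of $F_6$.
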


\begin{proof}    
    We proved this by computer (see \Cref{ss:grassmanianBasis}), constructing
    our special equation $F_6$ in $S^2(V_\lambda^\vee)$ and then calculating the $\fe_6$-subrepresentation generated by this equation.
    We find this to be $27$-dimensional, hence it is $V_{\lambda'}^\vee$.
    Computing a basis of weight vectors for the representation gives our type $E_6$ quadratic embedding equations $\cE^{(E)}_{6}$. As the $\fe_6$ action on $F_6$ gives an $W(E_6)$ action on the weight on $F_6$, each of these equations must be supported on a distinct $D_5$ cross polytope in $2_{21}$, hence $\cE_6^{(E)}$ tropicalises to $\cF_6^{(E)}$. 
\end{proof}

\begin{remark}
Each of the the type $E_6$ quadratic embedding equations are in the $W(E_6)$ orbit of $F_6$, hence one may ask whether it is the full orbit.
However, the full $W(E_6)$ orbit of $F_6$ is of size $270$: each of the 27 $D_5$ cross polytopes has $\binom{5}{2}$ equations in the orbit supported on it.
The only difference between these equations are signs: two coefficients are chosen of the $5$ to have a different sign than the other $3$.
\end{remark}

\subsubsection{$E_7$ minuscule case}

For type $E_7$, recall that the only minuscule fundamental weight is the last, $\lambda = \lambda_7$.
Its fundamental representation $V_{\lambda}$ is $56$-dimensional and its weights are, up to translation and dilation, the vertices of the $3_{21}$ polytope.
A computer calculation shows the symmetric square decomposes as $S^2(V_\lambda^\vee) \cong V_{2\lambda}^\vee \oplus V_{\omega}^\vee$ where 
$V_{\omega}^\vee$ is the first fundamental representation: the adjoint representation of $\mathfrak{e}_7$.
As $V_{\omega}^\vee$ is $133$-dimensional, this gives us $133$ quadrics as a basis for the Lichtenstein embedding equations.
However, $V_{\omega}^\vee$ is not minuscule: it decomposes into $126$ one-dimensional weight spaces indexed by the roots of $E_7$ and a $7$-dimensional zero~weight space corresponding to the Cartan subalgebra of $\fe_7$.
As such, we will have $126$ equations whose weight corresponds to a root of $E_7$ and must choose $7$ equations of weight zero.

To explicitly write these equations, set $X = \set{x_A}{A \in W(E_7)^{S \setminus s_7}}$ and recall that $S = \{s_1, \dots, s_7\}$ are the set of simple reflections for $W(E_7)$.
Recalling that each coset $A$ has an associated minimal coset representative $w_A$, we can equivalently think of $X$ as variables, or as a basis for $V_{\lambda}^\vee$, where $x_A := x_{w_A(\lambda)}$ is the vector associated to weight $w_A(\lambda)$.
Consider the equation
\begin{align*}
    F_7 \coloneqq x_{\lambda} &\cdot x_{s_7 s_6 s_5 s_4 s_2 s_3 s_4 s_5 s_6 s_7(\lambda)}\\
    &- \left(x_{s_7(\lambda)} \cdot x_{s_6 s_5 s_4 s_2 s_3 s_4 s_5 s_6 s_7(\lambda)} \right) \\
    &- \left(x_{s_6 s_7(\lambda)} \cdot x_{s_5 s_4 s_2 s_3 s_4 s_5 s_6 s_7(\lambda)} \right) \\
    &- \left(x_{s_5 s_6 s_7(\lambda)} \cdot x_{s_4 s_2 s_3 s_4 s_5 s_6 s_7(\lambda)} \right) \\
    &- \left(x_{s_4 s_5 s_6 s_7(\lambda)} \cdot x_{s_2 s_3 s_4 s_5 s_6 s_7(\lambda)} \right) \\
    &+ \left(x_{s_2 s_4 s_5 s_6 s_7(\lambda)} \cdot x_{s_3 s_4 s_5 s_6 s_7(\lambda)} \right)\, .
\end{align*}
This is one of the Lichtenstein embedding equations or, equivalently, an element of $V_{\omega}^\vee \subset S^2(V_{\lambda}^\vee)$. 
Its support corresponds to the six antipodes in a $D_6$ cross-polytope in $3_{21}$: in particular, it tropicalises to one of the type $E_7$ strong exchange equations.
As in $E_6$, the $\fe_7$-submodule generated by $F_7$ is exactly $V_{\omega}^\vee$, and hence we can compute a basis using the code in \Cref{app:code}.
This gives $126$ equations that are supported on one of the $126$ copies of the $D_6$ cross polytope in $3_{21}$, along with seven other weight zero equations.
The weight zero equations given by the computer do not tropicalise to any of the type $E_7$ strong exchange equations, hence we must choose these more rigorously.

To do so, we take the standard subalgebra $\mathfrak{e}_6$ corresponding to the maximal standard parabolic subgroup $\maximalParabolic{7}$.
There exists a unique vector (up to scalars) in the weight zero space of $V_\omega^\vee$ that this subalgebra acts by zero on.
This gives an equation supported on the the $28$ antipodes of the $3_{21}$ polytope, hence it tropicalises to the strong exchange equations $f_{3_{21}}$ from \eqref{eq:SEE-E7}. 
We then take six other subalgebras isomorphic to $\mathfrak{e}_6$ corresponding to six non-trivial conjugates of $\maximalParabolic{7}$,
yielding six more equations.
These equations are linearly independent, each have coefficients $\pm 1$ and $\pm 3$ and are all supported on the $28$ antipodes of the $3_{21}$ polytope, hence tropicalise to $f_{3_{21}}$.
The explicit choice we take is the following seven equations associated to seven parabolic subgroups, listed with their list of coefficients:

{\footnotesize
\begin{align}\label{eq:weightzerobasis}
    \begin{split}
    \maximalParabolic{7} & \to [ -3,  1,  1,  1,  1, -1, -1, -1,  1,  1,  1,  1,  1,  1,  1,  1,  1,  1,  1, -1,  1,  1,  1, -1,  1,  1,  1, -1],\\
    s_7(\maximalParabolic{7}) & \to [ -1,  3,  1,  1,  1, -1, -1, -1,  1,  1,  1,  1,  1,  1,  1, -1,  1, -1, -1, -1, -1, -1,  1,  1, -1, -1, -1,  1],\\
    s_6s_7(\maximalParabolic{7})& \to [ -1,  1,  3,  1,  1, -1, -1, -1,  1,  1,  1,  1, -1,  1, -1,  1, -1,  1, -1,  1,  1, -1, -1, -1, -1,  1, -1,  1],\\
    s_5s_6s_7(\maximalParabolic{7})& \to [ -1,  1,  1,  3,  1, -1, -1, -1,  1,  1, -1, -1,  1, -1,  1,  1,  1,  1, -1,  1,  1, -1, -1,  1, -1, -1,  1, -1],\\
    s_4s_5s_6s_7(\maximalParabolic{7})& \to [ -1,  1,  1,  1,  3, -1, -1, -1, -1, -1,  1,  1,  1, -1,  1,  1, -1,  1, -1, -1, -1, -1, -1, -1,  1, -1,  1,  1],\\
    s_3s_4s_5s_6s_7(\maximalParabolic{7}) & \to [ -1,  1,  1,  1,  1,  1, -3, -1,  1, -1,  1, -1,  1,  1, -1,  1,  1, -1, -1, -1,  1,  1, -1, -1, -1, -1,  1,  1],\\
    s_2s_3s_4s_5s_6s_7(\maximalParabolic{7})& \to [ -1,  1,  1,  1, -1, -1, -1,  1,  3,  1,  1, -1,  1,  1, -1,  1,  1, -1, -1,  1,  1,  1,  1,  1, -1,  1, -1, -1]
    \end{split}
\end{align}}
The construction of this linear combination can again be found on our repository \cite{githubrepo} 
in the file ``Zero.gap''.
Note that this choice is not unique: there are $28$ subalgebras of $\mathfrak{e}_7$ isomorphic to $\mathfrak{e}_6$.
A different choice of seven linearly independent equations would lead to a different basis,
or we could even forgo linear independence and take all $28$ of the equations in the spirit of \Cref{ex:B5}.
However, these would be tropically equivalent to our choice.

With this all together, we define the \emph{type $E_7$ quadratic embedding equations} $\cE_7^{(E)}$ to be the 126 equations supported on each copy of $D_6$ in $3_{21}$ as calculated in \Cref{app:code}, along with the seven weight zero equations \eqref{eq:weightzerobasis} supported on the antipodes of $3_{21}$.

\begin{proposition} \label{p:mainthmE7}
Let $\bbG$ be the simply connected complex Lie group of type $E_7$ and $\bbP$ the minuscule parabolic associated to the last fundamental weight $\lambda$.
The type $E_7$ quadratic embedding equations cut out the embedding $\bbG/\PP \subseteq \proj(V_{\lambda})$.
Moreover, the type $E_7$ quadratic embedding equations $\cE^{(E)}_{7}$ tropicalise to the type $E_7$ strong exchange equations $\cF_7^{(E)}$.
\end{proposition}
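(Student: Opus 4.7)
The plan is to follow the strategy used for $E_6$ in \Cref{p:mainthmE6}, but with added care for the non-minuscule summand $V_\omega^\vee$ of $S^2(V_\lambda^\vee)$. First, I would verify by computer algebra (see \Cref{app:code}) that $S^2(V_\lambda^\vee) \cong V_{2\lambda}^\vee \oplus V_\omega^\vee$, so that by \Cref{lem:grassmann+embedding+equations} the Lichtenstein embedding equations form a $133$-dimensional subspace naturally isomorphic to $V_\omega^\vee$ (the adjoint representation of $\fe_7$). The task is then to exhibit a spanning set of this subspace whose elements tropicalise to the equations in $\cF_7^{(E)}$.

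Next I would check directly from its expression that the distinguished element $F_7$ lies in $V_\omega^\vee$, either by verifying that the Casimir eigenvalue matches or (more practically) by confirming that $F_7$ is not contained in the $V_{2\lambda}^\vee$ summand. Since the weight of $F_7$ is a root of $E_7$, and $F_7 \neq 0$, the $\fe_7$-submodule it generates must be all of $V_\omega^\vee$ (as $V_\omega^\vee$ is irreducible). Applying elements of $\fe_7$ to $F_7$ permutes its weight among the roots, and each root weight determines a unique $D_6$-cross polytope facet of $3_{21}$; the $126$ weight vectors obtained are thus each supported on a single facet, and by construction tropicalise to the corresponding equations $f_\crossPoly$ in \eqref{eq:SEE-E7}. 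This step, along with verifying that these $126$ vectors form a basis of the root weight subspace of $V_\omega^\vee$, is a direct computer verification.

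The main obstacle is the seven-dimensional zero weight space of $V_\omega^\vee$, which corresponds to the Cartan subalgebra $\fh \subset \fe_7$. Generic elements of this weight space produced from $\fe_7$-action on $F_7$ are not supported on the $28$ antipodes of $3_{21}$ and so do not tropicalise to $f_{3_{21}}$. To remedy this, I would exploit the observation that, under the $\fe_6 \subset \fe_7$ subalgebra generated by $\{s_1,\ldots,s_6\}$, the weight zero space of $V_\omega^\vee$ decomposes as the adjoint $\fe_6$-representation plus a one-dimensional $\fe_6$-invariant line. This invariant line must be spanned by a vector whose support is $W(E_6)$-invariant; since $W(E_6)$ acts transitively on the $28$ antipodal pairs of $3_{21}$ (which form a single orbit under the stabiliser of an antipodal axis direction), the supported monomials are exactly the antipodes, producing an equation that tropicalises to $f_{3_{21}}$.

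Finally, to obtain a basis, I would take the $W(E_7)$-translates of this invariant line under six non-trivial conjugates of $\maximalParabolic{7}$, as listed in \eqref{eq:weightzerobasis}. Each produces an equation supported on antipodes of $3_{21}$ (since the property of having such support is preserved by the $W(E_7)$-action permuting antipodes), and tropicalises to $f_{3_{21}}$. The seven resulting equations should then be checked by computer to be linearly independent in the zero weight space and so give a basis of it. Combined with the $126$ root weight equations, this yields $133$ spanning equations for the Lichtenstein embedding equations, each of which tropicalises to an equation in $\cF_7^{(E)}$, completing the proof.
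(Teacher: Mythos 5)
Your overall strategy mirrors the paper's: verify $S^2(V_\lambda^\vee)\cong V_{2\lambda}^\vee\oplus V_\omega^\vee$ by computer, generate $V_\omega^\vee$ from $F_7$, obtain $126$ root-weight equations supported on $D_6$ facets, and handle the $7$-dimensional zero-weight space via centralisers of conjugate $\fe_6$-subalgebras as in \eqref{eq:weightzerobasis}. This is exactly the proof in the paper.

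However, there is a genuine gap in your justification that the $\fe_6$-invariant zero-weight vector tropicalises to $f_{3_{21}}$. You claim that $W(E_6)$ acts transitively on the $28$ antipodal pairs of $3_{21}$. It does not: $W(E_6)$ is the stabiliser of the vertex $\omega_J$, so it fixes the pair $(\omega_J, -\omega_J)$ and permutes the remaining $27$ pairs transitively, giving orbits of sizes $1$ and $27$. (The explicit coefficients in \eqref{eq:weightzerobasis} --- one $\pm 3$ and twenty-seven $\pm 1$ per row --- reflect precisely this orbit decomposition.) Consequently invariance alone only forces the coefficients to be constant in absolute value within each of the two orbits; it does not rule out one of the two orbit-coefficients being zero, which would leave the tropicalisation short of $f_{3_{21}}$. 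The paper settles this by direct computation rather than by an orbit-transitivity argument, and your proposal would need to do the same (or supply a different argument that both orbit-coefficients are nonzero). A minor additional slip: the zero-weight space of $V_\omega^\vee$ is $7$-dimensional and decomposes under $\fe_6$ as the Cartan of $\fe_6$ plus a $1$-dimensional centraliser, not as ``the adjoint $\fe_6$-representation'' (which is $78$-dimensional); you presumably mean the zero-weight subspace of the adjoint.
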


\begin{proof}
We proved this by computer (see \Cref{app:code}), constructing our special equation $F_7$ in $S^2(V_{\lambda}^\vee)$ and then calculating the $\fe_7$-subrepresentations generated by this equation.
We find this to be $133$-dimensional, hence it is $V_{\omega}^\vee$.
Computing a basis of weight vectors for the representation gives $126$ equations supported on some $D_6$ polytope of $3_{21}$ that are part of $\cE_7^{(E)}$.
As the $\fe_7$ action on $F_7$ gives a $W(E_7)$ action on the weight of $F_7$, each of these equations must be supported on a distinct $D_6$ cross polytope in $3_{21}$, hence these equations tropicalise to the subset of $\cF_7^{(E)}$ supported on cross polytopes.
The remaining seven equations of $\cE_7^{(E)}$ form a basis for the weight zero space of $V_{\omega}^\vee$ by computer check and all tropicalise to the final strong exchange equation $f_{3_{21}}$.
\end{proof}

\begin{remark}\label{rem:do+not+minimise+support}
The seven equations we found in the zero weight space of $V_\omega^\vee$ were not of minimal support.
For each $D_6$ cross polytope $\lozenge$ in $3_{21}$, there is a corresponding cross polytope $\lozenge'$ whose vertices are the antipodes of the vertices of $\lozenge$ in $3_{21}$.
For each of these $63$ $D_6$ cross polytope pairs $(\lozenge, \lozenge')$, there exists a weight zero equation supported on the antipodes on this pair.
These $63$ equations generate the zero weight space of $V_\omega^\vee$.
However, the tropicalisation of these equations do not characterise strong Coxeter matroids: we have a (counter-)example of a strong Coxeter matroid that does not satisfy these tropical equations.
This counter-example can be found in our repository \cite{githubrepo} 
in the file ``Coxeter\_matroids.ipynb''.

In type $B$, we could have defined the quadratic embedding equations to be a set of equations of maximal support in each weight.
This would have been sufficient combinatorially, as tropicalising these also characterises strong $\Delta$-matroids: this will be the theme of a subsequent paper~\cite{peerless_antipodes}.
However, we can also take linear combinations of equations in each weight to minimise the supports, and this gives us the quadratic embedding equations that we presented in \Cref{d:quadraticembeddingeqs}.
In type $E_7$, only the former approach is correct tropically, as trying to minimise the supports gives rise to tropical equations that are not satisfied by strong Coxeter matroids.
This also demonstrates that strong Coxeter matroids may not satisfy the tropicalisation of \emph{all} quadratic equations from the Lichtenstein embedding.

\end{remark}

\printbibliography

\appendix

\section{Explicit equations}\label{app:equations}


This appendix is dedicated to calculating the Lichtenstein embedding equations for the minuscule varieties corresponding to type $B_n$ and $D_n$ with minuscule representation the spin module or the two half spin modules. These varieties are the isotropic Grassmannians in odd and even dimensional spaces respectively. These are not new results, but we failed to find explicit equations for~$B_n$ in the literature, so we record the working here. We approach the computation through representation theory, using results of Lichtenstein \cite{Lichtenstein:1982}. A general reference for this section is \cite{Fulton+Harris:2004}. 
See Section \ref{sssec:GrassmannPrelims} for preliminaries on Lichtenstein's results (Theorem \ref{t:eqfromcas}), and recall Lemma \ref{lem:grassmann+embedding+equations} which shows that if we construct a basis $b_i$ of $S^2(V_\lambda)$ that splits into bases for each irreducible summand then the ideal cutting out $\bbG/\bbP$ is generated by $b_i^\vee$ where $b_i \notin V_{2\lambda}$.

To understand the defining equations of the projective embedding of $\bbG/\bbP_\alpha$ into $\proj(V_\lambda)$, we find a basis of $S^2(V_\lambda)$ consisting of eigenvectors for the Casimir operator $\Omega$. Equivalently, we decompose $S^2(V_\lambda^\vee)$ into irreducible $\fg$-modules and then construct a basis of these irreducible modules. Each basis element of $V_\mu$ for $\mu \neq 2\lambda$ gives a defining equation. 
In the following two sections, we explicitly calculate a basis for the spin module and its symmetric square for Type $B_n$, corresponding to $\bbG = \mathrm{Spin}(2n+1)$ and $V_\lambda$ the spin representation (\Cref{App:B}). We then use these results and knowledge of the restriction from $\mathrm{Spin}(2n+1)$ (type $B_n$) to $\mathrm{Spin}(2n)$ (type $D_n$) to construct a suitable basis for the symmetric square of the two half spin representations of $\mathrm{Spin}(2n)$ (\Cref{App:D}).

\subsection{Equations from Lichtenstein: Type $B_n$}\label{App:B}
In this section, we shall derive explicit equations cutting out $\bbG/\bbP_\alpha$ in type $B_n$ corresponding to the only minuscule representation, the spin representation.
This means that $\bbG = \mathrm{Spin}(2n+1)$ with corresponding Lie algebra $\mathfrak{so}_{2n+1}$.
As representations of $\mathrm{Spin}(2n+1)$ and $\mathfrak{so}_{2n+1}$ are in one-to-one correspondence by differentiation, we shall decompose $S^2(V_{\lambda}^\vee)$ into irreducible $\so_{2n+1}$-modules and construct a basis of these irreducible modules.
The fundamental representations of $\mathfrak{so}_{2n+1}$ are wedge powers of the standard representation $\bigwedge^k(\CC^{2n+1})$ for $1 \leq k < n$ along with the spin representation $\Sp$.
This is the representation $V_{\lambda_{n}}$ with highest weight $\lambda_n = (\half,\, \half,\,\ldots,\, \half)$.
The spin representation $\Sp$ is of dimension $2^n$ and is the only minuscule representation, its single Weyl group of weights being $(\pm \half,\, \ldots,\, \pm \half)$. 

The outline of this subsection is as follows.
Recall from \Cref{lem:grassmann+embedding+equations} that the Lichtenstein embedding equations are a subspace of $S^2(\Sp^\vee)$ consisting of the irreducible $\so_{2n+1}$-modules whose highest weight is not equal to $2\lambda_n$.
To find these equations, we must first find an explicit decomposition of $S^2(\Sp^\vee)$ into irreducible $\so_{2n+1}$-modules.
We do this by considering $S^2(\Sp^\vee) \subset \Sp^\vee \otimes \Sp^\vee$, and showing that the latter module decomposes as
\begin{equation} \label{eq:spvee+iso}
\Sp^\vee \otimes \Sp^\vee \cong \Sp \otimes \Sp^\vee \cong \End(\Sp)   \cong \Cl^{\mathrm{even}} V \cong \twedge^{\mathrm{even}} V = \bigoplus_{k=0}^{n} \twedge^{2k} V\, .
\end{equation}
These four isomorphisms are given in \Cref{l:Spisotodual}, \Cref{l:end_iso_S_tensor_Svee}, \Cref{c:evenclifftoend} and \Cref{lem:quant+iso} respectively.
We then show which submodules in this decomposition correspond to $S^2(\Sp^\vee)$: this is the content of \Cref{p:Ssquaredecomp}.
Note that we will describe all of these isomorphisms very explicitly, as \Cref{App:B.2} will be focused on carefully pulling a basis of $S^2(\Sp^\vee)$ through each of these isomorphisms.

We begin by giving an explicit model for $\Sp$.
In this section we denote by~$V$ the vector representation $V_{\lambda_1} = \CC^{2n+1}$ of $\mathfrak{so}_{2n+1}$ with non-degenerate symmetric bilinear form $B$. Thus $\mathfrak{so}_{2n+1} = \mathfrak{so}_{2n+1}(V,B)$.
Following~\cite[Section 1]{Meinrenken:2013}, there exists a pair of maximal isotropic subspaces $F$, $F^*$  with respect to~$B$ and a unit length vector $w \in V$ orthogonal to $F$ and $F^*$ such that $V = F \oplus F^* \oplus \mathrm{Span}\{{w}\}$.
We have $B(w,w) =1$, and moreover we fix bases $\{v_1, \dots, v_n\}$ for $F$ and $\{x_1, \dots, x_n\}$ for $F^*$ such that $B(v_i, x_j) = \delta_{ij}$.

Let $F^\vee = \Hom(V,\mathbb{C})$ be the dual space of $V$ with basis $v_i^\vee \in F^\vee$ where $v_i^\vee(v_j) = \delta_{ij}$.
Note that we use $F^*$ to denote an isotropic complement to $F$ and we use $F^\vee$ to denote the dual space of $F$.
Though these are different spaces, we identify $F^*$ with $F^\vee$ by associating $x_i$ to $v_i^\vee$.
Explicitly, the form $B$ defines an isomorphism
\begin{equation} \label{eq:B+form+iso}
B^\sharp: V \rightarrow V^\vee \, , \quad B^\sharp(v)(u) = B(v,u) \, ,
\end{equation}
which also restricts to an isomorphism between $F^*$ and $F^\vee$.

We model $\Sp$ by $\bigwedge^\bullet F$ with inherited basis
\[
    \set{v_I \coloneqq v_{i_1} \wedge \cdots \wedge v_{i_k} }{ I = \{i_1, \dots, i_k\} \subseteq [n]} \, .
\]
Here we implicitly assume $I$ is ordered $i_1 < \cdots < i_k$ for this basis.
If instead we assume $I$ is ordered in some other way, say $I' = \sigma(I)$ for some permutation $\sigma$ of~$I$ as introduced in \Cref{sssec:Symmetric-group}, note that
\[
    v_{I'} \coloneqq v_{\sigma(I)_1} \wedge \cdots \wedge v_{\sigma(I)_k} = \sgn\left(\sigma\right) \cdot v_I = (-1)^{\lng{I'}} \cdot v_I\, .
\]
Similarly, we write the natural basis for $\bigwedge^\bullet F^*$ as
\[
    \set{x_I \coloneqq x_{i_1} \wedge \cdots \wedge x_{i_k} }{ I = \{i_1, \dots, i_k\} \subseteq [n]} \, .
\]

\begin{remark}\label{r:Bdetsharp}
We can extend the identification between $F^*$ and $F^\vee$ to an identification between $\bigwedge^\bullet F^*$ and $\Sp^\vee =\left(\bigwedge^\bullet F\right)^\vee$.
Explicitly, the form $B$ extends to a bilinear form on $\bigwedge^\bullet V$ called the \emph{determinant pairing}, defined as
\begin{align*}
    B_\det: \twedge^\bullet V \times \twedge^\bullet V \to \mathbb{C}\, , \quad
     B_\det (a_1 \wedge \ldots \wedge a_k,\, b_1 \wedge \ldots \wedge b_l) = \begin{cases} 
     \det(B(a_i,b_j)) & k = l\\
     0 & k \neq l
     \end{cases} \, .
\end{align*}
    The determinant pairing $B_\det$ induces an isomorphism between $\bigwedge^\bullet F^*$ and $\Sp^\vee =\left(\bigwedge^\bullet F\right)^\vee$ by 
\begin{align*}
    B_\det^\sharp : \twedge^\bullet F^* \to \left(\twedge^\bullet F\right)^\vee \, , \quad
B_\det^\sharp(x_I)(v_J) = B_\det(x_I,v_J).
\end{align*}
Note that $B_\det (x_I,v_J) = \delta_{I,J}$, here assuming that $I$ and $J$ are increasingly ordered. Thus $B_\det^\sharp(x_I) = (v_I)^\vee \in \Sp^\vee$.
\end{remark}

We now begin constructing the isomorphisms in \eqref{eq:spvee+iso} from right to left.
We will recall Clifford algebras and will recap the facts we need without proof: for further details see~\cite[Section 20.1]{Fulton+Harris:2004} or \cite{Meinrenken:2013}.
Let $\Cl(V)$ denote the Clifford algebra with a $\so_{2n+1}$-invariant non degenerate form $B: V \times V \to \CC$.
This is the tensor algebra $T^\bullet(V)$ modulo the two sided ideal $I(B)$ generated by all elements of the form $v \otimes u + u \otimes v - 2B(v,u)\cdot 1$.

\begin{lemma}[{\cite[Proposition 2.11]{Meinrenken:2013}}] \label{lem:quant+iso}
    Let $u_1, \dots, u_{2n+1}$ be any basis for $V$.
    The Clifford algebra $\Cl(V)$ is isomorphic to $\twedge^\bullet V$ as an $\mathfrak{so}_{2n+1}$-module via the \emph{quantisation} map
    \begin{equation}\label{eq:quant}
    q: \twedge^\bullet V \to \Cl(V) \, , \quad
    q( u_1 \wedge \ldots \wedge u_k) = \frac{1}{k!}\sum_{\sigma \in S_k} (-1)^{\lng{\sigma}} u_{\sigma(1)}  \ldots  u_{\sigma(k)}\, . 
\end{equation}
\end{lemma}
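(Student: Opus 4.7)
The plan is to recognise this as the Chevalley antisymmetrisation isomorphism and prove it in three steps: (a) establish a PBW-type basis for $\Cl(V)$, (b) show that $q$ is a linear bijection by analysing its leading term in a natural filtration, and (c) verify $\mathfrak{so}_{2n+1}$-equivariance. Throughout, the map $q$ is defined on wedge products of arbitrary vectors by the multilinear extension of \eqref{eq:quant}, so it suffices to check bijectivity on a basis of $\twedge^\bullet V$.

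First I would put on $\Cl(V)$ the filtration $\Cl^{\leq k}(V)$ given by the image of $\bigoplus_{j\leq k} V^{\otimes j}$ under the canonical projection $T^\bullet V \twoheadrightarrow \Cl(V)$. Since the defining element $u\otimes v + v\otimes u - 2B(u,v)\cdot 1$ of $I(B)$ equates a filtration-degree-$2$ anticommutator with a filtration-degree-$0$ scalar, in the associated graded we obtain $uv = -vu$, identifying $\mathrm{gr}\,\Cl(V)$ with $\twedge^\bullet V$. Ordering the generators yields the standard PBW-type basis $\{u_{i_1}\cdots u_{i_k} : i_1 < \cdots < i_k\}$ of $\Cl(V)$, indexed by subsets of $[2n+1]$ and matching the natural basis of $\twedge^\bullet V$.

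Second, I would evaluate $q$ on a basis wedge $u_{i_1}\wedge\cdots\wedge u_{i_k}$ with $i_1<\cdots<i_k$ and track leading terms modulo $\Cl^{\leq k-1}(V)$. For any $\sigma\in S_k$, the product $u_{i_{\sigma(1)}}\cdots u_{i_{\sigma(k)}}$ can be reordered to $u_{i_1}\cdots u_{i_k}$ by $\lng{\sigma}$ adjacent Clifford swaps, each contributing a factor of $-1$ plus a term in lower filtration degree. Hence the sign $(-1)^{\lng{\sigma}}$ in \eqref{eq:quant} exactly cancels the sign produced by the reordering, and every summand equals $u_{i_1}\cdots u_{i_k}$ modulo $\Cl^{\leq k-1}(V)$. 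Averaging the $k!$ summands yields
\[
q(u_{i_1}\wedge\cdots\wedge u_{i_k}) \equiv u_{i_1}\cdots u_{i_k} \pmod{\Cl^{\leq k-1}(V)},
\]
so $q$ induces the identity on associated graded pieces and is therefore a linear bijection by a straightforward induction on $k$.

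Finally, for equivariance I would note that $\mathfrak{so}_{2n+1}$ acts on $V$ and extends uniquely as a graded derivation of $T^\bullet V$; this descends to $\Cl(V)$ because $\mathfrak{so}_{2n+1}=\mathfrak{so}(V,B)$ preserves $B$ and hence annihilates the generators $u\otimes v+v\otimes u-2B(u,v)\cdot 1$ of $I(B)$, and it descends to $\twedge^\bullet V$ as usual. Since the formula \eqref{eq:quant} is manifestly natural in $V$, being built only out of multiplication in $T^\bullet V$ and the sign representation of $S_k$, it intertwines any derivation, proving that $q$ is $\mathfrak{so}_{2n+1}$-equivariant. The only technical obstacle is the bookkeeping in step (b), which is controlled by treating one adjacent transposition at a time and using the observation that the correction term is of filtration degree $k-2$, well below the $k$-level we care about.
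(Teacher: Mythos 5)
The paper does not prove this lemma---it cites \cite[Proposition~2.11]{Meinrenken:2013}---so there is no in-paper argument to compare against. Your self-contained proof is correct and follows the standard route (the one the cited reference also uses): a filtration/associated-graded (PBW-type) argument for the vector-space isomorphism, then naturality for equivariance. The one step worth polishing is the equivariance claim: the phrase ``built only out of multiplication in $T^\bullet V$'' is slightly off, since the products $u_{\sigma(1)}\cdots u_{\sigma(k)}$ appearing in \eqref{eq:quant} live in $\Cl(V)$. The clean way to make this rigorous is to observe that $q$ factors as the full antisymmetriser $\twedge^\bullet V\hookrightarrow T^\bullet V$, $u_1\wedge\cdots\wedge u_k\mapsto\frac{1}{k!}\sum_\sigma(-1)^{\lng{\sigma}}u_{\sigma(1)}\otimes\cdots\otimes u_{\sigma(k)}$, which is $\mathfrak{gl}(V)$-equivariant, followed by the canonical surjection $T^\bullet V\twoheadrightarrow\Cl(V)$, which is $\mathfrak{so}(V,B)$-equivariant because the derivation induced by $X\in\mathfrak{so}(V,B)$ preserves the generators of $I(B)$ (using $B(Xu,v)+B(u,Xv)=0$). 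This factorisation is exactly the content of your final paragraph made precise, and the rest of your argument stands as written.
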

We note some useful properties of the quantisation map.
In our basis $\{x_1,\ldots,x_n,v_1,\ldots,v_n,w\}$, each $x_i$ is orthogonal to every element except $v_i$, hence $x_i$ anticommutes in the Clifford algebra with every basis element except $v_i$. Therefore the quantisation of $x_i \wedge v_j$ is equal to 
\begin{equation} \label{eq:quantisation+identity}
    q(x_i \wedge v_j) = \begin{cases} 
        x_iv_j & i \neq j\, , \\
        \half(x_iv_i -v_i x_i)& i = j\, .
    \end{cases}
\end{equation}
Using this rule and the fact that $q(x_I \wedge v_J) = x_Iv_J$ when $I \cap J = \varnothing$, this explicit description of the image of $q$ can be extended to any monomial in $\twedge^\bullet V$.

To extend \eqref{eq:quant} to an isomorphism of \eqref{eq:spvee+iso}, we note that the tensor algebra $T(V)$ has a $\ZZ$-grading by degree and $\ZZ_2$-grading by even and odd degrees.
The ideal defining $\Cl(V)$ is also $\mathbb{Z}_2$-graded, thus $\Cl(V)$ is $\mathbb{Z}_2$-graded.
We write $\Cl^{\mathrm{even}}(V)$ for the subalgebra consisting of elements of degree $\overline{0}$.
Similarly $\twedge^\bullet V$ is $\mathbb{Z}$-graded and also $\mathbb{Z}_2$-graded into odd and even degrees, so we let $\twedge^{\mathrm{even}} V$ be the subalgebra spanned by even degree elements.
As $q$ respects the $\ZZ_2$-grading, this restricts to an isomorphism of $\so_{2n+1}$-modules between $\Cl^{\mathrm{even}}(V)$ and $\twedge^{\mathrm{even}}V$.

To describe the next isomorphism, we need some additional notation.
Given some $x \in F^\vee$, the \defn{contraction} $\iota_x$ is an endomorphism of $\Sp = \twedge^\bullet F$ extended linearly from its definition on basis vectors as
\[ 
    \iota_x(v_1 \wedge v_2 \wedge \cdots \wedge v_k) = \sum_{i=1}^k (-1)^{i-1} x(v_i) v_1 \wedge \cdots \wedge \widehat{v_i} \wedge \cdots \wedge v_k
\]
where $\widehat{v_i}$ denotes omitting $v_i$ from the product.
For $v \in F$ we define the contraction $\iota_v: \Sp^\vee \to \Sp^\vee$ similarly.
Given some $v \in F$, we define the operator of left multiplication $\epsilon_v$ on $\Sp$ by
\[
    \epsilon_v(v_1 \wedge \cdots \wedge v_k) = v \wedge  v_1 \wedge \cdots \wedge v_k\, ,
\]
and extend linearly.
Similarly, for $x \in F^\vee$ we define $\epsilon_x: \Sp^\vee \to \Sp^\vee$ to be left multiplication by $x$. 

For ordered subsets $I$ and~$J$ of~$[n]$, we extend this notation to $x_I$ and $v_J$ to denote iterated contraction or left-multiplication, i.e.,
\[ \iota_{x_{I}} = \prod_{i \in I} \iota_{x_{i}}, \quad \epsilon_{v_J} = \prod_{j \in J} \epsilon_{v_j},\]
with the product taken with the order of $I$ and~$J$. 
We note the following useful identities  of these operations.

\begin{lemma} \label{lem:howepsandiotawork}
Let $A$ and $B$ be ordered sets such that $A \cap B = \varnothing$.
Then 
\[
    \epsilon_{v_A}(v_B) = v_A \wedge v_B = v_{A\cat B} = (-1)^{\lng{A\cat B}}v_{\ord{A \cat B}}
\]
where $\ord{A \cat B}$ denotes $A\cat B$ in increasing order.

Let $C$ and $D$ be ordered sets such that $C$ is included in $D$ as a set. Then 
\[
    \iota_{x_C}(v_D) = (-1)^{\lng{\rev{C} \cat (\remS{D}{C}})} v_{\remS{D}{C}}\, .
\]
\end{lemma}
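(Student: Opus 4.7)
The first identity is essentially a direct chain of definitional unpackings. The plan is: by definition $\epsilon_{v_A}$ is left-multiplication by $v_A$, so $\epsilon_{v_A}(v_B)=v_A\wedge v_B$, and the wedge product in the concatenated order is (also by the convention $v_I:=v_{i_1}\wedge\cdots\wedge v_{i_k}$) just $v_{A\cat B}$. Since $A\cap B=\varnothing$ all entries of $A\cat B$ are distinct, so every ordering transposition of $A\cat B$ flips a sign in the wedge; performing $\lng{A\cat B}$ such transpositions stably-sorts the concatenation into $\ord{A\cat B}$ and produces the claimed sign $(-1)^{\lng{A\cat B}}$.

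The second identity I would prove by induction on $|C|$, assuming $D$ is in increasing order (so that $\remS{D}{C}$ inherits the increasing order, matching the context in which the lemma will be applied). The base case $|C|=0$ is immediate since $\rev{\varnothing}\cat \remS{D}{\varnothing}=D$ has length $0$. For the inductive step, write $C=(c)\cat C'$ where $c$ is the first entry, so by the convention ``$\iota_{x_I}=\prod_{i\in I}\iota_{x_i}$ with the product in the order of $I$'' we have $\iota_{x_C}=\iota_{x_c}\circ\iota_{x_{C'}}$. Apply the inductive hypothesis to obtain
\[
\iota_{x_{C'}}(v_D)=(-1)^{\lng{\rev{C'}\cat \remS{D}{C'}}}v_{\remS{D}{C'}}\,,
\]
and then apply $\iota_{x_c}$: letting $p$ denote the position of $c$ in the ordered set $\remS{D}{C'}$, the defining formula for $\iota_{x_c}$ yields $\iota_{x_c}(v_{\remS{D}{C'}})=(-1)^{p-1}v_{\remS{D}{C}}$.

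The remaining step, which I expect to be the only subtle one, is to verify the sign equality
\[
\lng{\rev{C'}\cat \remS{D}{C'}}+(p-1)\equiv \lng{\rev{C}\cat \remS{D}{C}}\pmod{2}.
\]
For this I would compare the two sequences termwise. Writing $\remS{D}{C'}=P\cat(c)\cat S$ with $|P|=p-1$ and $P\cat S=\remS{D}{C}$, we have
\[
\rev{C'}\cat \remS{D}{C'}=\rev{C'}\cat P\cat(c)\cat S\,,\qquad \rev{C}\cat \remS{D}{C}=\rev{C'}\cat(c)\cat P\cat S\,,
\]
using that $\rev{C}=\rev{C'}\cat(c)$. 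The second sequence is obtained from the first by sliding $c$ past the $p-1$ elements of $P$; since all entries are distinct, each adjacent swap changes the parity of the length by one, giving the required $(p-1)$ shift in parity. Since I am avoiding the subtlety of non-increasing $D$ (which would introduce a correction by $(-1)^{\lng{D}}$), I would note this convention at the start of the proof to keep signs consistent with all later uses of the lemma in the appendix.
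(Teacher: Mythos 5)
Your proof is correct but takes a genuinely different route on the second identity. For part one you do exactly what the paper does: unwind $\epsilon_{v_A}$ as left multiplication, identify $v_A\wedge v_B$ with $v_{A\cat B}$, and read off the sign from the antisymmetry of the wedge. For part two you induct on $|C|$, peeling off the first entry $c$ of $C$, using $\iota_{x_C}=\iota_{x_c}\circ\iota_{x_{C'}}$ and the single-element contraction sign $(-1)^{p-1}$, and then verifying the parity identity by sliding $c$ past $P$ in the concatenation. The paper instead avoids induction entirely: it observes that $\iota_{x_C}$ applied to the \emph{reordered} monomial $v_{\rev{C}\cat\remS{D}{C}}$ has no sign at all (the contractions strip off $v_{\rev C}$ from the left with coefficient $1$, giving $v_{\remS DC}$ directly), and then relates $v_D$ to $v_{\rev{C}\cat\remS{D}{C}}$ by the antisymmetry of the wedge, which produces $(-1)^{\lng{\rev C\cat\remS DC}}$ in one shot. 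The paper's argument is shorter and makes the specific string $\rev{C}\cat\remS{D}{C}$ appear naturally rather than falling out of bookkeeping; your inductive argument is more mechanical but has the minor advantage that it visibly makes no assumption that $C$ is increasing (the paper's proof restricts to increasing $C$ in its statement, even though the computation it performs does not in fact depend on that, and the lemma is later invoked with $C=\rev J$). Your parity check in the inductive step — $\lng{\rev{C'}\cat P\cat(c)\cat S}$ vs.\ $\lng{\rev{C'}\cat(c)\cat P\cat S}$ differing by $p-1$ via $p-1$ adjacent transpositions of distinct entries — is sound; this is the one place the inductive approach demands care that the paper's approach sidesteps.
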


\begin{proof}
Let $A=\{a_1,\ldots,a_k\}$ and $B=\{b_1,\ldots,b_l\}$ be ordered subsets of $[n]$. Then $v_A= v_{a_1} \wedge \ldots \wedge v_{a_k}$ and $v_{B} = v_{b_1} \wedge \ldots \wedge v_{b_\ell}$, thus 
\[
    \epsilon_{v_A}(v_B) = v_A \wedge v_B = v_{a_1} \wedge \ldots \wedge v_{a_k} \wedge v_{b_1} \wedge \ldots \wedge v_{b_\ell} = v_{A\cat B}\, .
\]
The first statement follows from the antisymmetric nature of the exterior product.

Let $C=\{c_1,\ldots,c_k\} \subset D=\{d_1,\ldots,d_\ell\}$ be increasing subsets of $[n]$.
Recall, $v_{\rev{C}} = v_{c_k} \wedge v_{c_{k-1}} \wedge \ldots \wedge v_{c_1}$.
Then
\[
    \iota_{x_C}(v_{\rev{C}}) = \iota_{x_{c_1}}  \iota_{x_{c_2}} \ldots  \iota_{x_{c_k}} (v_{c_k} \wedge \ldots \wedge x_{c_1}) = 1\, .
\]
Therefore, $\iota_{x_C}(v_{\rev{C}\cat (\remS{D}{C})}) = v_{\remS{D}{C}}$.
The second statement follows from the antisymmetric nature of the exterior product. 
\end{proof}

\begin{proposition}[{\cite[Lemma 20.16]{Fulton+Harris:2004}}] \label{p:cliffisend}
Let $V = F \oplus F^* \oplus \mathrm{Span}\{w\}$ and $\Sp = \bigwedge^\bullet F$ as above. The Clifford algebra $\Cl(V)$ is isomorphic to $\End(\Sp) \oplus \End(\Sp^\vee)$ as an $\so_{2n+1}$-module and an algebra, obtained by extending the map
\begin{align*}
    V = F \oplus F^* \oplus \mathrm{Span}\{ w \} &\rightarrow \End\left(\Sp\right) \oplus \End\left(\Sp^\vee\right)\, , \text{ where} \\ 
    v \in F &\mapsto \sqrt{2}\left( \epsilon_v \oplus \iota_v\right), \\
    x \in F^* &\mapsto \sqrt{2}\left(\iota_x \oplus \epsilon_x\right), \\
    w &\mapsto \left(v_I \mapsto (-1)^{|I|}v_I ,\, x_J \mapsto (-1)^{1+|J|} x_J\right).
\end{align*}
\end{proposition}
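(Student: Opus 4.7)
The plan is to invoke the universal property of the Clifford algebra to extend $\phi$ to an algebra homomorphism $\Phi: \Cl(V) \to \End(\Sp) \oplus \End(\Sp^\vee)$, then conclude $\Phi$ is an isomorphism by a dimension count, and finally verify $\so_{2n+1}$-equivariance.

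For the first step, I would check that $\phi(u)\phi(v) + \phi(v)\phi(u) = 2B(u,v)\cdot\mathrm{id}$ for all $u, v \in V$. Using $V = F \oplus F^* \oplus \CC w$, this breaks into six cases. On the $\Sp$-summand, the pairs $v, v' \in F$ and $x, x' \in F^*$ reduce to $\epsilon_v\epsilon_{v'} + \epsilon_{v'}\epsilon_v = 0$ and $\iota_x\iota_{x'} + \iota_{x'}\iota_x = 0$, which match $B(v,v') = B(x,x') = 0$ by isotropy of $F$ and $F^*$. The mixed case rests on the key identity $\epsilon_v\iota_x + \iota_x\epsilon_v = x(v)\cdot\mathrm{id}_\Sp$, combined with $B(v,x) = x(v)$ under the identification of $F^*$ with $F^\vee$ via $B^\sharp$; the factor $\sqrt{2}$ in the definition of $\phi$ produces the required factor of $2$. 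The cases involving $w$ use $\phi(w)^2 = \mathrm{id}_\Sp$ (since $v_I \mapsto (-1)^{|I|}v_I$ squares to the identity) and the anticommutation of $\phi(w)$ with each $\phi(v)$ and $\phi(x)$ arising from the parity shift of $\epsilon$ and $\iota$. The $\Sp^\vee$-summand is handled symmetrically by swapping the roles of $F$ and $F^*$ and of $\epsilon$ and $\iota$.

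For the second step, a standard filtration argument gives $\dim_\CC \Cl(V) = 2^{2n+1}$, matching $\dim_\CC(\End(\Sp) \oplus \End(\Sp^\vee)) = 2(2^n)^2 = 2^{2n+1}$, so it suffices to show $\Phi$ is surjective. By \Cref{lem:howepsandiotawork} the products $\epsilon_{v_I}\iota_{x_J}$ form a basis of $\End(\Sp)$, and each such operator appears in the image as a scalar multiple of $\phi(v_{i_1})\cdots\phi(v_{i_k})\phi(x_{j_1})\cdots\phi(x_{j_\ell})$; similarly for $\End(\Sp^\vee)$. To separate the two summands of the codomain, I would exploit that $\phi(w)$ acts by $(-1)^{|I|}\mathrm{id}$ on the degree-$|I|$ piece of $\Sp$ and by $(-1)^{1+|J|}\mathrm{id}$ on the degree-$|J|$ piece of $\Sp^\vee$, so suitable polynomials in $\phi(w)$ produce projectors onto each summand. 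For $\so_{2n+1}$-equivariance, both sides carry natural actions: on $\Cl(V)$ via the adjoint action from the embedding $\so_{2n+1} \hookrightarrow q(\twedge^2 V) \subseteq \Cl(V)$, and on $\End(\Sp) \oplus \End(\Sp^\vee)$ via conjugation by the spin actions. Equivariance on $V \subset \Cl(V)$ follows by a direct computation with the quadratic generators $q(v_i \wedge v_j)$, $q(x_i \wedge v_j)$, etc., and extends to all of $\Cl(V)$ because both actions are by algebra derivations.

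The main obstacle is the surjectivity step: the Clifford-relation calculations are bookkeeping once the contraction/wedge identities are set up, but genuinely separating the two endomorphism summands forces essential use of the odd-dimensional generator $w$. A cleaner alternative would be to invoke the classical structure theorem that $\Cl(V)$ for $\dim V$ odd is a direct product of two simple matrix algebras whose unique simple modules are $\Sp$ and $\Sp^\vee$ (distinguished precisely by the sign of the $w$-action); $\Phi$ would then be forced to be an isomorphism by Schur's lemma and the dimension count. This shortcut sidesteps the explicit surjectivity argument at the cost of invoking external structure theory.
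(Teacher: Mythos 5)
The paper does not prove this proposition; it is stated with a citation to Fulton--Harris, Lemma~20.16, and the argument given there is essentially your ``cleaner alternative'' (structure theory of Clifford algebras of odd-dimensional spaces plus Schur's lemma).  So there is no internal proof to compare against, and I evaluate your sketch on its own terms.

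Your verification of the Clifford relation $\phi(u)\phi(v)+\phi(v)\phi(u)=2B(u,v)\,\mathrm{id}$ is sound; all six cases go through as you describe, with the $\sqrt 2$ normalisation absorbing the factor of $2$.  The equivariance paragraph is also fine.  But the surjectivity argument contains a genuine gap.  You propose that ``suitable polynomials in $\phi(w)$ produce projectors onto each summand,'' but this cannot work: $\phi(w)$ has eigenvalues $\pm1$ on \emph{both} $\Sp$ and $\Sp^\vee$, the eigenspaces being the even-degree and odd-degree parts of each.  Hence every polynomial in $\phi(w)$ takes the same value on, say, the even part of $\Sp$ and the odd part of $\Sp^\vee$, and no polynomial in $\phi(w)$ alone can be $\mathrm{id}\oplus 0$.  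Without such a projector you only get that $\Phi$ composed with each of the two projections $\End(\Sp)\oplus\End(\Sp^\vee)\to\End(\Sp_{\pm})$ is onto, which does not by itself give surjectivity of $\Phi$ onto the product (the image could a priori be a ``graph'' subalgebra).

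The element that actually separates the summands is not $w$ but the image of the volume element $\omega=e_1\cdots e_{2n+1}$ (orthonormal basis), which for $\dim V$ odd lies in the centre of $\Cl(V)$ and therefore acts by a scalar on each irreducible module; the two scalars differ, and $(1\pm c\,\omega)/2$ for the appropriate normalising constant $c$ are the desired central idempotents.  Alternatively, once you know the projection of the image onto each factor is surjective, one can appeal to the fact that $\Sp$ and $\Sp^\vee$ are non-isomorphic as $\Cl(V)$-modules to rule out the graph case --- but proving that non-isomorphism again uses the volume element, so the extra argument is unavoidable.  Your final paragraph, invoking the classification of $\Cl(V)$ for odd $\dim V$ as a product of two full matrix algebras, closes the gap cleanly and is the route taken in the cited reference; I would simply replace the ``polynomials in $\phi(w)$'' step with that, and drop the claim that $w$ alone furnishes the central idempotents.
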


\begin{corollary}\label{c:evenclifftoend}
    The above isomorphism in Proposition \ref{p:cliffisend} induces an isomorphism 
    \[ \Cl^{\mathrm{even}}(V) \cong \End(\Sp),\]
    between the even subalgebra of the Clifford algebra and endomorphisms of the Spin module, as both an $\so_{2n+1}$-module and an algebra.
\end{corollary}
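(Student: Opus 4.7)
The plan is to restrict the isomorphism $\Phi: \Cl(V) \to \End(\Sp) \oplus \End(\Sp^\vee)$ of Proposition \ref{p:cliffisend} to the even subalgebra and compose with the projection $\pi_1: \End(\Sp) \oplus \End(\Sp^\vee) \to \End(\Sp)$. Call this composition $\Psi$. It is automatically an $\so_{2n+1}$-module and algebra homomorphism, since both $\Phi$ and $\pi_1$ are. It remains only to show that $\Psi$ is bijective.

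Since $\Cl(V) = \Cl^{\mathrm{even}}(V) \oplus \Cl^{\mathrm{odd}}(V)$ with each summand of dimension $2^{2n}$, we have $\dim \Cl^{\mathrm{even}}(V) = 2^{2n} = \dim \End(\Sp)$. Thus it suffices to prove injectivity. For this I would invoke the standard fact that $\Cl^{\mathrm{even}}(V)$ is a simple algebra (for instance, via the well-known isomorphism $\Cl^{\mathrm{even}}(V) \cong \Cl(F \oplus F^*)$ realising it as an even-dimensional Clifford algebra; see~\cite{Meinrenken:2013}). The kernel of the algebra homomorphism $\Psi$ is then a two-sided ideal in a simple algebra, so is either $0$ or all of $\Cl^{\mathrm{even}}(V)$. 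Since $\Psi(1) = \Id_\Sp \neq 0$, we conclude $\ker \Psi = 0$.

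A slightly more conceptual version of the injectivity step, which I find illuminating, is the following: $\Phi(\Cl^{\mathrm{even}}(V))$ is a simple subalgebra of $\End(\Sp) \oplus \End(\Sp^\vee)$ containing the identity $(\Id,\Id)$. It cannot lie in $\End(\Sp) \oplus 0$ (nor in $0 \oplus \End(\Sp^\vee)$), since those ideals do not contain $(\Id,\Id)$. Any simple subalgebra of a product $A \times B$ of matrix algebras which contains the identity of the product must be the graph of an algebra isomorphism $A \xrightarrow{\sim} B$; such a graph projects isomorphically onto either factor, so $\pi_1$ is injective on it.

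There is no real technical obstacle here: the entire content is the simplicity of $\Cl^{\mathrm{even}}(V)$ and a dimension count. The one subtlety worth flagging in the write-up is that $\Phi$ does \emph{not} send $\Cl^{\mathrm{even}}(V)$ into $\End(\Sp) \oplus 0$; indeed, a short calculation already shows that $\Phi(vw) = (\sqrt{2}\,\epsilon_v \eta,\ -\sqrt{2}\,\iota_v \eta')$ has nonzero components in both factors. The corollary is instead asserting that the two projections, while neither trivial, are each injective on the even subalgebra.
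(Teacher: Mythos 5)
Your proof is correct and follows essentially the same route as the paper: compose the isomorphism of Proposition~\ref{p:cliffisend} with the projection onto the $\End(\Sp)$ factor, invoke the simplicity of $\Cl^{\mathrm{even}}(V)$ (via its isomorphism to an even-rank Clifford algebra, hence a matrix algebra) to reduce injectivity to non-triviality, and conclude by the dimension count $\dim \Cl^{\mathrm{even}}(V) = 2^{2n} = \dim\End(\Sp)$. Your observation that $\Psi(1) = \Id_\Sp$ suffices for non-triviality is marginally cleaner than the paper's remark that products of basis elements are not sent to zero, but it is the same argument.
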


\begin{proof}
    Note that the isomorphism in Proposition \ref{p:cliffisend} defines a homomorphism 
    \[
        \Cl^{}(V) \to \End(\Sp), \quad v\in F \mapsto \sqrt{2}\epsilon_v \, , \, x\in F^* \mapsto \sqrt{2}\iota_x \, , \, w \mapsto (v_I \mapsto (-1)^{|I|}v_I).
    \]
    We then restrict this to the even subalgebra $\Cl^{\rm even}(V)$.
    It is well known that over the complex numbers, $\Cl^{\mathrm{even}}(V) \cong \Cl(W)$ where $\dim(W) = \dim(V) - 1$ \cite[pg. 79 (3.17)]{Meinrenken:2013}.
    Furthermore, every even rank Clifford algebra is isomorphic to an endomorphism algebra \cite[Lemma 20.9]{Fulton+Harris:2004} which has no non-trivial proper ideals.
    Therefore the kernel of the homomorphism above must be either $\{0\}$ or $\Cl^{\rm even}(V)$.
    Note that any product of two basis elements does not map to zero, thus the homomorphism is injective.
    Finally, both $\Cl^{\rm even}(V)$ and $\End(\Sp)$ are of dimension $2^{2n}$, hence the map is an isomorphism. 
\end{proof}

The element $v_{[n]} = v_1 \wedge \ldots \wedge v_n$ is (up to scalar) the unique element of top degree, i.e.\ degree~$n$, in $\twedge^\bullet F$.
For any element in $a$ in $\twedge^\bullet F$, let $a_{\rm [top]}$ denote the coefficient of $v_{[n]}$ in the expansion of $a$. Define a form on $\Sp$, 
\[
    (-,-)_\Sp: \Sp \times \Sp \to \mathbb{C}, \quad (v_I, v_J)_\Sp = (v_{\rev{I}} \wedge v_J)_{\rm [top]}.
\]
The following lemma  describes an $\mathfrak{so}_{2n+1}$ invariant form on $\Sp$. This form is also used in the proof of \cite[82, Theorem 3.12]{Meinrenken:2013}, but for a different model of $\Sp$, hence we reprove it for our model.

\begin{lemma}[$\mathfrak{so}_{2n+1}$-invariant forms on $\Sp = \twedge^\bullet F$] \label{l:nondegformonsp} 
The form 
\[
    (-,-)'_S: \Sp \times \Sp \to \mathbb{C}, \quad (v_I,v_J)'_S = (-1)^{n|J|} (v_I,v_J)_S
\]
 is a $\mathfrak{so}_{2n+1}$ invariant non-degenerate form on $\Sp$.
\end{lemma}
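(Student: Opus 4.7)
The plan is to establish non-degeneracy first and then verify $\so_{2n+1}$-invariance on a generating set of the Lie algebra, with the sign factor $(-1)^{n|J|}$ playing the role of a correction to the naive pairing.

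For non-degeneracy, I would observe that $(v_I, v_J)_S = (v_{\rev{I}} \wedge v_J)_{[\text{top}]}$ is zero unless $I$ and $J$ partition $[n]$, that is $J = I^c$, in which case $v_{\rev{I}} \wedge v_{I^c}$ is a signed reordering of $v_{[n]}$ and so $(v_I, v_{I^c})_S = (-1)^{\lng{\rev{I} \cat I^c}} = \pm 1$. Thus the Gram matrix of $(-,-)_S$ in the basis $\{v_I\}_{I \subseteq [n]}$ has $\pm 1$ entries on the anti-diagonal pairing $I$ with $I^c$ and zeros elsewhere, so it is non-degenerate. Multiplying row $J$ by $(-1)^{n|J|}$ preserves non-degeneracy, hence $(-,-)'_S$ is non-degenerate.

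For $\so_{2n+1}$-invariance, I would use that, via the quantisation map \eqref{eq:quant}, the Lie algebra $\so_{2n+1}$ is the image of $\twedge^2 V$ in $\Cl(V)$, so it is spanned by the elements $q(u \wedge u')$ with $u, u'$ in the basis $\{v_i\} \cup \{x_i\} \cup \{w\}$. Using Proposition \ref{p:cliffisend} and \eqref{eq:quantisation+identity}, each such generator acts on $\Sp$ as an explicit composition of contractions $\iota_x$, left multiplications $\epsilon_v$, and the sign operator $v_I \mapsto (-1)^{|I|} v_I$. I would verify the invariance identity $(Tv_I, v_J)'_S + (v_I, Tv_J)'_S = 0$ for each of the generator types: Cartan elements $q(x_i \wedge v_i)$ (which act on $v_I$ by $\pm 1$ depending on whether $i \in I$); root vectors $q(v_i \wedge v_j)$, $q(x_i \wedge x_j)$, $q(x_i \wedge v_j)$ for $i \neq j$ (acting as composition of $\epsilon$'s and $\iota$'s and changing $I$ by $\{i,j\}$-sized adjustments); and the short-root vectors $q(v_i \wedge w)$, $q(x_i \wedge w)$ (which change the parity of $|I|$). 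For the Cartan case the identity is immediate once one notes that if $(v_I,v_J)'_S \neq 0$ then $J = I^c$, so $i \in I$ iff $i \notin J$. For the other generators the signs come out of Lemma \ref{lem:howepsandiotawork}: the relevant terms both have support $J = (T^\mathrm{target}I)^c$, and their coefficients differ by a sign computed from reorderings of $\rev{I}\cat J$.

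The main obstacle is sign bookkeeping in the short-root generators $q(v_i \wedge w)$ and $q(x_i \wedge w)$, which are the ones that genuinely use the twist $(-1)^{n|J|}$. When $T = q(x_i \wedge w)$, acting on $v_I$ picks up a factor of $(-1)^{|I|}$ from $w$ and changes $I$ to $I \setminus i$, so the two terms $(Tv_I,v_J)'_S$ and $(v_I,Tv_J)'_S$ are nonzero on configurations where $|I|$ and $|J|$ differ by one and $J \setminus i = I^c$, and the $(-1)^{n|J|}$ versus $(-1)^{n(|J|-1)}$ difference combines with the reordering signs from Lemma \ref{lem:howepsandiotawork} to produce the required cancellation; this is precisely what forces the factor to be $(-1)^{n|J|}$ rather than $1$. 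A clean alternative end-game, once one has checked invariance under a Chevalley generating set, is to invoke the fact that $\Sp$ is irreducible and self-dual in type $B_n$ (since $w_0 = -\mathrm{id}$), so $\dim\Hom_{\so_{2n+1}}(\Sp,\Sp^\vee) = 1$ by Schur's lemma; a nonzero form invariant under a generating set is then automatically invariant under all of $\so_{2n+1}$.
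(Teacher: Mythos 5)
Your proposal is correct and takes essentially the same approach as the paper: both verify the invariance identity $(X\alpha,\beta)'_S + (\alpha,X\beta)'_S = 0$ case by case on the same spanning set $\{\tfrac12(v_ix_i-x_iv_i),\, v_iv_j,\, wv_i,\, v_ix_j,\, x_ix_j,\, wx_i\}$ of $\mathfrak{so}_{2n+1}$, with the signs controlled by \Cref{lem:howepsandiotawork}. You also correctly identify that the twist $(-1)^{n|J|}$ only genuinely intervenes in the $w$-involving generators, matching the paper's Cases~3 and~6; your Schur's-lemma aside is unnecessary, though, since extending invariance from a Lie-generating set to all of $\mathfrak{so}_{2n+1}$ follows directly from closure of the invariance condition under brackets, with no irreducibility needed.
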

\begin{proof}
    This form is clearly non-degenerate. 
    To show it is $\mathfrak{so}_{2n+1}$-invariant, we check for each $X$ in a set of generators of $\mathfrak{so}_{2n+1}$ that 
    \begin{equation}
        \label{e:forminginv}
        (X\cdot \alpha,\beta)^{'}_S + (\alpha, X\cdot \beta)^{'}_S = 0 .
    \end{equation}
We need to check  \cref{e:forminginv} for $X \in \{\half(v_ix_i-x_iv_i), v_iv_j, wv_i, v_ix_j, x_ix_j,wx_i\}$. We do this case by case. 
We will let $\Par(x)$ for $x \in \ZZ$ denote the parity of $x$.
In particular, we make heavy use of the fact that $\Par( + x) = \Par( - x)$.

For $\alpha = v_{i_1} \wedge \ldots \wedge v_{i_k} \in \Sp$ we let $A = \{i_1, \ldots, i_k\}$ be its associated ordered set and similarly for $\beta$ we let $B$ be its associated ordered set.
For ease of notation, we will always let $E_L = (X\cdot \alpha, \beta) = x_L + \lngS{L} + n\order{B}$ and $E_R = (\alpha, X\cdot \beta) = x_R + \lngS{R} + n\order{B}$, where $x_L, x_R \in \ZZ$ and $\sigma_L, \sigma_R \in S_n$. 
We abbreviate by $A\cat i$ the concatenated ordered set $A\cat \{i\}$.
In all the cases below, we aim to show that $\Par(E_L) \neq \Par(E_R)$.
We now handle our six cases:
\begin{itemize}
    \item[Case 1] when $X=\half(v_ix_i-x_iv_i)$.
        We first note the following:
        \begin{align*}
            x_i v_i \alpha &= \begin{cases}
            \alpha &\text{ if } i \text{ does not appear in } \alpha\\
            0 & \text{otherwise} \, ;
            \end{cases}\\
            v_i x_i \alpha &= \begin{cases}
            \alpha &\text{ if } i \text{ appears in } \alpha\\
            0 & \text{otherwise} \, .
            \end{cases}
        \end{align*}
        Then without loss of generality we assume $i$ appears in $\alpha$ and not in $\beta$ giving
        \begin{align*}
            ((x_i v_i - v_i x_i) \alpha, \beta) &= (-\alpha, \beta)\text{ and}\\
            (\alpha, (x_i v_i - v_i x_i)\beta) &= (\alpha, \beta)\, ,
        \end{align*}
        and we get our desired result.
    \item[Case 2] when $X = v_iv_j$, $i \neq j$.
        Note that $X \cdot \alpha = 0$ whenever $i \in A$ or $j\in A$.
        In other words, we may assume $i, j \notin A \cup B$.
        Then 
        $
            E_L = (v_i v_j \alpha, \beta) = \lng{\rev{A} \cat j \cat i \cat B}  + n\order{B}
        $
        and $(\alpha, v_i v_j \beta) = \lng{\rev{A}\cat i \cat j \cat B}  + n\order{B}$.
        Then $\Par(E_L) \neq \Par(E_R)$ since the only difference is the transposition of $ij$ in the middle which changes the parity by $1$.
    \item[Case 3] when $X = wv_i$. 
        As before, we may assume that $i\notin A \cup B$.
        Then 
        \begin{align*}
            E_L &= (w v_i \alpha, \beta) = \order{A \cat i} + \lng{\rev{A} \cat  i\cat  B}  + n\order{B}\text{ and }\\
            E_R &= (\alpha, w v_i \beta) = \order{B\cat i} + \lng{\rev{A} \cat i \cat  B} + n\order{B\cat i}\, .
        \end{align*}
        As the lengths are the same, it suffices to look at the other two terms.
        Then notice that $\order{A \cat i} + n \order{B}$ and $\order{B \cat i} + n \order{B\cat i}$ must have different parity:
        \begin{align*}
            \order{A\cat i} + n \order{B} &= n + (n-1)\order{B}\text{ and }\\
            \order{B\cat i} + n \order{B\cat i} &= (n + 1) \order{B\cat i} = n + 1 + (n+1)\order{B}\, .
        \end{align*}
        But $\Par(n-1) = \Par(n+1)$ and we have $\Par(E_L) \neq \Par(E_R)$ as desired.

    \item[Case 4] when $X = v_ix_j$, $i \neq j$.
        As before, we may assume that $j\notin A \cup B$ and that $i \in A \cap B$. Then
        \begin{align*}
            E_L &= (v_i x_j \alpha, \beta) = A(j) - 1 + \lng{\rev{\left(\remS{A}{j}\right)} \cat i \cat B}  + n\order{B}\text{ and}\\
            E_R &= (\alpha, v_i x_j \beta) = B(j) - 1 + \lng{\rev{A} \cat i \cat  \left(\remS{B}{j}\right)} + n\order{B}\, .
        \end{align*}
        Since $\Par(\lng{A \cat i}) = \Par(\order{A} + \lng{A})$ we have
        \begin{align*}
           \Par(E_L) &= \Par\left(A(j) - 1 + \lng{A} + B(j) - 1 + \lng{{\rev{\left(\remS{A}{j}\right)}} \cat i \cat \left(\remS{B}{j}\right)}  + n\order{B}\right)\text{ and}\\
           \Par(E_R) &= \Par\left(B(j) - 1 + \lng{A} - A(j) + \lng{{\rev{\left(\remS{A}{j}\right)}} \cat i\cat  \left(\remS{B}{j}\right)} + n\order{B}\right)
        \end{align*}
        and our inequality holds as desired.
    \item[Case 5] when $X = x_ix_j$.
        As before, we assume that $i, j \in A \cap B$.
        We may assume that $i < j$ in both cases, with the $j < i$ case having a similar proof.
        Then
        \begin{align*}
            E_L &= (x_i x_j \alpha, \beta) = A(i) - 1 + A(j) - 1 + \lng{\rev{\left(\rem{A}{i, j}\right)} \cat  B}  + n\order{B}\text{ and}\\
            E_R &= (\alpha, x_i x_j \beta) = B(i) - 1  + B(j) - 1 + \lng{\rev{A} \cat \left(\rem{B}{i, j}\right)} + n\order{\rem{B}{i, j}}\, .
        \end{align*}
        Since $\Par\left(\lng{A \cat i}\right) = \Par\left(\order{A} + \lng{A}\right)$ we have
        \begin{align*}
           \Par(E_L) &= \Par\left(
                \begin{array}{@{}c@{}}
                    A(i) + A(j) - 2 + B(i) + B(j) - 2 + 2(\order{A} - 2)\\
                    + \lng{\rev{\left(\rem{A}{i,j}\right)} \cat \left(\rem{B}{i, j}\right)} + n\order{B}
                \end{array}
            \right)\\
            &= \Par\left(
                \begin{array}{@{}c@{}}
                    A(i) + A(j) + B(i) + B(j) + 2\order{A}  - 8\\
                    + \lng{\rev{\left(\rem{A}{i,j}\right)} \cat \left(\rem{B}{i, j}\right)}  + n\order{B}
                \end{array}
            \right)\text{ and}\\
           \Par(E_R) &= \Par\left(
                \begin{array}{@{}c@{}}
                    B(i)  + B(j) - 2 + \left(\order{A} - 1 - A(i)\right) + \left(\order{A} - 2 - A(j)\right)\\
                    + \lng{\rev{\left(\rem{A}{i,j}\right)} \cat \left(\rem{B}{i, j}\right)} + n\order{B^{(i, j)}}
                \end{array}
            \right)\\
           &= \Par\left(
                \begin{array}{@{}c@{}}
                    -A(i) - A(j) + B(i) + B(j) + 2 \order{A} - 5\\
                    + \lng{\rev{\left(\rem{A}{i,j}\right)} \cat \left(\rem{B}{i, j}\right)} + n\order{\rem{B}{i, j}}
                \end{array}
            \right)
        \end{align*}
        and our inequality holds as desired.
    \item[Case 6] when $X = wx_i$. 
        As before, we may assume that $i\in A \cap B$.
        Then
        \begin{align*}
            E_L &= (w x_i \alpha, \beta) = A(i) - 1 + \order{A} - 1 + \lng{\rev{\left(\remS{A}{i}\right)} \cat B }  + n\order{B}\text{ and}\\
            E_R &= (\alpha, w x_i \beta) = B(i) - 1  + \order{B} - 1 + \lng{\rev{A} \cat \left(\remS{B}{i}\right)} + n\order{\remS{B}{i}}\, .
        \end{align*}
        Since $\Par(\lng{A \cat i}) = \Par(\order{A} + \lng{A})$ we have
        \begin{align*}
           \Par(E_L) &= \Par\left(A(i) + \order{A} - 2 + B(i) -1 + \order{A} - 1 + \lng{\rev{\left(\remS{A}{i}\right)} \cat \left(\remS{B}{i}\right)} + n\order{B}\right)\text{ and}\\
           \Par(E_R) &= \Par\left(B(i) + \order{B} - 2 + \order{A} - A(i) + \lng{\rev{\left(\remS{A}{i}\right)} \cat \left(\remS{B}{i}\right)} + n\order{\remS{B}{i}}\right)\, .
        \end{align*}
        As most terms are the same, it suffices to show that the parity of $\order{A}  + n\order{B}$ is different to the parity of $\order{B} + n\order{B^{(i)}}$. These are
        \begin{align*}
            \order{A}  + n\order{B} &= n + (n-1)\order{B} + 1\text{ and}\\
            \order{B}  + n\order{\rem{B}{i}} &= \order{B} + n(\order{B} - 1) = (n + 1)\order{B} - n
        \end{align*}
        as $\order{A} + \order{B} = n + 1$. \qedhere
\end{itemize}
\end{proof}

For the next isomorphism, recall the notation $I^c = [n] \setminus I$ for the complement of the set $I$.
\begin{corollary}\label{l:Spisotodual}
   Recall the models $\Sp = \bigwedge^\bullet F$ and $\Sp^\vee = \bigwedge^\bullet F^*$.
    Then $\Sp$ and $\Sp^\vee$ are isomorphic as $\mathfrak{so}_{2n+1}$ modules via the map
    \begin{equation}\label{eq:spintodualmap}
    \Sp \rightarrow  \Sp^\vee \, , \quad  v_I \mapsto (-1)^{n|I^c| + \lng{\rev{I} \cat I^c}}x_{I^c} .
    \end{equation}
\end{corollary}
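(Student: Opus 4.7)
The plan is to use the $\mathfrak{so}_{2n+1}$-invariant non-degenerate form $(-,-)'_S$ from Lemma~\ref{l:nondegformonsp} to exhibit the claimed map as the canonical isomorphism $\Sp \xrightarrow{\sim} \Sp^\vee$ induced by this form, then unwind the identification $\Sp^\vee \cong \bigwedge^\bullet F^*$ of Remark~\ref{r:Bdetsharp} to recover the explicit formula. Since $(-,-)'_S$ is non-degenerate and $\mathfrak{so}_{2n+1}$-invariant, the map
\[
\Phi : \Sp \to \Sp^\vee, \quad \alpha \mapsto (\alpha, -)'_S
\]
is an isomorphism of $\mathfrak{so}_{2n+1}$-modules.

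To compute $\Phi(v_I)$ explicitly, observe that $v_{\rev{I}} \wedge v_J$ is identically zero unless $I \cap J = \varnothing$, and its top-degree component is nonzero only when $I \sqcup J = [n]$, i.e.\ $J = I^c$. For this value of $J$, reordering the concatenation $\rev{I} \cat I^c$ into increasing order contributes the sign $(-1)^{\lng{\rev{I} \cat I^c}}$, so
\[
(v_I, v_J)_S = \delta_{J, I^c}\cdot (-1)^{\lng{\rev{I} \cat I^c}}.
\]
Multiplying by the sign $(-1)^{n|J|}$ from the definition of $(-,-)'_S$, we obtain
\[
(v_I, v_J)'_S = \delta_{J, I^c}\cdot (-1)^{n|I^c| + \lng{\rev{I} \cat I^c}}.
\]

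Therefore $\Phi(v_I)$ is the linear functional on $\Sp$ vanishing on every $v_J$ with $J \neq I^c$ and taking the value $(-1)^{n|I^c| + \lng{\rev{I} \cat I^c}}$ on $v_{I^c}$; that is,
\[
\Phi(v_I) = (-1)^{n|I^c| + \lng{\rev{I} \cat I^c}}\cdot (v_{I^c})^\vee.
\]
Finally, under the identification $B_\det^\sharp: \bigwedge^\bullet F^* \xrightarrow{\sim} \Sp^\vee$ of Remark~\ref{r:Bdetsharp}, we have $B_\det^\sharp(x_{I^c}) = (v_{I^c})^\vee$ (since $B_\det(x_{I^c}, v_J) = \delta_{I^c, J}$ with the standard increasing orderings). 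Pulling $\Phi$ back along $B_\det^\sharp$ yields the stated formula $v_I \mapsto (-1)^{n|I^c| + \lng{\rev{I} \cat I^c}}\, x_{I^c}$.

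The only nontrivial bookkeeping is the sign computation in the second paragraph: one must be careful that the reordering sign $(-1)^{\lng{\rev{I} \cat I^c}}$ is indeed the sign that appears when expressing $v_{\rev{I}} \wedge v_{I^c}$ in terms of $v_{[n]}$, and that $B_\det^\sharp(x_{I^c})$ really is the dual basis element with the chosen increasing convention. Both are immediate from Lemma~\ref{lem:howepsandiotawork} and Remark~\ref{r:Bdetsharp}; no further calculation is needed.
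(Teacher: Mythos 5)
Your proposal is correct and follows essentially the same route as the paper: use the non-degenerate $\mathfrak{so}_{2n+1}$-invariant form $(-,-)'_S$ from Lemma~\ref{l:nondegformonsp} to get the isomorphism $v\mapsto(v,-)'_S$, evaluate on basis vectors, and unwind the identification of Remark~\ref{r:Bdetsharp}. You spell out the sign bookkeeping in slightly more detail than the paper, but the argument is the same.
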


\begin{proof}
\Cref{l:nondegformonsp} proves that $(-,-)^{'}_S$ is an $\mathfrak{so}_{2n+1}$-invariant non-degenerate form on $\Sp$.
Analogously to \eqref{eq:B+form+iso}, this form induces an isomorphism 
\begin{equation*}
    \Sp \rightarrow \Sp^\vee, \quad v \mapsto (v,-)^{'}_S.
\end{equation*}
Note that $(v_I,-)^{'}_S$ takes the value zero on all basis vectors aside from $v_{I^c}$ where by definition it takes the value $(-1)^{n|I^c| + \lng{\rev{I} \cat I^c}}$.
Hence the linear map \eqref{eq:spintodualmap} is precisely this isomorphism.
\end{proof}

\begin{lemma} 
\label{l:end_iso_S_tensor_Svee}
    The algebra $\End(\Sp)$ is isomorphic to $\Sp \otimes \Sp^\vee$. 
\end{lemma}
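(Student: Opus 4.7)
The plan is to establish the standard natural isomorphism $V \otimes V^\vee \to \End(V)$ in the equivariant setting, specialised to $V = \Sp$. Concretely, I would define the linear map
\[
\Phi: \Sp \otimes \Sp^\vee \to \End(\Sp), \qquad \Phi(v \otimes \phi)(u) = \phi(u)\, v,
\]
extended linearly from simple tensors, and then verify three things: that $\Phi$ is well defined, that it is $\mathfrak{so}_{2n+1}$-equivariant, and that it is an isomorphism.

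Well-definedness follows from the universal property of the tensor product applied to the bilinear map $\Sp \times \Sp^\vee \to \End(\Sp)$ given by $(v,\phi) \mapsto (u \mapsto \phi(u)v)$. Equivariance is a short direct check: for any $X \in \mathfrak{so}_{2n+1}$ and any simple tensor $v \otimes \phi$, the action on the left is $X \cdot (v \otimes \phi) = (Xv) \otimes \phi + v \otimes (X \cdot \phi)$, where $(X \cdot \phi)(u) = -\phi(Xu)$, while the action of $X$ on $T \in \End(\Sp)$ is given by the commutator $X \cdot T = X \circ T - T \circ X$. Applying both sides to a vector $u$ gives
\[
\Phi\bigl(X \cdot (v \otimes \phi)\bigr)(u) = \phi(u)\, Xv - \phi(Xu)\, v = X\bigl(\phi(u)v\bigr) - \Phi(v \otimes \phi)(Xu) = (X \cdot \Phi(v \otimes \phi))(u),
\]
establishing the claim.

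Finally, both $\Sp \otimes \Sp^\vee$ and $\End(\Sp)$ have dimension $(\dim \Sp)^2 = 2^{2n}$, so it suffices to show $\Phi$ is injective. Picking a basis $\{v_I\}$ of $\Sp$ with dual basis $\{v_I^\vee\}$ of $\Sp^\vee$, the image $\Phi(v_I \otimes v_J^\vee)$ is the elementary matrix $E_{IJ}$ sending $v_J$ to $v_I$ and other basis vectors to $0$; these form a basis of $\End(\Sp)$, so $\Phi$ is surjective (and hence an isomorphism). I anticipate no obstacle here: this is the familiar natural isomorphism, and the only step requiring care is the sign in the dual action used to verify equivariance. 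The isomorphism is in fact canonical and does not require the explicit model of $\Sp$ from earlier; equipped with the $\mathfrak{so}_{2n+1}$-invariant form of \Cref{l:nondegformonsp}, one could further identify $\Sp^\vee$ with $\Sp$ via \Cref{l:Spisotodual} to obtain the full chain of isomorphisms in \eqref{eq:spvee+iso}, though that identification is not needed for the present lemma.
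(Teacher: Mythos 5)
Your proof is correct and takes essentially the same approach as the paper: invoking the canonical isomorphism $V\otimes V^\vee\cong\End(V)$, with the paper giving the map in the direction $\End(\Sp)\to\Sp\otimes\Sp^\vee$ via $\phi\mapsto\sum_{I,J}x_I(\phi(v_J))\,v_I\otimes x_J$ (the inverse of your $\Phi$). You go further by explicitly verifying $\mathfrak{so}_{2n+1}$-equivariance, which the paper leaves implicit as part of the ``general fact''; this is harmless extra care rather than a different route.
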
 
\begin{proof} 
It is a general fact about vector spaces that if $U$ is a vector space with basis ${b_i}$ and dual basis $d_i \in U^\vee$, then $\End(U) \cong U \otimes U^\vee$ via the map
\[ 
\phi \mapsto \sum_{i,j}d_i(\phi(b_j)) b_i \otimes d_j \, .
\]
By \Cref{r:Bdetsharp} and its preceding discussion, we have a basis $\set{v_J}{J \subseteq [n]}$ for $\Sp$ and an associated dual basis $\set{x_J}{J \subseteq [n]}$ for $\Sp^\vee$.
Hence, the explicit isomorphism is given by 
\begin{align}\label{eq:homtoend}
\End(\Sp) \rightarrow \Sp \otimes \Sp^\vee \, , \quad \phi \mapsto \sum_{I,J \subseteq [n]} x_{I}(\phi(v_J)) v_I \otimes x_{J} \, .
\end{align}
\end{proof}

We now have all the isomorphisms to prove the main result of this section:
\begin{proposition}\label{p:Ssquaredecomp}
The following are isomorphic as $\mathfrak{so}_{2n+1}$ modules:
\[
    \Sp^\vee \otimes \Sp^\vee \cong \Sp \otimes \Sp^\vee \cong \End(\Sp)   \cong \Cl^{\mathrm{even}} V \cong \twedge^{\mathrm{even}}V = \bigoplus_{k=0}^{n} \twedge^{2k} V\, .
\] 
Viewing $S^2(\Sp^\vee)$ as a submodule of $\Sp^\vee \otimes \Sp^\vee$, 
this induces the following isomorphism:
\[
    S^2 (\Sp^\vee) \cong \begin{dcases}
        \bigoplus_{k=0}^{\floor{\frac{2n+1}{4}}} \twedge^{4k} V & n = 3 , 4 \bmod 4\, , \\
        \bigoplus_{k}^{\floor{\frac{2n-1}{4}}} \twedge^{4k+2} V & n = 1,2 \bmod 4\, .\end{dcases}
\] 
\end{proposition}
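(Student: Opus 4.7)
The first chain of isomorphisms in the statement is essentially formal: concatenating \Cref{l:Spisotodual}, \Cref{l:end_iso_S_tensor_Svee}, \Cref{c:evenclifftoend}, and the even-degree restriction of \Cref{lem:quant+iso} gives
\[
    \Sp^\vee\otimes\Sp^\vee \;\cong\; \Sp\otimes\Sp^\vee \;\cong\; \End(\Sp) \;\cong\; \Cl^{\mathrm{even}}V \;\cong\; \twedge^{\mathrm{even}}V \;=\; \bigoplus_{k=0}^n \twedge^{2k}V.
\]
The plan for the refinement to $S^2(\Sp^\vee)$ is to track the tensor-swap involution $\sigma\colon a\otimes b\mapsto b\otimes a$ through this chain. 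Since each $\twedge^{2k}V$ is an irreducible $\mathfrak{so}_{2n+1}$-module and the summands are pairwise inequivalent, Schur's lemma forces $\sigma$ to act on each $\twedge^{2k}V$ as a scalar $\pm 1$, and $S^2(\Sp^\vee)$ will be the sum of those summands where the scalar is $+1$.

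I would compute that scalar in two sub-steps. First, determine the symmetry type $\epsilon\in\{\pm1\}$ of the invariant form $(-,-)'_S$ on $\Sp$ from \Cref{l:nondegformonsp}: a wedge reversal using the identity $\binom{|I|}{2}+\binom{|I^c|}{2}+|I||I^c|=\binom{n}{2}$ yields $(v_J,v_I)'_S=(-1)^{n(n+1)/2}(v_I,v_J)'_S$, so $\epsilon=+1$ iff $n\equiv 0,3\pmod 4$. Using $(-,-)'_S$ to identify $\Sp\cong\Sp^\vee$, the swap $\sigma$ carries over to $\End(\Sp)$ as $\epsilon$ times the adjoint $T\mapsto T^*$. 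Second, identify that adjoint, under the Clifford isomorphism of \Cref{c:evenclifftoend}, with the transposition anti-involution $t\colon v_1\cdots v_r\mapsto v_r\cdots v_1$ on $\Cl^{\mathrm{even}}V$; since $t$ is an anti-automorphism, it suffices to check that each vector $v\in V$ acts self-adjointly on $\Sp$ for $(-,-)'_S$, which I would verify on the Spin generators $\tfrac{1}{\sqrt 2}(x_i+v_i)w$ using the explicit action in \Cref{p:cliffisend}.

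A direct unwinding of the quantisation formula \eqref{eq:quant} then shows that $t$ acts on $q(\twedge^{2k}V)$ by the scalar $(-1)^{\binom{2k}{2}}=(-1)^k$, so $\sigma$ acts on $\twedge^{2k}V$ by $\epsilon\,(-1)^k$. Hence $\twedge^{2k}V\subseteq S^2(\Sp^\vee)$ exactly when $\epsilon(-1)^k=+1$: with $k=2m$ when $\epsilon=+1$ (i.e.\ $n\equiv 0,3\pmod 4$) and $k=2m+1$ when $\epsilon=-1$ (i.e.\ $n\equiv 1,2\pmod 4$), the constraint $2k\le\dim V=2n+1$ cuts off $m$ at $\lfloor(2n+1)/4\rfloor$ and $\lfloor(2n-1)/4\rfloor$ respectively.

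The main obstacle I foresee is in the second sub-step: distinguishing the adjoint as $t$ rather than the rival anti-involution $(-1)^{\deg}\cdot t$ (``Clifford conjugation''), because picking the wrong one would interchange the two cases of the final formula. Getting this sign right is genuinely sensitive to the factor $(-1)^{n|J|}$ inserted in the definition of $(-,-)'_S$, and will require careful bookkeeping of how $\epsilon_v$ and $\iota_x$ pair against each other under $(-,-)'_S$ in the weight basis.
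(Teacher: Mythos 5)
Your proposal is correct and follows essentially the same route as the paper: track the tensor-swap involution through the chain of isomorphisms, compare it with the transposition anti-involution $t$ on $\Cl^{\mathrm{even}}V$, and read off the eigenspace from $t$ acting on $\twedge^{2k}V$ by $(-1)^{\binom{2k}{2}}=(-1)^k$. The only organizational difference is that you factor the comparison into the symmetry type $\epsilon$ of $(-,-)'_S$ and the identity ``adjoint $= t$,'' whereas the paper runs the sign-bookkeeping directly on basis elements $w^{|I|+|J|+n}x_Iv_{[n]}x_J$; the two viewpoints encode the same computation, since the paper's global factor $(-1)^{n^2+\binom n2}\equiv(-1)^{\binom{n+1}{2}}$ is precisely your $\epsilon$.

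One small caution: your intermediate claim that ``each vector $v\in V$ acts self-adjointly on $\Sp$'' is false when $n$ is odd — direct computation with $(-,-)'_S$ gives $\epsilon_v^*=(-1)^n\epsilon_v$, $\iota_x^*=(-1)^n\iota_x$, $w^*=(-1)^n w$. This does not harm the conclusion, because the $(-1)^n$'s cancel in pairs, so on products of two vectors (which generate $\Cl^{\mathrm{even}}V$) the adjoint does equal $t$; but if you literally tried to verify self-adjointness of a single vector for $n$ odd you would stall. Relatedly, the obstacle you flag at the end is moot: Clifford conjugation $a\mapsto(-1)^{|a|}a^t$ agrees with $t$ on all of $\Cl^{\mathrm{even}}V$, so there is nothing to distinguish there.
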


\begin{proof}
The first isomorphism is described in Lemma \ref{l:Spisotodual}, the second by \Cref{l:end_iso_S_tensor_Svee}, the third by Corollary \ref{c:evenclifftoend} and the fourth by \Cref{lem:quant+iso} restricted to the even submodules.

To understand which of the submodules of $\twedge^{\mathrm{even}} V$ are isomorphic to $S^2(\Sp)$, 
we consider the involution $\tau:\Sp^\vee \otimes \Sp^\vee \to \Sp^\vee \otimes \Sp^\vee$ defined by $\tau(x_I \otimes x_J) = x_J \otimes x_I$.
Observe that $S^2(\Sp^\vee)$ is the $+1$-eigenspace of $\tau$.
By composing $\tau$ with the relevant isomorphisms, we will derive involutions on $\Sp^\vee \otimes \Sp^\vee$, $\End(\Sp)$, $\Cl^{\mathrm{even}}(V)$, and $\twedge^{\mathrm{even}} V$ whose $+1$-eigenspaces are isomorphic to $S^2(\Sp)$.
By abuse of notation, we will also denote these involutions by $\tau$.

We first track the element $x_I \otimes x_J$ through our chain of isomorphisms.
Under the inverse of the isomorphism defined in Lemma \ref{l:Spisotodual}, we find $x_I \otimes x_J$ gets mapped to 
\begin{equation} \label{eq:tau+first}
    (-1)^{n|I^c| + \lng{\rev{I} \cat I^c}} v_{I^c} \otimes x_J \in \Sp \otimes \Sp^\vee \, .
\end{equation}
To see where this is mapped to in $\End(\Sp)$ under the second isomorphism in Lemma \ref{l:end_iso_S_tensor_Svee}, observe that from Lemma \ref{lem:howepsandiotawork} we have $\iota_{x_{I}}\epsilon_{v_{[n]}}\iota_{x_{\rev{J}}}(v_J) = (-1)^{\lng{\rev{I} \cat I^c}}v_{I^c}$.
It follows that \eqref{eq:tau+first} gets mapped to 
\begin{equation} \label{eq:tau+second}
  (-1)^{n|I^c|}\iota_{x_{I}}\epsilon_{v_{[n]}}\iota_{x_{\rev{J}}} =(-1)^{n|I^c| + \binom{|J|}{2}}\iota_{x_{I}}\epsilon_{v_{[n]}}\iota_{x_J} \in \End(\Sp) \, .
\end{equation}
Finally, under the inverse of the map described in Corollary \ref{c:evenclifftoend}, we find \eqref{eq:tau+second} is mapped to the following element in $\Cl^{\mathrm{even}}(V)$:
\begin{align*}
     &\phantom=\begin{cases} (-1)^{n|I^c| + {\binom{|J|}{2}}}x_Iv_{[n]}x_J & n + |I| + |J| \equiv 0 \mod 2 \\ (-1)^{(n+1)|I^c| + {\binom{|J|}{2}}}wx_Iv_{[n]}x_J  & n + |I| + |J| \equiv 1 \mod 2\end{cases}
    \\
    &= (-1)^{(|I|+|J|)|I^c| + {\binom{|J|}{2}}}w^{|I|+|J|+n}x_Iv_{[n]}x_J\, .
\end{align*}

Recall that $\tau(x_I \otimes x_J) = x_J \otimes x_I$ in $\Sp^\vee \otimes \Sp^\vee$.
Hence $\tau$ induces an involution on $\Cl(V)$ by
\[
    \tau\left((-1)^{(|I|+|J|)|I^c| + {\binom{|J|}{2}}}w^{|I|+|J|+n}x_Iv_{[n]}x_J\right) = (-1)^{(|I|+|J|)|J^c| + {\binom{|I|}{2}}}w^{|I|+|J|+n}x_Jv_{[n]}x_I .
\]
Thus 
\begin{equation}\label{eq:taueq}
    \tau \left(w^{|I|+|J|+n}x_Iv_{[n]}x_J\right) = (-1)^{(|I|+|J|)(|I^c|+|J^c|) + {\binom{|I|}{2}}+{\binom{|J|}{2}}}w^{|I|+|J|+n}x_Jv_{[n]}x_I.
\end{equation} 

We next compare $\tau$ with another involution $\bullet^t$ on the Clifford algebra $\Cl(V)$, often called transposition.
This is defined as the unique antiautomorphism extending the identity on $V$. Explicitly 
\[
    v^t = v \quad \text{ for all } v \in V \quad \text{ and } \quad (ab)^t = b^ta^t \quad \text{ for all } a, b \in \Cl(V) \, .
\]
Let us record that 
\begin{align}\label{eq:smalltrans1}
    (x_{I}v_{[n]}x_J)^t &= (x_J)^tv_{[n]}^tx_{I}^t = (-1)^{{\binom{|J|}{2}} + {\binom{n}{2}} + {\binom{|I|}{2}}} x_J v_{[n]}x_{I}\, , \\
\label{eq:smalltrans2}
    (wx_{I}v_{[n]}x_J)^t &= (x_J)^tv_{[n]}^tx_{I}^tw = (-1)^{\binom{|J|}{2} + \binom{n}{2} + \binom{|I|}{2} + |I|+|J|+n} wx_J v_{[n]}x_{I}\, .
\end{align}
As with $\tau$, we can view $\bullet^t$ as an involution of $\twedge^\bullet V$ by passing through the (inverse of the) quantisation map.
By an abuse of notation, we denote the involution on both spaces by $\bullet^t$.
It is shown in \cite[Section 2.2.6]{Meinrenken:2013} that quantisation intertwines with $\bullet^t$, i.e., $q(a^t) = q(a)^t$ for all $a \in \twedge^\bullet V$.
It follows that $\bullet^t$ acts by $a^t = (-1)^{\binom{d}{2}} a$ for $a \in \twedge^d V$, hence $\twedge^d V$ is in the $+1$ eigenspace of $\bullet^t$ when $d \equiv 0, 1 \mod 4$ and in the $-1$ eigenspace when $d \equiv 2,3 \mod 4$.

From Equations \eqref{eq:smalltrans1} and \eqref{eq:smalltrans2} we get
\begin{equation}\label{eq:smalltrans3}(w^{|I|+|J|+n}x_{I}v_{[n]}x_J)^t= (-1)^{{\binom{|J|}{2}} + {\binom{n}{2}} + {\binom{|I|}{2}} + (|I|+|J|+n)(|I|+|J|+n)} w^{|I|+|J|+n}x_J v_{[n]}x_{I}\, .\end{equation}
Comparing \eqref{eq:taueq} to \eqref{eq:smalltrans3} and noting that  $|I|+|J| \equiv\ |I^c| + |J^c| \mod 2$ and $(m+n)^2 \equiv m^2 + n^2\mod 2$,  we see that 
\[  \tau (w^{|I|+|J|+n}x_Iv_{[n]}x_J)  =  (-1)^{n^2 + {\binom{n}{2}}}(w^{|I|+|J|+n}x_Iv_{[n]}x_J)^t.\]

Since $n^2 + {\binom{n}{2}}$ and ${\binom{n+1}{2}}$ are equal modulo $2$, 
the involutions $\bullet^t$ and $\tau$ differ by $(-1)^{\binom{n+1}{2}}$ and hence are equal when $n \equiv 3,4 \mod 4$ and differ by a sign when $n \equiv 1,2 \mod 4$.
Viewed as an involution of $\twedge^{\rm even} V$, the $+1$ eigenspace of $\bullet^t$ is $\bigoplus \twedge^{4k} V$ and the $-1$ eigenspace is $\bigoplus \twedge^{4k+2} V$.
Hence when $n \equiv 3,4 \mod 4$ we find $S^2(\Sp) \cong \bigoplus \twedge^{4k} V$ and when $n \equiv 1,2\mod 4$, the $+1$ eigenspace of $\tau$ corresponds to the $-1$ eigenspace of $\bullet^t$, hence $S^2(\Sp) \cong \bigoplus \twedge^{4k+2} V$.
\end{proof}

\subsection{Explicit equations: Type $B_n$}\label{App:B.2}

This section is divided into two parts.
In the previous section we obtained an explicit description of the decomposition of $S^2(\Sp^\vee)$ into irreducible $\mathfrak{so}_{2n+1}$-modules via the four isomorphisms described in Lemma \ref{l:Spisotodual}, \Cref{l:end_iso_S_tensor_Svee}, Corollary \ref{c:evenclifftoend} and \Cref{lem:quant+iso}. 
\Cref{App:BspinEqs} is dedicated to the slightly tedious process of carefully pulling a basis for $\twedge^\bullet V$ back through the three isomorphisms,
culminating in \Cref{t:BspinEqs} below.
Then, in \Cref{App:BnSpinmain}, we prove \Cref{t:BnSpinmain} by confirming that its equations have the same span as those in \Cref{t:BspinEqs}.

\subsubsection{A first basis of equations}\label{App:BspinEqs}

We prove \Cref{t:BspinEqs} in this section.
Recall the notation $\lng{-}$ for length of ordered multisets from \Cref{sssec:Symmetric-group}.
We let $\cM_{N,L}$ denote the set
\begin{equation}\label{eq:M_NL}
    \cM_{N,L} \coloneqq \set{4k + 2 \ceil{\frac{n}{2}} - |N| - |L| - \varepsilon}{k\in\ZZ \, , \, \varepsilon \in \{0,1\}} \setminus \Big\{ m-2,\,m-1,\,m,\,m+1,\,m+2\Big\}
\end{equation}
where $m= n -|N|-|L|$.

For disjoint $N, L \subseteq [n]$, define the set $Q_{N,L} = \set{ K \subset [n]}{N \subset K, L \cap K = \emptyset}$  of dimension $n - |N| - |L|$. 
This is the set of subsets indexing a face of the $n$-cube $Q_n$; we'll generally speak of it as a face or a (sub)cube itself.
The full $n$-cube $Q_n$ is given by $Q_{\emptyset,\emptyset}$.
Given a set $I$, write $I^c = [n] \setminus I$ for the complement.
Any pair $(I,J)$ of subsets of~$[n]$ is an antipode in a unique cube, namely $Q_{N,L}$, where $N = I \cap J$ and $L = (I \cup J)^c$ (this may be a $0$-cube).
So we will call such a pair of subsets an \defn{ordered antipode}.
Observe that we can write $J = (I^c \setminus L) \cup N$.
As such, for any $K \in Q_{N,L}$ we let $\overline{K}_{N,L} := (K^c \setminus L) \cup N$ denote the set such that $(K, \overline{K}_{N,L})$ is the unique antipode in $Q_{N,L}$ containing $K$.
If $N$ and $L$ are clear from context we will drop them and simplify $\overline{K}_{N,L}$ to $\overline{K}$. Letting $S = (N \cup L)^c$, we have $\overline{K}_{N,L} = K \symDiff S$.

\begin{restatable}{theorem}{thBspineqs}\label{t:BspinEqs}
    The set of equations 
    \[
    \set{p^M_{N,L} \in S^2(\Sp^\vee)}{M,N,L \text{ pairwise disjoint} \, , \, 2|M| \in \cM_{N,L}}\]
    where
    \[
        p^M_{N,L} = \sum_{\substack{(K ,\overline{K}) \\\text{ antipodes in } Q_{N,L}}} (-1)^{|K^c \cap M| + |\overline{K}^c|(|N|+|L|) + \lng{\rev{\overline{K}^c} \cat \overline{K}} + n|\overline{K}|+\lng{L \cat \left(\remS{K}{N}\right)} + \lng{\rev{N} \cat \remS{K}{N}}}x_{K}x_{{\overline{K}}}  
    \]
     forms a basis for the Lichtenstein embedding equations.
\end{restatable}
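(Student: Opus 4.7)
The plan is to apply \Cref{lem:grassmann+embedding+equations}: it suffices to decompose $S^2(\Sp)$ into its isotypic components, single out the unique $V_{2\lambda_n}$ summand, and exhibit an explicit basis of $S^2(\Sp^\vee)$ dual to the rest. The key input is the classical Clifford-algebra decomposition. The $\bbG$-invariant bilinear form on $\Sp$ together with the faithful action of $\Cl_{2n+1}$ on $\Sp$ gives an equivariant isomorphism of $\Sp \otimes \Sp$ with a direct sum of wedge powers $\bigwedge^k \CC^{2n+1}$ of the standard representation. Among these, the unique summand of highest weight $2\lambda_n$ is a copy of $\bigwedge^n \CC^{2n+1}$, and the remaining summands are various $\bigwedge^k \CC^{2n+1}$ with $k \neq n$. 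The split of this decomposition into $S^2(\Sp)$ and $\Lambda^2(\Sp)$ is determined by the symmetry of the invariant form composed with Clifford monomials $e_{i_1}\cdots e_{i_k}$, and depends only on $k$ and $n$ modulo~$4$; this is the origin of the step-of-$4$ progression appearing in the definition of $\cM_{N,L}$.

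With the decomposition in hand, I would construct each Lichtenstein embedding equation as a matrix entry of the equivariant projection $\pi_k\colon S^2(\Sp)\to\bigwedge^k \CC^{2n+1}$. A basis of $\bigwedge^k \CC^{2n+1}$ is indexed by quadruples $(A^+, A^-, A^\pm, \varepsilon)$ of pairwise disjoint subsets of $[n]$ together with $\varepsilon\in\{0,1\}$ recording whether the zero weight is included, subject to $|A^+|+|A^-|+2|A^\pm|+\varepsilon = k$. The weight of such a basis vector is $\sum_{i\in A^+}e_i-\sum_{j\in A^-}e_j$, so after setting $N = A^+$, $L = A^-$, $M = A^\pm$, the dual functional in $S^2(\Sp^\vee)$ is a quadric supported precisely on products $x_K x_{\overline{K}}$ with $(K, \overline{K})$ an ordered antipode in $Q_{N,L}$, recovering the summation range in $p^M_{N,L}$. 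The coefficients $(-1)^{|K^c\cap M|}$ then encode the Fourier-type change of basis that picks out the contribution of $A^\pm = M$ within the Boolean cube $Q_{N,L}$, and the remaining length-function exponents record the signs produced by commuting the Clifford generators $e_{i_1}\cdots e_{i_k}$ into a fixed normal form before acting on weight vectors $v_K$ of $\Sp$ via the usual fermionic creation and annihilation operators.

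To finish, one verifies a dimension count: the admissible triples $(M, N, L)$ enumerated by the theorem are in bijection with the weight-basis vectors of the non-excluded summands of $S^2(\Sp)$, and their total number matches $\dim S^2(\Sp)-\dim V_{2\lambda_n}$. The condition $2|M|\in\cM_{N,L}$ rephrases in a single formula the two constraints (i) that $k = |N|+|L|+2|M|+\varepsilon$ lies in the correct residue class modulo $4$ for $\bigwedge^k\CC^{2n+1}$ to land inside $S^2(\Sp)$ rather than $\Lambda^2(\Sp)$, and (ii) that $k\ne n$, excluding the $V_{2\lambda_n}$ summand; the excluded range $\{m-2,\ldots,m+2\}$ packages these constraints and the degenerate low-support cases in which the quadric collapses to zero (consistent with the remarks after \Cref{d:quadraticembeddingeqs}). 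The main obstacle will be the sign bookkeeping described in the previous paragraph: although the abstract Clifford picture is clean, translating the explicit action of $e_{i_1}\cdots e_{i_k}$ on weight-basis vectors $v_K$ of $\Sp$ into the precise combinatorial form $\lng{\rev{\overline{K}^c}\cat\overline{K}}+\lng{L\cat\remS{K}{N}}+\lng{\rev{N}\cat\remS{K}{N}}+n|\overline{K}|+|\overline{K}^c|(|N|+|L|)$ requires careful tracking of how the transpositions implicit in stable sorting interact with the parity-dependent Clifford commutation relations, which is why this calculation is naturally deferred to the appendix.
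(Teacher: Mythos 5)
Your high-level plan is exactly the paper's: pass $S^2(\Sp^\vee)$ through the Clifford-algebra chain $\Sp^\vee\otimes\Sp^\vee\cong\Sp\otimes\Sp^\vee\cong\End(\Sp)\cong\Cl^{\mathrm{even}}(V)\cong\bigwedge^{\mathrm{even}}V$, use the transpose anti-involution to see which $\bigwedge^{2\ell}V$ land in the symmetric square (the mod~$4$ dichotomy of Proposition~\ref{p:Ssquaredecomp}), identify $V_{2\lambda}$ as $\bigwedge^n V\cong\bigwedge^{n+1}V$, and read the equations off the remaining summands. Your reading of the condition $2|M|\in\cM_{N,L}$ as combining the mod~$4$ membership with the exclusion of $V_{2\lambda}$ is also correct (though strictly it is $k\notin\{n,n+1\}$, not just $k\neq n$, since both wedge degrees realise $V_{2\lambda}$, and the $m\pm 2$ part of the exclusion is redundant given the mod~$4$ condition).

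The genuine gap is that you have not proved the formula you are asked to prove. Theorem~\ref{t:BspinEqs} is an \emph{explicit} formula whose entire content is the sign $(-1)^{|K^c\cap M|+[K,\overline K]}$. You acknowledge this yourself, calling it ``sign bookkeeping \ldots naturally deferred to the appendix'' — but this statement \emph{is} in the appendix; deferring it is circular. The calculation you are deferring is precisely what Lemma~\ref{l:allpolysBn} does: it fixes the monomial basis $\{v_I\wedge u_J,\ v_{I'}\wedge u_{J'}\wedge w\}$ of $\bigwedge^{\mathrm{even}}V$, quantises via \eqref{eq:quant}, rewrites the Clifford monomial as a composite of $\epsilon_{v_L}$, $\iota_{x_N}$, and the diagonal operators coming from $v_su_s-u_sv_s$ and $w$, applies Lemma~\ref{lem:howepsandiotawork} to evaluate these on $v_K$, unpacks the result in $\Sp\otimes\Sp^\vee$ via \eqref{eq:homtoend}, and finally applies the duality of Corollary~\ref{l:Spisotodual} (whose sign rests on the hand-checked $\mathfrak{so}_{2n+1}$-invariance of the form in Lemma~\ref{l:nondegformonsp}). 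Each of these introduces a length-function or parity term in $[K,\overline K]$; omitting them leaves the theorem unproven. Two further small issues: your proposed ``dimension count'' only delivers the correct cardinality, whereas linear independence is needed and the paper gets it for free by producing the $p^M_{N,L}$ as the image of a basis under an isomorphism; and the identification $N=A^+$, $L=A^-$ is the reverse of what the weight conventions force (the $v$-only indices $L$ carry $+e_i$, the $x$-only indices $N$ carry $-e_j$).
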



\begin{remark}
It will be shown in the proof of \Cref{l:leadingquadeqisingrassmmann} that due to the mod~$4$ conditions, $2|M|$ cannot be $m-2$ or $m+2$. Thus it makes no difference to the theorem whether those are listed when defining $\cM_{N,L}$.
However, explicitly removing $\{m-2,m-1,m,m+1,m+2\}$ from $\cM_{N,L}$ clearly demonstrates that that there are no equations supported on antipodes of $0$-, $1$- and $2$-cubes.
\end{remark}

We emphasise that this theorem is not new, but rather just a calculation from a result of Lichtenstein \cite{Lichtenstein:1982}.
However, we include it in this appendix as we could not find an explicit basis of polynomials spanning $S^2(V_\lambda) / V_{2 \lambda}$ in the literature. 

To prove \Cref{t:BspinEqs}, it is sufficient to prove that the set of quadrics $\{ p^M_{N,L}\}$ span  the summand $S^2(\Sp^\vee) / V_{2\lambda}$. 
Note that the Casimir $\Omega$ has different eigenvalues on each irreducible summand of $S^2(\Sp^\vee)$.
The lemma below calculates each of these eigenvalues, producing a basis for $S^2(\Sp^\vee)$ which splits into bases for each irreducible summand.
We then finish the proof of Theorem \ref{t:BspinEqs} by deducing which subset of this basis is a basis for $S^2(\Sp^\vee) /V_{2\lambda}$. 

\begin{definition}\label{d:antipodesigns}
    Given an ordered antipode $(K, \overline{K})$, we define
    \[
        [K,\overline{K}] := |\overline{K}^c|\left(|N|+|L|\right) + \lng{\rev{\overline{K}^c} \cat \overline{K}} + n|\overline{K}|+\lng{L \cat \left(\remS{K}{N}\right)} + \lng{\rev{N} \cat \remS{K}{N}}\, . 
    \]
    Recall that $N$ and $L$ are determined by $(K,\overline{K})$.
\end{definition}

\begin{lemma}\label{l:allpolysBn}
The set
\begin{align}\label{e:cliffbasisSsq2} 
   \begin{split}
        \Bigg\{  \sum_{\substack{(K,\overline{K})\\\text{ antipode of } Q_{N,L}}} (-1)^{|K^c \cap M| + [K, \overline{K}]}x_{K}  x_{\overline{K}}\st   &L,M,N \text{ pairwise disjoint,}\\
    &|N|+2|M| +|L| \equiv 2\ceil{ \frac{n}{2}}, 2\ceil{ \frac{n}{2}}-1 \mod 4\Bigg\} 
    \end{split}
\end{align}
is a basis of $\Omega$-eigenvectors for $S^2(\Sp^\vee)$.

\end{lemma}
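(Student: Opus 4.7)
The plan is to pull back a natural graded basis of $\bigwedge^{\mathrm{even}} V$ through the composition of the four $\mathfrak{so}_{2n+1}$-equivariant isomorphisms in \Cref{p:Ssquaredecomp}. Each summand $\bigwedge^{2k} V$ is an irreducible $\mathfrak{so}_{2n+1}$-module with pairwise distinct dimensions $\binom{2n+1}{2k}$ for $0 \le 2k \le 2n$, so $\Omega$ acts on each by a distinct scalar. Any basis of $\bigwedge^{\mathrm{even}} V$ consisting of homogeneous elements thus pulls back to a basis of $\Omega$-eigenvectors, and restricting to those summands that map to $S^2(\Sp^\vee)$ rather than $\bigwedge^2(\Sp^\vee)$ yields a basis of $\Omega$-eigenvectors for $S^2(\Sp^\vee)$.

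I will parameterise the basis of $\bigwedge^{\mathrm{even}} V$ by quadruples $(L, M, N, \varepsilon)$ with $L, M, N \subseteq [n]$ pairwise disjoint, $\varepsilon \in \{0, 1\}$, and $|L| + |N| + \varepsilon$ even, via
\[
\omega_{L,M,N,\varepsilon} = \Bigl(\bigwedge_{i \in L} x_i\Bigr) \wedge \Bigl(\bigwedge_{j \in N} v_j\Bigr) \wedge \bigwedge_{k \in M}(x_k \wedge v_k) \wedge w^{\varepsilon},
\]
with fixed wedge ordering. This element has degree $|L|+2|M|+|N|+\varepsilon$. A direct check against \Cref{p:Ssquaredecomp} shows that the two mod-$4$ conditions on $|N|+2|M|+|L|$ in the lemma correspond precisely to $\varepsilon = 0$ and $\varepsilon = 1$, and in both cases place $\omega_{L,M,N,\varepsilon}$ in a summand of $\bigwedge^{\mathrm{even}} V$ that maps into $S^2(\Sp^\vee)$.

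The crux of the proof is computing the image of $\omega_{L,M,N,\varepsilon}$ in $\Sp^\vee \otimes \Sp^\vee$. Quantisation replaces each $x_k \wedge v_k$ by $\tfrac12(x_k v_k - v_k x_k)$ via \eqref{eq:quantisation+identity}, while the pairwise-anticommuting factors are unchanged. Under the Clifford action of \Cref{p:cliffisend}, a direct calculation shows that $\tfrac12(x_kv_k - v_kx_k)$ acts on $v_K \in \Sp$ by the scalar $(-1)^{[k \in K]}$, so its product over $k \in M$ contributes $(-1)^{|M \cap K|}$, which equals $(-1)^{|K^c \cap M|}$ up to the global sign $(-1)^{|M|}$; this is exactly the sign of $p^M_{N,L}$. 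The remaining operators $\iota_{x_L}$ and $\epsilon_{v_N}$, analysed via \Cref{lem:howepsandiotawork}, convert the resulting endomorphism into a sum of rank-one operators indexed by $K$ with $N \subseteq K \subseteq [n]\setminus L$---that is, by antipodes $(K, \overline K)$ of $Q_{N,L}$. Composing the trace isomorphism \eqref{eq:homtoend} and the spin self-duality \eqref{eq:spintodualmap} produces a sum $\sum_{(K,\overline K)}(\pm)\,x_K x_{\overline K}$; careful sign bookkeeping shows the accumulated signs assemble into $(-1)^{|K^c \cap M|+[K,\overline K]}$ with $[K,\overline K]$ as in \Cref{d:antipodesigns}.

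Linear independence of $\{p^M_{N,L}\}$ then follows from that of the $\omega_{L,M,N,\varepsilon}$ via injectivity of the composed isomorphism, and a dimension count comparing the number of admissible quadruples to $\dim S^2(\Sp^\vee) = \binom{2^n+1}{2}$ establishes spanning. The main obstacle is the sign bookkeeping: permutation signs enter from \Cref{l:Spisotodual}, \eqref{eq:quantisation+identity}, \eqref{eq:homtoend}, and \Cref{lem:howepsandiotawork}, and their composition must collapse into the compact exponent $[K,\overline K]$. This amounts to a combinatorial identity about lengths of stable sorts of concatenated ordered multisets, in the spirit of but more elaborate than \Cref{l:antipode+coefficient+difference}.
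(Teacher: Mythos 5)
Your proposal follows essentially the same route as the paper: parameterise a homogeneous basis of the relevant even wedge powers of $V$ by disjoint triples $L,M,N$ (together with the $w$-factor), pull it back through the four isomorphisms of \Cref{p:Ssquaredecomp} using \eqref{eq:quantisation+identity}, \Cref{p:cliffisend}, \Cref{lem:howepsandiotawork}, \eqref{eq:homtoend}, and \Cref{l:Spisotodual}, and track the accumulated signs into the exponent $[K,\overline K]$. Two minor streamlinings: the step from ``pairwise distinct dimensions'' to ``distinct Casimir scalars'' is not a valid implication in general and is also unneeded, since irreducibility of each $\bigwedge^{2k}V$ already makes every homogeneous pullback an $\Omega$-eigenvector; and the closing dimension count is superfluous, since an isomorphism carries a basis to a basis.
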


\begin{proof}
\Cref{p:Ssquaredecomp} gives an explicit isomorphism between $S^2(\Sp^\vee)$ and an appropriate submodule of $\twedge^\bullet V$ using the four isomorphisms described in Lemma \ref{l:Spisotodual}, \Cref{l:end_iso_S_tensor_Svee}, Corollary \ref{c:evenclifftoend} and \Cref{lem:quant+iso}. 
We choose a basis of this submodule and then chase this basis through the four isomorphisms to give an explicit basis for $S^2(\Sp^\vee)$.

Recall $V = F \oplus F^* \oplus \mathrm{span}(w)$, and let $v_i$ be a basis of $F$, $u_j$ a basis of $F^*$, and $x_i$ a basis of $F^\vee$. Here we introduce $u_j$ to distinguish $F^*$ from $F^\vee$ for clarity.
Consider a general pure tensor in $\bigwedge^{2l}(V)$, where $2l = 4k$ if $n \equiv 3,4 \mod 4$ and $2l=4k+2$ if $n \equiv 1,2 \mod 4$. 
A general pure tensor is either of the form $z = v_I \wedge u_J$ or $z'=v_{I'} \wedge u_{J'} \wedge w$ where $|I| + |J| = 2l$ or $|I'| + |J'| = 2l-1$.
The quantisation of $z$ and $z'$ is 
\begin{align}\label{eq:qz} 
    q(z) &={\left(\half\right)^{|I\cap J|}(-1)^{\bullet}} \prod_{s \in I \cap J} (v_su_s-u_sv_s) \prod_{i \in I \setminus J} v_i \prod_{j \in J \setminus I} u_j. \\
\label{eq:qz'}
    q(z') &={\left(\half\right)^{|I'\cap J'|}(-1)^{\bullet} } w\prod_{s \in I' \cap J'} (v_su_s-u_sv_s) \prod_{i \in I' \setminus J'} v_i \prod_{j \in J' \setminus I'} u_j  .
\end{align}
From now on we drop the global factor $(\half)^{|I\cap J|}(-1)^{\bullet}$ and its equivalent for $I'$ and $J'$ as this will only affect the resulting basis up to scalars.
 
We now relabel using pairwise disjoint sets $L,M,N \subset [n]$ (resp.\ $L',M',N' \subset [n]$) where $L = I \setminus J$, $M = I \cap J$ and $N = J \setminus I$ (resp.\ $L' = I' \setminus J'$, $M = I' \cap J'$ and $N' = J' \setminus I'$).
The conditions on the cardinality of $I,J,I',J'$ imply that $|L| + 2|M| + |N| = 2l$ and $|L'| + 2|M'| + |N'| = 2l-1$, which can be further refined to
\begin{equation}
    \label{eq:NLcond}
    |L| + 2|M| + |N| = |L'| + 2|M'| + |N'| + 1 =
    \begin{cases}
         4k & \text{if }n \equiv 3,4 \bmod 4\, , \\
         4k+2 & \text{if }n \equiv 1,2 \bmod 4\, .
    \end{cases}
\end{equation}
We rewrite the elements $q(z)$ and $q(z')$ from Equations \eqref{eq:qz} and \eqref{eq:qz'} in $\End(\Sp)$: 
\begin{align*}
    \left(\prod_{i \in L}v_i\right)(v_K) &= \epsilon_{v_L}(v_K)\, , &
    \left(\prod_{j \in N}u_j\right)(v_K) &= \iota_{x_N}(v_K)\, , \\
    w(v_K) &= (-1)^{|K|}v_K\, , &
    \left(\prod_{s \in M}v_su_s -u_sv_s\right)(v_K) &= (-1)^{|K^c \cap M|}v_K\, .
\end{align*}
Thus we can rewrite $q(z)$ and $q(z')$ as products of elements in $\End(\Sp)$ to give a basis for $S^2(\Sp^\vee)$ inside $\End(\Sp)$:
\begin{align*}
    q(z) (v_K) &=(-1)^{\order*{((K\setminus N)\cup L)^c \cap M}} \epsilon_{v_L} \iota_{x_N}(v_K)\, ,\\
    q(z')(v_K) &= (-1)^{\order*{((K\setminus N')\cup L')^c \cap M'} + \order*{(K\setminus N')\cup L')}} \epsilon_{v_{L'}} \iota_{x_{N'}}(v_K)\, .
\end{align*}
Note that as $M$ is disjoint from $N$ and $L$, we have $((K\setminus N) \cup L)^c \cap M = K^c \cap M$.
Using \Cref{lem:howepsandiotawork}, we can conclude that 
\begin{align*}
    q(z)v_K &= \begin{cases}
            (-1)^{|K^c \cap M| + \lng{L \cat (K \setminus N)}   + \lng{\rev{N} \cat (\remS{K}{N})}}v_{(K \setminus N) \cup L} & N \subset K, L \cap K = \varnothing,\\
            0 & \text{ otherwise; and}
        \end{cases} \\
    q(z')v_K &= \begin{cases}
                (-1)^{|(K \setminus N') \cap L'| + |K^c \cap M'| + \lng{L' \cat (K \setminus N')} + \lng{\rev{N'} \cat (\remS{K}{N'})}} v_{(K \setminus N') \cup L'} & N' \subset K, L' \cap K = \varnothing,\\
                0 & \text{ otherwise.}
            \end{cases}
\end{align*}
Note that the definition of $\iota_{x_{L}}$ and $\epsilon_{v_{N}}$ ensures that that output is non-zero only when $L \cap K = \emptyset$ and $N \subseteq K$.

Now, we need to understand $q(z) \in \End(\Sp)$ in terms of the basis $v_I \otimes x_{J}$ of $\Sp \otimes \Sp^\vee$.
Via the isomorphism in \Cref{l:end_iso_S_tensor_Svee}, we can write $q(z)$ as
\begin{equation} \label{eq:qz+sp+spvee}
    \sum_{K \in Q_{N,L}} (-1)^{|K^c \cap M| + \lng{L \cat (K \setminus N)}   + \lng{\rev{N} \cat (\remS{K}{N})}}v_{(K \setminus N )\cup L} \otimes x_{K} \in \Sp \otimes \Sp^\vee \, ,
\end{equation}
and similarly for $q(z')$.
Mapping \eqref{eq:qz+sp+spvee} through the isomorphism in \Cref{l:Spisotodual}
we obtain
\begin{align*} \label{eq:finalz}
q(z) &\mapsto \sum_{K \in Q_{N,L}} (-1)^{|K^c \cap M| + \lng{\rev{J} \cat J^c} + n|J^c|+\lng{L \cat (K \setminus N)} + \lng{\rev{N} \cat (\remS{K}{N})}}x_{{J^c}} \otimes x_{K} \in \Sp^\vee \otimes \Sp^\vee, \\
    q(z') &\mapsto \sum_{K \in Q_{N',L'}} (-1)^{|K^c \cap M'| + |J'| + \lng{\rev{J'} \cat J'^c} + n|J'^c|+ \lng{L' \cat (K \setminus N')} + \lng{\rev{N'} \cat (\remS{K}{N'})} }x_{{J'^c}} \otimes x_{K} \in \Sp^\vee \otimes \Sp^\vee
\end{align*}
where $J = (K\setminus N) \cup L$, and $J' = (K\setminus N') \cup L'$,
subject to the conditions: 
\[
|L| + 2|M| + |N| = |L'| + 2|M'| + |N'| + 1 =
    \begin{cases}
         4k & \text{if }n \equiv 3,4 \bmod 4\, , \\
         4k+2 & \text{if }n \equiv 1,2 \bmod 4\, .
    \end{cases}
\]
One can combine this into a single set of equations
\begin{align}\label{eq:Bnbedding}
   \begin{split}
    \Bigg\{  \sum_{K \in Q_{N,L}} &(-1)^{|K^c \cap M| + |J|(|N|+|L|) + \lng{\rev{J} \cat J^c} + n|J^c|+\lng{L \cat (K \setminus N)}   + \lng{\rev{N} \cat (\remS{K}{N})} }x_{{J^c}} \otimes x_{K}\\
    & \st L,M,N \text{ pairwise disjoint, }
    |N|+2|M| +|L| \equiv 2\ceil{ \frac{n}{2}}, 2\ceil{ \frac{n}{2}}-1 \mod 4\Bigg\} 
    \end{split}
\end{align}
where $J = (K \setminus N) \cup L$.
By construction, \eqref{eq:Bnbedding} is a basis of $S^2(V_\lambda)$ consisting of eigenvectors of the Casimir.
Note that $J^c = (K^c \setminus L) \cup N = \overline{K}$, hence our equations are supported on antipodes of $Q_{N,L}$, namely $x_K \otimes x_{\overline{K}}$ and $x_{\overline{K}} \otimes x_K$.
As \eqref{eq:Bnbedding} is a basis of the symmetric square $S^2(\Sp^\vee)$, 
we can sum only over the antipodes of $Q_{N,L}$ and replace $x_{\overline{K}} \otimes x_K$ with the quadratic monomial $x_Kx_{\overline{K}}$.
This finishes the proof.
\end{proof}

\begin{proof}[Proof of \Cref{t:BspinEqs}]
Lemma \ref{l:allpolysBn} gives a basis of $\Omega$-eigenvectors for $S^2(\Sp^\vee)$, and this basis splits into a basis for each irreducible submodule.
To prove Theorem \ref{t:BspinEqs}, it only remains to remove the basis elements that span $V_{2\lambda}$.
Note that $2\lambda = (1,\ldots,1)$ which is the highest weight for $\twedge^n V \cong \twedge^{n+1} V$.
One can see this by noticing that all of the weights of $V$ are $(0,\dots,\pm 1,\dots,0)$ or $(0,\ldots,0)$ with multiplicity one.
The weights of $\twedge^k V$ are all sums of $k$ different weights of $V$.
Notice, one must use either $n$ or $n+1$ weights from $V$ to sum to $2\lambda$.
Therefore, we must remove the basis elements given in Lemma \ref{l:allpolysBn} that originate from $\twedge^n V $ or $\twedge^{n+1} V$, only one of which can occur for each n.
The $z$ and $z'$ that originate in $\twedge^n V$ are such that $|N| + 2|M| + |L| = n$ and $|N'| + 2|M'| + |L'| = n-1$.
Likewise, $z$ and $z'$ that originate in $\twedge^{n+1} V$ are such that $|N| + 2|M| + |L| = n+1$ and $|N'| + 2|M'| + |L'| = n$.
Our basis of $S^2(\Sp)$ constructed in Lemma \ref{l:allpolysBn} are the $p^M_{N,L}$ such that 
\[
    |N| + 2|M| + |L| \equiv  2\ceil{ \frac{n}{2}},  2\ceil{ \frac{n}{2}}-1 \mod 4\, .
\]
To obtain a basis for the subspace of $S^2(\Sp^\vee)$ orthogonal to $V_{2\lambda}$, we remove the $p^M_{N,L}$ such that $|N| + 2|M| + |L| \in \{n-1,n,n+1\}$. 
By the mod $4$ conditions, $|N| + 2|M| + |L|$ is never equal to $n-2$ or $n+2$: this is shown in the proof of Lemma \ref{l:leadingquadeqisingrassmmann}.
Thus we find a basis for the Lichtenstein embedding equations is the set of $p^M_{N,L}$ such that \begin{equation}\label{eq:conditions1}
    |N| + 2|M| + |L| \in \set{ 4k+ 2\ceil{ \frac{n}{2}}, 4k+  2\ceil{ \frac{n}{2}}-1 }{ k \in \mathbb{Z} }\setminus \{ n-2,n-1,n,n+1,n+2\}  \, .
\end{equation}
Subtracting $|N| + |L|$ from both sides of the conditions \eqref{eq:conditions1}, we find that $p^M_{N,L}$ is in our basis set precisely when 
\[
    2|M| \in   \set{4k + 2 \ceil{\frac{n}{2}} - |N| - |L| - \varepsilon }{ \varepsilon \in \{0, 1\},\, k\in\ZZ} \setminus \left\{ m-2,\,m-1,\,m,\,m+1,\,m+2\right\}\, ,
\]
or equivalently $2|M| \in \cM_{N,L}$.
\end{proof}

\subsubsection{Minimising the support}\label{App:BnSpinmain}

We now produce the type~$B$ quadratic embedding equations $F_{I,J}^{(B)}$ of \Cref{t:BnSpinmain}.
To work with these equations it will be convenient to rephrase and reindex them.
Recall the notation for subcubes $Q_{N,L}$ and ordered antipodes $(K,\overline{K})$ from \Cref{App:BspinEqs}. For such an ordered antipode $(K,\overline{K})$ with $\overline{K}=K\symDiff S$
\begin{equation}\label{eq:strong+ex}
  \mathrm{ex}_{(K,\overline{K})} =\mathrm{ex}_{(K,K \symDiff S)} = \sum_{i \in S} (-1)^{\lngS{i,K} + \lngS{i,K \symDiff S}}x_{K \symDiff i} x_{K \symDiff S \symDiff i} - \big [ |S| \equiv 1 \mod 2 \big ] x_K x_{K \symDiff S}\, .
\end{equation} 
Note that $\mathrm{ex}_{(K,K \symDiff S)} =0$ if and only if $|S| \leq 2$. Note that the quadratic embedding equation $F_{I,J}^{(B)}$ is equal via this reindexing to $\mathrm{ex}_{(I,\overline{I}_{N,L})}$ where $N = I \cap J$ and $L = (I \cup J)^c$.

In \Cref{l:allpolysBn} we presented the Lichtenstein embedding equations as
\[
    p^M_{N,L} = \sum_{\substack{(K,\overline{K})\\ \text{antipode of } Q_{N,L}}} (-1)^{|K^c \cap M| + [K,\overline{K}]} x_K x_{\overline{K}}
\]
for all $M$ with $2|M| \in \cM_{N,L}$.
The proof of \Cref{t:BspinEqs} shows that $p^M_{N,L}$ is symmetric under exchanging $K$ and $\overline{K}$, but in general $[K,\overline{K}]$ differs from $[\overline{K},K]$, and the two can even have different parities.

To prove \Cref{t:BnSpinmain} it suffices to show that the span of the quadratic embedding equations $\{\mathrm{ex}_{(K,K\symDiff S)}\}$ is equal to the span of the Lichtenstein embedding equations $\{p^M_{N,L}\}$. 
The rest of this subsection is dedicated to proving the equality of these subspaces of quadrics.
Corollary \ref{cor:quadsareingrassmann} is the forward inclusion and  \Cref{l:psareinex} is the reverse.

To compare the quadratic embedding equations with the Lichtenstein embedding equations, we must rewrite them with coefficients in terms of $[K,\overline{K}]$.
This is the content of Corollary \ref{c:signverificationofexeqs}.
We first give a technical lemma.

\begin{lemma} \label{l:antipode+coefficient+difference}
Let $(K, \overline{K})$ be an antipode in $Q_{N, L}$.
For all $i \in K \symDiff \overline{K} = [n] \setminus (N \cup L)$,
\[
[K,\overline{K}] \equiv [K \symDiff i, \overline{K} \symDiff i] + \lngS{i,K\symDiff i} + \lngS{i,\overline{K}\symDiff i} + n \mod 2 \, .
\]
\end{lemma}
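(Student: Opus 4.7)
The plan is to prove this by direct computation: compute $[K,\overline{K}] - [K \symDiff i, \overline{K} \symDiff i]$ by examining each of the five summands in the definition of $[\cdot,\cdot]$ from \Cref{d:antipodesigns}, and compare with the claimed right-hand side modulo~$2$. I would first observe that there are two cases to consider: either $i \in K \setminus \overline{K}$ (so $K \symDiff i = K \setminus \{i\}$ and $\overline{K} \symDiff i = \overline{K} \cup \{i\}$) or $i \in \overline{K} \setminus K$ (the symmetric situation). Since $[K \symDiff i, \overline{K} \symDiff i]$ appears on the other side and swapping the two cases only reverses the sign of each difference, it would suffice to handle one case: the parities agree either way.

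Next I would handle the five summands individually. For the first and third summands ($|\overline K^c|(|N|+|L|)$ and $n|\overline K|$): the cardinalities $|\overline{K}|$ and $|\overline{K}^c|$ each change by exactly~$\pm 1$ when $i$ is moved, so these two contribute $|N|+|L|$ and $n$ to the difference mod~$2$. For the three length summands, I would use the basic observation that if $A$ and $B$ are ordered sets in increasing order with $A \cap B = \emptyset$, then $\lng{A \cat B} = |\{(a,b) \in A \times B : a > b\}|$, and if instead $A$ appears reversed then one additionally picks up the fixed constant $\binom{|A|}{2}$ of inversions internal to $\rev{A}$. With this, I would track exactly which inversions gain or disappear when $i$ moves between $K$ and $\overline{K}$ (equivalently between $\overline{K}^c$ and $\overline{K}$):
\begin{itemize}
\item for $\lng{\rev{\overline{K}^c}\cat\overline{K}}$, the inversions involving $i$ change by $i-1 \pmod 2$;
\item for $\lng{L \cat \remS{K}{N}}$, they change by $|\{a \in L : a > i\}| \pmod 2$;
\item for $\lng{\rev N \cat \remS{K}{N}}$, they change by $|\{a \in N : a > i\}| \pmod 2$.
\end{itemize}

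Collecting everything gives
\[
[K,\overline{K}] - [K\symDiff i,\overline{K}\symDiff i] \equiv |N| + |L| + n + (i-1) + |\{a \in N\cup L : a > i\}| \pmod 2 .
\]
To finish, I would compute the right-hand side. Using the definition of $\sigma$ for ordered multisets, one checks that $\lngS{i,A} = |\{a \in A : a < i\}|$ whether or not $i \in A$. Applied to $A = K\symDiff i$ and $A = \overline K \symDiff i$, and noting that $K \cup \overline K = [n] \setminus L$ and $K \cap \overline K = N$, one obtains
\[
\lngS{i,K\symDiff i} + \lngS{i,\overline K \symDiff i} \equiv (i-1) - |\{a \in L : a < i\}| + |\{a \in N : a < i\}| \pmod 2.
\]
Adding $n$ and reducing using $|\{a \in L : a > i\}| \equiv |L| + |\{a \in L : a < i\}|$ and the analogous identity for $N$ (both valid since $i \notin N \cup L$) collapses everything to the claimed congruence.

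The main obstacle is simply bookkeeping: correctly counting which inversions involve $i$ in $\rev{\overline{K}^c}\cat\overline{K}$, where $i$ moves across the boundary between a reversed piece and an in-order piece. Once this count is nailed down to be $i-1 \pmod 2$, independent of the fine structure of $\overline{K}$, the rest of the argument is an elementary parity manipulation of sizes of subsets of $[n]$ cut by the threshold $i$.
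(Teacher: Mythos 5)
Your proposal is correct and follows essentially the same route as the paper: a term-by-term comparison of the five summands in $[\cdot,\cdot]$ from \Cref{d:antipodesigns}, with the length differences tracked by counting how many transpositions moving $i$ costs. The only cosmetic difference is that you collapse $\left|\set{j \in \overline{K}}{j<i}\right| + \left|\set{j \in \overline{K}^c}{j<i}\right|$ immediately to $i-1$, whereas the paper carries the two counts separately and cancels them against $\overline{K}^c = (K\setminus N)\cup L$ at the end; the underlying accounting is identical.
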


\begin{proof}
   We use Definition~\ref{d:antipodesigns} to calculate the difference $[K,\overline{K}] - [K \symDiff i, \overline{K} \symDiff i]\mod 2$ term by term.
   We first observe that 
   \[
   |\overline{K}^c|(|N| + |L|) + n|\overline{K}| - |(\overline{K}\symDiff i)^c|(|N| + |L|) - n|\overline{K}\symDiff i| \equiv n + |N| + |L| \mod 2 \, .
   \]

    Next, let $\tau$ be the permutation where $\sigma_{(\overline{K}\symDiff i)^{c^T} \cat (\overline{K}\symDiff i)}$ is composed with the inverse of $\sigma_{\overline{K}^{c^T} \cat \overline{K}}$.
    Then $\tau$ transposes $i$ past all elements $j$ of $\overline{K}$ and $\overline{K}^c$ with $j < i$.
    Therefore, it follows that
   \[
    \lng{\overline{K}^{c^T} \cat \overline{K}} - \lng{\left(\overline{K}\symDiff i \right)^{c^T} \cat \left(\overline{K}\symDiff i \right)} \equiv \lng{\tau} \equiv \left|\set{j \in \overline{K}}{j < i}\right| + \left|\set{j \in \overline{K}^c}{j < i}\right| \mod 2\, .
   \]
    Repeating this argument for the other permutations in Definition~\ref{d:antipodesigns}, we get
   \begin{align*}
       \lng{L \cat \left(K \setminus N\right)} - \lng{L \cat \left((K\symDiff i) \setminus N\right)} &\equiv \left| \set{j \in L}{j > i}\right|\mod 2 \, , \\
       \lng{N^T \cat \left(K \setminus N\right)} - \lng{N^T \cat \left((K\symDiff i) \setminus N\right)} &\equiv \left| \set{j \in N}{j > i}\right| \mod 2\, .
   \end{align*}
    Before summing these, we make a number of observations.
    Firstly, as $i \notin N \cup L$, we have $\set{j \in L}{j > i} + \set{j \in L}{j < i} = L$ and $\set{j \in N}{j > i} = [n] \setminus \set{j \in N}{j < i}$.
    Furthermore, we can write $\overline{K}^c = (K\setminus N) \cup L$.
    Combining these facts, we have
    \begin{align*}
    [K,\overline{K}] - [K \symDiff i, \overline{K} \symDiff i] &\equiv n + |N| + |L| + \left|\set{j \in \overline{K}}{j < i}\right| + \left|\set{j \in \overline{K}^c}{j < i}\right| \\
    &\qquad  + \left| \set{j \in L}{j > i}\right| + \left| \set{j \in N}{j > i}\right| \mod 2 \\
    &\equiv n + |N| + \left|\set{j \in \overline{K}}{j < i}\right| + \left|\set{j \in K \setminus N}{j < i}\right| + \left| \set{j \in N}{j > i}\right| \mod 2 \\
    &\equiv n + \left|\set{j \in \overline{K}}{j < i}\right| + \left|\set{j \in K}{j < i}\right| \mod 2
    \end{align*}
    As $\lngS{i,K\symDiff i} = \left|\set{j \in K}{j < i}\right|$, the lemma follows.
\end{proof}

\begin{corollary}\label{c:signverificationofexeqs}
Up to a global sign, we have
\[
    \mathrm{ex}_{(K,K \symDiff S)} = \sum_{i \in S} (-1)^{[K \symDiff i, K \symDiff S \symDiff i]}x_{K \symDiff i} x_{K \symDiff S \symDiff i} + \big [ |S| \equiv 1 \mod 2 \big ](-1)^{n-1 + [K,K \symDiff S]} x_K x_{K \symDiff S} \, .
\]
\end{corollary}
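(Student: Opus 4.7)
The strategy is a direct comparison of the two sign patterns, with Lemma~\ref{l:antipode+coefficient+difference} doing the bulk of the work. We want to show that each coefficient $(-1)^{\lngS{i,K} + \lngS{i,K \symDiff S}}$ appearing in the defining expression \eqref{eq:strong+ex} of $\mathrm{ex}_{(K, K\symDiff S)}$ differs from $(-1)^{[K \symDiff i, K \symDiff S \symDiff i]}$ by a global sign that does not depend on $i \in S$, and then check the isolated term.

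First I would verify the elementary identity $\lngS{i, K \symDiff i} = \lngS{i, K}$ for any $i$ and~$K$. Unpacking the definition in \Cref{sssec:Symmetric-group}, $\lngS{i, K}$ counts the elements of~$K$ strictly less than~$i$ (ordering transpositions never swap equal elements, so the case $i \in K$ causes no trouble). Since toggling~$i$ in~$K$ does not change the set $\{j \in K : j < i\}$, the identity follows.

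Next, applying Lemma~\ref{l:antipode+coefficient+difference} with $\overline{K} = K \symDiff S$, together with the identity just noted for both $K$ and $K \symDiff S$, gives
\[
    [K \symDiff i, K \symDiff S \symDiff i] \equiv [K, K \symDiff S] + \lngS{i,K} + \lngS{i,K \symDiff S} + n \pmod 2
\]
for every $i \in S = [n] \setminus (N \cup L)$. Hence
\[
    (-1)^{[K \symDiff i, K \symDiff S \symDiff i]} = (-1)^{n + [K, K \symDiff S]} \cdot (-1)^{\lngS{i,K} + \lngS{i,K \symDiff S}},
\]
so every term in the sum over $i \in S$ in the statement is obtained from the corresponding term in \eqref{eq:strong+ex} by multiplication by the common sign $(-1)^{n + [K,K \symDiff S]}$.

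Finally I would check that when $|S|$ is odd, the extra monomial $x_K x_{K \symDiff S}$ also picks up the same global sign. The coefficient in \eqref{eq:strong+ex} is $-1$, while in the statement it is $(-1)^{n - 1 + [K, K \symDiff S]}$; the ratio is exactly $(-1)^{n + [K, K \symDiff S]}$, matching the summation part. Thus the two polynomials differ by the single global factor $(-1)^{n + [K, K \symDiff S]}$, proving the corollary. The only step requiring any thought is the length computation in Lemma~\ref{l:antipode+coefficient+difference}, which is already in hand; the rest is bookkeeping.
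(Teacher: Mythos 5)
Your proof is correct and follows the same approach as the paper's, which simply notes that multiplying $\mathrm{ex}_{(K,K\symDiff S)}$ by $(-1)^{n+[K,K\symDiff S]}$ yields the claim via Lemma~\ref{l:antipode+coefficient+difference}. You have helpfully spelled out the small but necessary observation $\lngS{i,A\symDiff i}=\lngS{i,A}$ that bridges the exponents appearing in the lemma to those in~\eqref{eq:strong+ex}, a step the paper leaves implicit.
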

\begin{proof}
If we multiply $\mathrm{ex}_{(K,K \symDiff S)}$ from Definition \ref{d:quadraticembeddingeqs} by the sign $(-1)^{n + [K,K \symDiff S]}$, the claim immediately follows from Lemma \ref{l:antipode+coefficient+difference}.
\end{proof}

We next introduce the variables $y_{(K,\overline{K})} = (-1)^{[K,\overline{K}]}x_Kx_{\overline{K}}$. 
We emphasise that as $(-1)^{[K,\overline{K}]}$ may differ from $(-1)^{[\overline{K},K]}$, the variables $y_{(K,\overline{K})}$ and $y_{(\overline{K},K)}$ may differ.
By Corollary \ref{c:signverificationofexeqs}, in these variables,  the quadratic embedding equations become
\[
    \mathrm{ex}_{(H,\overline{H})}(y) = \sum_{\substack{(K,\overline{K})\\|H \Delta K = 1|}} y_{(K,\overline{K})} + \big[k \equiv 1 \mod 2 \big] (-1)^{n-1}y_{(H,\overline{H})} \, ,
\]
and the Lichtenstein embedding equations become
\begin{equation}\label{eq:p^M_NL(y)}    
    p^M_{N,L}(y) = \sum_{\substack{(K ,\overline{K}) \\\text{ antipode in } Q_{N,L}}} (-1)^{|K^c \cap M|}y_{(K,\overline{K})}\, .
\end{equation}
This change of variables helps us prove the following.

\begin{lemma}\label{l:leadingquadeqisingrassmmann}
    For any $S \subseteq [n]$, the equation $\mathrm{ex}_{(\emptyset,S)}$ is in the subspace spanned by \[ \set{p^M_{\emptyset,S^c}}{2|M| \in \cM_{\emptyset,S^c}}.\]
\end{lemma}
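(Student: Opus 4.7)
The plan is to solve for coefficients $c_M$ in the putative identity $\mathrm{ex}_{(\emptyset, S)}(y) = \sum_M c_M\, p^M_{\emptyset, S^c}(y)$ by Fourier inversion on $(\mathbb{Z}/2)^S$. The crux is that $y_{(K, \overline K)}$ and $y_{(\overline K, K)}$ both represent the monomial $x_K x_{\overline K}$ up to a sign from $[K, \overline K] - [\overline K, K]$, so upon choosing orientation representatives $T$ for unordered antipodes, the coefficient of $y_{(K, S\setminus K)}$ (for $K \in T$) in $p^M_{\emptyset, S^c}(y)$ is $2(-1)^{|M \setminus K|} = 2(-1)^{|M|}\chi_M(K)$, where $\chi_M(K) := (-1)^{|M \cap K|}$ is a standard character. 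The factor of $2$ arises because the two orientations of each antipode coincide exactly when $|M|$ has the ``correct'' parity (as captured by the mod-$4$ residue condition in $\cM_{\emptyset, S^c}$), while for the opposite parity $p^M_{\emptyset, S^c}$ vanishes identically through cancellation. By \Cref{c:signverificationofexeqs}, the coefficient of $y_{(K, S\setminus K)}$ in $\mathrm{ex}_{(\emptyset, S)}(y)$ is the function $\phi(K)$ with $\phi(\{i\}) = 1$, $\phi(\emptyset) = [s \equiv 1 \bmod 2](-1)^{n-1}$, and $\phi(K) = 0$ elsewhere on $T$, where $s := |S|$.

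Since the restricted characters $\{\chi_M|_T : |M| \text{ of correct parity}\}$ form an orthogonal basis for complex functions on $T$, Fourier inversion recovers the unique coefficients
\[
\tilde{c}_M \;:=\; c_M(-1)^{|M|} \;=\; 2^{-(s-1)}\bigl(s - 2|M| + [s \equiv 1 \bmod 2](-1)^{n-1}\bigr),
\]
which depends only on $|M|$ because $\phi$ is $\mathrm{Sym}(S)$-invariant. A direct check shows $\tilde{c}_M$ vanishes precisely when $2|M| = s + [s \equiv 1 \bmod 2](-1)^n$, always one of $\{s-1, s, s+1\}$---all contained in the five-element middle window $\{s-2, \ldots, s+2\}$ excluded from $\cM_{\emptyset, S^c}$.

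A case check on the four parities of $n$ and $s$ then confirms that the correct parity class of $|M|$ intersects the middle window in exactly one integer value, which is precisely the $|M|$ where $\tilde{c}_M$ vanishes. Hence $\tilde{c}_M \neq 0$ forces $2|M| \in \cM_{\emptyset, S^c}$, and the $c_M = (-1)^{|M|}\tilde{c}_M$ give the required expansion. The main obstacle will be the careful sign and parity bookkeeping---specifically, identifying the ``correct'' parity of $|M|$ from the definition of $\cM_{N, L}$ and reconciling it with the parity of $[K, \overline K] - [\overline K, K]$ that controls the cancellation in $p^M_{\emptyset, S^c}$ for wrong-parity $|M|$. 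Once this reconciliation is established, the lemma follows from the Fourier inversion above.
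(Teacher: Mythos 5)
Your approach is genuinely different from the paper's, and it is conceptually cleaner. The paper groups the $p^M_{\emptyset,S^c}$ into aggregates $\chi_b := \sum_{|M|=b} p^M_{\emptyset,S^c}$, posits the explicit linear combination $\ex_{(\emptyset,S)} = \frac{1}{2^{m-2}}\sum_b (\gamma-b)\chi_b$, and then verifies it by comparing coefficients, which reduces to a binomial-coefficient identity proved by a two-variable recurrence together with a hand-checked base case at $m=3$. You instead identify the coefficient of $y_{(K,\overline K)}$ in $p^M$ as, up to a fixed sign, the character $\chi_M(K) = (-1)^{|M\cap K|}$ of $(\ZZ/2)^S$ restricted to a transversal $T$ of the antipodal involution $K\mapsto S\setminus K$, and solve uniquely for the coefficients by Fourier inversion. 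Because $\ex_{(\emptyset,S)}$ is supported on a two-point set in $T$ (the singletons, plus $\emptyset$ when $s$ is odd), the Fourier coefficients come out in closed form with no recurrence and no base case, and they agree with the paper's $\frac{\gamma-|M|}{2^{m-2}}$. What the paper's route buys is avoiding any discussion of characters; what yours buys is a shorter and more transparent verification.

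That said, several statements need repair or justification before this is a proof.

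\textbf{(i) The factor of $2$ and the cancellation story.} From \eqref{eq:p^M_NL(y)}, the coefficient of $y_{(K,\overline K)}$ in $p^M_{\emptyset,S^c}(y)$ is $(-1)^{|K^c\cap M|} = (-1)^{|M\setminus K|}$, with no factor of $2$: the sum is over unordered antipodes, one orientation each. Your stated coefficient $2(-1)^{|M\setminus K|}$ is thus off, and notice that the $\tilde c_M$ formula you then write down is the one you'd get \emph{without} the factor of $2$, so there is an internal inconsistency. Moreover, the claim that ``for the opposite parity $p^M_{\emptyset,S^c}$ vanishes identically through cancellation'' is not used in the proof and should be dropped: for the wrong parity $p^M$ is simply not among the Lichtenstein equations, and no cancellation argument is needed or invoked.

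\textbf{(ii) Orthogonality of the restricted characters.} You assert without proof that $\{\chi_M|_T : |M|\text{ of correct parity}\}$ is an orthogonal basis for functions on $T$. This is true and deserves a sentence: for $M\ne M'$ of the same parity, $M\symDiff M'$ is a nonempty set of even cardinality, so $\chi_{M\symDiff M'}$ descends to a nontrivial character of the quotient group $2^S/\langle S\rangle$, whose coset representatives are $T$; hence $\sum_{K\in T}\chi_M(K)\chi_{M'}(K)=\sum_{K\in T}\chi_{M\symDiff M'}(K)=0$, while the diagonal gives $|T|=2^{s-1}$. Note that for odd $|M|$ the functions $\chi_M|_T$ do depend on the choice of $T$, but orthogonality (and hence the inversion) is unaffected.

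\textbf{(iii) A sign.} The vanishing condition should read $2|M| = s + [s\equiv 1\bmod 2](-1)^{n-1}$, not $(-1)^n$: from your own $\tilde c_M$ formula, $\tilde c_M = 0$ precisely when $s - 2|M| + [s\text{ odd}](-1)^{n-1} = 0$. The conclusion, that the unique zero of $\tilde c_M$ lies at $|M|=\gamma$ and $2\gamma\in\{s-1,s,s+1\}$, is then checked exactly as in the paper's \eqref{eq:gamma} and is fine.

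With these repairs the argument goes through and matches the paper's coefficient formula, so the lemma follows.
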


\begin{proof}
    Our proof will proceed as follows.
    Fix $S$ of cardinality $m$.
    We first give an explicit description of which values of $|M|$ give rise to a Lichtenstein embedding equation $p_{\varnothing,S^c}^M$\,.
    Using this description we define intermediary linear combinations $\chi_b$ of the $p_{\varnothing,S^c}^M$ indexed by these possible values of~$|M|$.
    Finally we present $\mathrm{ex}_{(\emptyset,S)}$ as a linear combination of the $\chi_b$,
    and prove the claimed equality by induction after rewriting to introduce some auxiliary integer parameters.
    
    So, we begin with describing which $|M|$ are allowed in Lichtenstein embedding equations.
    Recall from \eqref{eq:M_NL} that
    \[
    \cM_{\varnothing,S^c} = \set{4k + 2\ceil{\frac{n}{2}} + (m-n) - \varepsilon}{k \in \ZZ\, , \, \varepsilon \in \{0,1\}} \setminus \left\{m-2, m-1,m,m+1,m+2\right\} \, .
    \]
    If $p^{M}_{\varnothing,S^c}$ is a valid Lichtenstein embedding equation, we necessarily have $M \in Q_{\varnothing,S^c}$, i.e.\ $M \subseteq S$.
    Hence we can restrict attention to the values in $\cM_{\varnothing,S^c}$ between $0$ and~$2m$.
    The further requirement that $2|M| \in \cM_{\varnothing,S^c}$ implies
    \begin{align*}
        |M| \equiv \ceil{\frac{n}{2}} + \floor{\frac{m-n}{2}} \mod 2 \, .
    \end{align*}
    We let $\gamma = \ceil{\frac{n}{2}} + \floor{\frac{m-n}{2}}$.
    To see which values of $|M|$ are excluded, observe that
    \begin{equation}\label{eq:gamma}
        \gamma =\begin{cases}
    \ceil{\frac{n}{2}} + \frac{m}{2} + \floor{\frac{-n}{2}} = \frac{m}{2} & m \equiv 0 \mod 2\, ,\\
    \ceil{\frac{n}{2}} + \frac{m-1}{2} + \floor{\frac{1-n}{2}} = \frac{m-1}{2} + (n \mod 2) & m \equiv 1 \mod 2\, .
\end{cases}
    \end{equation}
    In particular, if $|M| = \gamma$ then $2|M| \in \{m-1, m, m+1\}$ and hence $2|M| \notin \cM_{\varnothing,S^c}$.
    We further note that $m+2, m-2 \not\equiv \gamma \mod 4$ for any values of $m,n$, and hence these values have been `trivially' removed from $\cM_{\varnothing, S^c}$.
    Putting all this together, we see that
    \begin{equation} \label{eq:altMdescription}
    \set{p^M_{\emptyset,S^c}}{2|M| \in \cM_{\emptyset,S^c}} = \set{p^M_{\emptyset,S^c}}{0 \leq |M| \neq \gamma \leq m \, , \, |M| \equiv \gamma \mod 2 } \, .
    \end{equation}
    
    We next define our intermediary polynomials $\chi_b$, as follows:
    \[
    \chi_b =\sum_{|M| = b } p^M_{\varnothing,S^c}(y) \, . 
    \]
    We will actually be working with sums involving the polynomial $\chi_b$ for general~$b$,
    using \eqref{eq:p^M_NL(y)} to give meaning to the symbol $p^M_{\varnothing,S^c}$ in all cases.
    It is important that we require $2b \in \cM_{\varnothing,S^c}$ if $\chi_b$ is to be a linear combination of Lichtenstein embedding equations.
    It follows from \eqref{eq:altMdescription} that this is the case when $b \equiv \gamma \mod 2$ and $b \in [0,2m] \setminus \{\gamma\}$.
    
    Each polynomial $p^M_{\varnothing,S^c}$ has only terms $y_{(K,\overline{K})}$ in its support where $K \subseteq S$.
    Writing $\chi_b[y_{(K,\overline{K})}]$ for the coefficient of $y_{(K,\overline{K})}$ in $\chi_b$, we claim that
   \begin{equation} \label{eq:chi-b-n-a}
   \chi_b[y_{(K,\overline{K})}] = \sum_{|M| = b} (-1)^{|\overline{K} \cap M|} = \sum_k (-1)^k \binom{a}{k}\binom{m-a}{b-k}
   \end{equation}
   where $a = |\overline{K}|$.
   To see this, observe that $\chi_b[y_{(K,\overline{K})}] = \sum_{|M|=b} p^M_{\emptyset,S^c}[y_{(K,\overline{K})}] = \sum_{|M|=b} (-1)^{|K^c \cap M|}$.
   As $M \subseteq S$, we have $|K^c \cap M| = |\overline{K}\cap M|$ and hence $\chi_b[y_{(K,\overline{K})}]= \sum_k \sum_{|M| = b, |\overline{K} \cap M| = k} (-1)^{k}$.
    Counting the sets $M$ of size $b$ such that $\overline{K} \cap M$ is of size $k$ involves choosing $k$ elements from $\overline{K}$ followed by $b-k$ elements from $S \setminus \overline{K}$.
   Thus the number of $M$ of size $b$ such that $|\overline{K} \cap M| = k$ is $\binom{a}{k} \binom{m-a}{ b-k}$.

   We now show that $\ex_{(\varnothing,S)}$ can be written as a linear combination of those polynomials $\chi_b$ which are sums of Lichtenstein embedding equations. Proving this completes the proof.
   Explicitly, we claim
    \[
        \ex_{(\varnothing,S)} = \frac{1}{2^{m-2}}\sum_{\substack{b=0,\\ \, b \equiv \gamma \bmod 2}}^{m} \left(\gamma - b\right)  \chi_b \, .
    \]
    Note that the coefficient of~$\chi_\gamma$ is zero and hence the right hand side is indeed a linear combination of Lichtenstein embedding equations.
    By comparing coefficients of $y_{(K,\overline{K})}$, we can restate our above claim as the following relation for each $K \subseteq S$:
    \begin{equation} \label{eq:ex-chi-coefficients}
    \frac{1}{2^{m-3}}\sum_{\substack{b=0,\\ \, b \equiv \gamma \bmod 2}}^m \half\left(\gamma - b\right) \chi_b [y_{(K,\overline{K})}] = \begin{cases}  
    0 & |K| \in\{ 0, m\} \text{ and } m \text{ even}, \\ 
    (-1)^{n-1} & |K| \in\{0,m\}  \text{ and }  m\text{ odd}, \\ 
    1 & |K| \in \{1,m-1\}, \\ 
    0 & \text{ otherwise}. 
    \end{cases}
    \end{equation}

We prove \eqref{eq:ex-chi-coefficients} by instead considering a more general function
\[
S(m,a,c) \coloneqq \sum_{b\equiv c\bmod2}\frac12(c-b)\chi_b(m,a)
\]
where $c \in \ZZ$ and
\[
\chi_b(m,a) := \sum_k (-1)^k \binom{a}{k}\binom{m-a}{b-k} \, .
\]
Note the deliberate abuse of notation, as $\chi_b(m,a)$ is the coefficient $\chi_b[y_{(K,\overline{K})}]$ where $|\overline{K}| = a$ by \eqref{eq:chi-b-n-a}.
We show that for $m\ge3$,
\begin{equation}\label{eq:S(m,a,c)}
S(m,a,c) = \begin{cases}
    (c-\frac m2)2^{m-2} & a=0\\
    2^{m-3} & a=1\\
    0 & a,m-a\ge2.
\end{cases}
\end{equation}
If this holds, then specialising to $S(m,a,\gamma)$ and using \eqref{eq:gamma} gives \eqref{eq:ex-chi-coefficients} directly and completes the proof.

We have $\chi_b(m,0)=\binom mb$.
Write $S^+(m,c)=\sum_b(c-b)\binom mb$ and $S^-(m,c)=\sum_b(-1)^b(c-b)\binom mb$, both without parity conditions on~$b$,
so that 
\[S(m,0,c)=\frac14(S^+(m,c)+(-1)^cS^-(m,c)).\]
The sums $S^+(m,c)$ and $S^-(m,c)$ can be evaluated using the binomial theorem at $1$ and~$-1$ respectively for the linear term in~$c$
and the first derivative of the binomial theorem for the constant term:
the result is that $S^+(m,c)=c2^m-m2^{m-1}$ and $S^-(m,c)=0$.
This proves \eqref{eq:S(m,a,c)} for $a=0$.

When $a>0$ we have 
\begin{align*}
\chi_b(m,a)&=\sum_k(-1)^k\left(\binom{a-1}k+\binom{a-1}{k-1}\right)\binom{m-a}{b-k}
\\&=\sum_k(-1)^k\binom{a-1}k\binom{(m-1)-(a-1)}{b-k} - \sum_k(-1)^{k-1}\binom{a-1}{k-1}\binom{(m-1)-(a-1)}{(b-1)-(k-1)}
\\&=\chi_b(m-1,a-1)-\chi_{b-1}(m-1,a-1)
\end{align*}
implying that $S(m,a,c)=S(m-1,a-1,c)-S(m-1,a-1,c-1)$. 
Since the right hand side of \eqref{eq:S(m,a,c)} satisfies the same recurrence,
the proof is complete after manually checking the base case $m=3$.

We finish by doing this check. When $c$ is even,
\begin{align*}
S(3,a,c) &= \frac c2\chi_0(3,a)+\frac{c-2}2\chi_2(3,a)
\\&= \frac c2\binom a0\binom{3-a}0
+ \frac{c-2}2\left(\binom a2\binom{3-a}0-\binom a1\binom{3-a}1+\binom a0\binom{3-a}2\right)
\\&= \frac c2 + \frac{c-2}2(2a^2-6a+3).
\end{align*}
The evaluations of~$a$ appearing in~\eqref{eq:S(m,a,c)} are
\begin{align*}
S(3,0,c) &= 2c-3 = 2^{3-2}(c-\textstyle\frac32),\\
S(3,1,c) &= 1 = 2^{3-3}.
\end{align*}
In the same way, when $c$ is odd,
\begin{align*}
S(3,a,c) &= \frac{c-1}2\chi_0(3,a)+\frac{c-3}2\chi_2(3,a)
\\&= \frac{c-1}2(-2a+3)
+ \frac{c-3}2(-\frac43a^3+6a^2-\frac{20}3a+1),
\end{align*}
and the evaluations in~\eqref{eq:S(m,a,c)} are again
\begin{align*}
S(3,0,c) &= 2c-3,\\
S(3,1,c) &= 1
\end{align*}
as desired.
\end{proof}

Having shown that every $\ex_{(\emptyset,S)}$ is in the span of the Lichtenstein embedding equations, 
the next step is to generalise to all quadratic embedding equations.
Every antipode of every subcube of the $n$-cube is in the orbit of an antipode $\{\emptyset,S\}$ under reflections in the coordinate hyperplanes,
so our strategy is essentially to apply these reflections.
But we cannot literally act by symmetries of the cube;
the symmetries we have are those of the spin representation $\mathrm{Spin}(2n+1)$.
To introduce these, we resume the notation relating to the vector representation $V =\CC^{2n+1}$ of $\mathfrak{so}_{2n+1}$ from Appendix~\ref{App:B}.

The group $\mathrm{Pin}(2n+1)$ is defined as the subset of the Clifford algebra $\Cl(V)$ multiplicatively generated by every $v \in V$ such that $B(v,v) =1$.
The Spin group $\mathrm{Spin}(2n+1)$ is the subset $\mathrm{Pin}(V) \cap \Cl^{\mathrm{even}}(V)$ of even degree elements.
Since $x_i$ and $v_i$ are isotropic and $B(x_i,v_i) = 1$, we have that $\frac{1}{\sqrt{2}}(x_i + v_i)$ is in $\mathrm{Pin}(V)$. Also $B(w,w) = 1$, whence
\[
    \frac{1}{\sqrt{2}}(x_i + v_i)w \in \mathrm{Spin}(2n+1)\, .
\]
Recall (Lemma \ref{p:cliffisend}) that $x_i$ acts on $\Sp^\vee$ by left multiplication by $\sqrt{2}x_i$ (denoted $\sqrt{2} \epsilon_{x_i}$) and $v_i$ acts on $\Sp^\vee$ by contraction by $\sqrt{2}v_i$ (denoted $\sqrt{2}\iota_{v_i}$). Hence the element $g_i = \frac{1}{\sqrt{2}} (x_i + v_i) w$ is such that
\[
    g_i(x_A) = \begin{cases}
        (-1)^{|A|+\lngS{i,A}} x_{A \setminus i}  & i \in A\, ,\\
        (-1)^{|A|} x_{i \cat A}  & i \notin A\, .
        \end{cases}
\]
Thus in both cases, in the notation of \Cref{sssec:Symmetric-group}, we have 
\[
    g_i(x_A) = (-1)^{|A| + \lngS{i,A}} x_{\{i\}\symDiff A}\, .
\]
Since $g_i$ is a group element, we have
\[
    g_i(x_Ax_B) = g_i(x_A)g_i(x_B)\, .
\]

\begin{lemma}\label{l:quadembcomefrombasicantipode}
    If $H=\{h_1,\ldots,h_{k}\}$ with $h_1<\cdots<h_{k}$,
    let $g_H = g_{h_1}\cdots g_{h_{k}}$.
    Then the quadratic embedding equation $\mathrm{ex}_{(H, \overline{H})}$ satisfies
\[
    \ex_{(H, \overline{H})} = (-1)^{|H||H\symDiff\overline{H}|+\lngS{H,H\symDiff\overline{H}}}\,
    g_H\ex_{(\emptyset,H\symDiff\overline{H})}\, .
\]
\end{lemma}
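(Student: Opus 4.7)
The strategy is to compute $g_H\ex_{(\emptyset,S)}$ directly (where $S := H \symDiff \overline{H}$) and compare term-by-term with $\ex_{(H,\overline{H})}$. Since $g_H$ is a group element of $\Spin(2n+1)$, its action on $\Sp^\vee\otimes\Sp^\vee$ is the tensor square of its action on $\Sp^\vee$, so $g_H(x_Ax_B) = g_H(x_A)\,g_H(x_B)$. Hence it suffices to understand $g_H(x_A)$ on each basis element $x_A$ of $\Sp^\vee$.

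Iterating the given formula $g_i(x_A) = (-1)^{|A|+\lngS{i,A}}x_{\{i\}\symDiff A}$ along the factorisation $g_H = g_{h_1}\circ\cdots\circ g_{h_k}$ yields $g_H(x_A) = \epsilon(H,A)\,x_{H\symDiff A}$ for an explicit sign $\epsilon(H,A) \in \{\pm 1\}$, expressed as a product of the one-step signs evaluated at the intermediate states $A^{(j)} := \{h_{j+1},\ldots,h_k\}\symDiff A$ for $j = 0, \ldots, k-1$. Expanding
\[
\ex_{(\emptyset,S)} = \sum_{i\in S}(-1)^{\lngS{i,S}}x_{\{i\}}x_{S\symDiff i} \;-\; [|S|\equiv 1\bmod 2]\,x_\emptyset\, x_S,
\]
applying $g_H$ termwise, and using $\overline{H} = H\symDiff S$, produces a signed combination of the monomials $\{x_{H\symDiff i}x_{\overline{H}\symDiff i} : i \in S\}\cup\{x_Hx_{\overline{H}}\}$. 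These are exactly the monomials appearing in $\ex_{(H,\overline{H})}$, so only the signs remain to be checked.

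It then suffices to verify that the ratio of the corresponding coefficients is the claimed global sign $(-1)^{|H||S|+\lngS{H,S}}$. Writing $\epsilon(H,A) = (-1)^{e(H,A)}$, this reduces to two combinatorial identities modulo $2$: for each $i\in S$,
\[
\lngS{i,S}+e(H,\{i\})+e(H,S\symDiff i) \equiv |H||S|+\lngS{H,S}+\lngS{i,H}+\lngS{i,\overline{H}} \pmod{2},
\]
and an analogous identity for the extra monomial $x_Hx_{\overline H}$ when $|S|$ is odd. I plan to handle this by induction on $|H|$: the base case $H=\emptyset$ is immediate (both sides of the lemma reduce to $\ex_{(\emptyset,S)}$), and the inductive step reduces to a single-element extension $H \mapsto H\cup\{h\}$. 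The change in $e(H,A)$ under this extension is a one-step sign given by $|A^{(0)}| + \lngS{h, A^{(0)}}$, and the same kind of length-manipulation argument used in Lemma~\ref{l:antipode+coefficient+difference} shows this matches the corresponding change in $\lngS{H,S}$ and the lengths $\lngS{i,H}, \lngS{i,\overline{H}}$. The main obstacle is precisely this sign bookkeeping; the induction makes it a one-step check rather than a global sum, and ultimately comes down to the anticommutation $g_ig_j = -g_jg_i$ for $i\ne j$, which is a direct consequence of the defining Clifford relations.
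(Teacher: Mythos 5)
Your proposal is correct in outline and would succeed if carried through, but it takes a different route from the paper. You keep the sign $\epsilon(H,A)$ as a product of one-step signs and propose to verify the resulting sign identity by induction on $|H|$. The paper instead first establishes an ``associativity'' identity $(-1)^{\lngS{A,B}+\lngS{A\symDiff B,C}} = (-1)^{\lngS{B,C}+\lngS{A,B\symDiff C}}$ (both sides being the sign of the sort of $A\cat B\cat C$), uses it to compress the telescoping one-step signs into the closed form $g_H(x_A) = (-1)^{|H||A|+\lngS{H,A}}\,x_{H\symDiff A}$, and then applies that same associativity to simplify the resulting signs directly, with no induction. Your inductive plan does work --- if you add $h$ as the \emph{smallest} element of the new $H$ (so $g_{H\cup\{h\}}=g_hg_H$ with no reordering), the key one-step parity check becomes $\lngS{h,H\symDiff i}+\lngS{h,\overline{H}\symDiff i}\equiv\lngS{h,S}\pmod 2$, which holds because $i\in S$ lies in exactly one of $H,\overline H$ and elements of $N=H\cap\overline H$ are counted twice --- but be precise about this ordering convention, since otherwise you must invoke the anticommutation $g_ig_j=-g_jg_i$ to reorder factors, introducing extra signs you would then have to track. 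The paper's associativity lemma does all of this bookkeeping in one stroke and makes the computation essentially a one-liner; the trade-off is that your induction requires no new structural observation, only careful parity accounting.
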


\begin{proof}
For any three sets $A,B,C$, we have the associativity relation
\[
    (-1)^{\lngS{A,B}+\lngS{A\symDiff B,C}} = 
    (-1)^{\lngS{B,C}+\lngS{A,B\symDiff C}}\, .
\]
We see that both of these are equal to the sign of the multipermutation sorting $A\cat B\cat C$ into weakly increasing order,
once we observe that replacing the intermediate weakly increasing ordered multiset $A\circ B$ 
with the increasingly ordered set $A\symDiff B$
only deletes pairs of adjacent letters,
which removes an even number of transpositions from the subsequent sorting.

Thus, for any $A\subseteq[n]$, we have
\begin{align*}
    g_H(x_A) &= (-1)^{|H||A| + \lngS{h_1,A} + \lngS{h_2,A\symDiff h_1} + \cdots + \lngS{h_{k},A\symDiff(H\setminus h_{k})}}\, x_{H\symDiff A}\\
    &= (-1)^{|H||A| + \lngS{H,A}}\, x_{H\symDiff A}\, ,
\end{align*}
where the second equality can be seen as repeated use of the above associativity and the fact that $h_1,\ldots,h_{k}$ is increasing.
Write $S=H\symDiff \overline{H}$, so $\overline{H}=H\symDiff S$.
Then 
\[
    g_H(\ex_{(\emptyset,S)}) = 
    (-1)^{|H||S|}
    \left(\sum_{i\in S}
        (-1)^{\lngS{i,S\setminus i}+\lngS{H,i}+\lngS{H,S\setminus i}}x_{H\symDiff i}x_{H\symDiff S\symDiff i}
        - [|S|\equiv1(2)](-1)^{\lngS{H,S}}x_Hx_{H\symDiff S}
    \right)\, .
\]
We multiply through by the sign $(-1)^{\lngS{H,S}}$.
This makes the sign inside the sum
\begin{align*}
    (-1)^{\lngS{i,S\setminus i}+\lngS{H,S}+\lngS{H,i}+\lngS{H,S\setminus i}}
    &= (-1)^{\lngS{H,i}+\lngS{i\symDiff H,S\setminus i}+\lngS{H,i}+\lngS{H,S\setminus i}}
    \\&= (-1)^{\lngS{i\symDiff H,S\setminus i}+\lngS{H,S\setminus i}}
    \\&= (-1)^{\lngS{i,H}+\lngS{i,H\symDiff (S\setminus i)}}
    \\&= (-1)^{\lngS{i,H}+\lngS{i,H\symDiff S}}
\end{align*}
where the first equality is associativity on the sets $H$, $\{i\}$, $S\setminus i$,
the third is associativity on the sets $\{i\}$, $H$, $S\setminus i$ with the terms collected differently,
and the last is the observation that adding or removing an $i$ from $H\symDiff S$ does not change the position that another $i$ on the left would sort into.
\end{proof}

\begin{corollary}\label{cor:quadsareingrassmann}
    Every quadratic embedding equation $\mathrm{ex}_{(H, \overline{H})}$ lies in the span of the Lichtenstein embedding equations $\{ p^M_{N,L}\}$.
  
\end{corollary}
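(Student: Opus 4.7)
The plan is to combine the two preceding lemmas with a single representation-theoretic observation. First, I would note that the span $\mathcal L \subseteq S^2(\Sp^\vee)$ of all Lichtenstein embedding equations $\{p^M_{N,L}\}$ is $\mathrm{Spin}(2n+1)$-stable. Indeed, by \Cref{t:BspinEqs} this span is precisely the degree-two part of the ideal cutting out $\mathbb G/\mathbb P \subset \proj(\Sp)$, and the ideal of a $\mathbb G$-orbit is $\mathbb G$-invariant; equivalently, in the language of \Cref{lem:grassmann+embedding+equations}, $\mathcal L$ is the sum of the dual irreducible summands $V_\mu^\vee$ of $S^2(\Sp^\vee)$ for all $\mu \neq 2\lambda$, and this decomposition is $\mathbb G$-equivariant.

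With this in hand, fix an ordered antipode $(H, \overline{H})$ in some face $Q_{N,L}$ and set $S = H \symDiff \overline{H} = [n]\setminus(N\cup L)$. Applying \Cref{l:leadingquadeqisingrassmmann} to $S$ places $\ex_{(\emptyset, S)}$ in $\mathcal L$. Since the element $g_H = g_{h_1}\cdots g_{h_k}$ lies in $\mathrm{Spin}(2n+1) = \mathbb G$, the $\mathbb G$-invariance of $\mathcal L$ gives $g_H \cdot \ex_{(\emptyset, S)} \in \mathcal L$. Finally, \Cref{l:quadembcomefrombasicantipode} identifies $\ex_{(H, \overline{H})}$ as a nonzero scalar multiple of $g_H \cdot \ex_{(\emptyset, S)}$, so $\ex_{(H, \overline{H})} \in \mathcal L$ as required. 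Letting $H$ vary over all subsets of~$[n]$ and $\overline{H}$ over all antipodes of $H$ in all subcubes of $Q_n$ exhausts every quadratic embedding equation.

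I expect no serious obstacle: the substantive work is already done, with the combinatorial identity of \Cref{l:leadingquadeqisingrassmmann} handling the special ``base'' antipode $(\emptyset, S)$, and the explicit $g_H$-action of \Cref{l:quadembcomefrombasicantipode} producing every other antipode by a Clifford-algebraic twist. The corollary is essentially their assembly via a one-line invariance argument. The only minor check worth spelling out is that the generators $g_i = \tfrac{1}{\sqrt 2}(x_i + v_i) w$ that compose to form $g_H$ are elements of $\mathrm{Spin}(2n+1)$ — so that the $\mathbb G$-invariance of $\mathcal L$ is applicable — but this is built into their construction in the appendix.
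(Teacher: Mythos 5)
Your proposal is correct and follows the same route as the paper's (one-sentence) proof: express $\ex_{(H,\overline H)}$ as $g_H$ applied to $\ex_{(\emptyset,S)}$ via \Cref{l:quadembcomefrombasicantipode}, place $\ex_{(\emptyset,S)}$ in the span via \Cref{l:leadingquadeqisingrassmmann}, and conclude by noting that the span of $\{p^M_{N,L}\}$ is stable under the $g_h$. You have merely spelled out the invariance step more fully (via the $\mathbb G$-orbit / irreducible-summand description of the Lichtenstein equations), which the paper leaves implicit.
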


\begin{proof}
    This follows from Lemma~\ref{l:quadembcomefrombasicantipode} because the span of this set of $p^M_{N,L}$ is closed under the action of the $g_h$.
\end{proof}

\begin{lemma}\label{l:psareinex}
    Fix $N$ and $L$. Then every $p^M_{N,L}$ for $2|M| \in \cM_{N,L}$ is in the span of the quadratic embedding equations $\{\mathrm{ex}_{(H,\overline{H})}: (H,\overline{H}) \text{ an antipode of } Q_{N,L}\}$.
\end{lemma}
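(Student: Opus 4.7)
The plan is to establish both inclusions between the two spans. The forward inclusion
\[V^{\ex}_{N,L} := \mathrm{span}\{\ex_{(H, \overline H)}: (H, \overline H) \text{ antipode in } Q_{N,L}\} \subseteq V^p_{N,L} := \mathrm{span}\{p^M_{N,L}: 2|M| \in \cM_{N,L}\}\]
is almost already known: Corollary~\ref{cor:quadsareingrassmann} places each $\ex_{(H,\overline H)}$ in the total Lichtenstein span, and since every Lichtenstein equation $p^M_{N',L'}$ and every quadratic embedding equation is supported on products $x_K x_{\overline K}$ for antipodes in the uniquely determined subcube $(N',L') = (K \cap \overline K,\,(K \cup \overline K)^c)$, the families for distinct $(N',L')$ are linearly independent. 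So only $p^{M'}_{N,L}$ with the matching $(N,L)$ can contribute to the expansion of~$\ex_{(H,\overline H)}$.

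To obtain the reverse inclusion I would compare dimensions. By \Cref{t:BspinEqs} the $p^M_{N,L}$ are linearly independent as $M$ varies, so $\dim V^p_{N,L}$ equals the count of valid $M \subseteq [n] \setminus (N \cup L)$. Generalising the parity analysis from the proof of \Cref{l:leadingquadeqisingrassmmann} to arbitrary $(N,L)$, this count is $2^{m-1} - \binom{m}{\gamma}$, where $m = n - |N| - |L|$ and $\gamma = \ceil{n/2} - (|N|+|L|+\eta)/2$ with $\eta = (|N|+|L|) \bmod 2$; one checks that this $\gamma$ is in every case the unique value of $|M|$ that has the correct parity and is excluded by the $\{m-2, m-1, m, m+1, m+2\}$ condition in~$\cM_{N,L}$.

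For the lower bound on $\dim V^{\ex}_{N,L}$, \Cref{l:quadembcomefrombasicantipode} lets me write each $\ex_{(H,\overline H)}$ in $Q_{N,L}$ as a signed $g_H$-translate of the single base equation $\ex_{(N, N \cup S)}$, with $S = [n] \setminus (N \cup L)$. Applying $g_H$ to the expansion of $\ex_{(N, N \cup S)}$ produced by \Cref{l:leadingquadeqisingrassmmann} (generalised to $(N,L)$ via the invertible spin-group shift $g_N$) yields an explicit matrix $E$, with rows indexed by antipodes of $Q_{N,L}$ and columns by valid $M'$, that expresses each $\ex_{(H,\overline H)}$ in the basis $\{p^{M'}_{N,L}\}$. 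The subgroup of the spin group generated by $\{g_H : H \in Q_{N,L}\}$ is a projective elementary abelian $2$-group of rank~$m$, and the row structure of $E$ is invariant under its action, so $E$ diagonalises in the Walsh--Hadamard basis indexed by subsets $T \subseteq S$. Its eigenvalues on each isotype reduce to a binomial sum in the spirit of the $S(m,a,c)$ computation already carried out in the proof of \Cref{l:leadingquadeqisingrassmmann}; verifying that these eigenvalues vanish on exactly the $\binom{m}{\gamma}$-dimensional isotype $|T|=\gamma$ and are non-zero elsewhere establishes $\operatorname{rank}(E) = 2^{m-1} - \binom{m}{\gamma} = \dim V^p_{N,L}$, closing the dimension count and therefore the proof.

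I expect the main obstacle to be the bookkeeping of the sign conventions $[K,\overline K]$ from \Cref{d:antipodesigns} through the action of~$g_H$, for which an analogue of \Cref{l:antipode+coefficient+difference} describing the variation of $[K,\overline K]$ under the spin action will be required. Once the signs are under control, the Fourier diagonalisation is routine and the remaining binomial identity is of the same flavour as those already verified in \Cref{l:leadingquadeqisingrassmmann}.
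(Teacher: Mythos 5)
Your plan takes a genuinely different route from the paper. The paper proves the lemma by writing the expansion down explicitly: it sets $\gamma_{|M|} = -(-1)^n(m-2|M|) + (-1)[m\equiv 1\bmod 2]$ and asserts $p^M_{N,L}=\gamma_{|M|}^{-1}\sum_{(K,\overline K)}p^M_{N,L}(v_Kv_{\overline K})\,\mathrm{ex}_{(K,\overline K)}$, then verifies the identity by matching the coefficient of each $x_Ix_{\overline I}$ using \Cref{l:antipode+coefficient+difference}; the only fact it needs about $\cM_{N,L}$ is that it forces $\gamma_{|M|}\neq 0$. This is morally the same spectral fact you are after --- the $p^M$'s are eigenvectors, with nonvanishing eigenvalue, of the ``neighbour-sum'' operator whose rows are the $\mathrm{ex}$'s --- but stated this way it needs no count of $\dim V^p_{N,L}$, no rank computation of a matrix $E$, and no control over the remaining part of the spectrum. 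Your double-inclusion-plus-dimension-count framing is strictly more work.

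There is also a genuine gap. You defer the sign conventions as bookkeeping and call the Fourier diagonalisation routine once they are ``under control,'' but the signs are not an overlay on an otherwise-clean combinatorial operator: they change the operator whose spectrum you need. Concretely, take $n=4$, $N=\emptyset$, $L=\{4\}$, so $m=3$. The four antipodes of $Q_{\emptyset,\{4\}}$ form the graph $K_4$, and the unsigned operator $A(K_4)-I$ one would naively attach to the $\mathrm{ex}$'s has rank $4$. But a direct check shows $F^{(B)}_{\emptyset,123}$, $F^{(B)}_{1,23}$, $F^{(B)}_{2,13}$, $F^{(B)}_{3,12}$ are all $\pm F^{(B)}_{\emptyset,123}$, so $\dim V^{\mathrm{ex}}_{\emptyset,\{4\}}=1$, matching $2^{m-1}-\binom{m}{\gamma}=4-3=1$. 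The missing $3$-dimensional kernel is produced entirely by the sign twist $y_{(\overline K,K)}=\pm y_{(K,\overline K)}$ (which likewise controls which Walsh functions descend to the antipode quotient and forces the parity constraint on $|M|$). Controlling this twist and computing the resulting eigenvalues is exactly \Cref{l:antipode+coefficient+difference} plus the eigenvalue formula --- that is the proof, not a routine appendix to it. Two smaller points: the group generated by $\{g_H : H\in Q_{N,L}\}$ has rank $m+1$ when $N\ne\emptyset$, not $m$; and for $m$ odd the vanishing eigenvalue sits at index $m-\gamma$, not $\gamma$ --- your count survives because $\binom{m}{\gamma}=\binom{m}{m-\gamma}$, but the stated location is wrong.
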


\begin{proof}
    Recall $m = n-|N|-|L| = |K \symDiff \overline{K}|$. Let $\gamma_{|M|} =  -(-1)^n(m-2|M|)+(-1)[m \equiv 1\ \mod 2]$.
    To prove the lemma we claim, for $M$ such that $2|M| \in\cM_{N,L}$, that

    \begin{equation}\label{eq:pexpressedasex}
         p^M_{N,L} = \frac{1}{\gamma_{|M|}}\sum_{\substack{(K,\overline{K}) \\ \text{ an antipode of }Q_{N,L}}} p^M_{N,L}(v_Kv_{\overline{K}}) \mathrm{ex}_{(K,K^c)}\, .
    \end{equation}
    Note that $2|M| \in \cM_{N,L}$ precludes $\gamma_{|M|} = 0$.
    To show that Equation (\ref{eq:pexpressedasex}) holds, we compare the coefficient of $x_Ix_{\overline{I}}$ on each side. Note that
    \[ p^M_{N,L}[x_Ix_{\overline{I}}] = (-1)^{|I^c \cap M|+[I,\overline{I}]}.
    \] 
    Thus we must show that the coefficient of $x_Ix_{\overline{I}}$ in the right hand side of (\ref{eq:pexpressedasex}) is also $(-1)^{|I^c \cap M|+[I,\overline{I}]}$.

    Note that \[\mathrm{ex}_{(K,\overline{K})}[x_Ix_{\overline{I}}] =\begin{cases} -1[m \equiv 1 \mod 2] & \{K,\overline{K}\} = \{I,\overline{I}\} \\
    (-1)^{\lngS{i,K} + \lngS{i,\overline{K}}} & I \symDiff K =\{i\} \text{ or } I \symDiff \overline{K} = \{i\} \\
    0 & \text{otherwise.}
    \end{cases}
    \]
    Equivalently, 
\[\mathrm{ex}_{(K,\overline{K})}[x_Ix_{\overline{I}}] =\begin{cases} -1[m \equiv 1 \mod 2] & \{K,\overline{K}\} = \{I,\overline{I}\} \\
    (-1)^{\lngS{i,I \symDiff i} + \lngS{i,\overline{I}\symDiff i}} & I \symDiff K =\{i\} \text{ or } I \symDiff \overline{K} = \{i\} \\
    0 & \text{otherwise.}
    \end{cases}
    \]
    
     Hence, we find 
    \begin{align*}
     \frac{1}{\gamma_{|M|}}&\sum_{\substack{(K,\overline{K}) \\\text{an antipode of }Q_{N,L} }} p^M_{N,L}(v_Kv_{K^c}) \ex_{(K,K^c)}[x_Ix_{\overline{I}}]\\ 
    &= \frac{1}{\gamma_{|M|}}\left(-p^M_{N,L}(v_Iv_{I^c})[m \equiv 1 \bmod 2] +\sum_{i\in [n] \setminus (N \cup L)} p^M_{N,L}(v_{I \symDiff i}v_{\overline{I}\symDiff i})(-1)^{\lngS{i,I \symDiff i} + \lngS{i,\overline{I}\symDiff i}} \right)\\
    &= \frac{1}{\gamma_{|M|}}\left(-(-1)^{|I^c \cap M|+[I,\overline I]}[m \equiv 1 \bmod 2] +\sum_{i\in [n] \setminus (N \cup L)} (-1)^{|(I\symDiff i)^c \cap M|+[I \symDiff i,\overline I \symDiff i]+\lngS{i,I \symDiff i} + \lngS{i,\overline{I}\symDiff i}}\right).
   \end{align*}

   Note that, if $i \notin M$, then adding or subtracting $i$ from $I$ does not affect the intersection with $M$. If $i$ is in~$M$, this increases or decreases the intersection by $1$. Thus 
     \[
        |(I\symDiff i)^c \cap M| \equiv \begin{cases}
            |I^c \cap M| \mod 2 & i \notin M\, , \\
            |I^c \cap M| + 1 \mod 2 & i \in M\, .
        \end{cases}
    \]

    We also recall from Lemma \ref{l:antipode+coefficient+difference} that
   \[
       [I ,\overline{I}]\equiv [I \symDiff i,\overline{I} \symDiff i]   +\lngS{i,I \symDiff i} + \lngS{i,\overline{I}\symDiff i} +n \mod 2\, .
   \]
   Using these two facts we have that the coefficient of the right-hand side of (\ref{eq:pexpressedasex}) is 
   \begin{align*}\frac{1}{\gamma_{|M|}}\left(-(-1)^{|I^c \cap M| + [I,\overline{I}]}[m \equiv 1 \mod 2] - \sum_{i \in M}(-1)^{|I^c \cap M| + [I,\overline{I}]  +n}+ \sum_{i \in [n]\setminus (N \cup L \cup M)}(-1)^{|I^c \cap M| + [I,\overline{I}] +n}\right)\\
   = (-1)^{|I^c \cap M| + [I,\overline{I}]}\frac{- [m \equiv 1 \mod 2]-  (-1)^n(|M|+ m-|M|)}{\gamma_{|M|}}\\
   =(-1)^{|I^c \cap M| + [I,\overline{I}]}\,.
   \end{align*}
   Hence the coefficient of the two sides of (\ref{eq:pexpressedasex}) are equal for every $x_I x_{\overline{I}}$ and the result follows. 
\end{proof}

\begin{proof}[Proof of Theorem~\ref{t:BnSpinmain}]
    Corollary \ref{cor:quadsareingrassmann} and Lemma \ref{l:psareinex} prove that the span of the quadratic embedding equations $\{\mathrm{ex}_{(K,K\symDiff S)}\}$ is equal to the span of the Lichtenstein embedding equations $\{p^M_{N,L}\}$.
    As $\{p^M_{N,L}\}$ cut out $\bbG/\PP \subset \proj(\Sp)$ by Theorem~\ref{t:BspinEqs}, our theorem is proved.
\end{proof}

\subsection{Equations from Lichtenstein: Type $D_n$}\label{App:D}

We now proceed to find a basis for the Lichtenstein embedding equations for type $D_n$ with its minuscule representation $\Sp_+$. The organisation and procedure is identical to type $B_n$ above.
We focus on $\Sp_+$ since the two varieties associated to $\Sp_+$ and $\Sp_-$ are the two connected components of the isotropic Grassmannian and are diffeomorphic as varieties, and furthermore the weights of $\Sp_+$ and $\Sp_-$ are isomorphic as (transitive) sets with a $W$-action. So for our purposes these cases are equivalent.

We can model $\Sp_+$ as follows.
Recall that $V = \CC^{2n}$ is an even-dimensional vector space with non-degenerate symmetric bilinear form $B$.
Following~\cite[Section 1]{Meinrenken:2013}, there exists maximal isotropic subspaces $F$ with respect to $B$ such that $V = F \oplus F^*$. 
Moreover, we can fix bases $\{v_1, \dots, v_n\}$ for $F$ and $\{x_1, \dots, x_n\}$ for $F^*$ such that $B(v_i, x_j) = \delta_{ij}$.
We model $\Sp_+$ by writing $\Sp_+ = \bigwedge^{\mathrm{even}} F$ with inherited basis
\[
    \set{v_I \coloneqq v_{i_1} \wedge \cdots \wedge v_{i_{2k}} }{ I = \{i_1, \dots, i_{2k}\} \subseteq [n]} \, .
\]
Here we implicitly assume $I$ is ordered $i_1 < \cdots < i_{2k}$ for this basis and $|I|$ is even. Similarly we model the module $\Sp_-$ with $\bigwedge^{\mathrm{odd}}F$.
Note that $v_I$ is a basis for the weight space with $+1/2$ in the $i^{\rm th}$ position if $i \in I$ and $-1/2$ otherwise.
Also, $\Sp_+$ has weights with an even number of positive and negative entries and $\Sp_-$ has an odd number of positive and negative entries.
Let $\Sp = \Sp_+ \oplus \Sp_-$.
Then the $\mathfrak{so}_{2n}$-representation is precisely the restriction of the $\mathfrak{so}_{2n+1}$-representation $\Sp$ considered in Section \ref{App:B}. Therefore the isomorphism of $\mathfrak{so}_{2n+1}$-representations considered induces an isomorphism of $\mathfrak{so}_{2n}$-representations. Since $\Sp =\Sp_+ \oplus \Sp_-$ we have 
\[
    S^2(\Sp^\vee) = S^2(\Sp_+^\vee) \oplus S^2(\Sp_-^\vee) \oplus \Sp_+^\vee\otimes \Sp_-^\vee\, .
\]
Theorem \ref{t:BspinEqs} shows that $S^2(\Sp^\vee)$ is spanned by equations $p^M_{N,L}$. Thus to find the Lichtenstein embedding equations, we need to find the subspace of the span of these equations that spans $S^2(\Sp_+^\vee)$.

\thDneqs*

\begin{proof}
Note that $\Sp$ has a basis given by $\cQ_n$ and a basis for $\Sp_+$ is given by the even demicube $\cD_{n}^+$. The space $S^2(\Sp^\vee)$ can be considered as the space of symmetric $2$-forms on $\Sp$. The space $S^2(\Sp_+^\vee)$ is the space of symmetric $2$-forms on $\Sp_+$ and can be described as all symmetric $2$-forms on $\Sp$ that vanish on $\Sp_-$, equivalently vanishing on $\cD_{n,-}$. Thus the space of Lichtenstein embedding equations of $\bbG/\PP_{\alpha_{n}}$ for type $D_n$ is precisely the subspace of Lichtenstein embedding equations of type $B_n$ that are supported only on the even demicube and not the odd demicube. An identical argument works with $\bbG/\PP_{\alpha_{n-1}}$ and~$\Sp_-$.
\end{proof}

As with the odd-dimensional case, the Spin module $\Sp = \twedge^{\bullet} F$ is a $\Cl(V)$ module via the action 
   \[
        \Cl^{}(V) \to \End(\Sp), \quad v\in F \mapsto \sqrt{2}\epsilon_v \, , \, x\in F^* \mapsto \sqrt{2}\iota_x\, .
    \]
Note that $\epsilon_v$ and $\iota_x$ exchange $\Sp_+=\twedge^{\mathrm{even}}F$ and $\Sp_-=\twedge^{\mathrm{odd}}F$, hence $\Sp_+$ and $\Sp_-$ are $\Cl^{\mathrm{even}}(V)$ submodules. 
To end, we utilise this Clifford structure to demonstrate that the spin representation of $\Spin(2n+1)$ is isomorphic to either of the half spin representations of $\Spin(2n+2)$.
As discussed in \Cref{rem:B+D+Gr+iso}, this gives us an isomorphism between the type $B_n$ embedding equations and the type $D_{n+1}$ embedding equations.

Let $V_{2n+2}$ be of dimension $2n+2$ with maximal isotropic spaces $F_{n+1}$ and $F^*_{n+1}$ such that we have $V_{2n+2} = F_{n+1} \oplus F^*_{n+1}$.
As described earlier, the spin module $\Sp_{2n+2} = \twedge^\bullet F_{n+1}$ splits into the two irreducible 
half-spin representations of $\Spin(2n+2)$, namely $\Sp_{2n+2,+} = \twedge^{\mathrm{even}} F_{n+1}$ and $\Sp_{2n+2,-} = \twedge^{\mathrm{odd}} F_{n+1}$, and these are both $\Cl^{\mathrm{even}}(V_{2n+2})$ modules.
Let $v_1,\ldots,v_{n+1}$ be a basis for $F_{n+1}$, with dual basis $x_1,\ldots, x_{n+1}$ in $F^*_{n+1}$.
Let $F_{n}$ and $F^*_{n}$ be spaces spanned by $v_1,\ldots,v_{n}$ and $x_1,\ldots,x_{n}$ respectively and define $V_{2n+1} = F_{n} \oplus F^*_{n} \oplus \textrm{Span}\{w\}$ where $w = \sqrt{\frac{-1}{2}}(v_{n+1} - x_{n+1})$.
Recall that the spin representation $\Sp_{2n+1}$ of $\Spin(2n+1)$ can be modelled as $\Sp_{2n+1} = \twedge^\bullet F_n$.
Moreover, it is a $\Cl(V_{2n+1})$ module where $v_i$ acts by $\epsilon_{v_i}$, $x_i$ acts by $\iota_{x_i}$ and $w$ acts by $w \cdot v_I = (-1)^{|I|} v_I$.

Set $e_{2n+2}$ to be $\frac{1}{\sqrt{2}}(x_{n+1} + v_{n+1})$. 
There is an algebra isomorphism \cite[pg 79 (3.17)]{Meinrenken:2013}
\[ 
\Cl(V_{2n+1}) \cong \Cl^{\mathrm{even}}(V_{2n+2}), \quad v \mapsto \sqrt{-1}v e_{2n+2}\, .
\]
This allows us to consider $\Sp_{2n+2,+}$ and $\Sp_{2n+2,-}$ also as $\Cl(V_{2n+1})$ modules.

\begin{lemma}\label{lem:isospinmod}
There is an isomorphism of $\Cl(V_{2n+1})$ modules
\[ \phi:\twedge^\bullet F_n \cong \twedge ^{\mathrm{even}} F_{n+1}, \quad
v_I \mapsto \begin{cases} v_I & |I| \text{ even}, \\\sqrt{-1}v_{I \cup \{n+1\}} & |I| \text{ odd}. \end{cases}
\]    
\end{lemma}

\begin{proof}
We verify the property that for all $v_I \in \twedge ^\bullet F_n$ and $u$ in the basis $\set{v_{i},x_i, w}{i \in \{ 1,\ldots,n\}}$ that 
\[
\sqrt{-1}ue_{2n+2} \cdot \phi(v_I) = \phi(u \cdot v_I) ,
\]
where $\cdot$ denotes the Clifford algebra actions on the respective spin modules. 
Note that $w$ acts on $\twedge^\bullet F_{n+1}$ by $\sqrt{-1}(  \epsilon_{x_{n+1}}-\iota_{v_{n+1}} )$ and $e_{2n+2}$ acts by $\epsilon_{v_{n+1}} +\iota_{v_{n+1}}$. 

First, suppose $|I|$ is even. Note that we always have $e_{2n+2}\cdot \phi(v_I) = \epsilon_{v_{n+1}}v_I = v_{I \cup \{n+1\}}$, hence
     \begin{align*}
    \sqrt{-1}v_i e_{2n+2} \cdot \phi( v_{I}) &=\sqrt{-1}v_i \cdot  v_{I\cup \{n+1\}}= \sqrt{-1}v_i \wedge  v_{I\cup \{n+1\}}
    =\phi (v_i \cdot v_I)    
   \\ 
    \sqrt{-1}x_i e_{2n+2} \cdot \phi(v_{I}) &= \sqrt{-1}x_i \cdot  v_{I\cup \{n+1\}} = \phi(x_i \cdot v_I)
    \\
    \sqrt{-1} w e_{2n+2} \cdot \phi(v_I) &= \sqrt{-1} w \cdot  v_{I\cup \{n+1\}}  = \sqrt{-1}( -\sqrt{-1}\iota_{x_{n+1}}) v_{I \cup \{n+1\}} = v_I   
    =\phi (w \cdot v_I).
    \end{align*}
    Now suppose $|I|$ is odd. Note that we always have $e_{2n+2}\cdot \phi(v_I) = \sqrt{-1}\iota_{v_{n+1}}v_{I\cup\{n+1\}} = -\sqrt{-1}v_{I}$, hence
   \begin{align*}
    \sqrt{-1}v_i e_{2n+2} \cdot \phi( v_{I}) &= v_i \cdot v_{I} =\phi (v_i \cdot v_I)    
   \\ 
    \sqrt{-1}x_i e_{2n+2} \cdot \phi(v_{I}) &= x_i \cdot v_{I} = \phi(x_i \cdot v_I)
    \\
    \sqrt{-1} w e_{2n+2} \cdot \phi(v_I) &= w \cdot v_{I} =\sqrt{-1} \epsilon_{v_{n+1}} v_I= -\sqrt{-1}v_{I\cup \{n+1\}}  
    =\phi (w \cdot v_I).
    \end{align*}
Thus $\phi$ is a morphism of $\Cl(V_{2n+1})$ modules, and hence $\mathrm{Spin}(2n+1)$ modules.
A similar isomorphism exists from $\twedge^\bullet F_n$ to $\twedge^{\mathrm{odd}}F_{n+1}$.
\end{proof}

\section{Code}\label{app:code}

The following is a condensed version of the code we used for our proofs.
The full version of the code can be found at \cite{githubrepo}.

\subsection{Proof of Lichtenstein basis}
\label{ss:grassmanianBasis}
The following code is the proofs for \Cref{p:mainthmE6} and \Cref{p:mainthmE7}.
We used \texttt{GAP 4.13.1} \cite{GAP4}.
\begin{lstlisting}[language=GAP]
ty := 6; # 6 or 7

ExtRep := function(elt)
    return ExtRepOfObj(ExtRepOfObj(elt));
end;;

sBasisWt := function(elt)
    local extElt;
    extElt:= ExtRep(elt);
    return ExtRep(extElt[1][1])[1][3] + ExtRep(extElt[1][2])[1][3];
end;;

if ty = 6 then
    hwv:= [0,0,0,0,0,1];
    owv:=[1,0,0,0,0,0];
else
    hwv:= [0,0,0,0,0,0,1];
    owv:=[1,0,0,0,0,0,0];
fi;

L:= SimpleLieAlgebra( "E", ty, Rationals );;
V:=HighestWeightModule(L, hwv);
S:=SymmetricPowerOfAlgebraModule(V, 2);
SBasis:=Basis(S);

sOWVData:=[];
for s in SBasis do
    if sBasisWt(s) = owv then
        Append(sOWVData, [s]);
    fi;
od;

csOWVData:=[1*sOWVData[1], -1*sOWVData[2], -1*sOWVData[3], -1*sOWVData[4], 1*sOWVData[5]];
if not ty = 6 then
    csOWVData[5]:=-1*sOWVData[5];
    csOWVData[6]:=1*sOWVData[6];
fi;
linCombo:=Sum(csOWVData);

subAlgModule:=SubAlgebraModule(S, [linCombo]);
dim:=Dimension(subAlgModule);
deg:=DimensionOfHighestWeightModule(L, owv);
if dim = deg then
    Print("Dimension is as expected - ", dim, ".\n\n");
else
    Print("Dimension is NOT as expected.\n\n");
fi;
Print("The basis of the subalgebra module is given by:\n");
for sb in Basis(subAlgModule) do
    Print(sb, "\n");
od;
\end{lstlisting}


\end{document}